\numberwithin{equation}{section}
\newtheorem{theorem}{Theorem}[section]
\newtheorem{lemma}[theorem]{Lemma}
\newtheorem{prop}[theorem]{Proposition}
\theoremstyle{definition}
\newtheorem{remark}[theorem]{Remark}
\theoremstyle{definition}
\theoremstyle{definition}
\def\dashint{\operatorname%
{\,\,\text{\bf-}\kern-.98em\DOTSI\intop\ilimits@\!\!}}
\def\\det{\text{\det}}
\def\.5{\frac{1}{2}}
\def\h{\mathcal{H}}
\newcommand{\RN}[1]{%
  \textup{\uppercase\expandafter{\romannumeral#1}}%
}
\renewcommand{\epsilon}{\varepsilon}
\newcounter{marnote}
\begin{document}
	\title[Boundary stress blow-up in 2D Stokes flow]{Stress blow-up analysis when a  suspending\\ rigid particle approaches the
boundary\\ in Stokes flow:  2D case}
	
	\author[H.G. Li]{Haigang Li}
	\address[H.G. Li]{School of Mathematical Sciences, Beijing Normal University, Laboratory of MathematiCs and Complex Systems, Ministry of Education, Beijing 100875, China.}
	\email{hgli@bnu.edu.cn}
	
	\author[L.J. Xu]{Longjuan Xu}
	\address[L.J. Xu]{Academy for Multidisciplinary Studies, Capital Normal University, Beijing 100048, China.}
	\email{ljxu311@163.com}
	
	\author[P.H. Zhang]{PeiHao Zhang}
	\address[P.H. Zhang]{School of Mathematical Sciences, Beijing Normal University, Beijing 100875, China.}
	\email{202021130034@mail.bnu.edu.cn}


	\date{\today} 
	
\begin{abstract}
It is an interesting and important topic to study the motion of small particles in a viscous liquid in current applied research. In this paper we assume the particles are convex with arbitrary shapes and mainly investigate the interaction between the rigid particles and the domain boundary when the distance tends to zero. In fact, even though the domain and the prescribed boundary data are both smooth, it is possible to cause a definite increase of the blow-up rate of the stress. This problem has the free boundary value feature due to the rigidity assumption on the particle. We find that the prescribed local boundary data directly affects on the free boundary value on the particle. Two kinds of boundary data are considered: locally constant boundary data and locally polynomial boundary data.  For the former we prove the free boundary value is close to the prescribed constant, while for the latter we show the influence on the blow-up rate from the order of growth of the prescribed polynomial. Based on pointwise upper bounds in the neck region and lower bounds at the midpoint of the shortest line between the particle and the domain boundary, we show that these blow-up rates obtained in this paper are optimal. These precise estimates will help us understand the underlying mechanism of the hydrodynamic interactions in fluid particle model.
\end{abstract}
	
	\maketitle
	\tableofcontents
	
\section{Introduction}
	
\subsection{Background}\label{subsec1.1}

The study of the motion of small particles in a viscous liquid is one of the main focuses of current applied research. The particles interact with each other through the fluid flow since the movement of one particle also changes the fluid flow at the other particles. The presence of the particles affects the flow of the liquid, and this, in turn, affects the motion of the particles, so that this makes fundamental mathematical problem related to liquid-particle interaction a particularly challenging one. In fact, the mathematical theory of the motion of rigid particles in a liquid is one of the oldest and most classical problems in fluid mechanics, owed to the seminal contributions of Stokes \cite{Stokes}, Kirchhoff \cite{Kirchhoff}, and Jeffery \cite{Jeffery}. In recent two decades there appears a systematic study of the basic problems related to liquid-particle interaction by mathematicians, see \cite{Fortes,Weinberger,Galdi98,Galdi00,Desjardins,Hoffmann,Gunzburger,Martin} and the references therein. The interaction between particle pair and that between particle and the domain boundary are fundamental mechanisms that enter strongly in all practical applications of particulate flows \cite{DDJoseph}. The existence of global weak solutions for the two-dimensional motion of several rigid bodies in an incompressible viscous fluid, governed by the Navier-Stokes equations, is demonstrated by San Mart\'in et al \cite{SanMartin}, where they do not assume the lack of collisions. However, as a matter of fact, the stress in the neck region between rigid bodies or between a rigid body and the domain boundary may become arbitrarily large when the distances there between them tend to zero.

Since the Reynolds number usually can be very small in experiments, for the sake of simplicity in this paper we consider its Stokes approximation, the limiting situation of vanishingly small Reynolds number. Thus, the stress due to viscosity is predominant on that due to inertia, and so that inertial effects can be neglected. It is important to understand microstructure in flowing suspensions of spherical bodies. Very recently, for two $\varepsilon$-apart rigid circular inclusions suspending in the two-dimensional incompressible Stokes flow 
$$\mu\,\Delta {\bf u}=\nabla p,\quad\nabla\cdot {\bf u}=0,$$
Ammari, Kang, Kim, and Yu \cite{AKKY} made use of the bipolar coordinates to derive an asymptotic representation formula for the stress and showed the blow-up rates of the gradient and the Cauchy stress are  both $\varepsilon^{-1/2}$ in dimension two. The first two authors \cite{LX} extended to study two adjacent rigid convex inclusions of arbitrary shapes in dimensions two and three, by adapting the energy iteration technique developed in \cite{LLBY, BLL,BLL2} for dealing with the stress concentration problem in high-contrast elastic composite material. In \cite{LX} they established the pointwise upper bounds of the gradient for locally symmetric inclusions, and a lower bound at the narrowest place between inclusions under certain additional symmetric assumption on the boundary data, which shows that the optimal blow-up rate of the gradient is $(\varepsilon|\ln\varepsilon|)^{-1}$  in dimension three, while that of the Cauchy stress tensor is $\varepsilon^{-1}$. These results in  \cite{AKKY,LX} can be regarded as interior estimates on the interaction
between two close-spaced inclusions.

As a continuation of \cite{LX}, in this paper we consider the corresponding boundary gradient blow-up and study the interaction between the rigid particles and the domain boundary. In fact, even though the domain and the prescribed boundary data are both smooth, it is possible to cause a definite increase of the blow-up rate of the gradient. Firstly, thanks to the rigidity assumption on the particle, the problem has the free boundary value feature. The direct effect from the prescribed local boundary data on the free boundary value on the particle is discovered, and two kinds of boundary data are investigated: locally constant boundary data and locally polynomial boundary data.  For the former we prove the free boundary value is close to the the prescribed constant, while for the latter we show the influence on the blow-up rate from the order of growth of the prescribed polynomial. Secondly, although the main idea is from \cite{LX}, the asymmetry of the domain causes a lot of technical difficulties in the process of the construction of a series of divergence free auxiliary functions for the velocity vector, which will be proved to have the main singular terms in their gradients. More to the point, a series of associated  pressure auxiliary functions are constructed to capture the main singularity of the pressure terms, without solving the stream function. Here each auxiliary function is more complicated than that in the linear elasticity problem, and the computation is troublesome. Finally, the blow-up rates obtained in this paper are optimal, based on pointwise upper bounds in the neck region and lower bounds at the midpoint of the shortest line between the particle and the domain boundary, with new blow-up factors found due to the complicated effect from the different boundary data.  These precise estimates established in this paper help us understand the underlying mechanism of the hydrodynamic interactions in fluid particle model.

We would like to remark that one of the motivations of this paper is from the study of the stress concentration problem and electric field concentration problem in high-contrast composite materials. The concentrated stress field in fiber-reinforced composites may cause  failure of the composites \cite{Bab}, and the electric field can be greatly enhanced and utilized to achieve subwavelength imaging and sensitive spectroscopy \cite{YuAmmari}. Hence, it is quite important to understand the field concentration in a quantitatively precise manner. In this respect, since the numerical simulation work of Babu\u{s}ka et al. \cite{Bab}, there has been a lot of important literature in understanding this kind of field enhancement phenomenon. In the context of the perfect conductivity when the conductivity of two adjacent inclusions degenerates to $\infty$, the blow-up rate of the electric field is proved to be $\varepsilon^{-1/2}$ in dimension two \cite{AKL,Yun}, $(\varepsilon|\ln\varepsilon|)^{-1}$ in dimension three \cite{BLY,LY}, where $\varepsilon$ is the distance between two inclusions. While, the study for the insulated inclusions can refer to \cite{LiYang,Ben,DongLiYang}. For the Lam\'{e} system in linear elasticity, there is a significant difficulty in applying the methods for scalar equations to systems of equations that the maximum principle does not hold for the system. The first author, collaborated with Bao and Li, developed an energy iteration technique to  overcome this difficulty and proved the optimal blow-up rates of the gradient are $\varepsilon^{-1/2}$ in dimension two and $(\varepsilon|\ln\varepsilon|)^{-1}$ in dimension three, by providing pointwise upper bounds \cite{BLL,BLL2} and lower bound estimates \cite{Li2021}. The corresponding boundary estimates were as well studied in \cite{BJL}. The quantitative characterization and the asymptotic formula for the stress concentration was further  investigated in \cite{KY} by introducing singular functions in two dimensions and in \cite{LX0} for arbitrary convex inclusions in dimension three.  There is a long list of literature for more related work, see \cite{AKL3,ABTV,BC,BT,BV,DongLi,DongZhang,KLeY,KLiY,KLiY2,KangYu,Yun2} and the references therein. We end this subsection by mentioning that there is another interesting topic to investigate the sedimentation of a cloud of rigid particles under gravity in a Stokes fluid \cite{Hofer,Weinberger2}, where the existence of the solution was established in dimension two,  and the interactions between particles are an important issue there as well.

\subsection{Problem Formulation }\label{Subsec1.2}

Before we state our main results precisely, we first describe our domain and notations. For the sake of simplicity of computation, we first consider the problem in dimension two in this paper. Let $D\subset\mathbb{R}^{2}$ be a bounded open set, and  $D_1^0$ be a convex subset of $ D$, touching $\partial D$ at a point. We assume that $\partial D$ and $\partial D_1^0$ are both $C^{3}$ and their $C^{3}$ norms are bounded by some constant, independent of $\varepsilon$, to avoid their volumes tend to zero. By a translation and rotation of coordinates if necessary, we assume that $\partial D_1^0\cap\partial D=\{0\}$ and let the plane $\{x_{2}=0\}$ be their common tangent plane.  By a translating along the $x_{2}$-axis, set $D_1^{\varepsilon}:=D_{1}^{0}+(0,\varepsilon)$, and 
for simplicity of notation in the sequel, we drop the superscript $\varepsilon$ and denote $D_{1}:=D_{1}^{\varepsilon}$. 
Then 
\begin{equation*}
	\begin{split}
	\overline{D}_{1}\subset D,\quad
	\varepsilon:=\mbox{dist}(D_{1},\partial D)>0,\quad\text{and}\quad  \Omega:=D\setminus\overline{D}_{1}.
	\end{split}
\end{equation*}
Now we fix a small constant $0< R< 1$, independent of $\varepsilon$, such that the portions of $\partial D_1$ and $\partial D$ can be represented, respectively, by 
\begin{equation}\label{hh1'}
	 x_2=\varepsilon+h_1(x_1)\quad\text{and}\quad x_2=h(x_1),\quad \text{for}~ |x_1|\leq 2R,
\end{equation}
	 where $h_1$, $h\in C^{3}(B'_{2R}(0'))$ and satisfy 
\begin{align}
	 &\varepsilon+h_{1}(x_1)>h(x_1),\quad\mbox{for}~~ |x_1|\leq 2R,\label{h-h1}\\
	 &h_{1}(0')=h(0')=0,\quad  h'_{1}(0')=h'(0')=0,\label{h1h1}\\
	 &h_{1}(x_1)-h(x_1)=\kappa_0|x_1|^{2}+O(|x_1|^{3}),\quad\mbox{for}~~|x_1|<2R,\label{h1-h1}
\end{align}
where the constant $\kappa_0>0$. For $0\leq r\leq 2R$, let us define the neck region between $\partial D_{1}$ and $\partial D$ by
\begin{equation}\label{narrow-region}
	 \Omega_r:=\left\{(x_1,x_{2})\in \Omega:~ h(x_1)<x_{2}<\varepsilon+h_1(x_1),~|x_1|<r\right\}.
\end{equation}

Denote the linear space of rigid displacements in $\mathbb{R}^{2}$
	$$\Psi:=\Big\{{\boldsymbol\psi}\in C^{1}(\mathbb{R}^{2};\mathbb{R}^{2})~|~e({\boldsymbol\psi}):=\frac{1}{2}(\nabla{\boldsymbol\psi}+(\nabla{\boldsymbol\psi})^{\mathrm{T}})=0\Big\},$$
	with a basis $\{{\boldsymbol\psi}_{\alpha}\}_{\alpha=1,2,3}\in\Psi$: $${\boldsymbol\psi}_{1}=(1,0)^{\mathrm T},\quad{\boldsymbol\psi}_{2}=(0,1)^{\mathrm T},\quad{\boldsymbol\psi}_{3}=(x_{2},-x_{1})^{\mathrm T}.$$
We consider the following Stokes flow containing one rigid particle close to boundary:
\begin{align}\label{sto}
	\begin{cases}
	\mu \Delta {\bf u}=\nabla p,\quad\quad~~\nabla\cdot {\bf u}=0,&\hbox{in}~~\Omega:=D\setminus\overline{D}_{1},\\
	{\bf u}|_{+}={\bf u}|_{-},&\hbox{on}~~\partial{D_{1}},\\
	e({\bf u})=0, &\hbox{in}~~D_{1},\\
	\int_{\partial{D}_{1}}\frac{\partial {\bf u}}{\partial \nu}\Big|_{+}\cdot{\boldsymbol\psi}_{\alpha}-\int_{\partial{D}_{1}}p\,{\boldsymbol\psi}_{\alpha}\cdot
	\nu=0, &\alpha=1,2,3,
	\\
	{\bf u}={\boldsymbol\varphi}, &\hbox{on}~~\partial{D},
	\end{cases}
\end{align}
where $\mu>0$, 
$\frac{\partial {\bf u}}{\partial \nu}\big|_{+}:=\mu(\nabla {\bf u}+(\nabla {\bf u})^{\mathrm{T}})\nu$, $\nu$ is the  unit outer normal vector of $D_{1}$, and the subscript $+$ indicates the limit from inside the domain. Furthermore, via Gauss theorem and $\nabla\cdot{\bf u}=0$ in $\Omega$, one can verify that the prescribed velocity field ${\boldsymbol\varphi}$ satisfies the compatibility condition:
\begin{equation}\label{compatibility}
\int_{\partial{D}}{\boldsymbol\varphi}\cdot \nu\,=0,
\end{equation}
which guarantees the existence and uniqueness of the solution. Indeed, let
\begin{equation}\label{defPhi}
{\bf\Phi}:=\left\{\boldsymbol\varphi\in C^{1,\alpha}(\partial D;\mathbb R^2)~|~\boldsymbol\varphi~\mbox{satisfies}~ \eqref{compatibility}\right\},
\end{equation}
then for any given boundary ${\boldsymbol\varphi}\in{\bf\Phi}$ the existence and regularity of the solution now is standard, see \cite{ADN,Solo,GaldiBook}.

\subsection{The Main Difficulties and the Decomposition of the Solution}\label{subsec1.3}

Although it is known that there exists a pair solution $({\bf u},p)$ of \eqref{sto} and \eqref{compatibility} in $H^{1}(D;\mathbb{R}^{2})\times L^{2}(D)$,  the gradient of ${\bf u}$ and the press $p$ may blow up in the narrow region $\Omega_R$ when $\varepsilon$ tends to zero. The aim of the paper is to derive pointwise estimates of $\nabla{\bf u}$ and $p$ in $\Omega_R$ and to characterize such concentration phenomenon, especially affected from the prescribed boundary data. 

We will follow the method used in \cite{LX}. Introduce the Cauchy stress tensor
\begin{equation*}
	 	\sigma[{\bf u},p]=2\mu e({\bf u})-p\mathbb{I},
\end{equation*}
	 where $\mathbb{I}$ is the identity matrix. Then we reformulate \eqref{sto} as
\begin{align}\label{Stokessys}
	 \begin{cases}
	 	\nabla\cdot\sigma[{\bf u},p]=0,~~\nabla\cdot {\bf u}=0,&\hbox{in}~~\Omega,\\
	 	{\bf u}|_{+}={\bf u}|_{-},&\hbox{on}~~\partial{D_{1}},\\
	 	e({\bf u})=0, &\hbox{in}~~D_{1},\\
	 	\int_{\partial{D}_{1}}\sigma[{\bf u},p]\cdot{\boldsymbol\psi}_{\alpha}
	 	\nu=0
	 	,&\alpha=1,2,3,\\
	 	{\bf u}={\boldsymbol\varphi}, &\hbox{on}~~\partial{D}.
	 \end{cases}
\end{align}
Similarly as in \cite{BJL,LX}, by virtue of $e({\bf u})=0$ in $D_{1}$, we have
\begin{equation}\label{introC}
{\bf u}=\sum_{\alpha=1}^{3}C^{\alpha}{\boldsymbol\psi}_{\alpha}\quad\mbox{in}~D_1,
\end{equation} 
where the three free constants
$C^\alpha$, $\alpha=1,2,3$, will be determined by $u$ through the fourth line of \eqref {Stokessys}. By the continuity of the transmission condition (or the no-slip condition) on $\partial{D}_{1}$,  the solution of \eqref {Stokessys} in $\Omega$ can be decomposed as follows:
\begin{align}\label{udecom}
{\bf u}(x)=\sum_{\alpha=1}^{3}C^{\alpha}{\bf u}_{\alpha}(x)+{\bf u}_{0}(x)\quad\mbox{and}\quad p(x)=\sum_{\alpha=1}^{3}C^{\alpha}p_{\alpha}(x)+p_{0}(x),
\end{align}
where $({\bf u}_{\alpha},p_{\alpha})$, $\alpha=1,2,3$, satisfy
\begin{equation}\label{equ_v1}
\begin{cases}
\nabla\cdot\sigma[{\bf u}_\alpha,p_{\alpha}]=0,\quad\nabla\cdot {\bf u}_{\alpha}=0&\mathrm{in}~\Omega,\\
{\bf u}_{\alpha}={\boldsymbol\psi}_{\alpha}&\mathrm{on}~\partial{D}_{1},\\
{\bf u}_{\alpha}=0&\mathrm{on}~\partial{D},
\end{cases}
\end{equation}
and $({\bf u}_{0}, p_0)$ satisfies
\begin{equation}\label{equ_v3}
\begin{cases}
\nabla\cdot\sigma[{\bf u}_{0},p_0]=0,\quad\nabla\cdot {\bf u}_{0}=0&\mathrm{in}~\Omega,\\
{\bf u}_{0}=0&\mathrm{on}~\partial{{D}_{1}},\\
{\bf u}_{0}={\boldsymbol\varphi}&\mathrm{on}~\partial{D}.
\end{cases}
\end{equation}
Therefore,
\begin{equation}\label{equ_v4}
\nabla{\bf u}=\sum_{\alpha=1}^{3}C^\alpha\nabla {\bf u}_\alpha+\nabla {\bf u}_0\quad\mathrm{in}~\Omega.
\end{equation}
To estimate $\nabla{\bf u}$ and $p$, it suffices to derive the following three ingredients: (i) estimates of $|\nabla \bf u_\alpha|$ and $p_\alpha$, $\alpha=1,2,3$; (ii) estimate of $|\nabla{\bf u}_0|$ and $p_{0}$; and (iii) estimates of $|C^\alpha|$, $\alpha=1,2,3$. 

Compared with the derivation process for the interior estimates presented in \cite{LX}, new difficulties will be encountered in the above each ingredient. First, we assume that the two close-to-touching inclusions are locally symmetric near the origin in \cite{LX}. However, for the problem \eqref{sto}, here the boundary function $h(x_{1})$ and $h_{1}(x_{1})$ are not symmetric any more, which causes it troublesome to construct auxiliary functions satisfying the incompressible condition. This is the first difficulty we need to overcome in this paper. So that the auxiliary functions constructed in this paper looks more complicated than before. Second, it is obvious that the estimates of $|\nabla{\bf u}_0|$ and $p_{0}$ are directly affected from the choice of the boundary data. Because the blow-up may occur near the origin, by the Taylor expansion of ${\boldsymbol\varphi}(x)$ at the origin, we first consider the case that ${\boldsymbol\varphi}=const.$ and then consider the polynomial cases. The latter may result in the growth of singularity of $|\nabla{\bf u}_0|$  due to the effect from given ${\boldsymbol\varphi}$.

Finally, denote
\begin{align}\label{aijbj}
a_{\alpha\beta}:=-
\int_{\partial D_1}{\boldsymbol\psi}_\beta\cdot\sigma[{\bf u}_{\alpha},p_{\alpha}]\nu,\quad
Q_{\beta}[{\boldsymbol\varphi}]:=
\int_{\partial D_1}{\boldsymbol\psi}_\beta\cdot\sigma[{\bf u}_{0},p_{0}]\nu,
\end{align}
then combining the fourth line of \eqref{Stokessys} with decomposition \eqref{equ_v4} leads to 
\begin{equation}\label{ce}
\sum\limits_{\alpha=1}^{3}C^{\alpha}a_{\alpha\beta}=Q_{\beta}[{\boldsymbol\varphi}].
\end{equation}
To solve $C^{\alpha}$, we need to show the matrix $(a_{\alpha\beta})_{3\times3}$ is invertible and to give a positive lower bound of its determinant. By using the integration by parts, we have 
\begin{align}\label{defaij}
a_{\alpha\beta}=\int_{\Omega} \left(2\mu e({\bf u}_{\alpha}), e({\bf u}_{\beta})\right)\mathrm{d}x,\quad
Q_{\beta}[{\boldsymbol\varphi}]=-\int_{\Omega} \left(2\mu e({\bf u}_{0}),e({\bf u}_{\beta})\right)\mathrm{d}x.
\end{align}
So the estimates of $a_{\alpha\beta}$ totally depend on the estimates  of $|\nabla{\bf u}_{\alpha}|$, $\alpha=1,2,3$. Here we have to derive the asymptotic formula for each $a_{\alpha\beta}$ once it is an infinity to solve \eqref{ce}, see Lemma \ref{lema114D} below. On the other hand, it is obvious that from the definitions of $Q_{\beta}[{\boldsymbol\varphi}]$ that they depend on the choice of ${\boldsymbol\varphi}$ as well, so do the estimates of $C^{\alpha}$.  Therefore, in the sequel, we shall divide into two parts according to the choice of ${\boldsymbol\varphi}$ to investigate the influence on the blow-up of $\nabla{\bf u}$ and $p$ from ${\boldsymbol\varphi}$ and to show the optimality of these blow-up rates.

Because the stress concentration usually occurs near the origin, let us denote the part of $\partial{D}$ there by
$$\Gamma_{2R}=\left\{(x_1,x_{2})\in \Omega:~ x_{2}=h(x_1),~|x_1|<2R\right\}.$$
We will classify by the value of ${\boldsymbol\varphi}$ on $\Gamma_{2R}$ to investigate the role of ${\boldsymbol\varphi}$ in the blow-up analysis. Throughout this paper, we say a constant is {\em{universal}} if it depends only on $\mu,\kappa,\kappa_1$, and the upper bounds of the $C^{3}$ norm of $\partial{D}$ and $\partial{D}_{1}$, but independent of $\varepsilon$, where $\kappa$ and $\kappa_1$ are two contants independent of $\varepsilon$; see  \eqref{55} below.

\subsection{Main Results for Locally Constant Boundary Data Case}\label{subsec1.4}

 We first define the function class with constant-valued on $\Gamma_{2R}$: 
\begin{equation}\label{defphi1}
{\bf\Phi}_{1}:=\left\{{\boldsymbol\varphi}\in{\bf\Phi}~|~{\boldsymbol\varphi}=(1,0)^{\mathrm T}~\mbox{on}~\Gamma_{2R}\right\},
\end{equation}
and
\begin{equation}\label{defphi2}
{\bf\Phi}_{2}:=\left\{\boldsymbol\varphi\in{\bf\Phi}~|~{\boldsymbol\varphi}=(0,1)^{\mathrm T}~\mbox{on}~\Gamma_{2R}\right\},
\end{equation}
with their element denoted by $\boldsymbol\varphi_{i}$, $i=1,2$, where ${\bf\Phi}$ is defined in \eqref{defPhi}. Then, for instance, when ${\boldsymbol{\varphi}}\in{\bf\Phi}_{1}$ on $\Gamma_{R}$, then roughly speaking, ${\bf u}_{0}$ will be very close to ${\bf e}_{1}-{\bf u}_{1}$ in $\Omega_{R}$, which further leads to the free constant $C^{1}$ is close to $1$ as well. This reflects the effects on ${\bf u}$ from the choice of ${\boldsymbol\varphi}$ directly and indirectly.

The following Proposition describes how the local constant boundary data ${\boldsymbol\varphi}$ affect the free constants $C^\alpha$ defined in \eqref{introC}, and provides the rates for their closeness.

\begin{prop}\label{propu18}
Let $C^\alpha$ be given in \eqref{introC}. The following assertions hold.

(i) If  ${\boldsymbol\varphi}\in{\bf\Phi}_{1}$, then
\begin{align}\label{lamda0}
|C^1-1|\leq C\sqrt{\varepsilon},\quad 
|C^\alpha|\leq
\begin{cases}
C\varepsilon^{3/2},&\quad \alpha=2,\\
C\sqrt\varepsilon,&\quad \alpha=3.
\end{cases}
\end{align}

(ii) If  ${\boldsymbol\varphi}\in{\bf\Phi}_{2}$, then
\begin{align}\label{lamda20}
|C^\alpha|\leq 
C\sqrt{\varepsilon},\quad\alpha=1,3,\quad |C^2-1|\leq C\varepsilon.
\end{align}
\end{prop}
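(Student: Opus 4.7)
The plan is to work with the $3\times 3$ linear system \eqref{ce}, namely $\sum_{\alpha=1}^{3} C^{\alpha} a_{\alpha\beta} = Q_\beta[{\boldsymbol\varphi}]$ for $\beta=1,2,3$, combining the asymptotic formulas for $a_{\alpha\beta}$ promised by Lemma~\ref{lema114D} with sharp estimates of the right-hand sides $Q_\beta[{\boldsymbol\varphi}]$ that exploit the special structure of ${\boldsymbol\varphi}$ on $\Gamma_{2R}$. The bounds \eqref{lamda0} and \eqref{lamda20} then follow by inverting the matrix $(a_{\alpha\beta})$ and tracking the leading orders.

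The key reduction in case (i) is to introduce the auxiliary Stokes field ${\bf w} := {\bf u}_{0} + {\bf u}_{1}$. From \eqref{equ_v1} and \eqref{equ_v3}, ${\bf w}|_{\partial D_{1}} = {\boldsymbol\psi}_{1} = (1,0)^{\mathrm T}$ and ${\bf w}|_{\partial D} = {\boldsymbol\varphi}$, and since ${\boldsymbol\varphi} \in {\bf\Phi}_{1}$ coincides with $(1,0)^{\mathrm T}$ on $\Gamma_{2R}$, the trace of ${\bf w}$ on the entire portion of $\partial\Omega$ bordering $\Omega_{2R}$ is identically the constant $(1,0)^{\mathrm T}$. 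Because this constant field itself solves the Stokes system with zero pressure, ${\bf w}-(1,0)^{\mathrm T}$ solves a Stokes problem in $\Omega$ whose boundary data vanishes in a full neighborhood of the contact point; standard boundary regularity then forces $|\nabla{\bf w}| \leq C$ uniformly in $\varepsilon$ throughout $\overline{\Omega_{R}}$. Writing ${\bf u}_{0} = {\bf w} - {\bf u}_{1}$ and inserting into \eqref{defaij} gives
\begin{equation*}
Q_\beta[{\boldsymbol\varphi}] \,=\, a_{1\beta} - \int_{\Omega} \bigl(2\mu\, e({\bf w}),\, e({\bf u}_{\beta})\bigr)\,\mathrm{d}x \,=:\, a_{1\beta} + R_{\beta},
\end{equation*}
and the $\varepsilon$-uniform bound on $|\nabla{\bf w}|$ combined with the $L^{1}$-integrability of $|\nabla{\bf u}_{\beta}|$ over $\Omega$ (from the earlier pointwise ingredients) gives $|R_{\beta}|\leq C$ universal. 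Substitution into \eqref{ce} reduces the system to $(C^{1}-1)a_{1\beta} + C^{2} a_{2\beta} + C^{3} a_{3\beta} = -R_{\beta}$ for $\beta=1,2,3$. Case (ii) is handled identically with ${\bf w} := {\bf u}_{0} + {\bf u}_{2}$, giving $C^{1} a_{1\beta} + (C^{2}-1)a_{2\beta} + C^{3} a_{3\beta} = -R_{\beta}$.

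With this reduction in place, one applies the asymptotic expansions from Lemma~\ref{lema114D} --- in which the squeeze-flow entry $a_{22}$ is the most singular, dominating $a_{11}$ and $a_{33}$, with appropriate control on the off-diagonals --- to invert the $3\times 3$ matrix and read off \eqref{lamda0} and \eqref{lamda20}. I expect the main difficulty to be twofold: first, justifying the $\varepsilon$-uniform gradient bound on ${\bf w}$ in the degenerating narrow region $\Omega_{R}$, where one must exploit that the non-zero portion of the boundary data of ${\bf w}-(1,0)^{\mathrm T}$ sits at a definite distance from the contact point, so that elliptic regularity with constants independent of $\varepsilon$ is available; and second, managing the off-diagonal coupling, since the entries $a_{\alpha\beta}$ with $\alpha\neq\beta$ are themselves singular and their interaction with the unknown $C^{\alpha}$'s must be balanced against the surviving components of $R_{\beta}$ in order to produce the sharp scaling $\sqrt{\varepsilon}$, $\varepsilon^{3/2}$, and $\varepsilon$ appearing in \eqref{lamda0} and \eqref{lamda20}.
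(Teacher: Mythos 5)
The structure of your argument — introducing ${\bf w}={\bf u}_{0}+{\bf u}_{1}$ (resp.\ ${\bf u}_{0}+{\bf u}_{2}$), rewriting \eqref{ce} as $\sum_{\alpha}(C^{\alpha}-\delta_{\alpha 1})a_{\alpha\beta}=-R_{\beta}$ with $R_{\beta}=-\int_{\Omega}(2\mu e({\bf w}),e({\bf u}_{\beta}))$, and inverting via Lemma~\ref{lema114D} — is essentially the paper's (it appears explicitly as \eqref{matrixC1}--\eqref{matrixC2} in Section~7). There is, however, a real gap in the central step, and one secondary inaccuracy.

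The claim that ``standard boundary regularity then forces $|\nabla{\bf w}|\leq C$ uniformly in $\varepsilon$ throughout $\overline{\Omega_{R}}$'' is not justified by a standard theorem, and this is precisely the hard part. The domain degenerates: near $(z_{1},\cdot)$ the strip has width $\delta(z_{1})$, and rescaling to a unit-width strip and applying interior/boundary Stokes regularity only returns $|\nabla{\bf w}|\lesssim 1/\delta(z_{1})$, not $O(1)$. Upgrading to $O(1)$ requires showing the local energy $\int_{\Omega_{\delta}(z_{1})}|\nabla{\bf w}|^{2}$ decays like $\delta^{2}(z_{1})$, which is the Caccioppoli-iteration argument (Lemma~\ref{lem3.1} applied to ${\bf w}-(1,0)^{\mathrm T}$ with ${\bf f}=0$), and that in turn needs a global energy bound $\int_{\Omega}|\nabla{\bf w}|^{2}\leq C$ — itself nontrivial, since $\int_{\Omega}|\nabla{\bf u}_{0}|^{2}$ and $\int_{\Omega}|\nabla{\bf u}_{1}|^{2}$ each blow up like $\varepsilon^{-1/2}$ and the cancellation has to be exhibited. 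The paper captures this cancellation at the level of the explicit auxiliary functions via $\bf v_{0}^{1}+{\bf v}_{1}=(1,0)^{\mathrm T}$ (equation~\eqref{v01v1}), and then invokes the $O(1)$ gradient bounds on the residuals ${\bf w}_{1}={\bf u}_{1}-{\bf v}_{1}$ and ${\bf w}_{0}^{1}={\bf u}_{0}-{\bf v}_{0}^{1}$ from Proposition~\ref{propu11} (and its stated analogue for ${\bf u}_{0}$); this gives exactly \eqref{estu0u1}. So your intuition is correct, but you need either to reproduce the energy-iteration machinery for ${\bf w}-(1,0)^{\mathrm T}$ from scratch (including the global energy bound by a divergence-free test-function construction) or to appeal to the auxiliary-function cancellation as the paper does.

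Also, the statement $|R_{\beta}|\leq C$ is too strong for $\beta=2$. Even with $|\nabla{\bf w}|\leq C$ in $\Omega_{R}$, one has $\int_{\Omega_{R}}|e({\bf u}_{2})|\gtrsim\int_{|x_{1}|<R}|x_{1}|/\delta(x_{1})\,\mathrm{d}x_{1}\sim|\ln\varepsilon|$, so $|R_{2}|$ is at best $O(|\ln\varepsilon|)$ in case (i); in case (ii) the paper's Propositions~\ref{lemaQb2} and Lemma~\ref{lema114D} only control $|Q_{2}-a_{22}|=O(\varepsilon^{-1/2})$. The conclusions of \eqref{lamda0} and \eqref{lamda20} survive because the accompanying cofactor $\mathrm{cof}(\mathbb{A})_{21}$ is correspondingly small relative to $\det\mathbb{A}$, but the final inversion has to account for this rather than treat all right-hand sides as uniformly bounded.
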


\begin{remark}This is a new observation, compared with the corresponding boundary estimates obtained before in \cite{BJL} for the Lam\'e system. Besides, taking case $(i)$ for instance, the closeness $|C^1-1|\leq C\sqrt{\varepsilon}$ can make the singularities caused by $\nabla{\bf u}_{1}$ and $\nabla{\bf u}_{0}$ be cancelled, see \eqref{v11v01} below.
This is also one of the difference from the interior case \cite{LX}, where the differences $|C_{1}^\alpha-C_{2}^\alpha|$,  $\alpha=1,2,3$, are in a small order for any boundary data. 
\end{remark}

\begin{remark}
Actually, for case $(i)$, if ${\boldsymbol\varphi}=a(1,0)^{\mathrm T}$ on $\Gamma_{2R}$, for any constant $a\in\mathbb{R}$, and ${\boldsymbol\varphi}\in{\bf\Phi}$, then we have
$$|C^1-a|\leq C\sqrt{\varepsilon},$$
to replace the analogous estimate in \eqref{lamda0}. A similar conclusion holds true also for case $(ii)$.
\end{remark}

Denote 
$$\delta(x_1)=\varepsilon+h_{1}(x_1)-h(x_1).$$ 
Then our first main result in 2D is as follows: 

\begin{theorem}\label{mainthm0}(Pointwise upper bounds)
Assume that $D,D_1,\Omega$, and $\varepsilon$ are defined as in Subsection \ref{Subsec1.2}, and ${\boldsymbol\varphi}\in{\bf\Phi}_{1}$ or  ${\boldsymbol\varphi}\in{\bf\Phi}_{2}$. Let ${\bf u}\in H^1(D;\mathbb R^2)\cap C^1(\bar{\Omega};\mathbb R^2)$ and $p\in L^2(D)\cap C^0(\bar{\Omega})$ be the solution to \eqref{Stokessys} . Then for sufficiently small $0<\varepsilon<1$,  we have
\begin{align*}
|\nabla{{\bf u}}(x)|\leq
\frac{C}{\sqrt{\delta(x_1)}},\quad x\in \Omega_{R};\quad
\inf_{c\in\mathbb{R}}\|p+c\|_{C^0(\bar{\Omega}_{R})}\leq
\frac{C}{\varepsilon}\nonumber,
\end{align*}
and 
$$\|\nabla{\bf u}\|_{L^\infty(\Omega\setminus\Omega_R)}+\|p\|_{L^\infty(\Omega\setminus\Omega_R)}\leq C\|\boldsymbol\varphi\|_{C^{1,\alpha}(\partial D)},$$
where $C$ is a {\em{universal} constant}.
In particular,we have
\begin{equation*}
\|\nabla{{\bf u}}\|_{L^{\infty}(\Omega)}\leq 
\frac{C}{\sqrt\varepsilon}.
\end{equation*}
\end{theorem}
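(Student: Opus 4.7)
The plan is to combine the decomposition \eqref{udecom} with the closeness estimates of Proposition~\ref{propu18} and with pointwise estimates on the building blocks $({\bf u}_\alpha, p_\alpha)$ and $({\bf u}_0, p_0)$ obtained by adapting the auxiliary-function strategy of \cite{LX} to the present asymmetric geometry. First, for each $\alpha \in \{1,2,3\}$, I would construct a divergence-free ansatz $\bar{\bf u}_\alpha$ carrying the correct boundary values ${\boldsymbol\psi}_\alpha$ on $\partial D_1$ and $0$ on $\partial D$, together with a matching auxiliary pressure $\bar p_\alpha$ capturing the leading singularity of $p_\alpha$. Unlike the symmetric interior setting of \cite{LX} where linear interpolation in $x_2$ suffices, here the mismatch between $h(x_1)$ and $h_1(x_1)$ forces additional correction terms in the $x_1$-component of $\bar{\bf u}_\alpha$ so that $\nabla \cdot \bar{\bf u}_\alpha = 0$ holds identically. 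An energy iteration along dyadic subregions of $\Omega_R$, combined with Caccioppoli-type inequalities, then absorbs the remainders $({\bf u}_\alpha - \bar{\bf u}_\alpha, p_\alpha - \bar p_\alpha)$ into bounded terms. This step yields pointwise bounds of the form $|\nabla {\bf u}_\alpha(x)| \leq C/\delta(x_1)^{k_\alpha}$ with $k_1 = k_3 = 1$ and $k_2 = 2$, plus analogous pressure bounds $|p_\alpha(x) + c_\alpha| \leq C/\delta(x_1)^{k_\alpha + 1}$ after suitable additive normalization.

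Next, to handle $({\bf u}_0, p_0)$ I exploit the fact that on $\Gamma_{2R}$ the boundary data is a rigid motion, which produces an exact cancellation against one of the ${\bf u}_\alpha$. Concretely, if ${\boldsymbol\varphi} \in {\bf\Phi}_1$, then the combination ${\bf u}_0 + {\bf u}_1 - (1,0)^{\mathrm T}$ solves a Stokes problem with homogeneous Dirichlet data on $\Gamma_{2R}$ and on the portion of $\partial D_1$ over $|x_1| < 2R$, so its gradient is uniformly bounded in $\Omega_R$ by standard local Stokes regularity (after an $\varepsilon$-independent straightening of the neck region). A parallel cancellation with ${\bf u}_2$ applies when ${\boldsymbol\varphi} \in {\bf\Phi}_2$. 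This is the place where the prescribed locally constant boundary data directly cancels the would-be singular part of ${\bf u}_0$.

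The assembly is then a bookkeeping step. For ${\boldsymbol\varphi} \in {\bf\Phi}_1$ rewrite
\begin{equation*}
{\bf u} = (C^1-1){\bf u}_1 + C^2 {\bf u}_2 + C^3 {\bf u}_3 + ({\bf u}_0 + {\bf u}_1).
\end{equation*}
Using Proposition~\ref{propu18}, the bounds from Step~1, and the boundedness of $\nabla({\bf u}_0 + {\bf u}_1)$ from Step~2, together with $\delta(x_1) \geq \varepsilon$, one finds
\begin{equation*}
|(C^1-1)\nabla {\bf u}_1| \leq \frac{C\sqrt{\varepsilon}}{\delta(x_1)} \leq \frac{C}{\sqrt{\delta(x_1)}}, \qquad |C^2 \nabla {\bf u}_2| \leq \frac{C\varepsilon^{3/2}}{\delta(x_1)^2} \leq \frac{C}{\sqrt{\delta(x_1)}},
\end{equation*}
and similarly for $C^3 \nabla {\bf u}_3$; summation gives $|\nabla {\bf u}(x)| \leq C/\sqrt{\delta(x_1)}$ in $\Omega_R$. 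The analogous computation for the pressure yields $\|p + c\|_{C^0(\bar\Omega_R)} \leq C/\varepsilon$ after choosing the additive constant $c$ to remove the normalization freedom. The case ${\boldsymbol\varphi} \in {\bf\Phi}_2$ is treated identically with the roles of ${\bf u}_1$ and ${\bf u}_2$ swapped, and the estimate in $\Omega \setminus \Omega_R$ follows from standard Stokes regularity since that region is geometrically uniform in $\varepsilon$.

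The hard part is Step~1: without local symmetry, producing divergence-free auxiliary velocities $\bar{\bf u}_\alpha$ whose $x_2$-derivatives carry the leading singularity while the Stokes residual $\nabla \cdot \sigma[\bar{\bf u}_\alpha, \bar p_\alpha]$ remains small enough for the iteration to close is delicate, and crucially the pressure auxiliary must be constructed directly rather than via a stream function. The extra terms produced by $h'(x_1), h_1'(x_1) \neq 0$ must be tracked consistently through both $\bar{\bf u}_\alpha$ and $\bar p_\alpha$; this is the technical core that the authors identify in the introduction as the main novelty compared to \cite{LX}.
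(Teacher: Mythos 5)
Your overall plan coincides with the paper's proof: decompose $\nabla{\bf u}=\sum_\alpha C^\alpha\nabla{\bf u}_\alpha+\nabla{\bf u}_0$, estimate each $|\nabla{\bf u}_\alpha|$ via a divergence-free auxiliary velocity and a companion auxiliary pressure plus energy iteration, use Proposition~\ref{propu18} to control $C^\alpha$, and exploit the cancellation between ${\bf u}_0$ and ${\bf u}_1$ (resp.\ ${\bf u}_2$) for ${\boldsymbol\varphi}\in{\bf\Phi}_1$ (resp.\ ${\bf\Phi}_2$). The bookkeeping in your Step~3 and the treatment of $\Omega\setminus\Omega_R$ are exactly the paper's.

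The one place you are under-justified is Step~2. You claim that ${\bf u}_0+{\bf u}_1-(1,0)^{\mathrm T}$ has uniformly bounded gradient in $\Omega_R$ ``by standard local Stokes regularity (after an $\varepsilon$-independent straightening of the neck region).'' This does not hold as stated: the neck has width $\delta(x_1)\sim\varepsilon+x_1^2$, and no $\varepsilon$-independent straightening exists; any change of variables flattening it distorts $x_2$-derivatives by a factor $\delta(x_1)^{-1}$, so local elliptic regularity alone gives only $|\nabla({\bf u}_0+{\bf u}_1)|\lesssim \delta(x_1)^{-1}\|{\bf u}_0+{\bf u}_1-(1,0)^{\mathrm T}\|_{L^\infty}$, which is circular. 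The actual mechanism in the paper is that the cancellation happens at the level of the explicit auxiliary functions: one constructs ${\bf v}_0^1$ with ${\bf v}_0^1+{\bf v}_1\equiv(1,0)^{\mathrm T}$ in $\Omega_{2R}$ (and $\bar p_0^1=-\bar p_1$), so that $\nabla({\bf v}_0^1+{\bf v}_1)=0$ identically; then $\nabla({\bf u}_0+{\bf u}_1)=\nabla({\bf w}_0^1+{\bf w}_1)$, and the $O(1)$ bound on the remainders $\nabla{\bf w}_0^1$, $\nabla{\bf w}_1$ comes from the very same energy-iteration argument you already invoke in Step~1 for each ${\bf w}_\alpha$. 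In other words, Step~2 is not a separate, softer regularity step — it is another instance of the Step~1 machinery, applied to $({\bf u}_0,p_0)$ with the auxiliary pair chosen to manifest the cancellation. You have the right intuition but the wrong technical justification for why the cancellation yields a bounded gradient.

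A smaller slip: you assert $|\nabla{\bf u}_\alpha|\leq C/\delta(x_1)^{k_\alpha}$ with $k_2=2$. The paper's Proposition~\ref{propu12} actually gives $|\nabla{\bf u}_2|\leq C(1/\delta+|x_1|/\delta^2)\lesssim \delta^{-3/2}$ since $|x_1|\lesssim\sqrt{\delta}$ in $\Omega_R$. Your over-estimate happens not to break the assembly because $\varepsilon^{3/2}/\delta^2\leq\delta^{-1/2}$ anyway, but it is worth noting that the sharper bound is what enables the optimality analysis later.
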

These upper bounds in this case are the same as the analogue of the interior estimates in \cite{LX}.

\subsection{Main Results for Locally Polynomial Boundary Data Case}

In this subsection we shall consider the following two kinds of polynomial boundary data:

\begin{equation}\label{defphi3}
{\bf\Phi}_{3}:=\left\{{\boldsymbol\varphi}\in{\bf\Phi}\big|~{\boldsymbol\varphi}=(x_1^{l_1},0)^{\mathrm T}~\mbox{on}~\Gamma_{2R}\right\},~{\bf\Phi}_{4}:=\left\{{\boldsymbol\varphi}\in{\bf\Phi}\big|~{\boldsymbol\varphi}=(0,x_1^{l_2})^{\mathrm T}~\mbox{on}~\Gamma_{2R}\right\},
\end{equation}
where $l_1,l_2\in\mathbb N^+$, and ${\bf\Phi}$ is defined in \eqref{defPhi}. Our method can be applied to more kinds of boundary data, for example, the polynomials of $x_{2}$. We left it to the interested readers. Set
\begin{equation}\label{narrowdelta}
\Omega_{\delta}(x_1):=\left\{(y_1,y_{2})\in\mathbb R^2\big| h(y_1)<y_{2}
<\varepsilon+h_{1}(y_1),\,|y_1-x_1|<\delta \right\},
\end{equation}
and define $$(q_{0})_{\Omega_R}=\frac{1}{|\Omega_R|}\int_{\Omega_R}q_{0}(y)\mathrm{d}y,$$
where $q_0=p_0-\bar p_0$ and $\bar p_0$ is the auxiliary function corresponding to $p_0$ constructed later, see \eqref{barp062} below for instance. For these general boundary values ${\boldsymbol\varphi}\in{\bf\Phi}_{3}$ and ${\boldsymbol\varphi}\in{\bf\Phi}_{4}$,  we first have the following general result to emphasize the dependence of the estimates on ${\boldsymbol\varphi}$, where $Q_{\beta}[\boldsymbol\varphi]$ are linear functionals of ${\boldsymbol\varphi}$ and ${\bf u}_0$ directly depend on ${\boldsymbol\varphi}$.

\begin{theorem}\label{mainthm}(Upper Bounds)
Assume that $D,D_1,\Omega$, and $\varepsilon$ are defined as in Subsection \ref{Subsec1.2}, and $\boldsymbol\varphi\in {\bf\Phi}$. Let ${\bf u}\in H^1(D;\mathbb R^2)\cap C^1(\bar{\Omega};\mathbb R^2)$ and $p\in L^2(D)\cap C^0(\bar{\Omega})$ be the solution to \eqref{Stokessys}. Then for sufficiently small $0<\varepsilon<1$, we have, $x\in \Omega_{R}$,
\begin{align*}
|\nabla{\bf u}(x)|&\leq \frac{C\sqrt\varepsilon}{\delta(x_1)}\Big(|Q_{1}[\boldsymbol\varphi]|+|Q_3[\boldsymbol\varphi]|+|x_1||Q_{2}[\boldsymbol\varphi]|\Big)+|\nabla{\bf u}_0(x)|,\\
\inf_{c\in\mathbb{R}}\|p+c\|_{C^0(\bar{\Omega}_{R})}&\leq \frac{C}{\varepsilon}\Big(|Q_{1}[\boldsymbol\varphi]|+|Q_3[\boldsymbol\varphi]|\Big)+\frac{C|Q_{2}[\boldsymbol\varphi]|}{\sqrt\varepsilon}+|p_0(x)-(q_0)_{\Omega_R}|,
\end{align*}
and 
\begin{align}\label{outside}
\|\nabla{\bf u}\|_{L^\infty(\Omega\setminus\Omega_R)}+\|p\|_{L^\infty(\Omega\setminus\Omega_R)}\leq C\|\boldsymbol\varphi\|_{C^{1,\alpha}(\partial D)},
\end{align}
where $C$ is a {\em{universal} constant},  $Q_{\beta}[\boldsymbol\varphi]$ is  defined in \eqref{aijbj}, $\beta=1,2,3$, and $({\bf u}_{0}, p_0)$ satisfies \eqref{equ_v3}.
\end{theorem}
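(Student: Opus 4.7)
The starting point is the decomposition \eqref{udecom}--\eqref{equ_v4}, which reduces everything to three sub-tasks identified in Subsection \ref{subsec1.3}: (i) pointwise upper bounds for $(|\nabla{\bf u}_\alpha|,p_\alpha)$, $\alpha=1,2,3$, on $\Omega_R$; (ii) control of $|\nabla{\bf u}_0|$ and $p_0$ on $\Omega_R$ (which here simply enter through the last term on the right-hand side); and (iii) bounds on $|C^\alpha|$ solving \eqref{ce}. The outside bound \eqref{outside} is a separate, easier matter that I would dispatch first using standard Agmon--Douglis--Nirenberg and Caccioppoli estimates for the Stokes system on the set $\Omega\setminus\Omega_R$, where $\delta(x_1)\gtrsim R^2$ is bounded below by a universal constant and $\boldsymbol\varphi\in C^{1,\alpha}(\partial D)$ gives the quantitative right-hand side.

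For task (i), I would adapt the auxiliary-function plus energy-iteration scheme of \cite{LX,BLL}. For each $\alpha$ I construct a divergence-free velocity $\bar{\bf u}_\alpha\in C^1(\overline{\Omega}_{2R})$ matching $\boldsymbol\psi_\alpha$ on $\partial D_1\cap\{|x_1|<2R\}$ and vanishing on $\partial D\cap\{|x_1|<2R\}$, together with a companion scalar $\bar p_\alpha$ that already carries the leading pressure singularity; the construction uses an affine-in-$x_2$ ansatz across the gap, corrected by explicit terms built from $h(x_1)$, $h_1(x_1)$ and their derivatives to enforce $\nabla\cdot\bar{\bf u}_\alpha=0$ and to make $\nabla\cdot\sigma[\bar{\bf u}_\alpha,\bar p_\alpha]$ small. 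The remainder $({\bf u}_\alpha-\bar{\bf u}_\alpha,p_\alpha-\bar p_\alpha)$ solves a Stokes problem with zero Dirichlet data, so iterating the Caccioppoli-type energy inequality on shrinking sub-strips $\Omega_\delta(x_1)$ gives the targeted pointwise bounds
\[
|\nabla{\bf u}_\alpha(x)|\leq \frac{C}{\delta(x_1)}\quad(\alpha=1,3),\qquad |\nabla{\bf u}_2(x)|\leq \frac{C|x_1|}{\delta(x_1)},
\]
together with the analogous $|p_\alpha|\lesssim \delta(x_1)^{-3/2}$-type bounds that translate, after multiplication by the expected $|C^\alpha|\lesssim\sqrt\varepsilon$, into the pressure part of the theorem.

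For task (iii), I would use Lemma \ref{lema114D} (alluded to in the text) to obtain asymptotic expansions of the symmetric $3\times 3$ matrix $(a_{\alpha\beta})$, identifying which entries blow up (typically $a_{11},a_{33},a_{13}$ at rate $\varepsilon^{-1/2}$) and which stay $O(1)$; combined with an explicit lower bound on $\det(a_{\alpha\beta})$, this shows that $(a_{\alpha\beta})^{-1}$ has entries of size $O(\sqrt\varepsilon)$ (with possibly refined weights for the second row/column reflecting the $|x_1|$-factor in ${\bf u}_2$). Inverting the linear system \eqref{ce} then yields $|C^\alpha|\leq C\sqrt\varepsilon\sum_\beta|Q_\beta[\boldsymbol\varphi]|$ in the form needed for the theorem. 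Assembling (i), (ii) and (iii) through \eqref{equ_v4} produces the stated estimates on $\nabla{\bf u}$ and on $\inf_c\|p+c\|_{C^0(\overline{\Omega}_R)}$, with the $|x_1|$-weight in front of $|Q_2|$ coming from the corresponding refined bound for $|\nabla{\bf u}_2|$, and the improved $1/\sqrt\varepsilon$ factor (instead of $1/\varepsilon$) in front of $|Q_2|$ in the pressure inequality coming from the integrable factor of $|x_1|$ against the pressure singularity of $p_2$.

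The main obstacle, inherited from the asymmetry of the domain, is step (i): constructing $(\bar{\bf u}_\alpha,\bar p_\alpha)$ that are simultaneously divergence-free, fit the asymmetric profiles $h$ and $\varepsilon+h_1$, and produce a remainder amenable to the energy iteration, is substantially more intricate than in the symmetric setting of \cite{LX}, and the pressure auxiliary $\bar p_\alpha$ must be designed so that $\nabla\cdot\sigma[\bar{\bf u}_\alpha,\bar p_\alpha]$ is integrable enough for the iteration to close. A secondary subtlety is the quantitative inversion of $(a_{\alpha\beta})$: the three modes $\boldsymbol\psi_1,\boldsymbol\psi_2,\boldsymbol\psi_3$ generate singularities of different strengths in the neck, and the determinant estimate must be sharp enough that no spurious factor of $\varepsilon^{-1}$ contaminates $|C^\alpha|$.
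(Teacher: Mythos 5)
Your high-level architecture matches the paper's exactly: decompose via \eqref{udecom}--\eqref{equ_v4}, estimate $\nabla{\bf u}_\alpha$ and $p_\alpha$ by building divergence-free auxiliary pairs $({\bf v}_\alpha,\bar p_\alpha)$ and running the Caccioppoli-type energy iteration in the neck, and then solve \eqref{ce} via Cramer's rule. The outside bound \eqref{outside} is indeed handled by standard local regularity. But two of your intermediate quantitative claims are wrong, and they are the two facts that actually generate the shape of the stated inequalities, so the argument as written does not close.

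First, the bound $|\nabla{\bf u}_2(x)|\le C|x_1|/\delta(x_1)$ is false. The paper's Proposition \ref{propu12} proves $|\nabla{\bf u}_2(x)|\le C\bigl(\delta(x_1)^{-1}+|x_1|\delta(x_1)^{-2}\bigr)$, and one sees directly from \eqref{estv212} that, e.g., $\partial_{x_1}{\bf v}_2^{(1)}(0,\varepsilon/2)\sim\varepsilon^{-1}\neq0$, so $\nabla{\bf u}_2$ does not vanish on the shortest segment. Second, you assert that the inverse of $(a_{\alpha\beta})$ has all entries $O(\sqrt\varepsilon)$; but Lemma \ref{lema114D} gives $a_{22}\sim\varepsilon^{-3/2}$ (one order worse than $a_{11},a_{33},a_{13}\sim\varepsilon^{-1/2}$), and together with $\det\mathbb A\sim\varepsilon^{-5/2}$ this forces $|\det\mathbb A_2|/\det\mathbb A\lesssim\varepsilon^{3/2}|Q_2[\boldsymbol\varphi]|$, i.e. $|C^2|\lesssim\varepsilon^{3/2}|Q_2|$, not $\sqrt\varepsilon|Q_2|$. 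It is precisely the pairing of the \emph{extra} smallness $\varepsilon^{3/2}$ in $C^2$ with the \emph{extra} strength $|x_1|\delta^{-2}$ in $\nabla{\bf u}_2$ (and $\varepsilon^{-2}$ in $p_2$) that produces the $|x_1|$-weight on $|Q_2|$ in the velocity bound and the $\varepsilon^{-1/2}|Q_2|$ term in the pressure bound; the $1/\sqrt\varepsilon$ does not come from integrating $|x_1|$ against the pressure singularity, which is a sup-norm, not an averaged, estimate. Your two errors formally compensate to roughly the right power of $\varepsilon$, but they scramble the $|x_1|$-dependence and would give $\sqrt\varepsilon\sum_\beta|Q_\beta|/\delta$ rather than the stated bound, so you should correct both the $|\nabla{\bf u}_2|$ estimate and the asymptotics of $a_{22}$ before assembling the pieces.
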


The proof of \eqref{outside} is standard. Therefore, from Theorem \ref{mainthm}, to show the explicit effect from the above two kinds of $\boldsymbol\varphi$ is reduced to estimating $|Q_\beta[\boldsymbol\varphi]|$, $\beta=1,2,3$, and $|\nabla{\bf u}_0|$. Precisely, we have

\begin{prop}\label{lemaQb}
(1)	If ${\boldsymbol{\varphi}}\in{\bf\Phi}_{3}$, then	
	\begin{align*}
	|Q_1 [\boldsymbol\varphi]|, |Q_3[\boldsymbol\varphi]|\leq
	C,\quad
	|Q_2[\boldsymbol\varphi]|\leq
	\begin{cases}
	C|\ln\varepsilon|,&\mbox{for}~l_1=1,\\
	C,&\mbox{for}~l_1\geq 2.
	\end{cases}
	\end{align*}
	
(2)	If ${\boldsymbol{\varphi}}\in{\bf\Phi}_{4}$, then	
	\begin{align*}
	|Q_1[\boldsymbol\varphi]|, |Q_3[\boldsymbol\varphi]|\leq
	\begin{cases}
	\frac{C}{\sqrt{\varepsilon}},&\mbox{for}~l_2= 1,\\
	C,&\mbox{for}~l_2\geq 2,\\
	\end{cases}\quad
	|Q_2[\boldsymbol\varphi]|\leq
	\begin{cases}
	\frac{C}{\sqrt{\varepsilon}},&\mbox{for}~l_2= 1,\\
	C|\ln\varepsilon|,&\mbox{for}~l_2= 2,\\
	C,&\mbox{for}~ l_2\geq 3.
	\end{cases}
	\end{align*}
\end{prop}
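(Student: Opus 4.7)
The plan is to work from the bilinear representation
\[
Q_\beta[\boldsymbol\varphi] = -\int_\Omega 2\mu\,(e(\mathbf{u}_0), e(\mathbf{u}_\beta))\, dx
\]
in \eqref{defaij}. I would split $\Omega = \Omega_R \cup (\Omega\setminus\Omega_R)$. On the complement of the neck, $\|\nabla\mathbf{u}_0\|_{L^\infty} + \|\nabla\mathbf{u}_\beta\|_{L^\infty} \le C$ by standard Stokes regularity, so that region contributes $O(1)$ irrespective of $l_1$ or $l_2$. All the singular behaviour is concentrated in $\Omega_R$.

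In $\Omega_R$ I would follow the auxiliary-function strategy already set up in the paper for $a_{\alpha\beta}$: write $\mathbf{u}_\beta = \bar{\mathbf{u}}_\beta + w_\beta$ and $\mathbf{u}_0 = \bar{\mathbf{u}}_0 + w_0$, where each $\bar{\mathbf u}_\beta$ is one of the explicit divergence-free auxiliary vectors used in the earlier lemmas (which exactly match $\boldsymbol\psi_\beta$ on $\partial D_1$ and vanish on $\partial D\cap \Gamma_{2R}$), and $\bar{\mathbf u}_0$ is the analogous divergence-free extension of $(x_1^{l_1},0)^{\mathrm{T}}$, respectively $(0,x_1^{l_2})^{\mathrm{T}}$, from $\partial D$ that vanishes on $\partial D_1$ inside the neck. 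The remainders $w_\beta$ and $w_0$ satisfy $\|\nabla w_\ast\|_{L^2(\Omega_R)} \le C$ from the iteration-type energy estimates in earlier sections. Expanding the bilinear form produces a main term $\int_{\Omega_R} 2\mu\,e(\bar{\mathbf u}_0):e(\bar{\mathbf u}_\beta)\, dx$ plus three cross terms, which are controlled by Cauchy--Schwarz using the $L^2$ bounds on $\nabla\bar{\mathbf{u}}_\beta$ and $\nabla\bar{\mathbf{u}}_0$ that follow from their explicit form.

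For the main term I would substitute the explicit formulas for $\bar{\mathbf u}_0$ and $\bar{\mathbf u}_\beta$. In the neck the dominant shears behave like $\delta(x_1)^{-1}$ (for the Couette-type parts tied to the rigid translation $\boldsymbol\psi_1$ and to the horizontal component of $\boldsymbol\varphi$), together with $|x_1|\delta(x_1)^{-2}$ squeeze-flow contributions attached to the vertical rigid motion $\boldsymbol\psi_2$ and to the vertical component of $\boldsymbol\varphi$, while $\bar{\mathbf u}_3$ inherits mixed scaling weighted by $|x_1|$ because $\boldsymbol\psi_3$ vanishes to first order at the origin. After integrating in $x_2$ over a fibre of width $\delta(x_1)$, every piece reduces to a one-dimensional integral
\[
I_k^{(m)} = \int_{-R}^R \frac{|x_1|^k\, dx_1}{(\varepsilon + \kappa_0 x_1^2)^m},
\]
whose behaviour as $\varepsilon\to 0$ is elementary (bounded once $k \ge 2m$, logarithmic at $k = 2m-1$, and of order $\varepsilon^{(k+1-2m)/2}$ otherwise). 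Reading off the exponents for each $(\beta, l_1)$ or $(\beta, l_2)$ combination recovers the orders $1$, $|\ln\varepsilon|$, and $\varepsilon^{-1/2}$ displayed in the proposition.

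The main obstacle is that brute-force insertion of the pointwise shear bounds into the bilinear form yields orders strictly larger than the proposition asserts in several cases (for instance $|\ln\varepsilon|$ instead of $O(1)$ for $Q_1$ and $Q_3$ when $l_1 = 1$, and $\varepsilon^{-1/2}$ instead of $|\ln\varepsilon|$ for $Q_2$ when $l_1 = 1$). Recovering the sharper bounds requires identifying cancellations of two types: parity in $x_1$ (the leading integrand in $e(\bar{\mathbf u}_0):e(\bar{\mathbf u}_\beta)$ is odd in $x_1$ in the symmetric model, so only the residues coming from the $O(x_1^3)$ correction in \eqref{h1-h1} and from the asymmetry between $h$ and $h_1$ contribute), and algebraic cancellations between the off-diagonal and diagonal tensor components of $e(\bar{\mathbf u}_0):e(\bar{\mathbf u}_\beta)$ enforced by the incompressibility of each factor. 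Making these cancellations quantitative requires careful Taylor bookkeeping of the even and odd parts of $h$, $h_1$, and is the core technical difficulty; it runs in parallel with the asymptotic analysis of $a_{\alpha\beta}$ carried out in Lemma \ref{lema114D}.
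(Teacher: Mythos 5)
The paper never spells out a proof of Proposition \ref{lemaQb}; it relies on the reader to reproduce the computation pattern from Lemma \ref{lema114D}--\ref{lema113D} and Propositions \ref{lemaQb1}--\ref{lemaQb2} using the newly built auxiliaries ${\bf v}_0^3,\bar p_0^3$ and ${\bf v}_0^4,\bar p_0^4$ and the remainder estimates of Propositions \ref{propv03}--\ref{propv04}. Your outline reproduces that skeleton correctly: start from the bilinear identity in \eqref{defaij}, split $\Omega = \Omega_R\cup(\Omega\setminus\Omega_R)$, write ${\bf u}_0={\bf v}_0^j+{\bf w}_0^j$, ${\bf u}_\beta={\bf v}_\beta+{\bf w}_\beta$, and reduce the leading term to one-dimensional integrals $\int|x_1|^k\delta(x_1)^{-m}dx_1$. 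Your identification of parity in $x_1$ as the source of the extra gain is also exactly the mechanism the paper invokes (compare the proof of Lemma \ref{lema113D}, where the leading integrand $\partial_{x_2}{\bf v}_1^{(1)}\partial_{x_2}{\bf v}_2^{(1)}$ is discarded as odd).

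The gap is in your handling of the cross terms. You claim they are ``controlled by Cauchy--Schwarz using the $L^2$ bounds on $\nabla\bar{\mathbf u}_\beta$ and $\nabla\bar{\mathbf u}_0$''. That cannot give the stated orders: in $\Omega_R$ one has $\|\nabla{\bf v}_1\|_{L^2(\Omega_R)}\sim\varepsilon^{-1/4}$ and $\|\nabla{\bf v}_2\|_{L^2(\Omega_R)}\sim\varepsilon^{-3/4}$, while the global energy estimates only give $\|\nabla{\bf w}_0^j\|_{L^2(\Omega)}\leq C$, so Cauchy--Schwarz on the cross term $\int_{\Omega_R}(e({\bf w}_0^j),e({\bf v}_\beta))$ returns $\varepsilon^{-1/4}$ (or worse), far above the claimed $O(1)$ or $O(|\ln\varepsilon|)$. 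The pointwise route used for the constant data case also degrades here: by Propositions \ref{propv03}--\ref{propv04} the remainder bound for the polynomial data is only $|\nabla{\bf w}_0^j|\lesssim\delta^{-1/2}$ (or $\delta^{-1}$), so $\int_{\Omega_R}|\nabla{\bf w}_0^j||\nabla{\bf v}_1|\,dx\sim\int\delta^{-1/2}dx_1\sim|\ln\varepsilon|$, still not $O(1)$. Recovering the sharp orders requires integrating by parts: since ${\bf w}_0^j$ vanishes on $\partial\Omega$ one writes
\begin{equation*}
\int_\Omega\left(2\mu\,e({\bf w}_0^j),e({\bf v}_\beta)\right)dx
=-\int_{\Omega_{2R}}\left({\bf f}_\beta+\nabla\bar p_\beta\right)\cdot{\bf w}_0^j\,dx+O(1),
\end{equation*}
and then uses $|{\bf w}_0^j|\lesssim\sqrt{\delta}$ (from the pointwise gradient bound plus vanishing on $\partial D_1$) together with $|{\bf f}_\beta|\lesssim\delta^{-1}$ and a further integration by parts in the $\nabla\bar p_\beta$ term using $\nabla\cdot{\bf w}_0^j=0$ in $\Omega_{2R}$. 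None of this is captured by the $L^2$ Cauchy--Schwarz you propose, and omitting it leaves every bound in the proposition with at least a superfluous $|\ln\varepsilon|$ (and in some entries $\varepsilon^{-1/4}$).

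A smaller point: since the paper reduces, without loss of generality, to the model case $h(x_1)=\kappa x_1^2$, $h_1(x_1)=\kappa_1 x_1^2$, the leading parity cancellation there is exact; the worry you raise about residues from the $O(|x_1|^3)$ correction in \eqref{h1-h1} and the $h$-versus-$h_1$ asymmetry is largely moot in the framework the paper sets up, where those corrections are absorbed into the same scheme as for $a_{\alpha\beta}$.
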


\begin{prop}\label{propv0}
Let ${\bf u}_0\in{C}^{2}(\Omega;\mathbb R^2),~p_0\in{C}^{1}(\Omega)$ be the solution to \eqref{equ_v3}. Then  in $\Omega_{R}$, 
(1)	if ${\boldsymbol{\varphi}}\in{\bf\Phi}_{3}$, then 
	\begin{align*}
	|\nabla {\bf u}_0(x)|\leq\begin{cases}
	\frac{C}{\sqrt{\delta(x_1)}},&\mbox{for}~ l_1=1,\\
	C,&\mbox{for}~ l_1\geq 2,\\
	\end{cases}
	\end{align*}
and
	\begin{align*}
		\|p_0-(q_0^3)_{\Omega_R}\|_{L^{\infty}(\Omega_{\delta/2}(x_{1}))}\leq\begin{cases}
		\frac{C}{\varepsilon^{3/2}},&\mbox{for}~ l_1=1,\\
			\frac{C}{\varepsilon},&\mbox{for}~ l_1\geq 2.
			\end{cases}
	\end{align*}
(2)	If ${\boldsymbol{\varphi}}\in{\bf\Phi}_{4}$, then 
\begin{align*}
	|\nabla {\bf u}_0(x)|\leq
	\begin{cases}
		\frac{C}{\delta(x_1)},&\mbox{for}~ l_2=1,\\
		\frac{C}{\sqrt{\delta(x_1)}},&\mbox{for}~ l_2=2,\\
		C,& \mbox{for}~l_2\geq 3,\\
	\end{cases}
\end{align*}
and
\begin{align*}
\|p_0-(q_0^4)_{\Omega_R}\|_{L^{\infty}(\Omega_{\delta/2}(x_{1}))}\leq
\begin{cases}
	\frac{C}{\varepsilon^{3/2}},\quad&\mbox{for}~l_2=1,2,\\
	\frac{C}{\varepsilon}, &\mbox{for}~l_2\geq 3.
\end{cases}
\end{align*}
\end{prop}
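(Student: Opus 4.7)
I would adapt the auxiliary-function strategy already used in the paper for $({\bf u}_\alpha,p_\alpha)$ (and developed in \cite{LX}) to the inhomogeneous problem \eqref{equ_v3}. The overarching idea is to introduce a divergence-free ansatz $\bar{\bf u}_0$ together with an associated scalar $\bar p_0$, both given by explicit formulas inside the neck $\Omega_R$ and smoothly cut off outside, that (a)~match the prescribed values of $({\bf u}_0,p_0)$ on $\partial D_1$ and $\partial D$, (b)~pointwise realize the singular behavior claimed in the proposition, and (c)~have a sufficiently tame Stokes residual $\nabla\cdot\sigma[\bar{\bf u}_0,\bar p_0]$ so that the remainder $(w,q):=({\bf u}_0-\bar{\bf u}_0,\,p_0-\bar p_0)$ can be treated by the dyadic energy iteration already used in the paper.

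\textbf{Construction of $\bar{\bf u}_0$ and the gradient bound.} For case (1), with ${\boldsymbol\varphi}=(x_1^{l_1},0)^{\mathrm T}$ on $\Gamma_{2R}$, I would set
\[ \bar u_0^{(1)}(x)=x_1^{l_1}\,\frac{\varepsilon+h_1(x_1)-x_2}{\delta(x_1)}+g(x_1)\,(x_2-h(x_1))(\varepsilon+h_1(x_1)-x_2), \]
and define $\bar u_0^{(2)}$ by integrating $\nabla\cdot\bar{\bf u}_0=0$ in $x_2$ from the upper boundary $x_2=\varepsilon+h_1(x_1)$ downward. The scalar $g(x_1)$ is determined by a first-order linear ODE that enforces $\bar u_0^{(2)}=0$ also on $x_2=h(x_1)$; this reduces to $(g\delta^3)'=\alpha(x_1)$ for an explicit $\alpha$ depending on $h,h_1$ and their derivatives, and is solvable in closed form. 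For case (2), with ${\boldsymbol\varphi}=(0,x_1^{l_2})^{\mathrm T}$, an analogous construction applies, but now $\bar u_0^{(1)}$ is taken as a pure Poiseuille profile vanishing on both boundaries, whose amplitude $k(x_1)$ satisfies $(k\delta^3)'=6x_1^{l_2}$, and $\bar u_0^{(2)}$ is reconstructed from incompressibility. Outside the neck I glue $\bar{\bf u}_0$ smoothly to a fixed $H^1$-harmonic extension of $\boldsymbol\varphi$ whose $W^{1,\infty}$ norm is controlled by $\|\boldsymbol\varphi\|_{C^{1,\alpha}(\partial D)}$. Differentiating the explicit formulas and using $\delta(x_1)\asymp\varepsilon+|x_1|^2$ together with the elementary bound $|x_1|^m/\delta(x_1)\leq C\,\delta(x_1)^{(m-2)/2}$ produces the pointwise gradient estimates listed in the proposition.

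\textbf{Construction of $\bar p_0$ and the pressure bound.} The Laplacian $\mu\Delta\bar{\bf u}_0$ inherits singular terms of the form $-2\mu g(x_1)$ (respectively $-2\mu k(x_1)$) from the Poiseuille correction through $\partial_{x_2}^2$, and bounded terms from the varying-gap $\partial_{x_1}^2$ contributions. I would choose $\bar p_0(x_1,x_2)$ as an antiderivative in $x_1$ of the leading singular part of $\mu\Delta\bar u_0^{(1)}$, up to a small $x_2$-dependent correction, so that $\nabla\bar p_0-\mu\Delta\bar{\bf u}_0$ becomes a bounded vector field whose $L^2$-norm is absorbable. Via the substitution $s=\sqrt{\varepsilon}\,t$, the worst $x_1$-integrands, of size $\varepsilon|s|/\delta^{3}$ for $l_1=1$ (case (1)) or $\varepsilon|s|^m/\delta^{3}$ with $m\in\{1,2\}$ (case (2)), integrate to quantities of order $\varepsilon^{-3/2}$; in the less singular cases the integrands are of order $\delta^{-1}$ and integrate to $O(\varepsilon^{-1})$. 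This exactly reproduces the $\|p_0-(q_0)_{\Omega_R}\|_{L^\infty(\Omega_{\delta/2}(x_1))}$ bounds stated in the proposition.

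\textbf{Remainder and main obstacle.} The pair $(w,q)$ solves the homogeneous Stokes system in $\Omega$ with right-hand side $-\nabla\cdot\sigma[\bar{\bf u}_0,\bar p_0]$ (bounded by construction) and zero Dirichlet data. Running the Caccioppoli-type dyadic iteration already employed in the paper over slices $\Omega_t(x_1)$ and then applying standard interior Stokes regularity yields $\|\nabla w\|_{L^\infty(\Omega_{\delta/2}(x_1))}+\|q-(q)_{\Omega_R}\|_{L^\infty(\Omega_{\delta/2}(x_1))}$ of strictly lower order than $|\nabla\bar{\bf u}_0(x)|+|\bar p_0(x)|$, which combined with the previous paragraph completes the proof. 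The step I expect to be hardest is the squeeze-flow regime $l_2=1$ in case (2): there the gradient attains its maximal blow-up rate $1/\delta$ and the Poiseuille amplitude $k$ is itself of size $|x_1|^2/\delta^{3}$, so the cancellation between $\mu\Delta\bar{\bf u}_0$ and $\nabla\bar p_0$ must be tracked with care to keep the residual source of $(w,q)$ small enough for the dyadic iteration to close.
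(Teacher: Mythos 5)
Your proposal follows essentially the same strategy as the paper — construct explicit divergence-free auxiliary functions in the neck, an associated pressure $\bar p_0$ killing the worst part of $\mu\Delta\bar{\bf u}_0$, then run the Caccioppoli-type energy iteration on the remainder — only with a different, lubrication-theory parametrization. Your shear-plus-Poiseuille ansatz with the flux-balance ODE $(g\delta^3)'=\alpha(x_1)$ (resp.\ $(k\delta^3)'=6x_1^{l_2}$) and the paper's $k(x)$-based formulas $\boldsymbol\varphi(\tfrac12-k(x))+(\cdots)(k^2(x)-\tfrac14)$ are equivalent up to the choice of the integration constant: since $k^2-\tfrac14=-(x_2-h)(\varepsilon+h_1-x_2)/\delta^2$, your Poiseuille amplitude and the paper's coefficient of $(k^2-\tfrac14)$ agree after unwinding; for instance in case ${\bf\Phi}_4$, $l_2\ge2$, both give $k(x_1)=\frac{6x_1^{l_2+1}}{(l_2+1)\delta^3}$, and for $l_2=1$ the paper picks the integration constant so that $k=(3x_1^2-\varepsilon/(\kappa_1-\kappa))/\delta^3$. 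Your ODE derivation is arguably more transparent, since the constraint $(g\delta^3)'=\alpha$ is exactly the cross-sectional flux balance, but you should specify the integration constant: for odd $l$ this does affect the construction (not the final bounds), and you should verify that the Leibniz boundary terms $x_1^{l_1}h'$ are included when deriving $\alpha$.

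Two imprecisions would need to be corrected in a full write-up. First, the Stokes residual ${\bf f}=\mu\Delta\bar{\bf u}_0-\nabla\bar p_0$ is \emph{not} bounded; the paper computes $|{\bf f}_0^3|\le C|x_1|/\delta^2$ for $l_1=1$ and $\le C/\delta$ for $l_1\ge2$ (and similarly $|{\bf f}_0^4|\le C|x_1|/\delta^2$ for $l_2=2$, $\le C/\delta$ otherwise). These are unbounded as $\varepsilon\to0$; the point is that they have precisely the size for which the global energy bound (via the polynomial-in-$x_2$ integration-by-parts trick of Lemma~\ref{lem_energyw13}) and the Caccioppoli iteration close, yielding $\int_{\Omega_\delta}|\nabla w|^2\le C\delta$ or $C\delta^2$. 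Claiming uniform boundedness would, if true, give a spuriously strong conclusion, and if assumed without checking would leave the iteration unjustified for the actual $|{\bf f}|$. Second, the remainder is not always of ``strictly lower order'' than the leading term: for $l_1=1$ (and $l_2=2$), $|\nabla{\bf v}_0|\le C|x_1|/\delta$ vanishes at $x_1=0$ while $\|\nabla w\|_{L^\infty}\sim\delta^{-1/2}$ there, so both contributions are of size $O(\delta^{-1/2})$ and it is only their \emph{sum} that satisfies the stated bound. Neither point is a fatal flaw in the plan, but both would need to be stated correctly for the argument to be rigorous.
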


Combining Theorem \ref{mainthm}, Proposition \ref{lemaQb} and Proposition \ref{propv0}, we immediately have 

\begin{theorem}\label{mainthm1}
Assume that $D,D_1,\Omega$, and $\varepsilon$ are defined as in Subsection \ref{Subsec1.2}, and ${\boldsymbol\varphi}\in{\bf\Phi}_{3}$. Let ${\bf u}\in H^1(D;\mathbb R^2)\cap C^1(\bar{\Omega};\mathbb R^2)$ and $p\in L^2(D)\cap C^0(\bar{\Omega})$ be the solution to \eqref{Stokessys}. Then for sufficiently small $0<\varepsilon<1$,  
	\begin{align*}
	&|\nabla{{\bf u}}(x)|\leq
	\frac{C}{\sqrt{\delta(x_1)}},\quad x\in \Omega_{R};
	\quad \inf_{c\in\mathbb{R}}\|p+c\|_{C^0(\bar{\Omega}_{R})}\leq\begin{cases}
		\frac{C}{\varepsilon},& l_1=1,\\
		\frac{C}{\varepsilon^{3/2}},& l_1\geq 2,
	\end{cases}
	\end{align*}
where $C$ is a universal constant.
	In particular,
	\begin{equation*}
	\|\nabla{{\bf u}}\|_{L^{\infty}(\Omega)}\leq 
	\frac{C}{\sqrt\varepsilon}.
	\end{equation*}
\end{theorem}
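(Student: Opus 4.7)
\textbf{Plan for the proof of Theorem \ref{mainthm1}.} The statement is an immediate corollary of the three previously listed results (Theorem \ref{mainthm}, Proposition \ref{lemaQb}(1), and Proposition \ref{propv0}(1)), specialised to $\boldsymbol\varphi\in{\bf\Phi}_{3}$. The plan is simply to plug the component bounds into the master inequalities of Theorem \ref{mainthm} and simplify, using only the elementary geometric inequality
\begin{equation*}
\delta(x_1)\ \geq\ \max\bigl\{\varepsilon,\ \tfrac{\kappa_0}{2} x_1^{2}\bigr\} \qquad\text{on }\Omega_R,
\end{equation*}
which is a direct consequence of \eqref{h1-h1}.

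For the gradient bound, I start from the first line of Theorem \ref{mainthm},
\begin{equation*}
|\nabla{\bf u}(x)|\ \leq\ \frac{C\sqrt\varepsilon}{\delta(x_1)}\Bigl(|Q_{1}[\boldsymbol\varphi]|+|Q_{3}[\boldsymbol\varphi]|+|x_1|\,|Q_{2}[\boldsymbol\varphi]|\Bigr)+|\nabla{\bf u}_{0}(x)|,
\end{equation*}
and substitute $|Q_{1}|,|Q_{3}|\leq C$ and $|Q_{2}|\leq C|\ln\varepsilon|$ (for $l_{1}=1$) or $|Q_{2}|\leq C$ (for $l_{1}\geq 2$) from Proposition \ref{lemaQb}(1), together with $|\nabla{\bf u}_{0}|\leq C/\sqrt{\delta(x_1)}$ (for $l_{1}=1$) or $|\nabla{\bf u}_{0}|\leq C$ (for $l_{1}\geq 2$) from Proposition \ref{propv0}(1). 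The factor $\sqrt\varepsilon/\delta(x_1)$ is absorbed by $1/\sqrt{\delta(x_1)}$ since $\varepsilon\leq\delta(x_1)$, and the potentially dangerous torque contribution $|x_1|\sqrt\varepsilon|\ln\varepsilon|/\delta(x_1)$ is tamed by the bound $|x_1|\leq C\sqrt{\delta(x_1)}$, which turns it into $C\sqrt\varepsilon|\ln\varepsilon|/\sqrt{\delta(x_1)}=o(1)/\sqrt{\delta(x_1)}$. Collecting terms then yields $|\nabla{\bf u}(x)|\leq C/\sqrt{\delta(x_1)}$ in $\Omega_{R}$.

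For the pressure, I substitute the same $Q_{\beta}$-bounds and the bound $\|p_{0}-(q_{0}^{3})_{\Omega_R}\|_{L^{\infty}(\Omega_{\delta/2}(x_1))}\leq C\varepsilon^{-3/2}$ (for $l_{1}=1$) or $\leq C\varepsilon^{-1}$ (for $l_{1}\geq 2$) from Proposition \ref{propv0}(1) into the second line of Theorem \ref{mainthm}; the three resulting summands $C\varepsilon^{-1}$, $C|Q_{2}|/\sqrt\varepsilon$, and $|p_{0}-(q_{0})_{\Omega_R}|$ are then controlled by their maximum, giving the stated $\inf_{c}\|p+c\|_{C^{0}(\bar\Omega_{R})}$ bound in each case. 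Finally, the global $L^{\infty}$ estimate $\|\nabla{\bf u}\|_{L^{\infty}(\Omega)}\leq C/\sqrt\varepsilon$ follows by specialising the pointwise bound to the worst case $\delta(x_1)=\varepsilon$ in $\Omega_{R}$ and using the exterior bound \eqref{outside}, noting that $\|\boldsymbol\varphi\|_{C^{1,\alpha}(\partial D)}\leq C$ for polynomial data in ${\bf\Phi}_{3}$.

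Since every constituent estimate is supplied by the three cited results, there is no genuine obstacle; the only delicate bookkeeping is making sure the logarithmic factor carried by $Q_{2}$ in the case $l_{1}=1$ is absorbed rather than propagating into the final gradient bound. This is precisely where the extra prefactor $|x_1|$ multiplying $Q_{2}$ in Theorem \ref{mainthm} is essential: it converts a would-be $|\ln\varepsilon|\sqrt\varepsilon/\delta(x_1)$ blow-up into something of size $o(1)\cdot\delta(x_1)^{-1/2}$, preserving the clean $1/\sqrt{\delta(x_1)}$ rate.
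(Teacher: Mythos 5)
Your approach is exactly the paper's: the author's proof of Theorem~\ref{mainthm1} consists solely of the one‑sentence remark that one substitutes the estimates of $Q_\beta[\boldsymbol\varphi]$ from Proposition~\ref{lemaQb}(1) and the estimates of $|\nabla{\bf u}_0|$, $p_0$ from Proposition~\ref{propv03} into Theorem~\ref{mainthm}. Your treatment of the gradient bound is careful and correct, including the observation that the torque contribution $|x_1|\,|Q_2|$ is the only delicate term and that $|x_1|\leq C\sqrt{\delta(x_1)}$ together with $\varepsilon\leq\delta(x_1)$ turns $\sqrt\varepsilon|\ln\varepsilon|\,|x_1|/\delta(x_1)$ into $o(1)/\sqrt{\delta(x_1)}$, and the passage to the global bound via $\delta\geq\varepsilon$ and \eqref{outside}.

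However, your assertion that the pressure substitution ``gives the stated bound in each case'' does not hold as written, and you should flag this rather than glide past it. For $l_1=1$, the three summands are $C/\varepsilon$, $C|\ln\varepsilon|/\sqrt\varepsilon$, and $C/\varepsilon^{3/2}$ (from Proposition~\ref{propv03}), so the maximum is $C/\varepsilon^{3/2}$; for $l_1\geq2$ the summands are $C/\varepsilon$, $C/\sqrt\varepsilon$, and $C/\varepsilon$, with maximum $C/\varepsilon$. The substitution therefore yields $C/\varepsilon^{3/2}$ for $l_1=1$ and $C/\varepsilon$ for $l_1\geq2$, which is the reverse of the two cases as printed in Theorem~\ref{mainthm1}. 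Either the case labels in the theorem statement are transposed (a typo, given that the $p_0$ term visibly dominates) or a further cancellation in $p$ is needed that the paper's one-line proof and your plan both omit; in either case the step ``then controlled by their maximum, giving the stated bound'' cannot be accepted without comment, since the maximum you compute is not what the statement asserts.
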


\begin{theorem}\label{mainthm2}
Assume that $D,D_1,\Omega$, and $\varepsilon$ are defined as in Subsection \ref{Subsec1.2}, and ${\boldsymbol\varphi}\in{\bf\Phi}_{4}$. Let ${\bf u}\in H^1(D;\mathbb R^2)\cap C^1(\bar{\Omega};\mathbb R^2)$ and $p\in L^2(D)\cap C^0(\bar{\Omega})$ be the solution to \eqref{Stokessys}. Then for sufficiently small $0<\varepsilon<1$, 
	\begin{align*}
	&|\nabla{{\bf u}}(x)|\leq
	\begin{cases}
	\frac{C}{\delta(x_1)},& l_2=1,\\
	\frac{C}{\sqrt{\delta(x_1)}},& l_2\geq 2,\\
	\end{cases}\quad x\in \Omega_{R},
	\end{align*}
and	
	\begin{align*}\inf_{c\in\mathbb{R}}\|p+c\|_{C^0(\bar{\Omega}_{R})}\leq
\begin{cases}
	\frac{C}{\varepsilon^{3/2}},\quad&l_2=1,2,\\
	\frac{C}{\varepsilon}, &l_2\geq 3,
\end{cases}
	\end{align*}
where $C$ is a universal constant.
	In particular,
	\begin{equation*}
	\|\nabla{{\bf u}}\|_{L^{\infty}(\Omega)}\leq 
	\begin{cases}
	\frac{C}{\varepsilon},&\quad l_2=1,\\
	\frac{C}{\sqrt\varepsilon},&\quad l_2\geq 2.
	\end{cases}
	\end{equation*}
\end{theorem}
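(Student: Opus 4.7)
The plan is a direct substitution: plug the $\boldsymbol\varphi$-dependent bounds furnished by Proposition \ref{lemaQb}(2) on $|Q_\beta[\boldsymbol\varphi]|$ and by Proposition \ref{propv0}(2) on $|\nabla{\bf u}_0|$ and $\|p_0-(q_0^4)_{\Omega_R}\|_{L^\infty(\Omega_{\delta/2}(x_1))}$ into the general pointwise upper bound of Theorem \ref{mainthm}, then keep the dominant term for each of the three ranges $l_2=1$, $l_2=2$, $l_2\geq 3$.

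First, I would record two elementary inequalities on $\Omega_R$ which reduce the $Q$-terms in Theorem \ref{mainthm} to a single $\delta(x_1)^{-1/2}$-type rate. Since $\delta(x_1)\geq \varepsilon$ and $\delta(x_1)\geq c(\varepsilon+|x_1|^2)$ by \eqref{h1-h1}, one has
\begin{equation*}
\frac{\sqrt\varepsilon}{\delta(x_1)}\leq \frac{1}{\sqrt{\delta(x_1)}},\qquad \frac{|x_1|\sqrt\varepsilon}{\delta(x_1)}\leq \frac{C}{\sqrt{\delta(x_1)}},
\end{equation*}
the second following from $|x_1|/\sqrt{\delta(x_1)}\leq C$ uniformly in $\Omega_R$. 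With these, each $Q_\beta$-contribution to the gradient bound of Theorem \ref{mainthm} is controlled by a constant multiple of $\delta(x_1)^{-1/2}$ times $|Q_\beta|$.

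For the gradient I argue case by case. When $l_2=1$, Proposition \ref{lemaQb}(2) gives $|Q_\beta|\leq C/\sqrt\varepsilon$ for all $\beta$, so the $Q$-terms contribute at most $C/(\sqrt{\varepsilon}\sqrt{\delta(x_1)})\leq C/\delta(x_1)$, which is of the same order as $|\nabla{\bf u}_0|\leq C/\delta(x_1)$ from Proposition \ref{propv0}(2); the total is $C/\delta(x_1)$. When $l_2=2$ one has $|Q_1|,|Q_3|\leq C$ and $|Q_2|\leq C|\ln\varepsilon|$; the $Q_2$-piece is dominated by $C\sqrt\varepsilon|\ln\varepsilon|/\sqrt{\delta(x_1)}\leq C/\sqrt{\delta(x_1)}$ for small $\varepsilon$, matching $|\nabla{\bf u}_0|\leq C/\sqrt{\delta(x_1)}$. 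When $l_2\geq 3$ everything is bounded and the same $C/\sqrt{\delta(x_1)}$ rate follows.

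The pressure bound is produced in exactly the same way from the second inequality of Theorem \ref{mainthm}: comparing $\varepsilon^{-1}(|Q_1|+|Q_3|)$, $\varepsilon^{-1/2}|Q_2|$, and the $L^\infty$-bound on $p_0-(q_0^4)_{\Omega_R}$, the maximum is $C\varepsilon^{-3/2}$ for $l_2=1,2$ and $C\varepsilon^{-1}$ for $l_2\geq 3$. Finally, the global $\|\nabla{\bf u}\|_{L^\infty(\Omega)}$ estimate follows by evaluating the pointwise bound at $x_1=0$, where $\delta(0)=\varepsilon$, and merging with \eqref{outside}, noting that $\|\boldsymbol\varphi\|_{C^{1,\alpha}(\partial D)}$ is $\varepsilon$-independent because $\boldsymbol\varphi|_{\Gamma_{2R}}$ is a fixed monomial. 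The main obstacle is purely bookkeeping: checking in each range of $l_2$ that the correct term dominates and that the logarithmic factor at $l_2=2$ is absorbed by $\sqrt\varepsilon$; all the substantial work is already hidden in Propositions \ref{lemaQb} and \ref{propv0}.
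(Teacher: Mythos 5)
Your proposal is correct and matches the paper's own proof, which in Section~\ref{seccoro2} consists of exactly this one-line substitution of Proposition~\ref{lemaQb}(2) and Proposition~\ref{propv04} (the $\Phi_4$-version of Proposition~\ref{propv0}) into Theorem~\ref{mainthm}. The two elementary reductions you record, $\sqrt\varepsilon/\delta(x_1)\leq 1/\sqrt{\delta(x_1)}$ and $|x_1|/\sqrt{\delta(x_1)}\leq C$, plus the absorption of $|\ln\varepsilon|$ by $\sqrt\varepsilon$ at $l_2=2$, are precisely the bookkeeping the paper leaves implicit; your case-checking for both the gradient and the pressure, and the $\delta(0)=\varepsilon$ evaluation for the global bound, are all as intended.
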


\subsection{Lower Bounds at the Midpoint of the Shortest Line}
From the above pointwise upper bounds obtained in Theorem \ref{mainthm0}, Theorem \ref{mainthm1} and Theorem \ref{mainthm2}, we find that the maximum of $|\nabla{\bf u}|$ may attain at the shortest line $\{|x_1|=0\}\cap\Omega$. In this subsection, we try to establish the lower bounds of $|\nabla{\bf u}|$ at the midpoint of the shortest line to show the optimality of these blow-up rate. To this aim, we need to consider the limiting case when $D_1$ touches $\partial D$ at the origin. Assume that $({\bf u}_\beta^*,p_\beta^*)$, $\beta=1,2,3$, verify
\begin{align}\label{defu*}
\begin{cases}
\nabla\cdot\sigma[{\bf u}_\beta^*,p_\beta^*]=0,\quad\nabla\cdot {\bf u}_\beta^*=0,&\mathrm{in}~\Omega^{0},\\
{\bf u}_\beta^*={\boldsymbol\psi}_{\beta},&\mathrm{on}~\partial{D}_{1}^{0}\setminus\{0\},\\
{\bf u}_\beta^*=0,&\mathrm{on}~\partial{D},
\end{cases}
\end{align} 
and $({\bf u}_0^*,p_0^*)$ is the unique solution of
\begin{align}\label{maineqn touch}
\begin{cases}
\nabla\cdot\sigma[{\bf u}_0^{*},p_0^{*}]=0,\quad\nabla\cdot {\bf u}_0^{ *}=0,\quad&\hbox{in}\ \Omega^{0},\\
{\bf u}_0^{*}=0,&\hbox{on}\ \partial D_{1}^{0},\\
{\bf u}_0^{*}={\boldsymbol\varphi},&\hbox{on}\ \partial{D}.
\end{cases}
\end{align}
We find that for different kinds of boundary data, the blow-up factor is different. 

First, for the locally constant boundary data case ${\boldsymbol\varphi}\in{\bf\Phi}_{1}$, one can see from Proposition \ref{lemaQb1} that the functionals $Q_{\beta}[{\boldsymbol\varphi}]$ defined in \eqref{defaij} tend to infinity as $\varepsilon\rightarrow0$, $\beta=1,3$. Because of this, we can not use their limits to be blow-up factors as in \cite{BJL} for Lam\'e system. To overcome this difficulty, we define a new linear bounded functional of ${\boldsymbol\varphi}$,
\begin{align*}
Q^*_{1,\beta}[{\boldsymbol\varphi}]=-\int_{\Omega^0} \left(2\mu e({\bf u}^*_{0}+{\bf u}^*_{1}),e({\bf u}^*_{\beta})\right)\mathrm{d}x,\quad\beta=1,2,3,
\end{align*}
which is the limit of $Q_{1,\beta}[{\boldsymbol\varphi}]:=Q_{\beta}[{\boldsymbol\varphi}]-a_{\beta 1},$ 
see Subsection \ref{subphi1}. 

Similarly, for ${\boldsymbol\varphi}\in{\bf\Phi}_{2}$, we define 
\begin{align*}
Q^*_{2,\beta}[{\boldsymbol\varphi}]=-\int_{\Omega^0} \left(2\mu e({\bf u}^*_{0}+{\bf u}^*_{2}),e({\bf u}^*_{\beta})\right)\mathrm{d}x,\quad\beta=1,2,3.
\end{align*}
These functionals $Q^*_{j,\beta}[{\boldsymbol\varphi}]$ are independent of $\varepsilon$, $j=1,2$. We will use them to define {\it blow-up factors}, which determines whether the blow-up occurs or not. 

Assume that the boundary data ${\boldsymbol\varphi}\in  C^{1,\alpha}(\partial D;\mathbb R^2)$ satisfies the following condition:
\begin{align*}
({\mathscr{H}}_1): ~Q_{1,1}^*[{\boldsymbol\varphi}]-(\kappa_1+\kappa) Q_{1,3}^*[{\boldsymbol\varphi}]\neq 0~\mbox{for~some}~{\boldsymbol\varphi}\in{\bf\Phi}_{1},\\
({\mathscr{H}}_2):~ Q_{2,1}^*[{\boldsymbol\varphi}]-(\kappa_1+\kappa)	 Q_{2,3}^*[{\boldsymbol\varphi}]\neq 0~\mbox{for~some}~{\boldsymbol\varphi}\in{\bf\Phi}_{2}.
\end{align*}
Then

\begin{theorem}\label{mainthm02}(Lower bounds)
Assume that $D,D_1,\Omega$, and $\varepsilon$ are defined as in Subsection \ref{Subsec1.2}, and ${\boldsymbol\varphi}\in{\bf\Phi}_{1}$ or  ${\boldsymbol\varphi}\in{\bf\Phi}_{2}$. Let ${\bf u}\in H^1(D;\mathbb R^2)\cap C^1(\bar{\Omega};\mathbb R^2)$ and $p\in L^2(D)\cap C^0(\bar{\Omega})$ be the solution to \eqref{Stokessys}. Then for sufficiently small $0<\varepsilon<1$, under assumption $({\mathscr{H}}_1)$ or $({\mathscr{H}}_2)$, the following lower bound estimate holds
\begin{align*}
|\nabla{\bf u}(0,\varepsilon/2)|\geq\frac{1}{C\sqrt{\varepsilon}}.
\end{align*} 
\end{theorem}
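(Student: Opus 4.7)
The plan is to exploit the decomposition \eqref{udecom}--\eqref{equ_v4} by isolating, in each of the two boundary-data cases, a combination of $\nabla{\bf u}_0$ with a single $\nabla{\bf u}_i$ which is uniformly bounded near the midpoint, leaving only the singular pieces $(C^1-1)\nabla{\bf u}_1+C^3\nabla{\bf u}_3$ (resp.\ $(C^2-1)\nabla{\bf u}_2+C^3\nabla{\bf u}_3$) whose leading profile at $(0,\varepsilon/2)$ I must quantify. For $\boldsymbol\varphi\in{\bf\Phi}_1$, write
\begin{align*}
\nabla{\bf u}=(C^1-1)\nabla{\bf u}_1+C^2\nabla{\bf u}_2+C^3\nabla{\bf u}_3+\nabla({\bf u}_0+{\bf u}_1).
\end{align*}
Because $\boldsymbol\varphi=(1,0)^{\mathrm T}$ on $\Gamma_{2R}$ and ${\bf u}_1=\boldsymbol\psi_1$ on $\partial D_1$, the shifted field ${\bf u}_0+{\bf u}_1-(1,0)^{\mathrm T}$ solves the Stokes system in $\Omega$ with vanishing Dirichlet data on $\partial D_1\cup\Gamma_{2R}$. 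A localised ADN/Solonnikov boundary Schauder estimate, together with an $H^1$ energy bound away from $\Omega_{R/2}$, gives $|\nabla({\bf u}_0+{\bf u}_1)(0,\varepsilon/2)|\le C$ uniformly in $\varepsilon$. The ${\bf\Phi}_2$ case is identical after replacing ${\bf u}_1$ by ${\bf u}_2$ and $(1,0)^{\mathrm T}$ by $(0,1)^{\mathrm T}$.

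Next I refine the free constants. Substituting $\tilde C^1:=C^1-1$ into \eqref{ce} turns the right-hand side into $Q_{1,\beta}[\boldsymbol\varphi]:=Q_\beta[\boldsymbol\varphi]-a_{\beta 1}$, which by the limiting argument outlined in Subsection \ref{subphi1} converges as $\varepsilon\to 0$ to the finite constant $Q_{1,\beta}^*[\boldsymbol\varphi]$. Using the leading-order asymptotics of $a_{\alpha\beta}$ from Lemma \ref{lema114D} (with $a_{11},a_{33}\to\infty$, $a_{22}$ more singular, and the crude sizes $|\tilde C^1|,|C^3|=O(\sqrt\varepsilon)$ and $|C^2|=O(\varepsilon^{3/2})$ from Proposition \ref{propu18}), the $C^2$-column decouples at the leading order and the effective $2\times 2$ block in $(\alpha,\beta)\in\{1,3\}^2$ inverts to yield
\begin{align*}
\tilde C^1=\sqrt\varepsilon\bigl(A_{11}Q_{1,1}^*[\boldsymbol\varphi]+A_{13}Q_{1,3}^*[\boldsymbol\varphi]\bigr)+o(\sqrt\varepsilon),\\
C^3=\sqrt\varepsilon\bigl(A_{31}Q_{1,1}^*[\boldsymbol\varphi]+A_{33}Q_{1,3}^*[\boldsymbol\varphi]\bigr)+o(\sqrt\varepsilon),
\end{align*}
with explicit universal constants $A_{ij}$ built from the leading coefficients of $a_{\alpha\beta}$.

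From the auxiliary divergence-free functions $\bar{\bf u}_\alpha$ constructed in the proof of Theorem \ref{mainthm0}, at the midpoint $\nabla{\bf u}_\alpha(0,\varepsilon/2)=\nabla\bar{\bf u}_\alpha(0,\varepsilon/2)+O(1)$ for $\alpha=1,3$, where the explicit parts are of order $\varepsilon^{-1}$ with coefficients determined by the local quadratic expansions of $h_1$ and $h$. Inserting this together with the previous step into the decomposition produces singular terms of order $\sqrt\varepsilon\cdot\varepsilon^{-1}=\varepsilon^{-1/2}$ which combine into
\begin{align*}
\nabla{\bf u}(0,\varepsilon/2)=\frac{Q_{1,1}^*[\boldsymbol\varphi]-(\kappa_1+\kappa)Q_{1,3}^*[\boldsymbol\varphi]}{c\sqrt\varepsilon}\,\rM+O(1)
\end{align*}
for a fixed nonzero matrix $\rM$ and a universal constant $c$. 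The algebraic coincidence that the coefficient collapses to exactly this linear combination is the reason hypothesis $(\mathscr H_1)$ takes the form it does; under $(\mathscr H_1)$ the lower bound $|\nabla{\bf u}(0,\varepsilon/2)|\ge 1/(C\sqrt\varepsilon)$ follows. The ${\bf\Phi}_2$ case is handled in parallel, with $Q_{1,\beta}^*$ replaced by $Q_{2,\beta}^*$ and $(\mathscr H_2)$ in place of $(\mathscr H_1)$.

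The main obstacle is the refined solve in the second paragraph: the $3\times 3$ matrix $(a_{\alpha\beta})$ is nearly singular since $a_{11},a_{33}$ both blow up at the same rate as the off-diagonal coupling $a_{13}$, and the determinant of the effective $2\times 2$ block is a delicate cancellation whose leading term must be computed exactly. One must go beyond the crude $O(\sqrt\varepsilon)$ bounds of Proposition \ref{propu18} and match sub-leading terms in the expansions of $a_{\alpha\beta}$ against the sub-leading profiles of $\nabla\bar{\bf u}_\alpha(0,\varepsilon/2)$, tracking how the asymmetry of $h_1$ and $h$ propagates through the inversion. Only with this level of precision does the surviving coefficient simplify to $Q_{1,1}^*[\boldsymbol\varphi]-(\kappa_1+\kappa)Q_{1,3}^*[\boldsymbol\varphi]$, thereby justifying the nondegeneracy condition $(\mathscr H_1)$.
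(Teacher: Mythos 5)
Your overall plan — isolate $(C^1-1)\nabla{\bf u}_1$ in the decomposition, show the residual $\nabla({\bf u}_0+{\bf u}_1)$ is bounded at the midpoint, and obtain the leading $\sqrt\varepsilon$-order asymptotics of $C^1-1$ by Cramer's rule — is the paper's approach. But two of your steps contain genuine gaps.

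First, your justification for $|\nabla({\bf u}_0+{\bf u}_1)(0,\varepsilon/2)|\le C$ does not hold as stated. The shifted field ${\bf u}_0+{\bf u}_1-(1,0)^{\mathrm T}$ does vanish on $\partial D_1\cup\Gamma_{2R}$ and solves the homogeneous Stokes system, but an ADN/Solonnikov boundary Schauder estimate applied near $(0,\varepsilon/2)$ carries a constant depending on the local geometry, and the neck $\Omega_{\varepsilon}(0)$ degenerates as $\varepsilon\to0$. If you rescale that neck to unit size, the Schauder estimate only returns $|\nabla(\cdot)(0,\varepsilon/2)|\le C\varepsilon^{-1}\Bigl(\int_{\Omega_\varepsilon(0)}|\nabla(\cdot)|^2\Bigr)^{1/2}$; a global $H^1$ bound together with information ``away from $\Omega_{R/2}$'' gives no smallness of the local energy near the origin, so you only get $O(\varepsilon^{-1})$, not $O(1)$. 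What the paper actually uses is the exact cancellation of the explicit profiles, ${\bf v}_1+{\bf v}_0^1=(1,0)^{\mathrm T}$ in $\Omega_{2R}$ (so $\nabla({\bf v}_1+{\bf v}_0^1)\equiv0$ and in particular $\partial_{x_2}{\bf v}_1^{(1)}(0,\varepsilon/2)=1/\varepsilon=-\partial_{x_2}({\bf v}_0^1)^{(1)}(0,\varepsilon/2)$), combined with the local energy estimate $\int_{\Omega_\delta(z_1)}|\nabla({\bf w}_1+{\bf w}_0)|^2\le C\delta^2(z_1)$ from the Caccioppoli-type iteration (Lemma \ref{lem3.1}), which is precisely the ingredient your argument is missing.

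Second, your closing paragraph reads the matrix inversion backwards. The $2\times2$ block $\bigl(\begin{smallmatrix}a_{11}&a_{13}\\a_{31}&a_{33}\end{smallmatrix}\bigr)$ has all four entries of size $\varepsilon^{-1/2}$, but plugging in the leading coefficients from Lemma \ref{lema114D} and simplifying gives $a_{11}a_{33}-a_{13}^2=\frac{(\mu\pi)^2}{3(\kappa_1-\kappa)^3}\bigl(3+\frac{\kappa}{\kappa_1-\kappa}\bigr)\varepsilon^{-1}+O(\varepsilon^{-1/2})$, a strictly positive leading coefficient. There is no delicate near-cancellation to resolve, and no need to ``go beyond'' Lemma \ref{lema114D} or match sub-leading expansions of $a_{\alpha\beta}$ against sub-leading profiles of the auxiliary functions: the leading-order Cramer computation, together with $\mathrm{cof}(\mathbb A)_{31}/\mathrm{cof}(\mathbb A)_{11}\to-(\kappa_1+\kappa)$ and $|\mathrm{cof}(\mathbb A)_{21}|\le C\varepsilon^{-1/2}$, already yields $C^1-1=\frac{(\kappa_1-\kappa)^{1/2}}{\mu\pi}\bigl(Q_{1,1}[\boldsymbol\varphi]-(\kappa_1+\kappa)Q_{1,3}[\boldsymbol\varphi]\bigr)\sqrt\varepsilon+O(\varepsilon)$. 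Relatedly, your picture of the midpoint is heavier than needed: the paper lower-bounds $|\nabla{\bf u}(0,\varepsilon/2)|$ by the single entry $\partial_{x_2}{\bf u}^{(1)}(0,\varepsilon/2)$ and observes (using $k(0,\varepsilon/2)=0$) that $\partial_{x_2}{\bf v}_3^{(1)}(0,\varepsilon/2)=2$ and $\partial_{x_2}{\bf v}_2^{(1)}(0,\varepsilon/2)=0$ are bounded, so $C^2,C^3$ contribute only $O(\sqrt\varepsilon)$ to that entry; no matrix $\rM$ needs to be assembled and no ``combination'' of $\nabla{\bf u}_1$ with $\nabla{\bf u}_3$ at order $\varepsilon^{-1/2}$ occurs. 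Finally, you assume the convergence $Q_{1,\beta}[\boldsymbol\varphi]\to Q^*_{1,\beta}[\boldsymbol\varphi]$; in the paper this is Lemma \ref{lemmatildeQ}, proved with rate $\varepsilon^{1/8}$ via the maximum-modulus estimate, interpolation, and a rescaling argument near the touching point, and it is a necessary step rather than a black box.
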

	
While, for the polynomial boundary data case ${\boldsymbol\varphi}\in{\bf\Phi}_{i}$, $i=3,4$, Proposition \ref{lemaQb} implies that all of the functionls $Q_{\beta}[{\boldsymbol\varphi}]$, $\beta=1,2,3$, are bounded when $l_1\geq2$ and $l_2\geq3$. This allows us to prove that as $\varepsilon\rightarrow0$, $Q_{\beta}[{\boldsymbol\varphi}]$ is convergent to the functional 
\begin{align*}
Q^*_{\beta}[{\boldsymbol\varphi}]:=\int_{\partial D_{1}^{0}}{\boldsymbol\psi}_\beta\cdot\sigma[{\bf u}_0^{*},p_0^{ *}]\nu,\quad\beta=1,2,3,
\end{align*}
and use them to define the blow-up factor as before, see Lemma \ref{zydyl} below. Assume that the boundary data ${\boldsymbol\varphi}\in  C^{1,\alpha}(\partial D;\mathbb R^2)$ satisfies the following condition:
	\begin{align*}
	({\mathscr{H}_3}): ~ Q_{1}^*[{\boldsymbol\varphi}]-(\kappa_1+\kappa)Q_{3}^*[{\boldsymbol\varphi}]\neq 0~\mbox{for~some}~{\boldsymbol\varphi}\in{\bf\Phi}_{i}, i=3,4.
	\end{align*}
Thus, for $l_1\geq 2$, and $l_2\geq 3$, we have the following lower bound of $|\nabla{\bf u}|$:
	
\begin{theorem}\label{mainthmlower}(Lower Bounds)
Assume that $D_1,D,\Omega$, and $\varepsilon$ are defined as above, and ${\boldsymbol\varphi}\in C^{1,\alpha}(\partial D;\mathbb R^2)$ for some $0<\alpha<1$. Let ${\bf u}\in H^1(D;\mathbb R^2)\cap C^1(\bar{\Omega};\mathbb R^2)$ and $p\in L^2(D)\cap C^0(\bar{\Omega})$ be a solution to \eqref{Stokessys}. Then there exists  a sufficiently small constant $\varepsilon_0>0$, such that for $0<\varepsilon<\varepsilon_0$, under the assumption $({\mathscr{H}_3})$, we have 
\begin{align*}
|\nabla{\bf u}(0,\varepsilon/2)|\geq\frac{1}{C\sqrt{\varepsilon}}.
\end{align*} 
\end{theorem}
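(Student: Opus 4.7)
The plan is to exploit the decomposition \eqref{udecom}--\eqref{equ_v4} evaluated at the midpoint $(0,\varepsilon/2)$, and to isolate the single non-cancelling leading singular term using the limit functionals $Q_\beta^*[\boldsymbol\varphi]$. By Proposition~\ref{propv0} (cases $l_1\geq 2$ and $l_2\geq 3$) the contribution $|\nabla{\bf u}_0(0,\varepsilon/2)|$ is bounded by a universal constant, so any lower bound of order $\varepsilon^{-1/2}$ must come from $\sum_\alpha C^\alpha \nabla{\bf u}_\alpha(0,\varepsilon/2)$. Thus it suffices to sharply track the asymptotics of each $C^\alpha$ and of $\partial_2 u_\alpha^{(1)}(0,\varepsilon/2)$.

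First I would solve the linear system \eqref{ce} asymptotically. Lemma~\ref{lema114D} supplies the leading behaviour of $(a_{\alpha\beta})$: $a_{11},a_{33},a_{13}\sim \varepsilon^{-1/2}$, $a_{22}\sim\varepsilon^{-3/2}$, and the mixed entries $a_{12},a_{23}$ are of lower order. By Proposition~\ref{lemaQb} each $Q_\beta[\boldsymbol\varphi]$ is bounded in the regime $l_1\geq 2$, $l_2\geq 3$; a standard $H^1$-compactness argument applied to $({\bf u}_0,p_0)$ in the touching limit domain $\Omega^0$ (together with the Proposition~\ref{propv0} $L^\infty$ control on $p_0$) then upgrades this to the convergence $Q_\beta[\boldsymbol\varphi]\to Q_\beta^*[\boldsymbol\varphi]$ as $\varepsilon\to 0$. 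Applying Cramer's rule to the $3\times 3$ system then yields
\begin{align*}
C^1 &= \sqrt\varepsilon\,\bigl(b_{11}\,Q_1^*[\boldsymbol\varphi]+b_{13}\,Q_3^*[\boldsymbol\varphi]\bigr)+o(\sqrt\varepsilon),\\
C^3 &= \sqrt\varepsilon\,\bigl(b_{31}\,Q_1^*[\boldsymbol\varphi]+b_{33}\,Q_3^*[\boldsymbol\varphi]\bigr)+o(\sqrt\varepsilon),\qquad C^2 = O(\varepsilon^{3/2}),
\end{align*}
with universal coefficients $b_{ij}$ read off from the leading terms of $(a_{\alpha\beta})$.

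Second, I would pass to the pointwise leading behaviour of $\partial_2 u_\alpha^{(1)}(0,\varepsilon/2)$ using the divergence-free auxiliary functions $\bar{\bf u}_\alpha$ constructed earlier in the paper, which capture the main singular part of ${\bf u}_\alpha$ in $\Omega_R$. Each of $\partial_2 u_1^{(1)}$ and $\partial_2 u_3^{(1)}$ is of order $\varepsilon^{-1}$ at the midpoint with computable coefficients; because ${\bf u}_3={\boldsymbol\psi}_3=(x_2,-x_1)^{\mathrm T}$ agrees on $\partial D_1$ near the origin with $\kappa_1{\boldsymbol\psi}_1+O(|x_1|)$ (the geometric constant $\kappa_1$ arising from the position of $D_1$), while a symmetric correction of size $\kappa$ comes from the curvature data \eqref{h1-h1}, the leading auxiliary-function coefficients are related by a factor $-(\kappa_1+\kappa)$. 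Substituting the asymptotics of $C^1,C^3$ and exploiting smallness of $C^2$ produces
\begin{align*}
\partial_2 u^{(1)}(0,\varepsilon/2)=\frac{A}{\sqrt\varepsilon}\Bigl(Q_1^*[\boldsymbol\varphi]-(\kappa_1+\kappa)\,Q_3^*[\boldsymbol\varphi]\Bigr)+o(\varepsilon^{-1/2})
\end{align*}
for a nonzero universal constant $A$. Under hypothesis $({\mathscr H}_3)$ the bracket is nonzero, so for $\varepsilon<\varepsilon_0$ the error is absorbed and $|\nabla{\bf u}(0,\varepsilon/2)|\geq (C\sqrt\varepsilon)^{-1}$.

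The principal obstacle is the bookkeeping in this last step: one must verify that after the Cramer's-rule inversion of the singular matrix $(a_{\alpha\beta})$ and substitution into the auxiliary-function expansions, the combination that actually survives in $\partial_2 u^{(1)}(0,\varepsilon/2)$ is precisely $Q_1^*-(\kappa_1+\kappa)Q_3^*$, rather than some other linear combination that could be cancelled by the structure of the matrix. A secondary technical point is the uniform convergence $Q_\beta[\boldsymbol\varphi]\to Q_\beta^*[\boldsymbol\varphi]$, which must be established carefully because $({\bf u}_0,p_0)$ lives on an $\varepsilon$-dependent domain whose geometry degenerates near the origin.
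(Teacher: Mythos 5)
Your overall strategy matches the paper's: expand $\nabla{\bf u}(0,\varepsilon/2)=\sum_\alpha C^\alpha\nabla{\bf u}_\alpha+\nabla{\bf u}_0$, use Cramer's rule on the singular matrix $(a_{\alpha\beta})$ to get $C^\alpha\sim\sqrt\varepsilon$, and show that the leading contribution comes from $C^1\partial_{x_2}{\bf u}_1^{(1)}$, with the blow-up factor $Q_1^*-(\kappa_1+\kappa)Q_3^*$ emerging from the cofactor structure. But there are three substantive problems.

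First, your claim that ``each of $\partial_2 u_1^{(1)}$ and $\partial_2 u_3^{(1)}$ is of order $\varepsilon^{-1}$ at the midpoint'' is false. Since $k(0,\varepsilon/2)=0$, formula \eqref{estv3112} gives $\partial_{x_2}{\bf v}_3^{(1)}(0,\varepsilon/2)=2$, not $\varepsilon^{-1}$: the singular term in $\partial_{x_2}{\bf v}_3^{(1)}$ carries a factor $k(x)$ which vanishes at the midpoint. Only $\partial_{x_2}{\bf u}_1^{(1)}(0,\varepsilon/2)\sim\varepsilon^{-1}$ survives. This fact is essential for the proof, because it is what makes the $C^3\partial_{x_2}{\bf u}_3^{(1)}$ term a subleading $O(\sqrt\varepsilon)$ correction rather than a potential cancellation of the same order as $C^1\partial_{x_2}{\bf u}_1^{(1)}$; your argument treats them as the same order and tries to combine them, which would force you to track exact coefficients rather than simply discard the $\alpha=3$ term.

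Second, and relatedly, your explanation of the $(\kappa_1+\kappa)$ factor — that $\boldsymbol\psi_3$ ``agrees on $\partial D_1$ near the origin with $\kappa_1\boldsymbol\psi_1+O(|x_1|)$'' — is incorrect: on $\partial D_1$ near the origin, $\boldsymbol\psi_3=(\varepsilon+\kappa_1 x_1^2,-x_1)^{\mathrm T}$, whose first component is $O(\varepsilon)+O(x_1^2)$, not $\kappa_1+O(|x_1|)$. In the paper the factor $(\kappa_1+\kappa)$ enters through the cofactor ratio $\mbox{cof}(\mathbb A)_{31}/\mbox{cof}(\mathbb A)_{11}\to-(\kappa_1+\kappa)$, which in turn comes from $a_{13}/a_{33}\to\kappa_1+\kappa$ in Lemma \ref{lema114D}. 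That ratio arises because the divergence-free correction in ${\bf v}_1$ \eqref{v1alpha} carries an explicit prefactor $(\kappa_1+\kappa)$, fixed by the incompressibility constraint together with the boundary shapes $h_1,h$, whereas the analogous correction in ${\bf v}_3$ \eqref{v13} does not.

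Third, the convergence $Q_\beta[\boldsymbol\varphi]\to Q_\beta^*[\boldsymbol\varphi]$ is not a matter of ``a standard $H^1$-compactness argument.'' The domain $\Omega$ degenerates as $\varepsilon\to0$ precisely where the integrands are most singular, so naive weak compactness neither supplies a rate nor straightforwardly controls the near-origin contribution. The paper proves a quantitative estimate $|Q_\beta[\boldsymbol\varphi]-Q_\beta^*[\boldsymbol\varphi]|\leq C\varepsilon^{1/8}$ (Lemma \ref{zydyl}, in the spirit of Lemma \ref{lemmatildeQ}) by rewriting the functionals as boundary integrals over $\partial D$, splitting near and far from the touching point, controlling the near part directly via the auxiliary-function bounds of Proposition \ref{propu11}, and comparing $({\bf u}_1,p_1)$ to the touching-limit solution $({\bf u}_1^*,p_1^*)$ via the maximum modulus estimate (Lemma \ref{lemmaxi}), interpolation, and rescaled interior estimates (Lemma \ref{lem difference v11}). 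You correctly flag this as a technical obstacle, but proposing compactness as the mechanism understates the work involved; a quantitative rate is in fact needed so that the error in \eqref{estC1} can be absorbed into the $O(\varepsilon^2)$ remainder and the hypothesis $({\mathscr H}_3)$ can be transferred from the limit functional to $Q_\beta[\boldsymbol\varphi]$ for small $\varepsilon$.
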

	
\begin{remark}
Theorem \ref{mainthm02} and Theorem \ref{mainthmlower}, together with Theorem \ref{mainthm0}, Theorem \ref{mainthm1} and Theorem\ref{mainthm2}, imply that if ${\boldsymbol\varphi}$ satisfies one of the three conditions $({\mathscr{H}}_1)$, $({\mathscr{H}}_2)$ or $({\mathscr{H}_3})$, then the blow-up rate $\varepsilon^{-1/2}$ is optimal  in dimension two. 
\end{remark}

The rest of this paper is organized as follows. As mentioned before, from the decomposition of ${\bf u}$ and $p$ in \eqref{udecom}, we reduce the estimates of $\nabla{\bf u}$ and $p$ to the estimates of $\nabla{\bf u}_\alpha,\nabla{\bf u}_0$, $p_\alpha,p_0$, and the estimates of $C^\alpha$, $\alpha=1,2,3$.  For reader's convenience, we give some basic results about Stokes system in Section \ref{pre}. In Section \ref{estualpha}, we first construct the divergence free auxiliary functions ${\bf v}_\alpha$ and the corresponding associators $\bar p_\alpha$, $\alpha=1,2,3$, calculate their explicit pointwise estimates of the first and second-order derivatives, and then apply the energy iteration technique to prove that $\nabla{\bf v}_\alpha$ are major singular terms of $\nabla{\bf u}_\alpha$ and to obtain the estimates of $|\nabla \bf u_\alpha|$ and $p_\alpha$, respectively in three subsections.  Based on these estimates for $|\nabla \bf u_\alpha|$ and $p_\alpha$, in Section \ref{estCalpha} we establish the asymptotics and estimates of $a_{\alpha\beta}$, and show the matrix $(a_{\alpha\beta})_{3\times3}$ is invertible. In order to solve $C^{\alpha}$ and investigate the effect from the boundary data ${\boldsymbol\varphi}$, we divide into two cases. In Section \ref{estu0}, we consider the locally constant ${\boldsymbol\varphi}$, that is, ${\boldsymbol\varphi}\in{\bf\Phi}_i$, $i=1,2$, establish the estimates of $(\nabla{\bf u}_0,p_{0})$ and $Q_\beta[{\boldsymbol\varphi}]$, and then complete the proof of Theorem \ref{mainthm0}. In Section \ref{sec5}, we consider the locally polynomial ${\boldsymbol\varphi}$, that is, ${\boldsymbol\varphi}\in{\bf\Phi}_i$, $i=3,4$, and prove Theorem \ref{mainthm1} and Theorem \ref{mainthm2}. Finally, in Section \ref{sec6} we give the lower bounds to show the optimality of the blow-up rates obtained above and prove Theorem \ref{mainthm02} and Theorem \ref{mainthmlower}.

\section{Preliminaries}\label{pre}
In this section, we give some basic results about Stokes system. The first one is the  well-known maximum modulus estimate in a Lipschitz domain. We refer to the book of Ladyzhenskaya \cite{Lady} about the proof.

\begin{lemma}\label{lemmaxi}
Let $\Omega\subset\mathbb R^2$ be a bounded Lipschitz domain. Suppose that $({\bf u},p)$ is the solution of the Stokes problem
\begin{align*}
\begin{cases}
\mu\Delta{\bf u}=\nabla p,\quad\nabla\cdot{\bf u}=0,&\quad\mbox{in}~\Omega,\\
{\bf u}={\bf g},&\quad\mbox{on}~\partial\Omega,
\end{cases}
\end{align*}
where ${\bf g}\in C^0(\partial\Omega)$ is given. Then 
\begin{equation*}
\|{\bf u}\|_{L^\infty(\Omega)}\leq C\|{\bf u}\|_{L^\infty(\partial\Omega)},
\end{equation*}
where $C>0$ is a constant depending on $\Omega$. 
\end{lemma}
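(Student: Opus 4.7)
\textbf{Proof plan for Lemma \ref{lemmaxi}.}

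The plan is to combine an integral representation for the velocity with a uniform $L^1$ bound on the Poisson kernel for the Stokes system, avoiding any direct appeal to a maximum principle (which fails for elliptic systems in general). First I would construct the Green's matrix $(G(x,y),\pi(x,y))$ for the Stokes problem in $\Omega$: for each $y\in\Omega$, the columns of $G(\cdot,y)$ solve the Stokes system with a Dirac source at $y$ and vanish on $\partial\Omega$. Differentiating $G$ in the conormal direction at $y$ produces the Poisson kernel $\mathcal P(x,y)$, and Green's identity for the Stokes system applied to $({\bf u},p)$ and the columns of $(G,\pi)$ yields the representation
$${\bf u}(x)=\int_{\partial\Omega}\mathcal P(x,y)\,{\bf g}(y)\,d\sigma(y),\quad x\in\Omega,$$
for sufficiently smooth data; the continuous case follows by approximation.

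The key estimate to establish is
$$\sup_{x\in\Omega}\int_{\partial\Omega}|\mathcal P(x,y)|\,d\sigma(y)\leq C,$$
with $C$ depending only on $\Omega$. A crucial normalization comes from testing the representation against any constant datum ${\bf g}(y)\equiv {\bf v}$: since constant vector fields solve the Stokes system with zero pressure, uniqueness forces $\int_{\partial\Omega}\mathcal P(x,y)\,d\sigma(y)=I$ for every $x\in\Omega$, which pins down the total mass of the kernel. Upgrading this to an absolute-value bound is the substantive point, and on a Lipschitz domain it is exactly the content of the layer-potential theory for the Stokes system developed by Fabes--Kenig--Verchota and co-workers: the $L^p$ Dirichlet problem is solvable for $2-\delta<p\leq\infty$ with nontangential maximal function estimate $\|({\bf u})^\ast\|_{L^p(\partial\Omega)}\leq C\|{\bf g}\|_{L^p(\partial\Omega)}$, whose $p=\infty$ endpoint is equivalent to the kernel bound above. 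Once this is in hand, the conclusion follows by
$$|{\bf u}(x)|\leq\int_{\partial\Omega}|\mathcal P(x,y)|\,|{\bf g}(y)|\,d\sigma(y)\leq C\,\|{\bf g}\|_{L^\infty(\partial\Omega)}.$$

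The main obstacle is precisely the absence of a scalar maximum principle for systems, so the Laplacian-style argument is unavailable; the proof must go through the quantitative layer-potential theory on Lipschitz domains. The technical heart is the uniform $L^1$-boundedness (in $x$) of the Poisson kernel, or equivalently the continuity of the solution operator $L^\infty(\partial\Omega)\to L^\infty(\Omega)$, for which Lipschitz regularity forces one to use singular-integral estimates on non-smooth boundaries rather than the classical Calder\'on--Zygmund machinery for $C^2$ domains. In the $C^1$ or smooth case one can alternatively represent via double-layer potentials and use the jump relations directly, which is the route taken in Ladyzhenskaya's treatment.
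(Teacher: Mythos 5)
The paper does not actually prove this lemma: it simply records it as a well-known result and refers the reader to Ladyzhenskaya's book. Your proposal is therefore not in competition with a proof in the paper; it is a genuine (and correct in outline) route to establishing the estimate. Two remarks. First, you rightly flag that the substantive step is upgrading the signed-mass normalization $\int_{\partial\Omega}\mathcal P(x,y)\,d\sigma(y)=I$ (which is free from uniqueness applied to constant velocities and zero pressure) to the absolute-value bound $\int_{\partial\Omega}|\mathcal P(x,y)|\,d\sigma(y)\leq C$; for the scalar Laplacian this is automatic because the Poisson kernel is nonnegative, but for the Stokes system the kernel is a matrix with no sign, and one must invoke the quantitative $L^\infty$ (or nontangential maximal function) estimates from the layer-potential theory on Lipschitz domains (Fabes--Kenig--Verchota, Shen), exactly as you say. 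Second, it is worth noting that Ladyzhenskaya's book handles smooth domains by double-layer potentials and jump relations, so strictly speaking the paper's citation does not cover the Lipschitz statement as written; your sketch, by contrast, explicitly accounts for the Lipschitz case. The only caution I would add is that the $p=\infty$ endpoint of the nontangential estimate is not a formal consequence of $L^p$ solvability for $p$ finite and requires a bit of extra work (interior regularity plus a covering argument, or a direct kernel estimate), so if you were to flesh this out you should be precise about which Lipschitz-domain Stokes result you are quoting and whether it is stated at $p=\infty$.
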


Next we  give the following $L^{p}$ estimate for Stokes flow in a bounded domain, with partially vanishing boundary data, which can be found in \cite[Theorem IV.5.1]{GaldiBook}.

\begin{theorem}\label{thmWmq}
	Let $\Omega$ be an arbitrary domain in $\mathbb R^d$, $d\geq2$, with a boundary portion $\sigma$ of class $C^{m+2}$, $m\geq0$. Let $\Omega_0$ be any bounded subdomain of $\Omega$ with $\partial\Omega_0\cap\partial\Omega=\sigma$. Further, let
	\begin{align*}
	{\bf u}\in W^{1,q}(\Omega_0), \quad p\in L^q(\Omega_0),\quad 1<q<\infty,
	\end{align*}
	be such that
	\begin{align*}
	(\nabla{\bf u},\nabla{\boldsymbol\psi})&=-\langle{\bf f},{\boldsymbol\psi}\rangle+(p,\nabla\cdot{\boldsymbol\psi}),\quad\mbox{for~all}~{\boldsymbol\psi}\in C_0^\infty(\Omega_0),\\
	({\bf u},\nabla\varphi)&=0,\quad \mbox{for~all}~\varphi\in C_0^\infty(\Omega_0),\\
	{\bf u}&=0,\quad \mbox{at}~\sigma.
	\end{align*}
	Then, if ${\bf f}\in W^{m,q}(\Omega_0)$, we have
	$${\bf u}\in W^{m+2,q}(\Omega'),\quad p\in W^{m+1,q}(\Omega'),$$
	for any $\Omega'$ satisfying 
	\begin{enumerate}
		\item $\Omega'\subset\Omega$,
		\item $\partial\Omega'\cap\partial\Omega$ is a strictly interior subregion of $\sigma$.
	\end{enumerate}
	Finally, the following estimate holds
	\begin{align*}
	\|{\bf u}\|_{W^{m+2,q}(\Omega')}+\|p\|_{W^{m+1,q}(\Omega')}\leq C\left(\|{\bf f}\|_{W^{m,q}(\Omega_0)}+\|{\bf u}\|_{W^{1,q}(\Omega_0)}+\|p\|_{L^{q}(\Omega_0)}\right),
	\end{align*}
	where $C=C(d,m,q,\Omega',\Omega_0)$.
\end{theorem}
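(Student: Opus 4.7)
The plan is to prove this via localization and bootstrap on $m$, reducing everything to two model problems for which the Calderón--Zygmund theory of the Stokes system is already available: the interior problem on a ball, and the half-ball problem with zero Dirichlet data on the flat portion. First I would cover $\overline{\Omega'}$ by finitely many small balls $B_r(x_0)$, separated into interior balls with $B_{2r}(x_0)\subset\Omega_0$ and boundary balls centred at $x_0\in\sigma$. On interior balls one applies the standard Cattabriga/ADN estimate
$$\|{\bf u}\|_{W^{2,q}(B_r)}+\|p\|_{W^{1,q}(B_r)}\leq C\bigl(\|{\bf f}\|_{L^q(B_{2r})}+\|{\bf u}\|_{L^q(B_{2r})}+\|p\|_{L^q(B_{2r})}\bigr),$$
which for the constant-coefficient Stokes system follows from Riesz-transform bounds on $\nabla^2{\bf u}$ and $\nabla p$ combined with a cutoff/Caccioppoli argument to localize.

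On boundary balls I would use a $C^{m+2}$ diffeomorphism $\Phi$ to flatten $\sigma\cap B_{2r}(x_0)$, transforming the system to a Stokes-type system with $C^{m+1}$ coefficients on a half-ball, with zero Dirichlet data on the flat part. The corresponding local boundary estimate is obtained from Solonnikov's explicit half-space Green tensor (or, equivalently, by Fourier multiplier estimates after extending the data by reflection) to get the base case $m=0$ in the flat setting; the Jacobian of $\Phi$ produces lower-order perturbations whose contributions are absorbed into the three norms on the right-hand side, provided the support of the cutoff is chosen small enough to make the coefficient perturbation small relative to the Laplacian.

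The bootstrap then proceeds by induction. Assume the estimate is known up to order $m-1$. Differentiating the equations by any $\partial^{\alpha}$ with $|\alpha|\leq m$, the derivative $\partial^{\alpha}{\bf u}$ still satisfies a Stokes system with right-hand side in $W^{0,q}$ controlled by $\|{\bf f}\|_{W^{m,q}}$ plus already-controlled lower-order terms. Tangential differentiation preserves the Dirichlet condition $\partial^{\alpha}{\bf u}|_\sigma=0$ on the flat portion; normal derivatives are recovered from the equation itself (the tangential components from $\nabla\cdot{\bf u}=0$ and the remaining ones from $\Delta{\bf u}=\nabla p-{\bf f}$). Combining the local estimates through a partition of unity and summing yields the claimed global bound on $\Omega'$.

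The main obstacle will be treating the pressure cleanly, since $p$ is not governed by its own elliptic equation but coupled to ${\bf u}$ through the incompressibility constraint, and it sits exactly one derivative below ${\bf u}$ in the scale. I would handle this with the Bogovskii operator, which supplies a bounded right inverse of $\operatorname{div}\colon W^{k+1,q}_0\to W^{k,q}$ with mean-zero data; this lets one correct the divergence when introducing cutoffs so that the model half-space problem can always be posed with $\nabla\cdot{\bf u}=0$ exactly, and then $p$ is extracted from $\nabla p={\bf f}+\Delta{\bf u}$ by integration, inheriting one derivative less than ${\bf u}$ with the correct $L^q$ control.
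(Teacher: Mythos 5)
The paper does not prove this statement at all: it is quoted verbatim as Theorem IV.5.1 of Galdi's monograph and used as a black box, so there is no in-paper proof to compare against. Your outline reconstructs the standard argument that Galdi gives for this result — cover $\overline{\Omega'}$ by interior and boundary balls, use the Cattabriga/ADN interior estimate, flatten $\sigma$ by a $C^{m+2}$ diffeomorphism and reduce to the half-space Stokes problem with zero Dirichlet data (handled via Solonnikov's fundamental tensor), absorb the lower-order coefficient perturbations by shrinking the support of the cutoff, correct the divergence constraint introduced by cutoffs via the Bogovskii operator, and bootstrap in $m$ by tangential differentiation followed by recovery of normal derivatives from the equations. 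That is precisely the structure of the reference proof, and the two subtleties you flag — that the pressure sits one derivative below the velocity and must be recovered from $\nabla p = {\bf f} + \Delta{\bf u}$, and that only tangential derivatives preserve the boundary condition on the flat portion — are exactly the points Galdi has to manage. The only caveat worth noting is that the parenthetical "extending the data by reflection" is not a clean alternative for the Stokes system on the half-space, since the velocity components and the pressure do not reflect compatibly; you should rely on the Green/fundamental tensor (as you also propose) rather than reflection. With that adjustment the sketch is sound and consistent with the cited source.
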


Consider the following general boundary value problem:
\begin{align}\label{w1}
\begin{cases}
-\mu\,\Delta{\bf w}+\nabla q={\bf f}:=\mu\,\Delta{\bf v}-\nabla\bar{p},\quad&\mathrm{in}\,\Omega,\\
\nabla\cdot {\bf w}=0,\quad&\mathrm{in}\,\Omega_{2R},\\
\nabla\cdot {\bf w}=-\nabla\cdot {\bf v},\quad&\mathrm{in}\,\Omega\setminus\Omega_R,\\
{\bf w}=0,\quad&\mathrm{on}\,\partial\Omega,
\end{cases}
\end{align}
where $\Omega_R$ is defined in \eqref{narrow-region}. Then by applying Theorem \ref{thmWmq} and using the bootstrap argument, we have the following local $W^{1,\infty}$ and $W^{2,\infty}$ estimates for Stokes system.
\begin{prop}\label{lemWG2}{\cite[Proposition 3.6]{LX}}
	Let $({\bf w},q)$ be the solution to \eqref{w1}. Then the following estimate holds
	\begin{align*}
	\|\nabla {\bf w}\|_{L^{\infty}(\Omega_{\delta/2}(z_1))}&\leq
	\,C\left(\delta^{-1}\|\nabla{\bf w}\|_{L^{2}(\Omega_{\delta}(z_1))}+\delta\|{\bf f}\|_{L^{\infty}(\Omega_{\delta}(z_1))} \right),\\
		\|\nabla {\bf w}\|_{L^{\infty}(\Omega_{\delta/2}(z_1))}&+\|\nabla q\|_{L^{\infty}(\Omega_{\delta/2}(z_1))}\nonumber\\
	&\leq
	\,C\left(\delta^{-2}\|\nabla{\bf w}\|_{L^{2}(\Omega_{\delta}(z_1))}+\|{\bf f}\|_{L^{\infty}(\Omega_{\delta}(z_1))}+\delta\|\nabla{\bf f}\|_{L^{\infty}(\Omega_{\delta}(z_1))}\right),
	\end{align*}
	where $|z_1|<R$, $\delta=\delta(z_1)$, and $\Omega_{\delta}(z_1)$ is defined in \eqref{narrowdelta}.
\end{prop}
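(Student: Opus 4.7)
The plan is to reduce the estimates on the thin region $\Omega_\delta(z_1)$ to a standard application of Theorem \ref{thmWmq} on a rescaled domain of unit size. Geometrically $\Omega_\delta(z_1)$ is essentially a rectangle of width $2\delta$ and height comparable to $\delta$: by \eqref{h1-h1}, the width $\delta(x_1)=\varepsilon+h_1(x_1)-h(x_1)$ satisfies $\delta(x_1)\sim\delta(z_1)=\delta$ for $|x_1-z_1|<\delta$, so rescaling both coordinates isotropically by $\delta$ is natural.

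First I would fix $z=(z_1,(h(z_1)+\varepsilon+h_1(z_1))/2)$ and set $y=(x-z)/\delta$, introducing
\[
\tilde{\bf w}(y)={\bf w}(z+\delta y),\qquad \tilde q(y)=\delta\,q(z+\delta y),\qquad \tilde{\bf f}(y)=\delta^2\,{\bf f}(z+\delta y).
\]
The rescaled pair then satisfies $-\mu\Delta_y\tilde{\bf w}+\nabla_y\tilde q=\tilde{\bf f}$ and $\nabla_y\cdot\tilde{\bf w}=0$ in the image $\widetilde\Omega$ of $\Omega_\delta(z_1)$, with $\tilde{\bf w}=0$ on the rescaled lower boundary (the image of $\partial D\cap\overline{\Omega_\delta(z_1)}$). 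Using the $C^3$ bound on $h,h_1$ together with \eqref{h1-h1}, one checks that the rescaled boundary graphs $y_2=(h(z_1+\delta y_1)-h(z_1))/\delta$ and $y_2=1+(h_1(z_1+\delta y_1)-h_1(z_1))/\delta$ have $C^3$ norms uniform in $\delta,\varepsilon$, and remain uniformly separated; hence $\widetilde\Omega$ is a $C^3$ domain with geometric parameters independent of $\delta,\varepsilon$.

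Second, on $\widetilde\Omega$ I would bootstrap Theorem \ref{thmWmq} to a choice of $r>2$. Starting from $\tilde{\bf w}\in W^{1,2}$, $\tilde q\in L^2$, apply the theorem with $m=0,\,q=2$ to obtain $\tilde{\bf w}\in W^{2,2}$; in two dimensions this embeds into $W^{1,r}$ for every finite $r$, so a second application with $m=0$ and this $r$ yields $\tilde{\bf w}\in W^{2,r}$, $\tilde q\in W^{1,r}$. The auxiliary norms $\|\tilde q\|_{L^r}$, $\|\tilde{\bf w}\|_{L^r}$ on the right are handled by normalizing $\tilde q$ to have zero mean (harmless since only $\nabla q$ enters the equation) and invoking the standard Ne\v cas/De Rham inequality together with Poincar\'e, reducing everything to $\|\nabla_y\tilde{\bf w}\|_{L^2}+\|\tilde{\bf f}\|_{L^\infty}$. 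Sobolev embedding then delivers
\[
\|\nabla_y\tilde{\bf w}\|_{L^\infty(\widetilde\Omega_{1/2})}\leq C\bigl(\|\nabla_y\tilde{\bf w}\|_{L^2(\widetilde\Omega)}+\|\tilde{\bf f}\|_{L^\infty(\widetilde\Omega)}\bigr).
\]
For the second assertion I would take $m=1$ instead, so that $\tilde{\bf w}\in W^{3,r}$, $\tilde q\in W^{2,r}$, and Sobolev embedding now yields
\[
\|\nabla_y\tilde{\bf w}\|_{L^\infty(\widetilde\Omega_{1/2})}+\|\nabla_y\tilde q\|_{L^\infty(\widetilde\Omega_{1/2})}\leq C\bigl(\|\nabla_y\tilde{\bf w}\|_{L^2(\widetilde\Omega)}+\|\tilde{\bf f}\|_{L^\infty(\widetilde\Omega)}+\|\nabla_y\tilde{\bf f}\|_{L^\infty(\widetilde\Omega)}\bigr).
\]

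Finally I would unwind the scaling. The chain-rule identities give $\|\nabla_x{\bf w}\|_{L^\infty}=\delta^{-1}\|\nabla_y\tilde{\bf w}\|_{L^\infty}$, $\|\nabla_x q\|_{L^\infty}=\delta^{-2}\|\nabla_y\tilde q\|_{L^\infty}$, and $\|\nabla_x{\bf w}\|_{L^2(\Omega_\delta)}=\|\nabla_y\tilde{\bf w}\|_{L^2(\widetilde\Omega)}$ (in two dimensions the Jacobian $\delta^2$ cancels the factor $\delta^{-2}$ from the gradient), together with $\|\tilde{\bf f}\|_{L^\infty}=\delta^2\|{\bf f}\|_{L^\infty}$ and $\|\nabla_y\tilde{\bf f}\|_{L^\infty}=\delta^3\|\nabla_x{\bf f}\|_{L^\infty}$. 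Substituting into the two rescaled estimates (and noting that $\Omega_{\delta/2}(z_1)$ corresponds to the subdomain $|y_1|<1/2$ of $\widetilde\Omega$) recovers exactly the two claimed bounds. The main obstacle I anticipate is the uniform geometric control of $\widetilde\Omega$ across $|z_1|<R$, because near $z_1=0$ the curvature of $h_1-h$ interacts delicately with $\varepsilon$; the clean way out is to split into the cases $|z_1|\lesssim\sqrt\varepsilon$ (where $\delta\sim\varepsilon$) and $|z_1|\gtrsim\sqrt\varepsilon$ (where $\delta\sim\kappa_0 z_1^2$), and verify separately that in each case the rescaled boundary graphs and their separation are controlled by universal constants, making the constants coming from Theorem \ref{thmWmq} on $\widetilde\Omega$ universal in the sense of Subsection \ref{Subsec1.2}.
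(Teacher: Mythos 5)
Your proof is correct and matches the approach the paper indicates: the paper itself cites this as Proposition~3.6 of \cite{LX} and describes the proof in one line (``applying Theorem \ref{thmWmq} and using the bootstrap argument''), which is precisely your rescale-to-unit-size, bootstrap-via-Theorem~\ref{thmWmq}, Sobolev-embed, and unscale strategy. As a minor aside, the way the proposition is applied in the proof of Proposition~\ref{propu11} suggests the left side of the second estimate should read $\|\nabla^{2}{\bf w}\|_{L^\infty}$ rather than $\|\nabla{\bf w}\|_{L^\infty}$ (this is also what the natural $\delta^{-2}$ scaling produces); your argument covers the stated version too, since $\delta<1$.
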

We shall use Proposition \ref{lemWG2} to establish the $L^\infty$ estimate of $|\nabla{\bf w}|$ in $\Omega_R$. See the proof of Proposition \ref{propu11} below for the details.

\section{Estimates of $|\nabla \bf u_\alpha|$ and $p_\alpha$, ~$\alpha=1,2,3$}\label{estualpha} 
In this section, we will estimate $|\nabla \bf u_\alpha|$ and $p_\alpha$, $\alpha=1,2,3$, respectively, in Subsection \ref{subsec2.1}, Subsection \ref{subsec2.2}, and Subsection \ref{subsec2.3}. 

In what follows, let us denote
\begin{align*}
\delta(x_1):=\epsilon+h_1(x_1)-h(x_1),\quad\mbox{for}\, |x_1|\leq 2R.
\end{align*}
First, we introduce the Keller-type auxiliary function $k(x)\in C^{2}(\mathbb{R}^{2})$,
satisfying $k(x)=\frac{1}{2}$ on $\partial D_{1}$, $k(x)=-\frac{1}{2}$ on $\partial D$, especially,
\begin{equation}\label{def_vx}
k(x)=\frac{x_{2}-h(x_1)}{\delta(x_1)}-\frac{1}{2}\quad\hbox{in}\ \Omega_{2R},
\end{equation}
and $\|k\|_{C^{2}(\mathbb{R}^{2}\backslash\Omega_{R})}\leq C$.
In order to express our idea clearly and avoid unnecessary difficulty from computation, without loss of generality, we assume that $h_{1}$ and $h$ are quadratic, say, $h_1(x_1)=\kappa_1x_1^2$, $h(x_1)=\kappa x_1^2$ for $x_1\leq 2R$, where $\kappa_1$ and $\kappa$ are two positive constants and $\kappa_0=\kappa_1-\kappa>0$. 
Thus,
\begin{equation}\label{55}
\delta(x_1)=\varepsilon+(\kappa_1-\kappa)x_1^2,\quad\mbox{for}\, |x_1|\leq 2R.
\end{equation}
A direct calculation gives, in $\Omega_{2R}$,
\begin{align*}
\quad\partial_{x_1}k(x)&= -\frac{2\kappa x_1}{\delta(x_1)}-2(\kappa_1-\kappa)\frac{x_1}{\delta(x_1)}\Big(k(x)-\frac{1}{2}\Big)\\&= -(\kappa_1+\kappa)\frac{x_1}{\delta(x_1)}-2(\kappa_1-\kappa)\frac{x_1}{\delta(x_1)}k(x),
\end{align*}
and
$$\partial_{x_2}k(x)=\frac{1}{\delta(x_1)}.\hspace{4.5cm}$$

Next we shall use such $k(x)$ to construct a series of vector-valued auxiliary functions ${\bf v}_{\alpha}\in C^{2}(\Omega;\mathbb R^2)$, satisfying the incompressible condition, and the corresponding associators $\bar p_{\alpha}\in C^{1}(\Omega)$. Then we  apply proposition 3.6 in \cite{LX} to establish the $L^{\infty}$ estimates of $|\nabla({\bf u}_\alpha-{\bf v}_\alpha)|$ and $|p_{\alpha}-\bar p_{\alpha}|$ in $\Omega_{R}$, and show that $\nabla{\bf v}_\alpha$ and $\bar p_{\alpha}$ capture the main singularities of $|\nabla{\bf u}_\alpha|$ and $p_\alpha$, $\alpha=1,2,3$. The needed basic tools and framework can refer to section 3 of \cite{LX}. 

\subsection{Estimates of $|\nabla {\bf u}_1|$ and $p_1$}\label{subsec2.1}

We construct a vector-valued ${\bf v}_{1}\in C^{2}(\Omega;\mathbb R^2)$, such that ${\bf v}_{1}={\bf u}_{1}={\boldsymbol\psi}_{1}$ on $\partial{D}_{1}$ and ${\bf v}_{1}={\bf u}_{1}=0$ on $\partial{D}$, and satisfying the divergence free condition in $\Omega_{2R}$, especially,
\begin{align}\label{v1alpha}
{\bf v}_{1}=\boldsymbol\psi_{1}\Big(k(x)+\frac{1}{2}\Big)+(\kappa_1+\kappa)\begin{pmatrix}
\frac{-4x_1^2}{\delta(x_{1})}+\frac{1}{\kappa_1-\kappa}\\\\
G_1(x)
\end{pmatrix}
\Big(k^2(x)-\frac{1}{4}\Big)\quad \mbox{in}~\Omega_{2R},
\end{align}
where
\begin{align}\label{defG1}
G_1(x)=\,\,2x_1\bigg(\Big(\kappa+(\kappa_1-\kappa)\Big(k(x)+\frac{1}{2}\Big)\Big)\frac{-4x_1^2}{\delta(x_{1})}+2\Big(k(x)+\frac{1}{2}\Big)\bigg),
\end{align}
and 
$\|{\bf v}_{1}\|_{C^{2}(\Omega\setminus\Omega_{R})}\leq\,C$. We remark that, due to the asymmetry of $h_{1}$ and $h$,  here the construction of ${\bf v}_{1}$ is more complicated than that  constructed in \cite{LX} (see (2.7) there).

Because it is easy to find by a direct calculations (see \eqref{v11-222-2D} and \eqref{v12-222-2D} below) that $|\Delta{\bf v}_{1}|\sim\frac{1}{\delta^2(x_1)}$ is too large to apply proposition 3.6 in \cite{LX} to obtain our desired results. To this end, we choose an associated function $\bar{p}_1\in C^{1}(\Omega)$, such that
\begin{equation}\label{defp113D}
\bar{p}_1=\frac{\kappa_1+\kappa}{\kappa_1-\kappa}\frac{2\mu\, x_{1}}{\delta^2(x_1)}+\mu\,\partial_{x_2} {\bf v}_1^{(2)},\quad\mbox{in}~\Omega_{2R},
\end{equation}
and  $\|\bar{p}_1\|_{C^{1}(\Omega\setminus\Omega_{R})}\leq C$, to make  $|\mu\,\Delta{\bf v}_{1}-\nabla\bar{p}_1|$ much smaller than $|\mu\,\Delta{\bf v}_{1}|$ in $\Omega_{2R}$, see  \eqref{estdivv11p1} below. We denote the differences
\begin{align*}
{\bf w}_{1}:={\bf u}_{1}-{\bf v}_{1},\quad q_{1}:=p_{1}-\bar{p}_{1},
\end{align*}
and thus $({\bf w}_{1},q_{1})$ satisfies the general boundary value problem \eqref{w1}. 

For $|x_1|,t\leq\,R$, denote
\begin{align}\label{omega_t}
\Omega_{t}(x_1):=\left\{(y_1,y_{2})\big|~h(y_1)<y_{2}
 <\varepsilon+h_1(y_1),~\,|y_1-x_1|<t \right\},
 \end{align}
and $$(q_{i})_{\Omega_R}=\frac{1}{|\Omega_R|}\int_{\Omega_R}q_{i}(y)\mathrm{d}y.$$
Then, the following Proposition shows that such $(\nabla{\bf v}_{1},\bar{p}_{1})$ constructed above captures the main singular terms of $(\nabla{\bf u}_{1},p_{1})$ in $\Omega_{2R}$.
\begin{prop}\label{propu11}
	Let ${\bf u}_{1}\in{C}^{2}(\Omega;\mathbb R^2),~p_{1}\in{C}^{1}(\Omega)$ be the solution to \eqref{equ_v1}. Then we have
	\begin{equation*}
	\|\nabla({\bf u}_{1}-{\bf v}_{1})\|_{L^{\infty}(\Omega_{\delta/2}(x_1))}\leq C,~~ \,x\in\Omega_{R},
	\end{equation*}
and 
   \begin{equation*}
	\|\nabla^2({\bf u}_{1}-{\bf v}_{1})\|_{L^{\infty}(\Omega_{\delta/2}(x_1))}+\|\nabla q_1\|_{L^{\infty}(\Omega_{\delta/2}(x_1))}\leq \frac{C}{\delta(x_1)}.
    \end{equation*}
	Consequently, 
	\begin{align}\label{upperu1}
	\frac{1}{C\delta(x_1)}\leq|\nabla {\bf u}_{1}(x)|\leq \frac{C}{\delta(x_1)},\quad\,x\in\Omega_{R},
	\end{align}
	and
	\begin{align}\label{upperp1}
	\|p_{1}-(q_{1})_{\Omega_R}\|_{L^{\infty}(\Omega_{\delta/2}(x_{1}))}\leq\,\frac{C}{\varepsilon^{3/2}},\quad\,x\in\Omega_{R}.
	\end{align}
\end{prop}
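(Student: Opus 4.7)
The plan is to verify that the explicit pair $({\bf v}_1,\bar p_1)$ already carries all of the singular behaviour of $({\bf u}_1,p_1)$, so that the remainder $({\bf w}_1,q_1):=({\bf u}_1-{\bf v}_1,\,p_1-\bar p_1)$ solves the auxiliary Stokes system \eqref{w1} with a source that is one power of $\delta(x_1)$ smaller than the naive bound on $\mu\Delta{\bf v}_1$, and can therefore be handled by the local regularity estimates of Proposition \ref{lemWG2}.

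The first step is a direct pointwise computation on \eqref{v1alpha}--\eqref{defp113D}. Using $\partial_{x_2}k=1/\delta$, $\partial_{x_1}k\sim x_1/\delta$, and $\delta=\varepsilon+(\kappa_1-\kappa)x_1^2$, I would obtain in $\Omega_{2R}$ the bounds $|\nabla{\bf v}_1|\le C/\delta(x_1)$ and $|\nabla^2{\bf v}_1|\le C/\delta(x_1)^2$, together with $|\bar p_1|\le C|x_1|/\delta(x_1)^2$, whose supremum over $\Omega_R$ is of order $\varepsilon^{-3/2}$ (attained near $|x_1|\sim\sqrt{\varepsilon}$). The crucial point, and the reason for the specific choice in \eqref{defp113D}, is that $\mu\,\partial_{x_2}^2{\bf v}_1^{(1)}$ matches $\partial_{x_1}\bar p_1$ identically, while $\mu\,\partial_{x_2}^2{\bf v}_1^{(2)}-\partial_{x_2}\bar p_1=0$ because the singular $\delta^{-2}$ term of $\bar p_1$ is $x_2$-independent. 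Consequently the residual ${\bf f}:=\mu\Delta{\bf v}_1-\nabla\bar p_1$ collapses into pure $x_1$-derivatives of ${\bf v}_1$ and obeys
$$|{\bf f}(x)|\le \frac{C}{\delta(x_1)},\qquad |\nabla{\bf f}(x)|\le \frac{C}{\delta(x_1)^2},\qquad \|{\bf f}\|_{C^1(\Omega\setminus\Omega_R)}\le C.$$
A direct check on \eqref{v1alpha} also gives $\nabla\cdot{\bf v}_1\equiv 0$ in $\Omega_{2R}$, so the divergence correction in the third line of \eqref{w1} is supported only in $\Omega\setminus\Omega_R$.

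The second step is the energy bound
$$\int_{\Omega_\delta(z_1)}|\nabla{\bf w}_1|^2\,\mathrm{d}x\le C\,\delta(z_1)^2,\qquad |z_1|\le R,$$
to be proved by the energy iteration technique of \cite{BLL,LX}. I test \eqref{w1} against ${\bf w}_1$ on the narrow strips $\Omega_t(z_1)$ for $\delta(z_1)\le t\le R$, couple the Poincar\'e inequality in the narrow $x_2$-direction with the pointwise bound on ${\bf f}$, and iterate the resulting recursion for $F(t):=\int_{\Omega_t(z_1)}|\nabla{\bf w}_1|^2$. This is the genuine technical heart of the proposition, and will be the main obstacle; compared with the symmetric interior case of \cite{LX}, the asymmetry of $h$ and $h_1$ forces the second-component correction $G_1$ of \eqref{defG1} into ${\bf v}_1$, and the cross terms that it produces after integration by parts must be absorbed using the small factor $\delta$ intrinsic to the narrow channel.

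Finally, armed with the energy bound and the pointwise estimates on ${\bf f}$, I apply Proposition \ref{lemWG2} to $({\bf w}_1,q_1)$ with $\delta=\delta(z_1)$: its first inequality gives $\|\nabla{\bf w}_1\|_{L^\infty(\Omega_{\delta/2}(z_1))}\le C$, while its second yields $\|\nabla^2{\bf w}_1\|_{L^\infty}+\|\nabla q_1\|_{L^\infty}\le C/\delta(z_1)$, which are precisely the first two assertions of the proposition. The upper bound in \eqref{upperu1} then follows from $|\nabla{\bf u}_1|\le|\nabla{\bf v}_1|+|\nabla{\bf w}_1|$, and the matching lower bound is obtained by evaluating $\partial_{x_2}{\bf v}_1^{(1)}$ at the midpoint of the vertical segment where $k=0$: there this derivative equals $1/\delta(x_1)$, which dominates the $O(1)$ error $|\nabla{\bf w}_1|$ once $\delta(x_1)\ll 1$. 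For \eqref{upperp1}, I decompose $p_1-(q_1)_{\Omega_R}=(q_1-(q_1)_{\Omega_R})+\bar p_1$, use $\|\nabla q_1\|_{L^\infty(\Omega_R)}\le C/\delta$ together with $\int_{-R}^{R}\delta(s)^{-1}\,\mathrm{d}s\le C\varepsilon^{-1/2}$ to produce the oscillation estimate $\|q_1-(q_1)_{\Omega_R}\|_{L^\infty(\Omega_R)}\le C\varepsilon^{-1/2}$, and combine it with the pointwise bound $\|\bar p_1\|_{L^\infty(\Omega_R)}\le C\varepsilon^{-3/2}$ from the first step.
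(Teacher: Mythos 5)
Your overall scheme — construct $({\bf v}_1,\bar p_1)$ so that the residual source ${\bf f}_1=\mu\Delta{\bf v}_1-\nabla\bar p_1$ is only $O(\delta^{-1})$, then run an energy iteration and finish with the local $W^{1,\infty}/W^{2,\infty}$ estimates of Proposition~\ref{lemWG2} — is exactly the paper's. Two points deserve attention.

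First, a small factual slip: you claim $\mu\,\partial_{x_2}^2{\bf v}_1^{(1)}$ and $\partial_{x_1}\bar p_1$ cancel ``identically.'' They do not; from \eqref{v11-222-2D} and \eqref{v11-222-2g}, $\mu\,\partial_{x_2x_2}{\bf v}_1^{(1)}-\partial_{x_1}\bar p_1=-\mu\,\partial_{x_1x_2}{\bf v}_1^{(2)}$, which is a genuine leftover, though of the right size $O(\delta^{-1})$. So ${\bf f}_1^{(1)}=\mu\,\partial_{x_1x_1}{\bf v}_1^{(1)}-\mu\,\partial_{x_1x_2}{\bf v}_1^{(2)}$ rather than a single $\partial_{x_1}^2$ term. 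The conclusion $|{\bf f}_1|\le C/\delta$ stands, but the reason is the cancellation of the $\delta^{-3}$ and $\delta^{-2}$ pieces, not an exact identity in the $(1)$-component.

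Second, and more substantively, the iteration you sketch cannot close without an anchor at the top scale: you need the global energy bound $\int_\Omega|\nabla{\bf w}_1|^2\,\mathrm{d}x\le C$ \emph{before} the Caccioppoli recursion for $F(t)=\int_{\Omega_t(z_1)}|\nabla{\bf w}_1|^2$ can be iterated down to $t=\delta(z_1)$. Your description goes straight to the recursion. The global bound is not free: $|{\bf f}_1|\le C/\delta$ gives $\|{\bf f}_1\|_{L^2(\Omega_R)}^2\sim\int_{|x_1|<R}\delta^{-1}\,\mathrm{d}x_1\sim\varepsilon^{-1/2}$, so a naive Cauchy--Schwarz on $\int_{\Omega_R}{\bf f}_1\cdot{\bf w}_1$ yields only $\|\nabla{\bf w}_1\|_{L^2}\lesssim\varepsilon^{-1/4}$, not $O(1)$. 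The paper's Lemma~\ref{lem3.0} overcomes this by exploiting that ${\bf f}_1$ is a total $\partial_{x_1}$-derivative, integrating by parts in $x_1$, and then using the pointwise bounds $|\partial_{x_1}{\bf v}_1^{(1)}|,|\partial_{x_2}{\bf v}_1^{(2)}|\le C|x_1|/\delta$, which are square-integrable over $\Omega_R$, together with the trace inequality \eqref{w1Dw1}. Your remark that ``cross terms after integration by parts must be absorbed using the small factor $\delta$'' is pointing in the right direction, but this integration-by-parts step has to be spelled out as a stand-alone global energy lemma; without it the iteration has no starting value and the argument is incomplete.

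As a minor aside, your path-integral argument for $\|q_1-(q_1)_{\Omega_R}\|_{L^\infty(\Omega_R)}\le C\varepsilon^{-1/2}$ is actually a bit sharper than the paper's $C\varepsilon^{-1}$ (obtained by the mean value theorem with the worst-case gradient $\varepsilon^{-1}$); both are dominated by $\|\bar p_1\|_{L^\infty(\Omega_R)}\le C\varepsilon^{-3/2}$, so either works for \eqref{upperp1}.
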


We will employ the procedure built in section 3 of \cite{LX} to prove Proposition \ref{propu11}. First, by applying the energy estimates together with the estimates of $|{\bf f}|$, we show the boundedness of the global energy of ${\bf w}_1$, see Lemma \ref{lem3.0}. Then, by making use of the iteration technique, we obtain estimate of the local energy in $\Omega_\delta(z_1)$, see Lemma \ref{lem3.1}. Finally, by using a scaling argument and combining with $L^p$ estimate for Stokes flow in a bounded domain with partially vanishing boundary data, we prove Proposition \ref{propu11}, and establish the pointwise estimates \eqref{upperu1} and \eqref{upperp1}. 

First, by a direct calculation, for $x\in\Omega_{2R}$,
\begin{align}\label{estv112}
\partial_{x_1}{\bf v}_1^{(1)}&=\partial_{x_1}k(x)-8(\kappa_1+\kappa)x_1\Big(\frac{1}{\delta(x_1)}+\frac{(\kappa_1-\kappa)x_1^2}{\delta^2(x_1)}\Big)\Big(k^2(x)-\frac{1}{4}\Big)\nonumber\\
&\quad+2(\kappa_1+\kappa)k(x)\partial_{x_1}k(x)\Big(\frac{-4x_1^2}{\delta(x_{1})}+\frac{1}{\kappa_1-\kappa}\Big), \\
\partial_{x_2}{\bf v}_1^{(1)}&=\frac{1}{\delta(x_{1})}+\frac{2(\kappa_1+\kappa)k(x)}{\delta(x_{1})}\Big(\frac{-4x_1^2}{\delta(x_{1})}+\frac{1}{\kappa_1-\kappa}\Big),\label{estv1112}\\
|\partial_{x_1}{\bf v}_1^{(2)}|&\leq C,\label{estv1122}
\end{align}
and
\begin{align}\label{estv11223}
\partial_{x_2}{\bf v}_1^{(2)}&=\frac{2(\kappa_1+\kappa) }{\delta(x_1)}\bigg(x_{1}\Big(\frac{-4 (\kappa_1-\kappa)x_{1}^2}{\delta(x_1)}+2\Big)\Big(k^2(x)-\frac{1}{4}\Big)+k(x)G_1(x)\bigg).
\end{align}
It is obvious from these estimates that	
\begin{equation*}
\nabla\cdot{\bf v}_{1}=0,\quad\mbox{in}~\Omega_{2R},
\end{equation*} 
moreover, 
\begin{align}\label{v1u1}
|\nabla{\bf v}_{1}(x)|\leq\,\frac{C}{\delta(x_1)},\quad\mbox{in}~\Omega_{2R}.
\end{align}

For the  second-order derivatives, a further calculation yields
\begin{align}
|\partial_{x_1x_1}{\bf v}_1^{(1)}|&\leq\frac{C}{\delta(x_{1})},\quad\partial_{x_2x_2}{\bf v}_1^{(1)}=\frac{2(\kappa_1+\kappa)}{\delta^2(x_{1})}\Big(\frac{-4x_1^2}{\delta(x_{1})}+\frac{1}{\kappa_1-\kappa}\Big),\label{v11-222-2D}\\
|\partial_{x_1x_1}{\bf v}_1^{(2)}|&\leq\frac{C|x_1|}{\delta(x_{1})},\quad|\partial_{x_1x_2}{\bf v}_1^{(2)}|\leq\frac{C}{\delta(x_{1})},\quad|\partial_{x_2x_2}{\bf v}_1^{(2)}|\leq\frac{C|x_1|}{\delta^2(x_{1})};\label{v12-222-2D}
\end{align}
and for the associated $\bar{p}_1$,
\begin{align}
\partial_{x_1}\bar{p}_1&=\frac{2(\kappa_1+\kappa)\mu}{\delta^2(x_{1})}\Big(\frac{-4x_1^2}{\delta(x_{1})}+\frac{1}{\kappa_1-\kappa}\Big)+\mu\,\partial_{x_{1}x_2} {\bf v}_1^{(2)}.\label{v11-222-2g}
\end{align}
Thus, combining  \eqref{v11-222-2D} and \eqref{v11-222-2g}, we have 
$$\mu\,\partial_{x_2x_2}{\bf v}_1^{(1)}-\partial_{x_1}\bar{p}_1=-\mu\,\partial_{x_1x_2}{\bf v}_1^{(2)}.$$
While, it is easy to see from the definition of $\bar{p}_1$ in \eqref{defp113D} that
$$\mu\,\partial_{x_2x_2}{\bf v}_1^{(2)}-\partial_{x_2}\bar{p}_1=0.$$
Therefore, together with \eqref{v11-222-2D}--\eqref{v11-222-2g}, we deduce
\begin{align}\label{estdivv11p1}
|{\bf f}_{1}|=\left|\mu\,\Delta{\bf v}_{1}-\nabla\bar{p}_1\right|=\left|\begin{pmatrix}
\mu\,\partial_{x_1x_1}{\bf v}_1^{(1)}-\mu\,\partial_{x_1x_2}{\bf v}_1^{(2)}\\\\
\mu\,\partial_{x_1x_1}{\bf v}_1^{(2)}
\end{pmatrix}\right|\leq \frac{C}{\delta(x_{1})}.
\end{align}

We will use a similar argument as in the proof of \cite[Lemma 4.1]{LX} to prove the global boundedness of $|\nabla{\bf w}_1|$. Before this, let us recall the following lemma from \cite{LX}.

\begin{lemma}\label{lemmaenergy}{\cite[Lemma 3.7]{LX}}
	Let $({\bf w},q)$ be the solution to \eqref{w1}. Then if ${\bf v}\in C^{2}(\Omega;\mathbb R^2)$ and $\bar{p}\in C^{1}(\Omega)$ satisfy
	\begin{equation*}
	\|{\bf v}\|_{C^{2}(\Omega\setminus\Omega_R)}\leq\,C,\quad\|\bar{p}\|_{C^1(\Omega\setminus\Omega_{R})}\leq C,
	\end{equation*}
	and 
	\begin{align}\label{int-fw}
	\Big| \int_{\Omega_{R}}\sum_{j=1}^{2}{\bf f}^{(j)}{\bf w}^{(j)}\mathrm{d}x\Big|\leq\,C\left(\int_{\Omega}|\nabla {\bf w}|^2\mathrm{d}x\right)^{1/2},
	\end{align}
	then
	\begin{align}\label{estw11case2}
	\int_{\Omega}|\nabla {\bf w}|^{2}\mathrm{d}x\leq C.
	\end{align}
\end{lemma}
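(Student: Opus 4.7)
My plan is to reduce \eqref{w1} to a homogeneous-divergence Stokes problem via a Bogovski\u{i} lift, and then run the standard energy estimate, handling the source by splitting at $\partial\Omega_R$: on the pinch region $\Omega_R$ the hypothesis \eqref{int-fw} applies directly, while on $\Omega\setminus\Omega_R$ the $C^2$ and $C^1$ bounds on $\mathbf{v}$ and $\bar p$ make $\mathbf{f}$ bounded, after which Poincar\'e closes the estimate.

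Set $g:=\nabla\cdot\mathbf{w}$. From the second and third equations of \eqref{w1} and their overlap on $\Omega_{2R}\setminus\overline{\Omega_R}$, I would first note that $\nabla\cdot\mathbf{v}=0$ there, so $g$ is in fact supported in $\Omega\setminus\overline{\Omega_{2R}}$, bounded there by $\|\mathbf{v}\|_{C^1(\Omega\setminus\Omega_R)}\leq C$; the boundary condition $\mathbf{w}|_{\partial\Omega}=0$ gives the compatibility $\int_\Omega g\,\mathrm{d}x=0$. Applying the Bogovski\u{i} operator on the bounded Lipschitz domain $\Omega\setminus\overline{\Omega_R}$ produces $\mathbf{b}\in H_0^1(\Omega\setminus\overline{\Omega_R};\mathbb{R}^2)$ with $\nabla\cdot\mathbf{b}=g$ and $\|\mathbf{b}\|_{H^1(\Omega)}\leq C\|g\|_{L^2(\Omega)}\leq C$; I then extend $\mathbf{b}$ by zero into $\Omega_R$ so that $\mathbf{b}\in H_0^1(\Omega;\mathbb{R}^2)$.

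Set $\tilde{\mathbf{w}}:=\mathbf{w}-\mathbf{b}$, which lies in $H_0^1(\Omega;\mathbb{R}^2)$ and is divergence-free in $\Omega$, and solves $-\mu\Delta\tilde{\mathbf{w}}+\nabla q=\mathbf{f}+\mu\Delta\mathbf{b}$. Testing against $\tilde{\mathbf{w}}$ and integrating by parts, the pressure term vanishes because $\nabla\cdot\tilde{\mathbf{w}}=0$ and $\tilde{\mathbf{w}}|_{\partial\Omega}=0$, yielding
\begin{equation*}
\mu\int_\Omega|\nabla\tilde{\mathbf{w}}|^2\,\mathrm{d}x=\int_\Omega\mathbf{f}\cdot\tilde{\mathbf{w}}\,\mathrm{d}x-\mu\int_\Omega\nabla\mathbf{b}:\nabla\tilde{\mathbf{w}}\,\mathrm{d}x.
\end{equation*}
The Bogovski\u{i} term is bounded by $C\|\nabla\tilde{\mathbf{w}}\|_{L^2}$ via Cauchy--Schwarz. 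Splitting the source integral at $\partial\Omega_R$, I note that $\mathbf{b}\equiv 0$ on $\Omega_R$, so the inside contribution equals $\int_{\Omega_R}\mathbf{f}\cdot\mathbf{w}\,\mathrm{d}x$, which \eqref{int-fw} bounds by $C\|\nabla\mathbf{w}\|_{L^2}$; on $\Omega\setminus\Omega_R$, the $C^2/C^1$ assumptions give $\|\mathbf{f}\|_{L^\infty(\Omega\setminus\Omega_R)}\leq C$, and then Poincar\'e in $H_0^1(\Omega)$ yields a bound $C\|\nabla\tilde{\mathbf{w}}\|_{L^2}$. Combining these with the triangle inequality $\|\nabla\mathbf{w}\|_{L^2}\leq\|\nabla\tilde{\mathbf{w}}\|_{L^2}+C$ produces $\mu\|\nabla\tilde{\mathbf{w}}\|_{L^2}^2\leq C\|\nabla\tilde{\mathbf{w}}\|_{L^2}+C$, whence Young's inequality gives $\|\nabla\tilde{\mathbf{w}}\|_{L^2}\leq C$, and \eqref{estw11case2} follows by one more triangle inequality.

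The only delicate point is securing an $\varepsilon$-independent Bogovski\u{i} constant on $\Omega\setminus\overline{\Omega_R}$: although this domain depends on $\varepsilon$ through $D_1^\varepsilon$, excising the pinch $\overline{\Omega_R}$ leaves a family of uniformly Lipschitz (indeed $C^3$) domains, and the classical construction of the Bogovski\u{i} operator via convolution against a singular kernel yields a constant depending only on the Lipschitz and star-shapedness data, which are uniform in $\varepsilon$ by the hypotheses at the start of Subsection~\ref{Subsec1.2}. This is precisely where having the excision radius $R$ fixed, independent of $\varepsilon$, is used.
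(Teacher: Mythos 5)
Your argument is correct, and it is necessarily a different route from the paper's, because the paper does not prove this lemma at all: it is quoted verbatim from \cite[Lemma 3.7]{LX}, so the only "proof" here is a citation. Your Bogovski\u{i}-based argument is a legitimate self-contained substitute: you correctly identify that $g=\nabla\cdot\mathbf{w}$ vanishes on $\Omega_{2R}$ and is bounded on $\Omega\setminus\Omega_{2R}$ by the $C^{1}$ control of $\mathbf{v}$ there, and the zero extension of the Bogovski\u{i} lift from $\Omega\setminus\overline{\Omega_R}$ keeps the corrected field $\tilde{\mathbf{w}}$ divergence free while leaving the hypothesis \eqref{int-fw} applicable verbatim on $\Omega_R$ (since $\mathbf{b}\equiv0$ there, $\tilde{\mathbf{w}}=\mathbf{w}$ on the neck). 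Two small points deserve one explicit line each if you write this up: the mean-zero compatibility needed for Bogovski\u{i} on $\Omega\setminus\overline{\Omega_R}$ is $\int_{\Omega\setminus\overline{\Omega_R}}g=0$, which follows from $\int_{\Omega}g=\int_{\partial\Omega}\mathbf{w}\cdot\nu=0$ together with $g\equiv0$ on $\Omega_R$; and the justification that $\Omega\setminus\overline{\Omega_R}$ is a connected, uniformly Lipschitz family (the cut segments $\{|x_1|=R\}$ meet $\partial D$ and $\partial D_1$ at angles bounded away from $0$ and $\pi$, and the gap there is $\geq\kappa_0R^2$ independently of $\varepsilon$), which you do address and which is exactly what makes the Bogovski\u{i} and Poincar\'e constants universal. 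Compared with working directly with $\mathbf{w}$ (where the pressure term $\int_{\Omega\setminus\Omega_{2R}}q\,\nabla\cdot\mathbf{v}$ survives and must be controlled some other way), your correction eliminates the pressure at the cost of the extra, harmless term $\mu\int_\Omega\nabla\mathbf{b}:\nabla\tilde{\mathbf{w}}$, which is what makes the energy inequality close cleanly; this is a perfectly acceptable, arguably more transparent, proof of \eqref{estw11case2}.
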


Now we are ready to prove the following result. 
\begin{lemma}\label{lem3.0}
	Let $({\bf w}_1,q_1)$ be the solution to \eqref{w1}. Then
	\begin{align}\label{w1alpha}
	\int_{\Omega}|\nabla{\bf w}_{1}|^{2}\mathrm{d}x\leq\,C.
	\end{align}
\end{lemma}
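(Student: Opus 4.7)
The plan is to reduce the claim to an application of Lemma \ref{lemmaenergy} by verifying its crucial hypothesis \eqref{int-fw} for the pair $({\bf w}_1, q_1)$. The regularity hypotheses on ${\bf v}_1$ and $\bar{p}_1$ outside $\Omega_R$, namely $\|{\bf v}_1\|_{C^2(\Omega\setminus\Omega_R)}\leq C$ and $\|\bar{p}_1\|_{C^1(\Omega\setminus\Omega_R)}\leq C$, are already built into the construction in \eqref{v1alpha} and \eqref{defp113D}, so everything reduces to estimating $\int_{\Omega_R}{\bf f}_1\cdot{\bf w}_1\,dx$, where ${\bf f}_1=\mu\Delta{\bf v}_1-\nabla\bar p_1$.

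The key input is the pointwise bound $|{\bf f}_1(x)|\leq C/\delta(x_1)$ in $\Omega_R$ already established in \eqref{estdivv11p1}; note that this bound by itself is \emph{not} uniformly square-integrable, since $\|{\bf f}_1\|_{L^2(\Omega_R)}^2\sim\int_{|x_1|<R}\delta(x_1)^{-1}dx_1\sim \varepsilon^{-1/2}$. The missing factor has to come from ${\bf w}_1$, which vanishes on both $\partial D$ and $\partial D_1$. I would exploit this by applying the one-dimensional Poincaré inequality on each vertical slice: for $x=(x_1,x_2)\in\Omega_R$,
\begin{align*}
|{\bf w}_1(x_1,x_2)|\leq \sqrt{\delta(x_1)}\,\bigg(\int_{h(x_1)}^{\varepsilon+h_1(x_1)}|\partial_{x_2}{\bf w}_1(x_1,s)|^2\,ds\bigg)^{1/2}.
\end{align*}

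Combining these two ingredients with Cauchy-Schwarz in $x_2$ and then in $x_1$ gives
\begin{align*}
\bigg|\int_{\Omega_R}{\bf f}_1\cdot{\bf w}_1\,dx\bigg|
&\leq C\int_{|x_1|<R}\frac{1}{\delta(x_1)}\cdot \sqrt{\delta(x_1)}\cdot \delta(x_1)\bigg(\int_{h(x_1)}^{\varepsilon+h_1(x_1)}|\partial_{x_2}{\bf w}_1|^2\,dx_2\bigg)^{1/2}dx_1\\
&\leq C\bigg(\int_{|x_1|<R}\delta(x_1)\,dx_1\bigg)^{1/2}\bigg(\int_{\Omega_R}|\nabla{\bf w}_1|^2\,dx\bigg)^{1/2},
\end{align*}
and by \eqref{55}, $\int_{|x_1|<R}\delta(x_1)\,dx_1\leq C(\varepsilon R+R^3)\leq C$ uniformly in $\varepsilon$. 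This verifies \eqref{int-fw}, and Lemma \ref{lemmaenergy} then immediately yields \eqref{w1alpha} via \eqref{estw11case2}.

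The only delicate point is the balancing of the singular factor $\delta(x_1)^{-1}$ in $|{\bf f}_1|$ against the vanishing of ${\bf w}_1$ on $\partial\Omega$: the whole construction of the pressure corrector $\bar p_1$ in \eqref{defp113D} was designed precisely so that the leading $\delta^{-2}$ singularity in $\Delta{\bf v}_1$ (coming from \eqref{v11-222-2D}--\eqref{v12-222-2D}) cancels and the remainder ${\bf f}_1$ is merely $O(\delta^{-1})$; this $O(\delta^{-1})$ is exactly what the Poincaré factor $\sqrt{\delta}$ together with the measure factor $\sqrt{\delta}$ from the slice width can absorb. I expect no further obstacle; the argument is the two-dimensional boundary analogue of the interior calculation in \cite{LX}, with the only new book-keeping being the asymmetric profiles $h_1$ and $h$, which are already absorbed into the single function $\delta(x_1)$.
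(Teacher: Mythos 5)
Your proof is correct, and it takes a genuinely different route from the paper's. The paper's proof of this lemma exploits the algebraic structure of ${\bf f}_1$: each component of ${\bf f}_1 = \mu\Delta{\bf v}_1 - \nabla\bar p_1$ is a pure $x_1$-derivative (namely $\mu\partial_{x_1x_1}{\bf v}_1^{(1)} - \mu\partial_{x_1x_2}{\bf v}_1^{(2)}$ and $\mu\partial_{x_1x_1}{\bf v}_1^{(2)}$), so the paper integrates by parts in $x_1$, transfers one derivative onto ${\bf w}_1$, controls the boundary term at $|x_1|=R$ via the trace embedding \eqref{w1Dw1}, and then uses the pointwise bounds $|\partial_{x_1}{\bf v}_1^{(1)}|, |\partial_{x_2}{\bf v}_1^{(2)}| \leq C|x_1|/\delta(x_1)$, which are square-integrable over $\Omega_R$. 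Your approach ignores this structure entirely: you use only the blanket pointwise estimate $|{\bf f}_1|\leq C/\delta(x_1)$ from \eqref{estdivv11p1}, and instead win the missing half-power of $\delta$ by applying the one-dimensional Poincar\'e inequality on each vertical slice, exploiting that ${\bf w}_1$ vanishes on both $\partial D_1$ and $\partial D$. Your calculation is right: the slice estimate $|{\bf w}_1(x_1,x_2)|\leq\sqrt{\delta(x_1)}\,\|\partial_{x_2}{\bf w}_1(x_1,\cdot)\|_{L^2}$ combined with the $\delta(x_1)$ slice width gives the integrand $\sqrt{\delta(x_1)}\,\|\partial_{x_2}{\bf w}_1(x_1,\cdot)\|_{L^2}$, and Cauchy--Schwarz in $x_1$ with $\int_{|x_1|<R}\delta(x_1)\,dx_1\leq C$ closes the argument. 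What you lose relative to the paper is the tighter matching to the specific structure of the auxiliary functions; what you gain is robustness and uniformity (indeed your argument with $|{\bf f}_\alpha|\leq C|x_1|/\delta^2$ or $C/\delta$ handles the $\alpha=2,3$ cases verbatim, whereas the paper needs a slightly different integration-by-parts device for $\alpha=2$). Both routes are valid; no gap.
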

\begin{proof}
	By the Sobolev trace embedding theorem, we have 
	\begin{align}\label{w1Dw1}
	\int_{\substack{|x_1|=R,\\h(x_1)<x_{2}<\varepsilon+h_{1}(x_1)}}|{\bf w}_1|\mathrm{d}x_{2}
	&\leq C\left(\int_{\Omega}|\nabla {\bf w}_1|^2\mathrm{d}x\right)^{1/2}.
	\end{align}
	Recalling \eqref{estdivv11p1}, applying the  integration by parts with respect to $x_1$, and in view of \eqref{w1Dw1}, we obtain
	\begin{align*}
	\left|\int_{\Omega_{R}} {\bf f}_1^{(1)}{\bf w}_1^{(1)}\mathrm{d}x\right|
	&=\left|\int_{\Omega_{R}} {\bf w}_1^{(1)}\big(\mu\,\partial_{x_1x_1}{\bf v}_1^{(1)}-\mu\partial_{x_1x_2}{\bf v}_1^{(2)}\big)\mathrm{d}x\right|\nonumber\\
	&\leq\,C\int_{\Omega_{R}}|\partial_{x_1}{\bf w}_1^{(1)}||\partial_{x_1}{\bf v}_1^{(1)}|+|\partial_{x_1}{\bf w}_1^{(1)}||\partial_{x_2}{\bf v}_1^{(2)}|\mathrm{d}x\nonumber\\
	&\quad+C\,\int_{\substack{|x_1|=R,\\h(x_1)<x_{2}<\varepsilon+h_{1}(x_1)}}|{\bf w}_1^{(1)}|\mathrm{d}x_{2}\\
	&\leq C\int_{\Omega_{R}}|\partial_{x_1}{\bf w}_1^{(1)}||\partial_{x_1}{\bf v}_1^{(1)}|+|\partial_{x_1}{\bf w}_1^{(1)}||\partial_{x_2}{\bf v}_1^{(2)}|\mathrm{d}x\nonumber\\
	&\quad+C\left(\int_{\Omega}|\nabla {\bf w}_1|^2\mathrm{d}x\right)^{1/2}=:\mathrm{I}_{11}+\mathrm{I}_{12}.
	\end{align*}
	By virtue of  \eqref{estv112} and \eqref{estv11223}, 
	\begin{equation*}
	|\partial_{x_1}{\bf v}_1^{(1)}|,|\partial_{x_2}{\bf v}_1^{(2)}|\leq\frac{C|x_1|}{\delta(x_1)}\quad
	\mbox{ in} ~\Omega_{2R}.
	\end{equation*}
	This, together with H\"{o}lder's inequality, implies
	\begin{align*}
	|\mathrm{I}_{11}|&\leq C\left(\int_{\Omega_{R}}|\partial_{x_1}{\bf v}_1^{(1)}|^{2}+|\partial_{x_2}{\bf v}_1^{(2)}|^2\mathrm{d}x\right)^{1/2}
	\left(\int_{\Omega} |\partial_{x_1}{\bf w}_1^{(1)}|^2\mathrm{d}x\right)^{1/2}\nonumber\\
	&\leq C \left(\int_{\Omega} |\nabla {\bf w}_1|^2\mathrm{d}x\right)^{1/2}.
	\end{align*}
	Thus, we conclude that
	\begin{align*}
	\left|\int_{\Omega_{R}} {\bf f}_1^{(1)}{\bf w}_1^{(1)}\mathrm{d}x\right|
	\leq C\left(\int_{\Omega}|\nabla {\bf w}_1|^2\mathrm{d}x\right)^{1/2}.
	\end{align*}
	
	Similarly, using \eqref{estdivv11p1}, \eqref{estv1122}, and \eqref{w1Dw1}, we derive
	\begin{align*}
	&\left|\int_{\Omega_{R}} {\bf f}_1^{(2)}{\bf w}_1^{(2)}\mathrm{d}x\right|=\left|\mu\,\int_{\Omega_{R}} {\bf w}_1^{(2)}\partial_{x_1x_1}{\bf v}_1^{(2)}\mathrm{d}x\right|\nonumber\\
	&\leq\,C\int_{\Omega_{R}}|\partial_{x_1}{\bf w}_1^{(2)}||\partial_{x_1}{\bf v}_1^{(2)}|\mathrm{d}x+C\int_{\substack{|x'|=R,\\h(x_1)<x_{2}<\varepsilon+h_{1}(x')}}|{\bf w}_1^{(2)}|\mathrm{d}x_{2}\\
	&\leq  C \left(\int_{\Omega} |\nabla {\bf w}_1|^2\mathrm{d}x\right)^{1/2}.
	\end{align*}
	Therefore, 
	\begin{align*}
	\Big| \int_{\Omega_{R}}\sum_{j=1}^{2}{\bf f}_1^{(j)}{\bf w}_1^{(j)}\mathrm{d}x\Big|\leq\,C\left(\int_{\Omega}|\nabla {\bf w}_1|^2\mathrm{d}x\right)^{1/2}.
	\end{align*}
	This is exactly the condition \eqref{int-fw}, and so the proof of Lemma \ref{lem3.0} is finished.
\end{proof}

Before proving the estimate of the local energy of $|\nabla{\bf w}_1|$, we first recall the  Caccioppoli-type inequality from \cite[Lemma 3.10]{LX}.
\begin{lemma} (Caccioppoli-type Inequality) Let $({\bf w},q)$ be the solution to \eqref{w1}. For $0<t<s\leq R$, there holds
	\begin{align}\label{iterating1}
	\int_{\Omega_{t}(z')}|\nabla {\bf w}|^{2}\mathrm{d}x\leq\,&
	\frac{C\delta^2(z')}{(s-t)^{2}}\int_{\Omega_{s}(z')}|\nabla {\bf w}|^2\mathrm{d}x+C\left((s-t)^{2}+\delta^{2}(z')\right)\int_{\Omega_{s}(z')}|{\bf f}|^{2}\mathrm{d}x,
	\end{align}
	where $\Omega_{t}(z_1)$ is defined by \eqref{omega_t}. 
\end{lemma}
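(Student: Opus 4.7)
The plan is to run the standard Caccioppoli argument for the Stokes system on the thin neck region, relying on a Bogovskii-type correction to kill the pressure and a transverse Poincaré inequality to gain the factor $\delta^2(z')$. First I would pick a one-dimensional cutoff $\eta=\eta(y_1)\in C_c^\infty(\mathbb{R})$ with $\eta\equiv1$ on $|y_1-z_1|\leq t$, $\mathrm{supp}\,\eta\subset\{|y_1-z_1|\leq s\}$, and $|\eta'|\leq 2/(s-t)$. Since $\mathbf{w}=0$ on $\partial D_1\cup\partial D$, the function $\eta^2\mathbf{w}$ vanishes on the entire boundary $\partial\Omega_s(z')$.

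Because $\nabla\cdot(\eta^2\mathbf{w})=2\eta\eta'\mathbf{w}^{(1)}\not\equiv 0$, I cannot use $\eta^2\mathbf{w}$ directly as a test function. I would instead invoke the Bogovskii operator on $\Omega_s(z')$ to produce $\mathbf{B}\in H^1_0(\Omega_s(z');\mathbb{R}^2)$ with $\nabla\cdot\mathbf{B}=-2\eta\eta'\mathbf{w}^{(1)}$ and
\begin{equation*}
\|\nabla\mathbf{B}\|_{L^2(\Omega_s(z'))}\leq \frac{C}{s-t}\|\mathbf{w}\|_{L^2(\Omega_s(z'))},
\end{equation*}
so that $\boldsymbol\phi:=\eta^2\mathbf{w}+\mathbf{B}$ is divergence-free and compactly supported in $\Omega_s(z')$. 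Testing the momentum equation of \eqref{w1} against $\boldsymbol\phi$ then eliminates $q$ and, after integration by parts and using $\nabla(\eta^2\mathbf{w})=\eta^2\nabla\mathbf{w}+2\eta\,\mathbf{w}\otimes\nabla\eta$, yields
\begin{equation*}
\mu\int_{\Omega_s(z')}\eta^2|\nabla\mathbf{w}|^2\,\mathrm{d}x = -2\mu\int_{\Omega_s(z')}\eta\,\nabla\mathbf{w}:(\mathbf{w}\otimes\nabla\eta)\,\mathrm{d}x - \mu\int_{\Omega_s(z')}\nabla\mathbf{w}:\nabla\mathbf{B}\,\mathrm{d}x + \int_{\Omega_s(z')}\mathbf{f}\cdot\boldsymbol\phi\,\mathrm{d}x.
\end{equation*}

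The remaining step is Cauchy--Schwarz plus Young's inequality on each term, together with the transverse Poincaré inequality on the thin strip: since $\mathbf{w}$ vanishes on the top and bottom of $\Omega_s(z')$, one has
\begin{equation*}
\|\mathbf{w}\|_{L^2(\Omega_s(z'))}\leq C\delta(z')\,\|\partial_{y_2}\mathbf{w}\|_{L^2(\Omega_s(z'))}\leq C\delta(z')\,\|\nabla\mathbf{w}\|_{L^2(\Omega_s(z'))}.
\end{equation*}
Combining this with the Bogovskii bound controls the first two terms on the right by $\tfrac{\mu}{2}\int\eta^2|\nabla\mathbf{w}|^2 + \tfrac{C\delta^2(z')}{(s-t)^2}\int_{\Omega_s(z')}|\nabla\mathbf{w}|^2$; the $|\nabla\mathbf{w}|^2$ piece is absorbed to the left. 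For the forcing, $\|\boldsymbol\phi\|_{L^2(\Omega_s(z'))}\leq C\|\mathbf{w}\|_{L^2(\Omega_s(z'))}\leq C\delta(z')\|\nabla\mathbf{w}\|_{L^2}$ gives the $\delta^2(z')\|\mathbf{f}\|_{L^2}^2$ piece, while a Poincaré applied in the $y_1$-direction on $\mathrm{supp}\,\eta'$ (of width $s-t$) produces the complementary $(s-t)^2\|\mathbf{f}\|_{L^2}^2$ piece, yielding the stated inequality.

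The main technical obstacle will be verifying that the Bogovskii constant on $\Omega_s(z')$ is independent of $\delta(z')$. I would handle this by the anisotropic rescaling $(y_1,y_2)\mapsto((y_1-z_1)/s,(y_2-h(y_1))/\delta(z'))$, which straightens the neck into a domain comparable to the unit square where the classical Bogovskii/Poincaré constants apply; it then remains to track how the $L^2$ norms of $\mathbf{w}$, $\nabla\mathbf{w}$, and $\mathbf{f}$ scale under this change of variables and confirm that the resulting constants are universal.
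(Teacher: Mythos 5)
The paper does not prove this lemma itself; it is cited as \cite[Lemma 3.10]{LX}, so there is no in-paper proof to compare against. On its own merits, your proposal has a genuine gap at precisely the step you flag as the ``main technical obstacle.'' You assert the Bogovskii bound $\|\nabla\mathbf{B}\|_{L^2(\Omega_s(z'))}\leq \tfrac{C}{s-t}\|\mathbf{w}\|_{L^2(\Omega_s(z'))}$ with a universal $C$, which implicitly requires the Bogovskii (equivalently, inf-sup/LBB) constant of $\Omega_s(z')$ to be $O(1)$. It is not: for a domain of horizontal extent $\sim s$ and transverse thickness $\sim\delta(z')$, the constant in $\|\nabla\mathbf{B}\|_{L^2}\leq C(\Omega)\|g\|_{L^2}$ degenerates at least like the aspect ratio $s/\delta(z')$, which in the neck is as large as $\varepsilon^{-1/2}$ or worse. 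Anisotropic rescaling does not cure this: if you stretch to unit scale and push $\mathbf{B}$ forward by the Piola-type map $\hat{B}^{(1)}=B^{(1)}/s$, $\hat{B}^{(2)}=(B^{(2)}-h'B^{(1)})/\delta$ (which is what makes $\nabla_{\tilde y}\cdot\hat{\mathbf{B}}=\nabla_y\cdot\mathbf{B}$), the unit-scale Bogovskii estimate controls $\nabla_{\tilde y}\hat{\mathbf{B}}$, but transforming $\nabla_y\mathbf{B}$ back reintroduces the bad factors, e.g.\ $\partial_{y_2}B^{(1)}=(s/\delta)\,\partial_{\tilde y_2}\hat{B}^{(1)}$, and one lands once more on $\|\nabla_y\mathbf{B}\|_{L^2}\lesssim(s/\delta(z'))\|g\|_{L^2}$. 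Inserting this into your Cauchy--Schwarz step gives the correction term a coefficient of order $\tfrac{s}{s-t}$ rather than $\tfrac{\delta^2(z')}{(s-t)^2}$; in the iteration regime $s-t\sim\delta(z')$, $s\lesssim\sqrt{\delta(z')}$, this is $\sim\delta(z')^{-1/2}$ instead of $O(1)$, so neither absorption nor iteration works.

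A route that bypasses the aspect-ratio-sensitive constant (and which is available here since the problem is two-dimensional) is to eliminate the pressure exactly with a genuinely solenoidal test field built from the stream function $\psi$ of $\mathbf{w}$: write $\mathbf{w}=(-\partial_{y_2}\psi,\partial_{y_1}\psi)$, normalized so that $\psi=0$ on the two arcs (the flux constant between the two boundary components must first be argued away or subtracted off), and test against $\boldsymbol\phi:=(-\partial_{y_2}(\eta^2\psi),\partial_{y_1}(\eta^2\psi))=\eta^2\mathbf{w}+(0,2\eta\eta'\psi)$. This kills the pressure term outright, and the double transverse Poincar\'e $\|\psi\|_{L^2}\lesssim\delta\|\mathbf{w}^{(1)}\|_{L^2}\lesssim\delta^2\|\nabla\mathbf{w}\|_{L^2}$, together with an integration by parts in $y_1$ on the cross terms, delivers the $\delta^2(z')/(s-t)^2$ factor with no thin-domain inf-sup constant appearing. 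You should also note a secondary point: your transverse Poincar\'e on all of $\Omega_s(z')$ with constant $\delta(z')$ requires $\sup_{|y_1-z'|<s}\delta(y_1)\lesssim\delta(z')$, which is automatic when $s\lesssim\sqrt{\delta(z')}$ (the regime in which the lemma is actually invoked in the iteration of Lemma \ref{lem3.1}) but not for arbitrary $s\leq R$; for the full stated range one must localize the Poincar\'e to $\mathrm{supp}\,\nabla\eta$ or otherwise be more careful.
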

By the iteration technique used in \cite{LLBY,BLL,LX}, we derive the following lemma. 	
\begin{lemma}\label{lem3.1}
	Let $({\bf w}_1,q_1)$ be the solution to \eqref{w1}. Then
	\begin{align}\label{estw11narrow}
	\int_{\Omega_{\delta}(z_1)}|\nabla {\bf w}_1|^{2}\mathrm{d}x\leq C\delta^2(z_1).
	\end{align}
\end{lemma}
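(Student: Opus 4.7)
Set $\delta:=\delta(z_1)$, and write $F(t):=\int_{\Omega_t(z_1)}|\nabla{\bf w}_1|^2\,dx$, $B(t):=\int_{\Omega_t(z_1)}|{\bf f}_1|^2\,dx$. The plan is to iterate the Caccioppoli-type estimate \eqref{iterating1} outward from $t_0=\delta$ along an arithmetic progression $t_i=(1+ic_0)\delta$, and then close with the global energy bound of Lemma \ref{lem3.0}.

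First I fix a universal $c_0\geq 1$ large enough that the constant $C$ appearing in \eqref{iterating1} satisfies $C/c_0^2\leq 1/2$. Applying \eqref{iterating1} on each consecutive pair $(t_i,t_{i+1})$ yields the one-step recursion $F(t_i)\leq \frac{1}{2}F(t_{i+1})+C'\delta^2 B(t_{i+1})$ with $C':=C(c_0^2+1)$, and iterating it $k$ times gives
\[
F(\delta)\,\leq\,2^{-k}F(t_k)+C'\delta^2\sum_{i=0}^{k-1}2^{-i}B(t_{i+1}).
\]
I then pick $k:=\lceil c_1/\sqrt\delta\,\rceil$, where $c_1>0$ is a small universal constant depending only on $c_0$ and $\kappa_0$, so that $t_k\lesssim\sqrt\delta\leq R$; by Lemma \ref{lem3.0} this gives $F(t_k)\leq\int_\Omega|\nabla{\bf w}_1|^2\,dx\leq C$, while $2^{-k}\leq\delta^2$ once $\delta$ is small enough (the super-polynomial decay of $2^{-k}\sim 2^{-1/\sqrt\delta}$ beats any polynomial in $\delta$), so the remainder is bounded by $C\delta^2$.

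To control the source contribution, I use the pointwise bound $|{\bf f}_1|\leq C/\delta(x_1)$ from \eqref{estdivv11p1} together with Fubini, giving
\[
B(t_{i+1})\,\leq\,\int_{z_1-t_{i+1}}^{z_1+t_{i+1}}\frac{C}{\delta(x_1)}\,dx_1.
\]
A short case analysis, splitting according to whether $|z_1|\leq 2t_k$ or $|z_1|>2t_k$, shows that with $c_1$ chosen small enough one has $\delta(x_1)\geq\delta/4$ uniformly on $[z_1-t_k,z_1+t_k]$: in the first case the identity $\delta=\varepsilon+\kappa_0 z_1^2\leq\varepsilon+4\kappa_0 c_0^2 c_1^2\delta$ forces $\delta\sim\varepsilon$, hence $\delta(x_1)\geq\varepsilon\gtrsim\delta$; in the second case the interval lies in $\{|x_1|\geq|z_1|/2\}$, hence $\delta(x_1)\geq\varepsilon+\kappa_0 z_1^2/4\geq\delta/4$. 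Consequently $B(t_{i+1})\leq Ct_{i+1}/\delta\leq C(i+1)$, and therefore
\[
C'\delta^2\sum_{i=0}^{k-1}2^{-i}B(t_{i+1})\,\leq\,C\delta^2\sum_{i=0}^{\infty}(i+1)2^{-i}\,\leq\,C\delta^2.
\]
Combining with the previous step yields $F(\delta)\leq C\delta^2$, as desired.

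The main obstacle is to strike the right balance in the length of the iteration. If $k=O(1)$, the leftover global energy $F(t_k)$ is not damped, while if $k\gg 1/\sqrt\delta$ then the endpoint $t_k$ overshoots the scale $\sqrt\delta$ on which $\delta(x_1)$ stays comparable to $\delta$, and the source term degenerates to the useless bound $B(t_{i+1})\lesssim 1/\sqrt\varepsilon$, which would give $C\delta^2/\sqrt\varepsilon\gg\delta^2$. The choice $k\sim 1/\sqrt\delta$, so that $t_k\sim\sqrt\delta$, is the sweet spot in which both the geometric contraction of $F$ and the pointwise comparability $\delta(x_1)\sim\delta$ hold simultaneously with constants independent of $\varepsilon$ and $z_1$.
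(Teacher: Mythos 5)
Your proof is correct and follows essentially the same Caccioppoli-iteration scheme as the paper: march outward from $t_0=\delta$ in steps of size $\sim\delta$ for $\sim 1/\sqrt{\delta}$ iterations (so the final radius is $\sim\sqrt{\delta}$), contract the leading term geometrically, and absorb the remaining global energy via Lemma \ref{lem3.0}. The only substantive addition you make is the explicit verification (via the case split $|z_1|\lessgtr 2t_k$) that $\delta(x_1)\gtrsim\delta(z_1)$ throughout $[z_1-t_k,z_1+t_k]$, which justifies the bound $B(s)\lesssim s/\delta(z_1)$; the paper asserts this bound without comment, where it is implicitly valid only because the iteration never leaves the window $s\lesssim\sqrt{\delta(z_1)}$, exactly the point you spell out.
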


\begin{proof} 
Denote
	$$F(t):=\int_{\Omega_{t}(z_1)}|\nabla {\bf w}_1|^{2}\mathrm{d}x.$$
	From 	\eqref{estdivv11p1}, we have 
	\begin{align*}
	\int_{\Omega_{s}(z_1)}|{\bf f}_{1}|^{2}\mathrm{d}x\leq\frac{Cs}{\delta(z_1)}.
	\end{align*}
	Then we obtain from \eqref{iterating1} that
	\begin{align}\label{iteration3D}
	F(t)\leq \left(\frac{c_{0}\delta(z_1)}{s-t}\right)^2 F(s)+C\left((s-t)^{2}+\delta^2(z_1\right)\frac{s}{\delta(z_1)},
	\end{align}
	where $c_{0}$ is a constant and we fix it now. 
	
Let $k_{0}=\left[\frac{1}{4c_{0}\sqrt{\delta(z_1)}}\right]+1$ and $t_{i}=\delta(z_1)+2c_{0}i\delta(z_1), i=0,1,2,\dots,k_{0}$. Applying \eqref{iteration3D} with $s=t_{i+1}$ and $t=t_{i}$ leads to the following iteration formula:
	\begin{align*}
	F(t_{i})\leq \frac{1}{4}F(t_{i+1})+C(i+1)\delta^2(z_1).
	\end{align*}
	After $k_{0}$ iterations, and using \eqref{w1alpha}, we obtain
	\begin{align*}
	F(t_0)&\leq \left(\frac{1}{4}\right)^{k}F(t_{k})
	+C\delta^2(z_1)\sum\limits_{l=0}^{k-1}\left(\frac{1}{4}\right)^{l}(l+1)\nonumber\\
	&\leq C\left(\frac{1}{4}\right)^{k}+C\delta^2(z_1)\sum\limits_{l=0}^{k-1}\left(\frac{1}{4}\right)^{l}(l+1)\leq C\delta^2(z_1),
	\end{align*}
	for sufficiently small $\varepsilon$ and $|z_1|$.  This gives \eqref{estw11narrow}, and thus the lemma is proved.
\end{proof}

Now we are ready to complete the proof of Proposition \ref{propu11}.
\begin{proof}[Proof of Proposition \ref{propu11}.]
Using Proposition \ref{lemWG2}, we have
	\begin{align*}
	&\|\nabla {\bf w}_1\|_{L^{\infty}(\Omega_{\delta/2}(z_1))}
	\leq
	\,C\left(\delta^{-1}(z_1)\|\nabla {\bf w}_1\|_{L^{2}(\Omega_{\delta}(z_1))}+\delta(z_1)\|{\bf f}_1\|_{L^{\infty}(\Omega_{\delta}(z_1))} \right).
	\end{align*}
	Combining \eqref{estdivv11p1} and \eqref{estw11narrow} yields
	\begin{align*}
	\|\nabla {\bf w}_1\|_{L^{\infty}(\Omega_{\delta/2}(z_1))}\leq C.
	\end{align*}
	Since ${\bf w}_1={\bf u}_1-{\bf v}_1$, it follows from \eqref{v1u1} that
	\begin{align*}
	|\nabla {\bf u}_1|&\leq|\nabla {\bf v}_1(z)|+|\nabla {\bf w}_1(z)|\leq \frac{C}{\delta(z_1)},\quad \quad~z\in\Omega_{R}.
	\end{align*}
Similarly, we obtain from Proposition \ref{lemWG2}, \eqref{estdivv11p1}, and \eqref{estw11narrow}, 
\begin{align*}
		\|\nabla^2 {\bf w}_1\|_{L^{\infty}(\Omega_{\delta/2}(z_1))}&+\|\nabla q_1\|_{L^{\infty}(\Omega_{\delta/2}(z_1))}\nonumber\\
	&\leq
	\,C\left(\delta^{-2}\|\nabla{\bf w}_1\|_{L^{2}(\Omega_{\delta}(z_1))}+\|{\bf f}_1\|_{L^{\infty}(\Omega_{\delta}(z_1))}+\delta\|\nabla{\bf f}_1\|_{L^{\infty}(\Omega_{\delta}(z_1))}\right)\\
	&\leq \frac{C}{\delta(z_1)}.
\end{align*}
Combining $q_1=p_1-\bar{p}_1$, the mean value theorem, and \eqref{defp113D}, we deduce
\begin{align*}
	|p_1(z)-(q_1)_{\Omega_R}|\leq |q_1-(q_1)_{\Omega_R}|+|\bar{p}_1|\leq \frac{C}{\varepsilon}+\frac{C|z_1|}{\delta^2(z_1)}\leq\frac{C}{\varepsilon^{3/2}},\quad z\in\Omega_{R}.
\end{align*}
	Therefore, Proposition \ref{propu11} is proved. 
\end{proof}

\subsection{Estimates of $|\nabla {\bf u}_2|$ and $p_2$}\label{subsec2.2}

By a similar way, we construct ${\bf v}_{2}\in C^{2}(\Omega;\mathbb R^2)$ satisfying
\begin{align}\label{v2alpha}
{\bf v}_{2}=\boldsymbol\psi_{2}\Big(k(x)+\frac{1}{2}\Big)+\begin{pmatrix}
\frac{6x_1}{\delta(x_{1})}\\\\
G_2(x)
\end{pmatrix}
\Big(k^2(x)-\frac{1}{4}\Big)\quad \mbox{in}~\Omega_{2R},
\end{align}
where
\begin{equation}\label{defG2}
G_2(x)=-2k(x)+\frac{6x_1}{\delta(x_1)}\Big((\kappa_1+\kappa)x_1+2(\kappa_1-\kappa)x_1k(x)\Big).
\end{equation}
We shall use the same argument as in the proof of Proposition \ref{propu11} to establish the estimates of $|\nabla{\bf u}_2|$ and $p_{2}$.  Firstly, a direct calculation yields that for $x\in \Omega_{2R}$,
\begin{equation}\label{estv212}
\partial_{x_1}{\bf v}_2^{(1)}=6\Big(\frac{1}{\delta(x_1)}-\frac{2(\kappa_1-\kappa)x_1^2}{\delta^2(x_1)}\Big)\Big(k^2(x)-\frac{1}{4}\Big)+\frac{12x_1}{\delta(x_1)} k(x)\partial_{x_1}k(x), 
\end{equation}
\begin{equation}\label{estv2112}
\partial_{x_2}{\bf v}_2^{(1)}=\frac{12x_1}{\delta^2(x_{1})}k(x),
\quad
|\partial_{x_1}{\bf v}_2^{(2)}|\leq \frac{C|x_1|}{\delta(x_1)},
\end{equation}
and
\begin{align}\label{estv11a2223}
\partial_{x_2}{\bf v}_2^{(2)}=\frac{1}{\delta(x_1)}+\frac{2}{\delta(x_1)}\bigg(\frac{6(\kappa_1-\kappa) x_{1}^2}{\delta(x_1)}-1\bigg)\Big(k^2(x)-\frac{1}{4}\Big)
+\frac{2k(x)}{\delta(x_1)}G_2(x).
\end{align}
Then, it is easy to check that 
$$\nabla\cdot {\bf v}_{2}=0\quad\mbox{in}~ \Omega_{2R},$$
and 
\begin{align*}
|\nabla{\bf v}_2|\leq C\left(\frac{1}{\delta(x_1)} +\frac{|x_1|}{\delta^2(x_1)}\right)\quad\mbox{in}~ \Omega_{2R}.
\end{align*}

A further calculation gives  
$$|\partial_{x_2x_2}{\bf v}_2^{(1)}|, |\partial_{x_2x_2}{\bf v}_2^{(2)}|\leq \frac{C|x_1|}{\delta^3(x_1)}\quad\mbox{in}~\Omega_{2R},$$
which are not ``good" terms. Hence, we construct  $\bar{p}_2\in C^{1}(\Omega)$, in $\Omega_{2R}$,
\begin{equation}\label{barp2}
\bar{p}_2=-\frac{1}{\kappa_1-\kappa}\frac{3\mu}{\delta^2(x_1)}+\frac{2\mu}{\delta(x_1)}\left(\Big(\frac{6(\kappa_1-\kappa)x_1^2}{\delta(x_1)}-1\Big)k^{2}(x)+k(x)G_2(x)\right),
\end{equation} 
which makes
\begin{align}\label{estf13}
|{\bf f}_{2}|=\left|\mu\,\Delta{\bf v}_{2}-\nabla\bar{p}_2\right|\leq\frac{C|x_1|}{\delta^{2}(x_1)}\quad \mbox{in}~\Omega_{2R}.
\end{align}

Indeed, first note that 
\begin{equation*}
\mu\,\partial_{x_2}{\bf v}_2^{(2)}-\bar{p}_2=\frac{3\mu}{(\kappa_1-\kappa)}\frac{1}{\delta^2(x_1)},
\end{equation*}
then after differentiating it with respect to $x_{2}$, we have
\begin{align*}
\mu\,\partial_{x_2x_2} {\bf v}_2^{(2)}-\partial_{x_2}\bar{p}_2=0.
\end{align*}
On the other hand,
\begin{align}
|\partial_{x_1x_1}{\bf v}_2^{(1)}|&\leq\frac{C|x_1|}{\delta^2(x_{1})},\quad\partial_{x_2x_2}{\bf v}_2^{(1)}=\frac{12x_1}{\delta^3(x_{1})},\label{v11-22-2D}\\
|\partial_{x_1x_1}{\bf v}_2^{(2)}|&\leq\frac{C}{\delta(x_{1})},\quad|\partial_{x_2x_2}{\bf v}_2^{(2)}|\leq\frac{C|x_1|}{\delta^3(x_{1})}\nonumber.
\end{align}
Denoting
$$ \tilde{p}_2 :=\frac{2\mu}{\delta(x_1)}\left(\frac{6(\kappa_1-\kappa)x_1^2}{\delta(x_1)}-1\right)k^{2}(x)+\frac{2\mu k(x)}{\delta(x_1)}G_2(x)\quad\mbox{in}~\Omega_{2R},$$
then we have
$$ |\partial_{x_1} \tilde{p}_2|\leq \frac{C|x_1|}{\delta^2(x_1)}.$$
Thus,
\begin{equation*}
|\mu\,\partial_{x_2x_2} {\bf v}_2^{(1)}- \partial_{x_1}\bar{p}_2|=|\partial_{x_1} \tilde{p}_2|\leq \frac{C|x_1|}{\delta^2(x_1)}.
\end{equation*}
So \eqref{estf13} holds. 

As a consequence,
\begin{align}\label{L2_f13}
\int_{\Omega_{s}(z_1)}|{\bf f}_{2}|^{2}\mathrm{d}x\leq\frac{Cs}{\delta^{3}(z_1)}(s^2+|z_1|^2).
\end{align}
Then, we have the boundedness of the global energy of ${\bf w}_2$.

\begin{lemma}\label{lem_energyw13}
	Let $({\bf w}_2,q_2)$ be the solution to \eqref{w1}. Then
	\begin{align*}
	\int_{\Omega}|\nabla {\bf w}_{2}|^{2}\mathrm{d}x\leq C.
	\end{align*}
\end{lemma}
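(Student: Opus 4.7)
The plan is to apply Lemma \ref{lemmaenergy} to the pair $({\bf w}_2, q_2)$ with right-hand side ${\bf f}_2 = \mu\,\Delta{\bf v}_2 - \nabla\bar p_2$. By the construction of ${\bf v}_2$ and $\bar p_2$ in \eqref{v2alpha} and \eqref{barp2}, the bounds $\|{\bf v}_2\|_{C^2(\Omega\setminus\Omega_R)}\leq C$ and $\|\bar p_2\|_{C^1(\Omega\setminus\Omega_R)}\leq C$ hold, so the only nontrivial point is to verify hypothesis \eqref{int-fw}, namely
$$\Big|\int_{\Omega_R}{\bf f}_2\cdot{\bf w}_2\,\mathrm{d}x\Big|\leq C\Big(\int_\Omega|\nabla{\bf w}_2|^2\,\mathrm{d}x\Big)^{1/2}.$$

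Since ${\bf w}_2={\bf u}_2-{\bf v}_2$ vanishes on both $\partial D_1$ and $\partial D$, the one-dimensional Poincar\'e inequality in the $x_2$-direction yields, for each fixed $x_1\in(-R,R)$,
$$\sup_{x_2}|{\bf w}_2(x_1,x_2)|\leq\sqrt{\delta(x_1)}\,\Big(\int_{h(x_1)}^{\varepsilon+h_1(x_1)}|\partial_{x_2}{\bf w}_2|^2\,\mathrm{d}x_2\Big)^{1/2}.$$
Combining this with the pointwise estimate $|{\bf f}_2(x)|\leq C|x_1|/\delta^2(x_1)$ from \eqref{estf13}, integrating first in $x_2$ (which contributes a factor of $\delta(x_1)$), and then applying Cauchy--Schwarz in $x_1$, I obtain
$$\Big|\int_{\Omega_R}{\bf f}_2\cdot{\bf w}_2\,\mathrm{d}x\Big|\leq C\Big(\int_{-R}^R\frac{x_1^2}{\delta(x_1)}\,\mathrm{d}x_1\Big)^{1/2}\Big(\int_{\Omega_R}|\partial_{x_2}{\bf w}_2|^2\,\mathrm{d}x\Big)^{1/2}.$$
The first integral is uniformly bounded in $\varepsilon$ because $x_1^2/\delta(x_1)\leq 1/(\kappa_1-\kappa)$ by \eqref{55}, so \eqref{int-fw} holds and the global energy bound \eqref{estw11case2} from Lemma \ref{lemmaenergy} gives the claim.

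The main subtlety, compared with the proof of Lemma \ref{lem3.0}, is that the pointwise bound on ${\bf f}_2$ is more singular than the bound $|{\bf f}_1|\leq C/\delta$ used for ${\bf w}_1$: by \eqref{L2_f13} one has $\int_{\Omega_R}|{\bf f}_2|^2\,\mathrm{d}x$ blowing up like $\varepsilon^{-1/2}$, so a Cauchy--Schwarz pairing of ${\bf f}_2$ against ${\bf w}_2$ in the full $x$-variable cannot close. Integration by parts in $x_1$, which worked for ${\bf w}_1$, would here require control of $\int_{\Omega_R}|\partial_{x_1}{\bf v}_2^{(1)}|^2\,\mathrm{d}x$, and inspection of \eqref{estv212} shows that this integral is also unbounded as $\varepsilon\to 0$. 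The decisive saving is the extra $|x_1|$ factor in ${\bf f}_2$: when paired with the factor $\sqrt{\delta(x_1)}$ gained from the vanishing of ${\bf w}_2$ on both boundary arcs, it produces the pointwise bounded integrand $x_1^2/\delta(x_1)$, which is the core reason the Poincar\'e route closes without any inverse power of $\varepsilon$.
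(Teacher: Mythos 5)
Your argument is correct and closes, but it takes a genuinely different route from the paper's. The paper proceeds component by component and integrates by parts in a direction adapted to each component: for ${\bf f}_2^{(1)}=\mu\Delta{\bf v}_2^{(1)}-\partial_{x_1}\bar p_2$ it uses the fact that ${\bf f}_2^{(1)}$ is a quadratic polynomial in $x_2$, writes it as $\partial_{x_2}$ of an explicit antiderivative $A_0^2(x_1)x_2+\tfrac12 A_1^2(x_1)x_2^2+\tfrac13 A_2^2(x_1)x_2^3$, integrates by parts in $x_2$, and checks that this antiderivative lies in $L^2(\Omega_R)$; for ${\bf f}_2^{(2)}=\mu\partial_{x_1x_1}{\bf v}_2^{(2)}$ it integrates by parts in $x_1$, using $\int_{\Omega_R}|\partial_{x_1}{\bf v}_2^{(2)}|^2\,\mathrm{d}x\leq C$. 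You instead apply a single one-dimensional Agmon/Sobolev estimate in $x_2$ (you call it a Poincar\'e inequality, but since you are bounding the sup rather than the $L^2$ norm of ${\bf w}_2$, it is really the $1$D Sobolev embedding obtained from the fundamental theorem of calculus and Cauchy--Schwarz), which needs only the pointwise bound $|{\bf f}_2|\leq C|x_1|/\delta^2(x_1)$ and the vanishing of ${\bf w}_2$ on both arcs, and then close with Cauchy--Schwarz in $x_1$ using $x_1^2/\delta(x_1)\leq 1/(\kappa_1-\kappa)$. What this buys: you avoid entirely the paper's dependence on the polynomial-in-$x_2$ structure of ${\bf f}_2^{(1)}$ and the explicit construction of the antiderivative, the argument treats both components identically, and (as your own remark about $\int_{\Omega_R}|\partial_{x_1}{\bf v}_2^{(1)}|^2\,\mathrm{d}x$ being unbounded shows) it neatly sidesteps the question of which direction to integrate by parts. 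Indeed the same route re-proves Lemma~\ref{lem3.0}, since there $|{\bf f}_1|\leq C/\delta(x_1)$ leads to $\int_{-R}^R\delta(x_1)\,\mathrm{d}x_1\leq C$ after the same manipulation.

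One small slip in the expository remark: $\int_{\Omega_R}|{\bf f}_2|^2\,\mathrm{d}x$ blows up like $\varepsilon^{-3/2}$, not $\varepsilon^{-1/2}$; from $|{\bf f}_2|\leq C|x_1|/\delta^2(x_1)$ one gets $\int_{\Omega_R}|{\bf f}_2|^2\,\mathrm{d}x\sim\int_{-R}^R x_1^2/\delta^3(x_1)\,\mathrm{d}x_1\sim\varepsilon^{-3/2}$ (the rate $\varepsilon^{-1/2}$ is the one for $\int_{\Omega_R}|{\bf f}_1|^2\,\mathrm{d}x$). The qualitative point — that a global Cauchy--Schwarz pairing cannot close — is of course unaffected.
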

\begin{proof}
	As in the proof of Lemma \ref{lem3.0}, it suffices to prove
	\begin{align}\label{energyDw13}
	\Big| \int_{\Omega_{R}}\sum_{j=1}^{2}{\bf f}_2^{(j)}{\bf w}_2^{(j)}\mathrm{d}x\Big|\leq\,C\left(\int_{\Omega}|\nabla {\bf w}_{2}|^2\mathrm{d}x\right)^{1/2}.
	\end{align}
	
	To this end, we first note from \eqref{estf13} that
	\begin{equation}\label{energy1}
	\int_{\Omega_{R}}{\bf f}_2^{(1)}{\bf w}_2^{(1)}\mathrm{d}x=\int_{\Omega_{R}}{\bf w}_2^{(1)}(\mu \Delta {\bf v}_2^{(1)}-\partial _{x_1} \bar{p}_2)\mathrm{d}x.
	\end{equation}
	One observes from \eqref{estv212} and \eqref{v11-22-2D} that ${\bf f}_2^{(1)}$ can be rewritten in the polynomial form
	$$\mu \Delta {\bf v}_2^{(1)}-\partial _{x_1} \bar{p}_2:=A^{2}_{0}(x_1)+A^{2}_{1}(x_1) x_2+A^{2}_{2}(x_1)x_{2}^{2},$$
	where $A^{2}_{l}(x_1)$ are polynomials of $x_{1}$, and in view of \eqref{estf13},  
	$$\Big|A^{2}_{0}(x_1)+A^{2}_{1}(x_1) x_2+A^{2}_{2}(x_1)x_{2}^{2}\Big|\leq\frac{C|x_1|}{\delta^{2}(x_1)}.$$
Thus, \eqref{energy1} becomes
	\begin{equation*}
	\int_{\Omega_{R}}{\bf f}_2^{(1)}{\bf w}_2^{(1)}\mathrm{d}x=\int_{\Omega_{r_{0}}}{\bf w}_2^{(1)}(A^{2}_{0}(x_1)+A^{2}_{1}(x_1) x_2+A^{2}_{2}(x_1)x_{2}^{2})\mathrm{d}x.
	\end{equation*}	   	
Because
	$$A^{2}_{0}(x_1)+A^{2}_{1}(x_1) x_2+A^{2}_{2}(x_1)x_{2}^{2}=\partial_{x_2}\bigg(A^{2}_{0}(x_1)x_2+\frac{1}{2} A^{2}_{1}(x_1) x_2^2+\frac{1}{3}A^{2}_{2}(x_1)x_{2}^{3} \bigg),$$
it is easy to get
	$$\int_{\Omega_{R}}\big|A^{2}_{0}(x_1)x_2+\frac{1}{2} A^{2}_{1}(x_1) x_2^2+\frac{1}{3}A^{2}_{2}(x_1)x_{2}^{3}\big|^2\mathrm{d}x\leq C\int_{\Omega_{R}}\frac{|x_1|^2}{\delta^2(x_1)}\mathrm{d}x\leq C.$$
	Since ${\bf w}_2=0$ on $\partial D\cup \partial D_1$, then by applying the integration by parts with respect to $x_{2}$ and using H\"older's inequality and \eqref{w1Dw1}, we have
	\begin{align}\label{ener1}
	\left|\int_{\Omega_{R}} {\bf f}_2^{(1)}{\bf w}_{2}^{(1)}\mathrm{d}x\right|&\leq\int_{\Omega_{R}}|\partial_{x_{2}}{\bf w}_2^{(1)}|\Big|A^{2}_{0}(x_1)x_2+\frac{1}{2} A^{2}_{1}(x_1) x_2^2+\frac{1}{3}A^{2}_{2}(x_1)x_{2}^{3}\Big|\mathrm{d}x\nonumber\\
	&\quad+C\left(\int_{\Omega}|\nabla {\bf w}_{2}|^2\mathrm{d}x\right)^{1/2}\nonumber\\
	&\leq\,C\left(\int_{\Omega}|\nabla {\bf w}_{2}|^2\mathrm{d}x\right)^{1/2}.
	\end{align}

For $j=2$, in view of \eqref{estv2112},
	$$\int_{\Omega_{R}}|\partial_{x_1}{\bf v}_2^{(2)}|^{2}\mathrm{d}x\leq\,C\int_{\Omega_{R}}\frac{|x'|^{2}}{\delta^{2}(x')}\leq\,C.$$  
	By using the integration by parts with respect to $x_1$, we have 
	\begin{align}\label{estf1333}
	&\left|\int_{\Omega_{R}} {\bf f}_2^{(2)}{\bf w}_2^{(2)}\mathrm{d}x\right|
	=\left|\int_{\Omega_{R}} {\bf w}_2^{(2)}(\mu\,\partial_{x_1x_1}{\bf v}_2^{(2)})\mathrm{d}x\right|\nonumber\\
	\leq&\,\int_{\Omega_{R}}|\partial_{x_1}{\bf w}_2^{(2)}||\mu\,\partial_{x_1}{\bf v}_2^{(2)}|\mathrm{d}x+\int_{\substack{|x'|=R,\\h(x')<x_{2}<\epsilon+h_{1}(x_1)}}|{\bf w}_2^{(2)}|\mathrm{d}x_1\nonumber\\
	\leq&\, C\left(\int_{\Omega}|\nabla {\bf w}_2|^2\mathrm{d}x\right)^{1/2}.
	\end{align}
	Combining \eqref{ener1} and \eqref{estf1333}, we derive \eqref{energyDw13}. This completes the proof.
\end{proof}

\begin{lemma}\label{lem3.2}
Let $({\bf w}_2,q_2)$ be the solution to \eqref{w1}. Then
\begin{align*}
\int_{\Omega_{\delta}(z_1)}|\nabla {\bf w}_2|^{2}\mathrm{d}x\leq C\delta(z_1).
\end{align*}
\end{lemma}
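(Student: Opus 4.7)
The plan is to follow the same iteration scheme as in the proof of Lemma \ref{lem3.1}, feeding in the different right-hand side bound \eqref{L2_f13} for $|{\bf f}_{2}|^{2}$ in place of the $|{\bf f}_{1}|^{2}$ bound, and then exploiting the key geometric identity $|z_{1}|^{2}\leq C\,\delta(z_{1})$ coming from \eqref{55} to absorb the polynomial factors in $|z_{1}|$ into $\delta(z_{1})$.

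First I would set $F(t):=\int_{\Omega_{t}(z_{1})}|\nabla{\bf w}_{2}|^{2}\,\mathrm{d}x$ and combine the Caccioppoli-type inequality \eqref{iterating1} with the bound \eqref{L2_f13} to obtain, for $0<t<s\leq R$,
\begin{align*}
F(t)\leq\Big(\frac{c_{0}\delta(z_{1})}{s-t}\Big)^{2}F(s)+C\big((s-t)^{2}+\delta^{2}(z_{1})\big)\,\frac{s\,(s^{2}+|z_{1}|^{2})}{\delta^{3}(z_{1})}.
\end{align*}
Then, exactly as in the proof of Lemma \ref{lem3.1}, fix the constant $c_{0}$, let $k_{0}=\big[\frac{1}{4c_{0}\sqrt{\delta(z_{1})}}\big]+1$, and set $t_{i}=\delta(z_{1})+2c_{0}\,i\,\delta(z_{1})$ for $i=0,1,\dots,k_{0}$. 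Plugging $s=t_{i+1}$, $t=t_{i}$ into the display above, and using $s\leq C(i+1)\delta(z_{1})$, the inhomogeneous term is bounded by
\begin{align*}
C\,(i+1)^{3}\,\delta^{2}(z_{1})+C\,(i+1)\,|z_{1}|^{2},
\end{align*}
which yields the iteration formula $F(t_{i})\leq \tfrac{1}{4}F(t_{i+1})+C(i+1)^{3}\delta^{2}(z_{1})+C(i+1)|z_{1}|^{2}$.

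Iterating $k_{0}$ times and using the global energy bound from Lemma \ref{lem_energyw13} at the top scale $F(t_{k_{0}})\leq C$, the geometric factor $(1/4)^{k_{0}}$ is super-exponentially small in $\delta(z_{1})^{-1/2}$, hence negligible compared with $\delta(z_{1})$. The two convergent series produce $C\delta^{2}(z_{1})+C|z_{1}|^{2}$. The crucial step is then to invoke \eqref{55}, which gives $|z_{1}|^{2}\leq\frac{1}{\kappa_{1}-\kappa}\,\delta(z_{1})$, and also $\delta^{2}(z_{1})\leq C\delta(z_{1})$ since $\delta(z_{1})\leq C$. This collapses both contributions to $C\delta(z_{1})$ and delivers $F(\delta(z_{1}))\leq C\delta(z_{1})$, which is the claim.

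The only conceptual subtlety (and the point where the argument departs from Lemma \ref{lem3.1}) is the balance between the $|z_{1}|^{2}$ factor arising from the $x_{1}$-weight in $|{\bf f}_{2}|$ and the denominator $\delta^{3}(z_{1})$ in \eqref{L2_f13}; without the identity $|z_{1}|^{2}\leq C\,\delta(z_{1})$, the iteration would only close at the weaker rate $|z_{1}|^{2}$, not $\delta(z_{1})$, so this is the main obstacle. Once that bound is inserted, the rest is a routine geometric-series computation in complete analogy with the proof of Lemma \ref{lem3.1}.
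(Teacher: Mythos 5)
Your proposal is correct and matches what the paper intends: the paper omits the proof of Lemma \ref{lem3.2}, stating only that it is ``very similar to that of Lemma \ref{lem3.1},'' and your write-up is precisely that iteration argument carried out with the modified bound \eqref{L2_f13} for $|{\bf f}_2|^2$ in place of the bound for $|{\bf f}_1|^2$. You also correctly identify the one genuinely new ingredient needed to reach the rate $\delta(z_1)$ rather than $\delta^2(z_1)$: the geometric identity $|z_1|^2\leq (\kappa_1-\kappa)^{-1}\delta(z_1)$ from \eqref{55}, which absorbs the extra $|z_1|^2$ factor coming from the $x_1$-weight in ${\bf f}_2$.
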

The proof is very similar to that of Lemma \ref{lem3.1} and thus is omitted here. Then, by a similar argument used in the proof of Proposition \ref{propu11}, together with Lemma \ref{lem3.2}, we obtain 
\begin{prop}\label{propu12}
Let ${\bf u}_{2}\in{C}^{2}(\Omega;\mathbb R^2),~p_{2}\in{C}^{1}(\Omega)$ be the solution to \eqref{equ_v1}. Then there holds
\begin{equation*}
\|\nabla({\bf u}_{2}-{\bf v}_{2})\|_{L^{\infty}(\Omega_{\delta/2}(x_1))}\leq \frac{C}{\sqrt{\delta(x_1)}},\quad\,x\in\Omega_{R}.
\end{equation*}
and 
\begin{equation*}
	\|\nabla^2 ({\bf u}_{2}-{\bf v}_{2})\|_{L^{\infty}(\Omega_{\delta/2}(x_1))}+\|\nabla q_2\|_{L^{\infty}(\Omega_{\delta/2}(x_1))}\leq C\left(\frac{1}{\delta(x_1)}+\frac{|x_1|}{\delta^2(x_1)}\right).
\end{equation*}
Consequently, \begin{align*}
|\nabla {\bf u}_2(x)|\leq C\left(\frac{1}{\delta(x_1)} +\frac{|x_1|}{\delta^2(x_1)}\right),\quad\,x\in\Omega_{R},
\end{align*}
	and
\begin{align*}
	\|p_{2}-(q_{2})_{\Omega_R}\|_{L^{\infty}(\Omega_{\delta/2}(x_{1}))}\leq\,\frac{C}{\varepsilon^{2}},\quad\,x\in\Omega_{R}.
\end{align*}
\end{prop}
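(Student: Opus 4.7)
The proof will follow exactly the framework established for Proposition \ref{propu11}, with the auxiliary pair $({\bf v}_2,\bar p_2)$ defined in \eqref{v2alpha} and \eqref{barp2} playing the roles of $({\bf v}_1,\bar p_1)$. The key inputs are already in place: ${\bf v}_2$ is divergence-free in $\Omega_{2R}$ (see \eqref{estv212}--\eqref{estv11a2223}), the improved source bound $|{\bf f}_2|\le C|x_1|/\delta^2(x_1)$ is verified in \eqref{estf13}, the global energy bound $\int_\Omega |\nabla{\bf w}_2|^2\le C$ is Lemma \ref{lem_energyw13}, and the local energy estimate $\int_{\Omega_\delta(z_1)}|\nabla{\bf w}_2|^2\le C\delta(z_1)$ is Lemma \ref{lem3.2}. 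So the remaining work is to convert these into pointwise bounds.

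First I would apply Proposition \ref{lemWG2} to $({\bf w}_2,q_2)$ on $\Omega_{\delta/2}(x_1)$. Using $\|\nabla{\bf w}_2\|_{L^2(\Omega_\delta(x_1))}\le C\sqrt{\delta(x_1)}$ from Lemma \ref{lem3.2}, $\|{\bf f}_2\|_{L^\infty(\Omega_\delta(x_1))}\le C|x_1|/\delta^2(x_1)$ from \eqref{estf13}, and the elementary inequality $|x_1|\le C\sqrt{\delta(x_1)}$ valid in $\Omega_R$ by \eqref{55}, the first-order inequality of Proposition \ref{lemWG2} gives
\[
\|\nabla{\bf w}_2\|_{L^\infty(\Omega_{\delta/2}(x_1))}\le C\bigl(\delta^{-1}\sqrt{\delta}+\delta\cdot|x_1|/\delta^{2}\bigr)\le \frac{C}{\sqrt{\delta(x_1)}}.
\]
Combined with the explicit bound $|\nabla{\bf v}_2|\le C(1/\delta+|x_1|/\delta^2)$ obtained directly from \eqref{estv212}--\eqref{estv11a2223}, this yields the claimed pointwise estimate on $|\nabla{\bf u}_2|$. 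For the second-order estimate I would differentiate the explicit formulae \eqref{v11-22-2D} and \eqref{barp2} to bound $\|\nabla{\bf f}_2\|_{L^\infty(\Omega_\delta(x_1))}$ by $C(1/\delta^2+|x_1|/\delta^3)$, and then the second inequality of Proposition \ref{lemWG2} yields $\|\nabla^2{\bf w}_2\|_{L^\infty(\Omega_{\delta/2}(x_1))}+\|\nabla q_2\|_{L^\infty(\Omega_{\delta/2}(x_1))}\le C(1/\delta+|x_1|/\delta^2)$, with the term $\delta^{-2}\|\nabla{\bf w}_2\|_{L^2}\le C\delta^{-3/2}$ absorbed since $1/\delta^{3/2}\le 1/\delta$ for small $\delta$.

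For the pressure estimate I would decompose $p_2-(q_2)_{\Omega_R}=(q_2-(q_2)_{\Omega_R})+\bar p_2$. The explicit expression \eqref{barp2} directly gives $|\bar p_2|\le C/\delta^2(x_1)\le C/\varepsilon^2$. For the oscillation of $q_2$, I would patch the local bounds $\|\nabla q_2\|_{L^\infty(\Omega_{\delta/2}(x_1))}\le C(1/\delta+|x_1|/\delta^2)$ along a polygonal path in $\Omega_R$ and integrate; the dominant contribution is
\[
\int_0^R\frac{|x_1|}{\delta^2(x_1)}\,\mathrm{d}x_1\le\frac{C}{\varepsilon},
\]
so $|q_2-(q_2)_{\Omega_R}|\le C/\varepsilon$, which is swallowed by the $C/\varepsilon^2$ coming from $\bar p_2$.

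The main obstacle is the bookkeeping for $\|\nabla{\bf f}_2\|_{L^\infty}$: one must carefully differentiate the complicated expressions for ${\bf v}_2$ and $\bar p_2$ (which, owing to the asymmetry $\kappa_1\ne\kappa$, are more involved than in the symmetric interior setting of \cite{LX}) and verify that the cancellations designed into $\bar p_2$ survive at the level of one more derivative, so that no worse power than $1/\delta^2+|x_1|/\delta^3$ arises. A secondary technical point is the patched path-integration used to bound the global pressure oscillation; this is routine given the local gradient bound on $q_2$, but the path must be chosen to cross the neck only through regions where the local $L^\infty$ bound on $\nabla q_2$ has already been established.
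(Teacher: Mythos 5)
Your outline matches the approach the paper implies (the paper omits the proof, deferring to the analogue of Proposition \ref{propu11} together with Lemma \ref{lem3.2}), and the first-order bound, the consequent pointwise estimate on $|\nabla{\bf u}_2|$, and the pressure estimate are all sound. But the second-order estimate contains a concrete error: when you write that $\delta^{-2}\|\nabla{\bf w}_2\|_{L^2}\le C\delta^{-3/2}$ is ``absorbed since $1/\delta^{3/2}\le 1/\delta$ for small $\delta$,'' the inequality is reversed. Since $0<\delta<1$, one has $\delta^{-3/2}>\delta^{-1}$, and $\delta^{-3/2}$ is also not dominated by $|x_1|/\delta^2$ (take $x_1=0$, $\delta=\varepsilon$: then $\delta^{-3/2}=\varepsilon^{-3/2}$ while $1/\delta+|x_1|/\delta^2=\varepsilon^{-1}$). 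So as written your argument delivers only $\|\nabla^2{\bf w}_2\|_{L^\infty}+\|\nabla q_2\|_{L^\infty}\le C\delta^{-3/2}$, which is strictly weaker than the claim.

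To obtain the stated bound $C(1/\delta+|x_1|/\delta^2)$ you must sharpen the local energy estimate. Lemma \ref{lem3.2} is stated as $\int_{\Omega_\delta(z_1)}|\nabla{\bf w}_2|^2\le C\delta(z_1)$, but running the iteration exactly as in Lemma \ref{lem3.1} while keeping the $s^2$ and $|z_1|^2$ contributions in \eqref{L2_f13} separate actually yields the sharper conclusion $\int_{\Omega_\delta(z_1)}|\nabla{\bf w}_2|^2\le C\big(\delta^2(z_1)+|z_1|^2\big)$, which implies the stated one since $\delta<1$ and $(\kappa_1-\kappa)|z_1|^2\le\delta(z_1)$. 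With this sharper input, $\delta^{-2}\|\nabla{\bf w}_2\|_{L^2(\Omega_\delta(z_1))}\le C\sqrt{\delta^2+|z_1|^2}\,/\,\delta^2\le C\big(1/\delta+|z_1|/\delta^2\big)$, which is exactly what the proposition requires and is presumably what the authors intend, the stated Lemma \ref{lem3.2} being a coarser corollary of what the iteration gives. Note your pressure argument is unaffected either way: even the weaker $\delta^{-3/2}$ bound on $\nabla q_2$ integrates across the neck to $C/\varepsilon$, which is still swallowed by the $C/\varepsilon^2$ coming from $\bar p_2$.
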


\subsection{Estimates of $|\nabla {\bf u}_3|$ and $p_3$}\label{subsec2.3}

Following the above idea, we seek ${\bf v}_{3}\in C^{2}(\Omega;\mathbb R^2)$, satisfying, in $\Omega_{2R}$,
\begin{align}\label{v13}
{\bf v}_{3}=\boldsymbol\psi_{3}\Big(k(x)+\frac{1}{2}\Big)
+\begin{pmatrix}
\frac{-4x_{1}^2}{\delta(x_{1})}-2k(x)x_2-\frac{3x_2^2}{\delta(x_1)}+\frac{1}{\kappa_1-\kappa}\\\\
G_3(x)\end{pmatrix}
\Big(k^2(x)-\frac{1}{4}\Big),
\end{align}
where
\begin{align}\label{G3}
G_3(x)=2x_1k(x)+x_1\Big((\kappa_1+\kappa)+2(\kappa_1-\kappa)k(x)\Big)\left(\frac{-4x_{1}^2}{\delta(x_{1})}-\frac{3x_2^2}{\delta(x_1)}+\frac{1}{\kappa_1-\kappa}\right).
\end{align} 
Choose $\bar{p}_3\in C^{1}(\Omega)$ such that, in $\Omega_{2R}$,
\begin{equation}\label{barp3}
\bar{p}_3=\frac{2\mu }{\kappa_1-\kappa}\frac{ x_1 }{\delta^2(x_1)}+\mu \left(\frac{-x_1}{\delta(x_1)}+ \partial_{x_2}G_3(x)\Big(k^2(x)-\frac{1}{4}\Big)
+\frac{2k(x)}{\delta(x_1)}G_3(x)\right).
\end{equation}

From \eqref{v13} and \eqref{G3}, we have 
\begin{equation*}
|\partial_{x_1}G_3(x)|\leq C,\quad |\partial_{x_2}G_3(x)|\leq \frac{Cx_1}{\delta(x_1)},
\end{equation*}
and
\begin{align}\label{estv3111}
\partial_{x_1}{\bf v}_3^{(1)}&=x_2\partial_{x_1}k(x)+2k(x)\partial_{x_1}k(x)\left( \frac{-4x_{1}^2}{\delta(x_{1})}-2k(x)x_2-\frac{3x_2^2}{\delta(x_1)}+\frac{1}{\kappa_1-\kappa}\right)\nonumber\\
&\quad-\left(\frac{8x_1}{\delta(x_1)}-\frac{8(\kappa_1-\kappa)x_1^3}{\delta^2(x_1)}+2\partial_{x_1}k(x)x_2-\frac{6(\kappa_1-\kappa)x_1x_2^2}{\delta^2(x_1)}\right)\Big(k^2(x)-\frac{1}{4}\Big) , \\
\partial_{x_2}{\bf v}_3^{(1)}&=k(x)+\frac{1}{2}+\frac{x_2}{\delta(x_{1})}-\Big(2k(x)+\frac{8x_2}{\delta(x_1)}\Big)\Big(k^2(x)-\frac{1}{4}\Big)\nonumber\\&\quad+\frac{2k(x)}{\delta(x_{1})}\left( \frac{-4x_{1}^2}{\delta(x_{1})}-2k(x)x_2-\frac{3x_2^2}{\delta(x_1)}+\frac{1}{\kappa_1-\kappa}\right),\label{estv3112}
\end{align}
\begin{equation}\label{estv3212}
\partial_{x_1}{\bf v}_3^{(2)}=-k(x)-\frac{1}{2}-x_1\partial_{x_1}k(x)+\partial_{x_1}G_3(x)\Big(k^2(x)-\frac{1}{4}\Big)+2k(x)\partial_{x_1}k(x)G_3(x),
\end{equation}
\begin{align}\label{estv111a223}
\partial_{x_2}{\bf v}_3^{(2)}=\frac{-x_1}{\delta(x_1)}+ \partial_{x_2}G_3(x)\Big(k^2(x)-\frac{1}{4}\Big)
+\frac{2k}{\delta(x_1)}G_3(x).
\end{align}
Then
$$\nabla\cdot {\bf v}_{3}=0,\quad\mbox{in}~ \Omega_{2R},$$
and
\begin{align*}
|\nabla{\bf v}_{3}|&\leq \frac{C}{\delta(x_1)},\quad\mbox{in}~ \Omega_{2R}.
\end{align*}

Furthermore,
\begin{align*}
|\partial_{x_1x_1}{\bf v}_3^{(1)}|&\leq\frac{C}{\delta(x_{1})},\quad\quad|\partial_{x_2x_2}{\bf v}_3^{(1)}|\leq\frac{C}{\delta^2(x_{1})},\\
|\partial_{x_1x_1}{\bf v}_3^{(2)}|&\leq\frac{C|x_1|}{\delta(x_{1})},\quad\quad|\partial_{x_2x_2}{\bf v}_3^{(2)}|\leq\frac{C|x_1|}{\delta^2(x_{1})}.
\end{align*}     
Combining with \eqref{barp3} yields 
\begin{equation*}
\mu\,\partial_{x_2x_2} {\bf v}_3^{(2)}- \partial_{x_2}\bar{p}_3=0,
\end{equation*}
and
\begin{equation*}
|\mu\,\partial_{x_2x_2} {\bf v}_3^{(1)}- \partial_{x_1}\bar{p}_3|\leq\frac{C}{\delta(x_1)}.
\end{equation*}        
This gives 
\begin{align*}
|{\bf f}_{3}|=\left|\mu\,\Delta{\bf v}_{3}-\nabla\bar{p}_3\right|\leq\frac{C}{\delta(x_1)},\quad \mbox{in}~\Omega_{2R}.
\end{align*} 

Similar to the proof of Lemma \ref{lem_energyw13}, we have the estimate of global energy of ${\bf w}_3$. 
\begin{lemma}\label{lem_energyw3}
Let $({\bf w}_3,q_3)$ be the solution to \eqref{w1}. Then
\begin{align}\label{energyw3}
\int_{\Omega}|\nabla {\bf w}_{3}|^{2}\mathrm{d}x\leq C.
\end{align}
\end{lemma}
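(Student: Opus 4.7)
The plan is to follow the template set by Lemma~\ref{lem_energyw13}: by Lemma~\ref{lemmaenergy}, since we have already verified $\|{\bf v}_3\|_{C^2(\Omega\setminus\Omega_R)}\le C$ and $\|\bar p_3\|_{C^1(\Omega\setminus\Omega_R)}\le C$, it suffices to establish the hypothesis
\[
\Big|\int_{\Omega_{R}}\sum_{j=1}^{2}{\bf f}_3^{(j)}{\bf w}_3^{(j)}\,\mathrm{d}x\Big|\leq C\Big(\int_{\Omega}|\nabla {\bf w}_3|^{2}\,\mathrm{d}x\Big)^{1/2},
\]
after which \eqref{energyw3} follows from \eqref{estw11case2}. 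All boundary contributions at $|x_1|=R$ arising from integration by parts will be handled by the trace estimate \eqref{w1Dw1}, exactly as in Lemma~\ref{lem3.0}.

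For the first component, I would decompose ${\bf f}_3^{(1)}=\mu\,\partial_{x_1x_1}{\bf v}_3^{(1)}+(\mu\,\partial_{x_2x_2}{\bf v}_3^{(1)}-\partial_{x_1}\bar p_3)$. The first piece is integrated by parts in $x_1$ against ${\bf w}_3^{(1)}$; Cauchy--Schwarz then reduces the task to checking $\int_{\Omega_R}|\partial_{x_1}{\bf v}_3^{(1)}|^{2}\,\mathrm{d}x\leq C$, which one reads off from \eqref{estv3111} since every summand there is either bounded or of size $O(|x_1|/\delta(x_1))$, and $\int_{\Omega_R} x_1^2/\delta(x_1)^{2}\cdot\delta(x_1)\,\mathrm{d}x_1<\infty$ because $\delta(x_1)\gtrsim x_1^2$. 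For the second piece, I observe from \eqref{v13}, \eqref{barp3}, and the definition of $k(x)$ that $\mu\,\partial_{x_2x_2}{\bf v}_3^{(1)}-\partial_{x_1}\bar p_3$ is a polynomial in $x_2$ of degree at most three with coefficients depending only on $x_1$, say $A_0^3(x_1)+A_1^3(x_1)x_2+A_2^3(x_1)x_2^{2}+A_3^3(x_1)x_2^{3}$, whose sum is bounded by $C/\delta(x_1)$ pointwise in $\Omega_{2R}$. Writing this polynomial as $\partial_{x_2}$ of its explicit primitive $A_0^3 x_2+\tfrac{1}{2}A_1^3 x_2^{2}+\tfrac{1}{3}A_2^3 x_2^{3}+\tfrac14 A_3^3 x_2^{4}$, using ${\bf w}_3=0$ on $\partial D\cup\partial D_1$, integrating by parts in $x_2$, and applying Cauchy--Schwarz, I reduce matters to an $L^{2}(\Omega_R)$ bound on that primitive. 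The extra factor of $x_2$ (which satisfies $|x_2|\lesssim\delta(x_1)$ in $\Omega_R$) provides exactly enough cancellation to convert $\delta(x_1)^{-1}$ into an integrable quantity, paralleling the treatment of $A_0^2+A_1^2 x_2+A_2^2 x_2^{2}$ in Lemma~\ref{lem_energyw13}.

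For the second component, the crucial simplification $\mu\,\partial_{x_2x_2}{\bf v}_3^{(2)}-\partial_{x_2}\bar p_3=0$ reduces ${\bf f}_3^{(2)}$ to $\mu\,\partial_{x_1x_1}{\bf v}_3^{(2)}$. Integrating by parts in $x_1$ against ${\bf w}_3^{(2)}$ (the side boundary contribution at $|x_1|=R$ is absorbed by \eqref{w1Dw1}), I need only confirm $\int_{\Omega_R}|\partial_{x_1}{\bf v}_3^{(2)}|^{2}\,\mathrm{d}x\leq C$. Formula \eqref{estv3212} together with the bounds $|\partial_{x_1}G_3|\leq C$ and the pointwise control on $G_3$ derived from \eqref{G3} yield $|\partial_{x_1}{\bf v}_3^{(2)}|\leq C+C|x_1|/\delta(x_1)$ in $\Omega_{2R}$, and again $\int_{\Omega_R}x_1^{2}/\delta(x_1)^{2}\,\mathrm{d}x<\infty$.

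The main technical obstacle is the explicit polynomial-in-$x_2$ expansion of $\mu\,\partial_{x_2x_2}{\bf v}_3^{(1)}-\partial_{x_1}\bar p_3$. The extra $x_2$-dependence in $G_3$ (from the term $-3x_2^{2}/\delta(x_1)$ in \eqref{v13}) and in $\bar p_3$ forces a cubic polynomial rather than the quadratic one appearing in Lemma~\ref{lem_energyw13}, so one must verify by direct computation that the leading $O(\delta^{-2})$ contributions indeed cancel — this cancellation is precisely what motivated the choice of $\bar p_3$ in \eqref{barp3} — and that each of the primitive coefficients, multiplied by the appropriate power of $x_2$, is square-integrable over $\Omega_R$ uniformly in $\varepsilon$. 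Once this book-keeping is carried out, combining the two estimates via Cauchy--Schwarz gives the required hypothesis of Lemma~\ref{lemmaenergy}, and \eqref{energyw3} follows immediately.
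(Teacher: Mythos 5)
Your proposal is correct and tracks the paper's own argument essentially step for step: appeal to Lemma~\ref{lemmaenergy}, handle $\mu\,\partial_{x_2x_2}{\bf v}_3^{(1)}-\partial_{x_1}\bar p_3$ by writing it as $\partial_{x_2}$ of a bounded polynomial primitive and integrating by parts in $x_2$, handle $\mu\,\partial_{x_1x_1}{\bf v}_3^{(1)}$ by integrating by parts in $x_1$ with $|\partial_{x_1}{\bf v}_3^{(1)}|\lesssim|x_1|/\delta(x_1)$, and use the exact cancellation $\mu\,\partial_{x_2x_2}{\bf v}_3^{(2)}-\partial_{x_2}\bar p_3=0$ together with $|\partial_{x_1}{\bf v}_3^{(2)}|\le C$ for the second component. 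The only differences are cosmetic (explicitly splitting ${\bf f}_3^{(1)}$ up front, and phrasing the primitive estimate as an $L^2$ rather than a pointwise bound).
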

\begin{proof}
In a similar fashion as in the proof of Lemma \ref{lem_energyw13}, we rewrite $\partial_{x_2x_2}{\bf v}_3^{(1)}-\partial_{x_1}\bar{p}_3$ as follows:
\begin{align*}
\partial_{x_2x_2}{\bf v}_3^{(1)}-\partial_{x_1}\bar{p}_3:&=A_0^3(x_1)+A_1^3(x_1)x_2+A_2^3(x_1)x_2^2+A_3^3(x_1)x_2^3\\&=\partial_{x_2}\big(A_0^3(x_1)x_2+\frac{1}{2}A_1^3(x_1)x_2^2+\frac{1}{3}A_2^3(x_1)x_2^3+\frac{1}{4}A_3^3(x_1)x_2^4\big),
\end{align*}
where $A_l^3(x_1)$ are polunomials of $x_1$.	Moreover, from \eqref{estv3112} and \eqref{barp3}, we have
$$|A_0^3(x_1)x_2+\frac{1}{2}A_1^3(x_1)x_2^2+\frac{1}{3}A_2^3(x_1)x_2^3+\frac{1}{4}A_3^3(x_1)x_2^4|\leq C.$$
Then
\begin{align*}
\left|\int_{\Omega_{R}} (\mu\,\partial_{x_2x_2} {\bf v}_3^{(1)}-\partial_{x_1}\bar{p}_3){\bf w}_3^{(1)}\mathrm{d}x\right|
\leq& C\int_{\Omega_{R}}|\partial_{x_{2}}{\bf w}_3^{(1)}|\mathrm{d}x
\leq C\left(\int_{\Omega}|\nabla {\bf w}_{3}|^2\mathrm{d}x\right)^{1/2}.
\end{align*}
Also, it follows from \eqref{estv3111} that 
$$|\partial_{x_1}{\bf v}_3^{(1)}|\leq\frac{C|x_1|}{\delta(x_1)}.$$
Then
$$ \left|\int_{\Omega_{R}} (\mu\,\Delta{\bf v}_3^{(1)}-\partial_{x_1}\bar{p}_3){\bf w}_3^{(1)}\mathrm{d}x\right|	\leq C\left(\int_{\Omega}|\nabla {\bf w}_{3}|^2\mathrm{d}x\right)^{1/2}.$$

Since
$$ \mu\,\partial_{x_2x_2} {\bf v}_3^{(2)}- \partial_{x_2}\bar{p}_3=0,\quad|\partial_{x_1}{\bf v}_3^{(2)}|\leq C,$$ 
then by using the integration by parts with respect to $x_2$, we obtain
\begin{align*}
\left|\int_{\Omega_{R}} (\mu\,\Delta {\bf v}_3^{(2)}-\partial_{x_2}\bar{p}_3){\bf w}_3^{(1)}\mathrm{d}x\right|
\leq& C\int_{\Omega_{R}}|\partial_{x_{1}}{\bf w}_3^{(2)}|\mathrm{d}x
\leq C\left(\int_{\Omega}|\nabla {\bf w}_{3}|^2\mathrm{d}x\right)^{1/2}.
\end{align*}
We thus have \eqref{energyw3}. This completes the proof.
\end{proof}

As before,  we have 
\begin{prop}\label{propu13}
Let ${\bf u}_{3}\in{C}^{2}(\Omega;\mathbb R^2),~p_{3}\in{C}^{1}(\Omega)$ be the solution to \eqref{equ_v1}. Then 
\begin{equation*}
\|\nabla({\bf u}_{3}-{\bf v}_{3})\|_{L^{\infty}(\Omega_{\delta/2}(x_1))}\leq C,\quad x\in\Omega_{R},
\end{equation*}
and 
\begin{equation*}
	\|\nabla^2 ({\bf u}_{3}-{\bf v}_{3})\|_{L^{\infty}(\Omega_{\delta/2}(x_1))}+\|\nabla q_3\|_{L^{\infty}(\Omega_{\delta/2}(x_1))}\leq \frac{C}{\delta(x_1)}.
\end{equation*}
Consequently, for $x\in\Omega_{R}$,
\begin{align*}
|\nabla {\bf u}_3(x)|\leq  \frac{C}{\delta(x_1)},\quad\,x\in\Omega_{R},
\end{align*}
and
\begin{align*}
	\|p_{3}-(q_{3})_{\Omega_R}\|_{L^{\infty}(\Omega_{\delta/2}(x_{1}))}\leq\,\frac{C}{\varepsilon^{3/2}},\quad\,x\in\Omega_{R}.
\end{align*}
\end{prop}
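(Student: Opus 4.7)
The plan is to follow exactly the same three-step template already carried out for $({\bf u}_1, p_1)$ in the proof of Proposition \ref{propu11} and for $({\bf u}_2,p_2)$ in Proposition \ref{propu12}, now with the auxiliary pair $({\bf v}_3,\bar p_3)$ from \eqref{v13}--\eqref{barp3}. The key inputs are already in hand: $\nabla\cdot{\bf v}_3=0$ in $\Omega_{2R}$, $|\nabla {\bf v}_3|\le C/\delta(x_1)$, the pointwise bound $|{\bf f}_3|=|\mu\Delta{\bf v}_3-\nabla\bar p_3|\le C/\delta(x_1)$, and the global energy bound $\int_\Omega|\nabla{\bf w}_3|^2\,dx\le C$ from Lemma \ref{lem_energyw3}. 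Writing ${\bf w}_3={\bf u}_3-{\bf v}_3$ and $q_3=p_3-\bar p_3$, the pair $({\bf w}_3,q_3)$ solves the problem \eqref{w1} with right-hand side ${\bf f}_3$, so all the general machinery applies.

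First, I would establish the local energy decay
\[
\int_{\Omega_\delta(z_1)}|\nabla{\bf w}_3|^2\,dx\le C\delta^2(z_1),\qquad |z_1|<R.
\]
Indeed, from $|{\bf f}_3|\le C/\delta(x_1)$ one gets $\int_{\Omega_s(z_1)}|{\bf f}_3|^2\,dx\le Cs/\delta(z_1)$; substituting into the Caccioppoli-type inequality \eqref{iterating1} yields
\[
F(t)\le\Bigl(\tfrac{c_0\delta(z_1)}{s-t}\Bigr)^2 F(s)+C\bigl((s-t)^2+\delta^2(z_1)\bigr)\tfrac{s}{\delta(z_1)},
\]
where $F(t)=\int_{\Omega_t(z_1)}|\nabla{\bf w}_3|^2\,dx$. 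The iteration of Lemma \ref{lem3.1}, with $k_0\sim 1/\sqrt{\delta(z_1)}$ steps along $t_i=\delta(z_1)+2c_0i\delta(z_1)$, together with the global bound \eqref{energyw3}, absorbs the geometric factor and produces $F(\delta(z_1))\le C\delta^2(z_1)$.

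Next, I apply Proposition \ref{lemWG2} on the sub-cylinder $\Omega_{\delta/2}(z_1)$. Its first inequality gives
\[
\|\nabla{\bf w}_3\|_{L^\infty(\Omega_{\delta/2}(z_1))}\le C\bigl(\delta^{-1}\|\nabla{\bf w}_3\|_{L^2(\Omega_\delta(z_1))}+\delta\|{\bf f}_3\|_{L^\infty(\Omega_\delta(z_1))}\bigr)\le C(\delta^{-1}\cdot\delta+\delta\cdot\delta^{-1})= C,
\]
while the second delivers $\|\nabla^2{\bf w}_3\|_{L^\infty(\Omega_{\delta/2}(z_1))}+\|\nabla q_3\|_{L^\infty(\Omega_{\delta/2}(z_1))}\le C/\delta(z_1)$ after using also $|\nabla{\bf f}_3|\le C/\delta^2(x_1)$ (an elementary consequence of differentiating \eqref{v13}--\eqref{barp3} once more). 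Combining with $|\nabla{\bf v}_3|\le C/\delta(x_1)$ yields $|\nabla{\bf u}_3(x)|\le C/\delta(x_1)$ for $x\in\Omega_R$.

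For the pressure estimate I split
\[
|p_3(x)-(q_3)_{\Omega_R}|\le |q_3(x)-(q_3)_{\Omega_R}|+|\bar p_3(x)|.
\]
The first term is controlled by $C/\varepsilon$ using the pointwise gradient bound $|\nabla q_3|\le C/\delta(x_1)\le C/\varepsilon$ and integration along a chain of balls connecting $x$ to the bulk of $\Omega_R$ (exactly as in the final step of the proof of Proposition \ref{propu11}). For the second term, the dominant contribution in \eqref{barp3} is the Stokeslet-like term $\tfrac{2\mu}{\kappa_1-\kappa}\tfrac{x_1}{\delta^2(x_1)}$, and a quick check shows all other summands are either $O(|x_1|/\delta(x_1))=O(1)$ or $O(|G_3|/\delta(x_1))=O(|x_1|/\delta(x_1))=O(1)$; hence $|\bar p_3(x)|\le C|x_1|/\delta^2(x_1)\le C/\varepsilon^{3/2}$ on $\Omega_R$, with the maximum attained where $|x_1|\sim\sqrt{\varepsilon}$. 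This yields the desired $\|p_3-(q_3)_{\Omega_R}\|_{L^\infty(\Omega_{\delta/2}(x_1))}\le C/\varepsilon^{3/2}$. The main obstacle is purely bookkeeping: verifying that the lengthy expression \eqref{barp3}, and in particular the cross term $(2k/\delta)G_3$, does not exceed the Stokeslet $|x_1|/\delta^2$ in size, which amounts to showing $|G_3(x)|\le C|x_1|$ on $\Omega_{2R}$ using the obvious estimates $|k|\le 1/2$, $x_1^2/\delta\le 1/(\kappa_1-\kappa)$ and $x_2\le\delta$.
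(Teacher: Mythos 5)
Your proposal is correct and follows exactly the same three-step template (global energy bound, Caccioppoli iteration to the local scale $\delta$, then the rescaled $W^{1,\infty}$/$W^{2,\infty}$ estimate via Proposition \ref{lemWG2}) that the paper itself invokes with the phrase ``as before'' after Lemma \ref{lem_energyw3}; all the ingredients you cite ($|{\bf f}_3|\le C/\delta$, $|\nabla{\bf v}_3|\le C/\delta$, the global bound $\int_\Omega|\nabla{\bf w}_3|^2\le C$, and $|G_3|\le C|x_1|$) match the paper's. One small slip in the exposition of the pressure bound: you write that the non-Stokeslet terms in $\bar p_3$ are ``$O(|x_1|/\delta(x_1))=O(1)$,'' but $|x_1|/\delta(x_1)$ is not $O(1)$ uniformly (it peaks at $\sim\varepsilon^{-1/2}$ near $|x_1|\sim\sqrt\varepsilon$); what matters, and what your argument in fact uses, is only that these terms are dominated by $|x_1|/\delta^2(x_1)$, so the final bound $\|p_3-(q_3)_{\Omega_R}\|\le C\varepsilon^{-3/2}$ is unaffected.
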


Based on these estimates for $\nabla {\bf u}_\alpha$ and $p_{\alpha}$, $\alpha=1,2,3$, established in Proposition \ref{propu11}, Proposition \ref{propu12} and Proposition \ref{propu13}, we will first prove the invertibility of the matrix $(a_{\alpha\beta})_{3\times3}$ in the next section. 

\section{Asyptotics and Estimates of $a_{\alpha\beta}$}\label{estCalpha}

In order to solve the free constants $C^\alpha$, $\alpha=1,2,3$, we first establish the asymptotics and estimates of $a_{\alpha\beta}$ in this section, and show the matrix $(a_{\alpha\beta})_{3\times3}$ is invertible.

\begin{lemma}\label{lema114D}
	We have
	\begin{align}
	a_{11}&=
	\frac{\mu\,\pi}{\sqrt{\kappa_1-\kappa}}\bigg(1+\frac{(\kappa_1+\kappa)^2}{(\kappa_1-\kappa)^{2}}+\frac{\kappa(\kappa_1+\kappa)^2}{3(\kappa_1-\kappa)^{3}}
	\bigg)\frac{1}{\sqrt{\varepsilon}}+O(1),\\
	a_{22}
	&=\frac{3\mu\,\pi}{2(\kappa_1-\kappa)^{3/2}}\frac{1}{\varepsilon^{3/2}}+\frac{O(1)}{\sqrt{\varepsilon}},\label{a22}\\
	a_{33}
	&=\frac{\mu \pi }{3(\kappa_1-\kappa)^{5/2}}\bigg(3+\frac{\kappa}{\kappa_1-\kappa}
	\bigg)\frac{1}{\sqrt{\varepsilon}}+O(1),\\
	a_{13}
	&=\frac{\mu \pi (\kappa_1+\kappa)}{3(\kappa_1-\kappa)^{5/2}}\bigg(3+\frac{\kappa}{\kappa_1-\kappa}
	\bigg)\frac{1}{\sqrt{\varepsilon}}+O(1).
	\end{align}
\end{lemma}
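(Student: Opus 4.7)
The plan is to start from the symmetric energy representation
$$a_{\alpha\beta}=\int_{\Omega} 2\mu\, e({\bf u}_\alpha):e({\bf u}_\beta)\,\mathrm{d}x$$
recorded in \eqref{defaij}, split the integration as $\int_{\Omega_R}+\int_{\Omega\setminus\Omega_R}$, and reduce to an explicit computation with the auxiliary fields ${\bf v}_\alpha$. On $\Omega\setminus\Omega_R$, standard interior Stokes estimates together with the boundary regularity force $|\nabla {\bf u}_\alpha|\le C$, so that portion contributes $O(1)$ to every $a_{\alpha\beta}$ and may be absorbed into the stated remainder.

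Inside $\Omega_R$, I would write ${\bf u}_\alpha={\bf v}_\alpha+{\bf w}_\alpha$ and expand
$$e({\bf u}_\alpha):e({\bf u}_\beta)=e({\bf v}_\alpha):e({\bf v}_\beta)+e({\bf v}_\alpha):e({\bf w}_\beta)+e({\bf w}_\alpha):e({\bf v}_\beta)+e({\bf w}_\alpha):e({\bf w}_\beta).$$
The pointwise bounds $|\nabla{\bf w}_1|,|\nabla{\bf w}_3|\le C$ and $|\nabla{\bf w}_2|\le C/\sqrt{\delta(x_1)}$ from Propositions \ref{propu11}--\ref{propu13}, combined with the explicit $|\nabla{\bf v}_\alpha|\lesssim 1/\delta(x_1)$ (respectively $|\nabla{\bf v}_2|\lesssim 1/\delta(x_1)+|x_1|/\delta^2(x_1)$) and with one-dimensional model integrals $\int_{-R}^R\delta(x_1)^{-k}\,\mathrm{d}x_1\lesssim\varepsilon^{1/2-k}$, show by Cauchy--Schwarz that the three cross/quadratic remainders are each at least one power of $\sqrt{\varepsilon}$ smaller than the principal term. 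Hence
$$a_{\alpha\beta}=\int_{\Omega_R}2\mu\, e({\bf v}_\alpha):e({\bf v}_\beta)\,\mathrm{d}x+(\text{admissible remainder}).$$

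To evaluate the principal integral I would exploit divergence-freeness of ${\bf v}_\alpha$ in $\Omega_{2R}$, which reduces the Frobenius pairing to
$$e({\bf v}_\alpha):e({\bf v}_\beta)=2\partial_{x_1}{\bf v}_\alpha^{(1)}\partial_{x_1}{\bf v}_\beta^{(1)}+\tfrac{1}{2}\bigl(\partial_{x_2}{\bf v}_\alpha^{(1)}+\partial_{x_1}{\bf v}_\alpha^{(2)}\bigr)\bigl(\partial_{x_2}{\bf v}_\beta^{(1)}+\partial_{x_1}{\bf v}_\beta^{(2)}\bigr).$$
Plugging in the explicit derivatives \eqref{estv112}--\eqref{estv11223}, \eqref{estv212}--\eqref{estv11a2223}, \eqref{estv3111}--\eqref{estv111a223} and integrating first in $x_2$ with the elementary identities $\int_{h(x_1)}^{\varepsilon+h_1(x_1)} k\,\mathrm{d}x_2=0$ and $\int k^2\,\mathrm{d}x_2=\delta(x_1)/12$ leaves me with one-dimensional integrals
$$I_{m,k}:=\int_{-R}^{R}\frac{x_1^{2m}\,\mathrm{d}x_1}{\delta(x_1)^k},\qquad \delta(x_1)=\varepsilon+(\kappa_1-\kappa)x_1^2.$$
Using the substitution $x_1=\sqrt{\varepsilon/(\kappa_1-\kappa)}\tan\theta$, each $I_{m,k}$ becomes $\varepsilon^{m-k+1/2}(\kappa_1-\kappa)^{-m-1/2}$ times a trigonometric moment of the form $\int_{-\pi/2}^{\pi/2}\sin^{2m}\theta\cos^{2(k-m)-2}\theta\,\mathrm{d}\theta$, plus an $O(1)$ tail. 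The elementary values $\int\cos^2=\pi/2$, $\int\cos^4=3\pi/8$, $\int\sin^2\cos^2=\pi/8$ (and $\int\cos^6=5\pi/16$) should then produce the four stated coefficients. In particular, the relation $a_{13}=(\kappa_1+\kappa)\bigl(3+\kappa/(\kappa_1-\kappa)\bigr)\mu\pi/(3(\kappa_1-\kappa)^{5/2}\sqrt{\varepsilon})+O(1)$ will follow because the leading ${\bf v}_1^{(1)}$ and ${\bf v}_3^{(1)}$ components in \eqref{v1alpha} and \eqref{v13} share the common factor $\bigl(-4x_1^2/\delta+1/(\kappa_1-\kappa)\bigr)(k^2-1/4)$, with an extra $(\kappa_1+\kappa)$ appearing in front in \eqref{v1alpha}.

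The chief difficulty is bookkeeping. Each ${\bf v}_\alpha$ produces several subleading contributions to $\nabla{\bf v}_\alpha$, and their pairwise products mix inside the bilinear form; only a handful of them actually reach the leading $\varepsilon$-scale. I would therefore classify every product by its power of $\delta(x_1)$ and of $x_1$ \emph{before} performing the $x_2$-integration, discard everything contributing at scale $O(1)$ (for $a_{11},a_{13},a_{33}$) or $O(\varepsilon^{-1/2})$ (for $a_{22}$), and only then evaluate the surviving $I_{m,k}$ integrals. A consistency check is that the matrix $(a_{\alpha\beta})$ must be positive definite (it is the Gram matrix of the three linearly independent strain fields $e({\bf v}_\alpha)$), which constrains the signs of the leading coefficients and forces $a_{13}^2\le a_{11}a_{33}$ at the principal scale — a useful sanity check on the final formulas.
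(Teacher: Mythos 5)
Your plan is the same as the paper's: split $\int_\Omega=\int_{\Omega_R}+\int_{\Omega\setminus\Omega_R}$, absorb the latter into $O(1)$, write ${\bf u}_\alpha={\bf v}_\alpha+{\bf w}_\alpha$ on $\Omega_R$, show the cross and quadratic remainders are negligible, and then evaluate $\int_{\Omega_R}2\mu\,e({\bf v}_\alpha):e({\bf v}_\beta)$ explicitly. The trace-free reduction of the Frobenius pairing you write down is correct, and the $x_2$-first integration via the moments $\int k\,dx_2=0$, $\int k^2\,dx_2=\delta/12$ is a cleaner way to organize the bookkeeping than the paper's ``identify the biggest term'' shortcut — both lead to the same principal integrals $\mu\int(\partial_{x_2}{\bf v}_\alpha^{(1)})^2$ (and $\mu\int\partial_{x_2}{\bf v}_1^{(1)}\partial_{x_2}{\bf v}_3^{(1)}$ for $a_{13}$, $\mu\int(\partial_{x_2}{\bf v}_2^{(1)})^2$ for $a_{22}$).

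There is one step you state that would not deliver the lemma as written: bounding the cross terms ``by Cauchy--Schwarz.'' If you literally apply Cauchy--Schwarz, then for $\alpha\in\{1,3\}$ you get $\big|\int 2\mu\,e({\bf v}_\alpha):e({\bf w}_\alpha)\big|\le C\|e({\bf v}_\alpha)\|_{L^2(\Omega_R)}\|e({\bf w}_\alpha)\|_{L^2(\Omega)}\lesssim\varepsilon^{-1/4}$, since $\|e({\bf v}_\alpha)\|_{L^2(\Omega_R)}^2\sim\varepsilon^{-1/2}$ while the energy bound only gives $\|e({\bf w}_\alpha)\|_{L^2(\Omega)}\lesssim1$. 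That is $o(\varepsilon^{-1/2})$ but not the $O(1)$ remainder the lemma claims. The paper instead multiplies the \emph{pointwise} bounds $|\nabla{\bf v}_\alpha|\lesssim\delta(x_1)^{-1}$ and $|\nabla{\bf w}_\alpha|\lesssim1$ from Proposition~\ref{propu11}/\ref{propu13} and integrates the product directly, $\int_{\Omega_R}\delta^{-1}\,dx=\int_{|x_1|<R}\delta\cdot\delta^{-1}\,dx_1=O(R)=O(1)$; similarly for $a_{22}$, where $|\nabla{\bf w}_2|\lesssim\delta^{-1/2}$ and $|\nabla{\bf v}_2|\lesssim|x_1|/\delta^2$ give a cross contribution $\lesssim\int|x_1|\delta^{-3/2}\,dx_1=O(\varepsilon^{-1/2})$, matching the claimed remainder. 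Since you already invoke the pointwise bounds, replace ``by Cauchy--Schwarz'' with the pointwise product estimate to land on the right remainder order.

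One thing worth double-checking when you actually run the $I_{m,k}$ computation: in my own calculation the principal $a_{33}$ integral reduces, after $\int_{x_2}k^2\,dx_2=\delta/12$ and $P:=-4x_1^2/\delta+1/(\kappa_1-\kappa)=(\varepsilon-3(\kappa_1-\kappa)x_1^2)/((\kappa_1-\kappa)\delta)$, to $\mu\int_{-R}^{R}\frac{P^2}{3\delta}\,dx_1=\frac{\mu\pi}{(\kappa_1-\kappa)^{5/2}\sqrt{\varepsilon}}+O(1)$ with no $\kappa$-dependent correction at the leading scale; all the extra $\partial_{x_2}{\bf v}_3^{(1)}$ pieces involving $x_2$ and $x_2^2$ are $O(1)$ in $\delta$ and contribute $O(1)$ after integration, and the $\partial_{x_1}{\bf v}_3^{(1)}$ and $\partial_{x_1}{\bf v}_3^{(2)}$ pieces are also $O(1)$. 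That does not match the stated factor $3+\kappa/(\kappa_1-\kappa)$ exactly. Your method is sound either way and the downstream arguments only use the $\varepsilon^{-1/2}$ and $\varepsilon^{-3/2}$ scalings together with the algebraic relation $a_{13}\approx(\kappa_1+\kappa)a_{33}$ (which survives), but be prepared for your careful bookkeeping to produce slightly different explicit constants than the ones printed.
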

\begin{proof}
For $a_{11}$, it follows from \eqref{defaij}, Proposition \ref{propu11} and \eqref{estv112}--\eqref{estv11223}  that
\begin{align}\label{esa11}
a_{11}&=\int_{\Omega_R} \left(2\mu e({\bf u}_{1}), e({\bf u}_1)\right)\mathrm{d}x+O(1)\nonumber\\
&=\int_{\Omega_R} \left(2\mu e({\bf v}_1), e({\bf v}_1)\right)\mathrm{d}x+\int_{\Omega_R} \left(2\mu e({\bf v}_1), e({\bf w}_1)\right)\mathrm{d}x\nonumber\\
&\quad+\int_{\Omega_R} \left(2\mu e({\bf w}_1), e({\bf v}_1)\right)\mathrm{d}x+\int_{\Omega_R} \left(2\mu e({\bf w}_1), e({\bf w}_1)\right)\mathrm{d}x+O(1)\nonumber\\
&=\int_{\Omega_R} \left(2\mu e({\bf v}_1), e({\bf v}_1)\right)\mathrm{d}x+O(1).
\end{align}
Using \eqref{estv112}--\eqref{estv11223} again, one can verify that the biggest term in $e({\bf v}_1)$ is $\partial_{x_2}{\bf v}_1^{(1)}$, and 
\begin{align*}
\int_{\Omega_R} \left(\partial_{x_2}{\bf v}_1^{(1)}\right)^2\mathrm{d}x
&=\int_{\Omega_R}\left(\frac{1}{\delta(x_{1})}+\frac{2k(x)}{\delta(x_{1})}\Big(-4(\kappa_1+\kappa)\frac{x_1^2}{\delta(x_{1})}+\frac{\kappa_1+\kappa}{\kappa_1-\kappa}\Big)\right)^2\mathrm{d}x\\
&=\frac{\pi}{\sqrt{\kappa_1-\kappa}}\bigg(1+\frac{(\kappa_1+\kappa)^2}{(\kappa_1-\kappa)^{2}}+\frac{\kappa(\kappa_1+\kappa)^2}{3(\kappa_1-\kappa)^{3}}
\bigg)\frac{1}{\sqrt{\varepsilon}}+O(1).
\end{align*}
Thus, 
\begin{align*}
a_{11}
&=\mu\,\int_{\Omega_R} \left(\partial_{x_2}{\bf v}_1^{(1)}\right)^2\mathrm{d}x+O(1)\\
&=\frac{\mu\,\pi}{\sqrt{\kappa_1-\kappa}}\bigg(1+\frac{(\kappa_1+\kappa)^2}{(\kappa_1-\kappa)^{2}}+\frac{\kappa(\kappa_1+\kappa)^2}{3(\kappa_1-\kappa)^{3}}
\bigg)\frac{1}{\sqrt{\varepsilon}}+O(1).
\end{align*}

From Proposition \ref{propu12} and \eqref{estv212}--\eqref{estv11a2223}, we have
\begin{align*}
a_{22}=\mu\,\int_{\Omega_R} \left(\partial_{x_2}{\bf v}_1^{(2)}\right)^2\mathrm{d}x+\frac{O(1)}{\sqrt{\varepsilon}}=\frac{3\mu\,\pi}{2(\kappa_1-\kappa)^{3/2}}\frac{1}{\varepsilon^{3/2}}+\frac{O(1)}{\sqrt{\varepsilon}}.
\end{align*}

Similarly, from Proposition \ref{propu13} and \eqref{estv3111}--\eqref{estv111a223}, we have 
	\begin{align*}
	a_{33}&=\int_{\Omega_R} \left(2\mu e({\bf v}_3), e({\bf v}_3)\right)\mathrm{d}x+O(1)\\
	&=\mu\,\int_{\Omega_R} \left(\partial_{x_2}{\bf v}_3^{(1)}\right)^2\mathrm{d}x+O(1)\\
	&=\mu\,\int_{\Omega_R}\left(\frac{2k(x)}{\delta(x_{1})}\left( \frac{-4x_{1}^2}{\delta(x_{1})}+\frac{1}{\kappa_1-\kappa}\right)\right)^2\mathrm{d}x+O(1)\\
	&=\frac{\mu \pi }{3(\kappa_1-\kappa)^{5/2}}\bigg(3+\frac{\kappa}{\kappa_1-\kappa}
	\bigg)\frac{1}{\sqrt{\varepsilon}}+O(1).
	\end{align*} 

Combining Proposition \ref{propu11} and Proposition \ref{propu13}, \eqref{estv112}--\eqref{estv11223} and \eqref{estv3111}--\eqref{estv111a223}, we deduce 
	\begin{align}\label{esta13}
	&a_{13}
	=\int_{\Omega_R} \left(2\mu e({\bf v}_1), e({\bf v}_3)\right)\mathrm{d}x+O(1)\nonumber\\
	&=\mu\,\int_{\Omega_R}\frac{2k(x)}{\delta(x_{1})}\left( \frac{-4x_{1}^2}{\delta(x_{1})}+\frac{1}{\kappa_1-\kappa}\right)\left(\frac{1}{\delta(x_{1})}+\frac{2k(x)}{\delta(x_{1})}\Big(\frac{-4(\kappa_1+\kappa)x_1^2}{\delta(x_{1})}+\frac{\kappa_1+\kappa}{\kappa_1-\kappa}\Big)\right)\mathrm{d}x\nonumber\\
	&\quad+O(1)\nonumber\\
	&=\frac{\mu \pi}{3}\frac{\kappa_1+\kappa}{(\kappa_1-\kappa)^{5/2}}\bigg(3+\frac{\kappa}{\kappa_1-\kappa}
	\bigg)\frac{1}{\sqrt{\varepsilon}}+O(1).
	\end{align} 
	The Lemma is proved.
\end{proof}  

\begin{lemma}\label{lema113D}
	We have
	\begin{align}
	|a_{12}|=|a_{21}|\leq  C|\ln \epsilon|,\quad\mbox{and}~
	|a_{23}|=|a_{32}|\leq C|\ln \epsilon|.\label{esta1112}
	\end{align}
\end{lemma}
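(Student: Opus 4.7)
The plan is to expand $a_{12}$ and $a_{23}$ via the auxiliary decomposition ${\bf u}_\alpha={\bf v}_\alpha+{\bf w}_\alpha$ from Section~\ref{estualpha}, and then to show that the leading contribution $\int_{\Omega_R}\bigl(2\mu\,e({\bf v}_\alpha),e({\bf v}_\beta)\bigr)\,dx$ vanishes identically by a parity argument in $x_1$, while the remaining ${\bf v}$--${\bf w}$ and ${\bf w}$--${\bf w}$ contributions are controlled by elementary one-dimensional integrals bounded by $C|\ln\varepsilon|$. The symmetry $a_{12}=a_{21}$ and $a_{23}=a_{32}$ is automatic from the symmetric bilinear form \eqref{defaij}.

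For the reduction step, standard elliptic regularity for the Stokes system away from the neck gives $\|\nabla{\bf u}_\alpha\|_{L^\infty(\Omega\setminus\Omega_R)}\le C$, so the portion of \eqref{defaij} supported on $\Omega\setminus\Omega_R$ is $O(1)$. Inside $\Omega_R$ I would write
\[
a_{\alpha\beta}=\int_{\Omega_R}\bigl(2\mu\,e({\bf v}_\alpha),e({\bf v}_\beta)\bigr)\,dx+\mathrm{I}_{\alpha\beta}+O(1),
\]
where $\mathrm{I}_{\alpha\beta}$ collects the mixed terms. Using the pointwise bounds established in Section~\ref{estualpha}, namely $|\nabla{\bf v}_1|,|\nabla{\bf v}_3|\le C\delta^{-1}$ and $|\nabla{\bf v}_2|\le C(\delta^{-1}+|x_1|\delta^{-2})$, together with Propositions~\ref{propu11}, \ref{propu12}, \ref{propu13} which give $|\nabla{\bf w}_1|,|\nabla{\bf w}_3|\le C$ and $|\nabla{\bf w}_2|\le C\delta^{-1/2}$ on $\Omega_R$, every summand in $\mathrm{I}_{12}$ or $\mathrm{I}_{23}$ reduces to a one-dimensional integral of the form $\int_{-R}^R\delta(x_1)^{-1/2}\,dx_1$ or $\int_{-R}^R|x_1|\delta(x_1)^{-1}\,dx_1$, each of which is at most $C|\ln\varepsilon|$.

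The main term is killed by parity in $x_1$. Under the simplification \eqref{55} the neck $\Omega_R$ is invariant under $x_1\mapsto-x_1$, and both $\delta(x_1)$ and the Keller function $k(x)$ of \eqref{def_vx} are even in $x_1$ while $\partial_{x_1}k$ is odd. Reading \eqref{v1alpha}, \eqref{v2alpha}, \eqref{v13} together with the explicit auxiliary functions \eqref{defG1}, \eqref{defG2}, \eqref{G3}, one verifies that ${\bf v}_1$ and ${\bf v}_3$ have components with the parity pattern $(\text{even},\text{odd})$ in $x_1$, whereas ${\bf v}_2$ has pattern $(\text{odd},\text{even})$. Hence the corresponding strain matrices satisfy
\[
e({\bf v}_1),\,e({\bf v}_3)\sim\begin{pmatrix}\text{odd}&\text{even}\\ \text{even}&\text{odd}\end{pmatrix},\qquad e({\bf v}_2)\sim\begin{pmatrix}\text{even}&\text{odd}\\ \text{odd}&\text{even}\end{pmatrix},
\]
so every entry of the Frobenius contractions $\bigl(e({\bf v}_1),e({\bf v}_2)\bigr)$ and $\bigl(e({\bf v}_2),e({\bf v}_3)\bigr)$ is an odd function of $x_1$, and integrates to zero on the symmetric domain $\Omega_R$. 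Combined with the previous step this yields $|a_{12}|,|a_{23}|\le C|\ln\varepsilon|$. The main obstacle I anticipate is precisely the parity bookkeeping: the fields ${\bf v}_\alpha$ were engineered to simultaneously enforce $\nabla\cdot{\bf v}_\alpha=0$ in $\Omega_{2R}$ and to cancel the worst terms of $\Delta{\bf v}_\alpha$ against $\nabla\bar p_\alpha$, so one must carefully check that the correction functions $G_1,G_2,G_3$ and the extra $x_2$-dependent pieces entering ${\bf v}_3$ preserve the asserted parities; the contrast with Lemma~\ref{lema114D} (where the same parity check gives \emph{even} integrands for $a_{11},a_{22},a_{33},a_{13}$) provides a useful consistency check. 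Once parity is confirmed, the remainder of the proof is routine one-dimensional integration.
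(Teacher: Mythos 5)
Your proposal follows essentially the same route as the paper: reduce to $\Omega_R$, split ${\bf u}_\alpha={\bf v}_\alpha+{\bf w}_\alpha$, bound the mixed ${\bf v}$--${\bf w}$ and ${\bf w}$--${\bf w}$ terms by one-dimensional integrals $\lesssim|\ln\varepsilon|$, and kill the leading ${\bf v}$--${\bf v}$ term by oddness in $x_1$. The one place where you go slightly further than the paper is the parity bookkeeping: the paper only identifies the \emph{biggest} contribution $\partial_{x_2}{\bf v}_1^{(1)}\,\partial_{x_2}{\bf v}_2^{(1)}$ as odd and separately estimates ``the rest'' by $C|\ln\varepsilon|$, whereas you observe (correctly, under the standing simplification $h=\kappa x_1^2$, $h_1=\kappa_1 x_1^2$, which makes $\delta$, $k$ even and $\Omega_R$ reflection-symmetric) that $e({\bf v}_1),e({\bf v}_3)$ have parity pattern $\begin{pmatrix}\text{odd}&\text{even}\\ \text{even}&\text{odd}\end{pmatrix}$ and $e({\bf v}_2)$ has the complementary pattern, so \emph{every} term in $(e({\bf v}_1),e({\bf v}_2))$ and $(e({\bf v}_2),e({\bf v}_3))$ is odd and the entire main integral vanishes. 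That makes your version a bit cleaner, and is a useful consistency check against Lemma~\ref{lema114D}, where the same parity analysis shows the integrands for $a_{11},a_{22},a_{33},a_{13}$ are even.
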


\begin{proof}
	By virtue of \eqref{defaij}, we have
	\begin{align}\label{deca12}
		a_{12}&=\int_{\Omega_R} \left(2\mu e({\bf u}_{1}), e({\bf u}_2)\right)\mathrm{d}x+O(1)\nonumber\\
		&=\int_{\Omega_R} \left(2\mu e({\bf v}_1), e({\bf v}_2)\right)\mathrm{d}x+\int_{\Omega_R} \left(2\mu e({\bf v}_1), e({\bf w}_2)\right)\mathrm{d}x\nonumber\\
		&\quad+\int_{\Omega_R} \left(2\mu e({\bf w}_1), e({\bf v}_2)\right)\mathrm{d}x+\int_{\Omega_R} \left(2\mu e({\bf w}_1), e({\bf w}_2)\right)\mathrm{d}x+O(1).
	\end{align}
	It follows from Propositions \ref{propu11} and \ref{propu12}, \eqref{estv112}--\eqref{estv11223} and \eqref{estv212}--\eqref{estv2112} that
	\begin{align*}
		\left|\int_{\Omega_R} \left(2\mu e({\bf v}_{1}), e({\bf w}_{2})\right)\mathrm{d}x\right|\leq\int_{|x_1|\leq R}\frac{C}{\sqrt{\delta(x_1)}}\ dx_1\leq C|\ln\varepsilon|,
	\end{align*}
	\begin{align*}
		\left|\int_{\Omega_R} \left(2\mu e({\bf w}_{1}), e({\bf v}_{2})\right)\mathrm{d}x\right|\leq\int_{|x_1|\leq R}\frac{C|x_1|}{\delta(x_1)}\ dx_1\leq C|\ln\varepsilon|,
	\end{align*}
	and 
	\begin{align*}
		\left|\int_{\Omega_R} \left(2\mu e({\bf w}_{1}), e({\bf w}_{2})\right)\mathrm{d}x\right|\leq \int_{|x_1|\leq R}C\sqrt{\delta(x_1)}\ dx_1\leq C.
	\end{align*}
  Moreover, one can see that  the biggest term in $(2\mu e({\bf v}_{1}), e({\bf v}_{2}))$ is
	\begin{align*}
		\partial_{x_2}{\bf v}_1^{(1)}\cdot\partial_{x_2}{\bf v}_2^{(1)}=\frac{12x_1}{\delta^2(x_1)}k(x)\left(\frac{1}{\delta(x_{1})}+\frac{2(\kappa_1+\kappa)k(x)}{\delta(x_{1})}\Big(\frac{-4x_1^2}{\delta(x_{1})}+\frac{1}{\kappa_1-\kappa}\Big)\right),
	\end{align*} 
	which is an odd function with respect to $x_1$ and thus the integral is $0$. The integral of the rest terms is bounded by $C|\ln\varepsilon|$. Hence, coming back to \eqref{deca12}, we derive
	$$|a_{12}|=|a_{21}|\leq C|\ln\varepsilon|.$$

Similarly, for $a_{23}$, recalling \eqref{estv212}--\eqref{estv11a2223} and  \eqref{estv3111}--\eqref{estv111a223}, we have 
	 $$|a_{32}|=|a_{23}|\leq  C|\ln\varepsilon|.$$ 
Thus, \eqref{esta1112} is proved. We complete the proof of the lemma.
\end{proof} 

By virtue of Lemma \ref{lema114D} and Lemma \ref{lema113D}, we have
$$(a_{11}a_{33}-a_{13}^2)=\frac{(\mu\,\pi)^2}{3(\kappa_1-\kappa)^3}\bigg(1+\frac{2\kappa_1-\kappa}{\kappa_1-\kappa}
\bigg)\frac{1}{\varepsilon}+\frac{O(1)}{\sqrt{\varepsilon}},$$
then, in view of \eqref{a22},
\begin{align}\label{DetA}
\det\mathbb{A}&=a_{22}(a_{11}a_{33}-a_{13}^2)+\frac{O(1)}{\sqrt{\varepsilon}}=\frac{(\mu\,\pi)^3}{2(\kappa_1-\kappa)^{9/2}}\bigg(1+\frac{2\kappa_1-\kappa}{\kappa_1-\kappa}
\bigg)\frac{1}{\varepsilon^{5/2}}+\frac{O(1)}{\varepsilon^{3/2}}.
\end{align}
As a consequence, we have $\mathbb{A}:=(a_{\alpha\beta})_{3\times3}$ is invertible. Recalling \eqref{ce}, by using Crame's law, we have
\begin{equation}\label{defma}
C^\alpha=\frac{\det\mathbb{A}_{\alpha}}{\det\mathbb{A}},
\end{equation} 
where $\mathbb{A}_{\alpha}$ is a matrix after replacing $\alpha$-th column of $\mathbb{A}$ with $(Q_1[\boldsymbol\varphi],Q_2[\boldsymbol\varphi],Q_3[\boldsymbol\varphi])^{\mathrm T}$, that is,
\begin{equation*}
\mathbb{A}_{1}=\begin{pmatrix}
Q_1[\boldsymbol\varphi]&a_{12}&a_{13}\\
Q_2[\boldsymbol\varphi]&a_{22}&a_{23}\\
Q_3[\boldsymbol\varphi]&a_{32}&a_{33}
\end{pmatrix},~\mathbb{A}_{2}=\begin{pmatrix}
a_{11}&Q_1[\boldsymbol\varphi]&a_{13}\\
a_{21}&Q_2[\boldsymbol\varphi]&a_{23}\\
a_{31}&Q_3[\boldsymbol\varphi]&a_{33}
\end{pmatrix},~\mathbb{A}_{3}=\begin{pmatrix}
a_{11}&a_{12}&Q_1[\boldsymbol\varphi]\\
a_{21}&a_{22}&Q_2[\boldsymbol\varphi]\\
a_{31}&a_{32}&Q_3[\boldsymbol\varphi]
\end{pmatrix}.
\end{equation*}
Hence, to solve for $C^\alpha$, we only need to estimate $Q_\beta[\boldsymbol\varphi]$ in the next section.

\section{Locally Constant Boundary Data Case} \label{estu0}

The section is devoted to studying the effect from the locally constant boundary data on the solution and proving Theorem \ref{mainthm0}. We will take the case ${\boldsymbol{\varphi}}\in{\bf\Phi}_{1}$ for instance and give more details for it, while for the case ${\boldsymbol{\varphi}}\in{\bf\Phi}_{2}$ we just list the main differences in subsection \ref{subsec4.5}, where ${\bf\Phi}_{1}$ and ${\bf\Phi}_{2}$ are defined in \eqref{defphi1} and \eqref{defphi2}, respectively.

\subsection{Estimates of $|\nabla {\bf u}_0|$ and $p_0$ for the case ${\boldsymbol{\varphi}}\in{\bf\Phi}_{1}$}
We choose ${\bf v}_{0}^1\in C^{2}(\Omega;\mathbb R^2)$ satisfying, in $\Omega_{2R}$,
\begin{align*}
{\bf v}_{0}^1=\boldsymbol\varphi\Big(\frac{1}{2}-k(x)\Big)+(\kappa_1+\kappa)\begin{pmatrix}
\frac{4x_1^2}{\delta(x_{1})}-\frac{1}{\kappa_1-\kappa}\\\\
G_0^1(x)
\end{pmatrix}
\Big(k^2(x)-\frac{1}{4}\Big)\quad \mbox{in}~\Omega_{2R},
\end{align*}
where
\begin{align*}
G_0^1(x)=-G_1(x)
\end{align*}
and $G_1(x)$ is defined in \eqref{defG1}. Moreover, 
\begin{equation}\label{v01v1}
{\bf v}_{0}^1+{\bf v}_1=(1,0)^{\mathrm T}\quad \mbox{in}~\Omega_{2R}.
\end{equation} 
We choose $\bar{p}_0^1\in C^{1}(\Omega)$, such that
$\bar{p}_0^1=-\bar{p}_1$ in $\Omega_{2R}$.
Thus, Proposition \ref{propu11} holds true for $({\bf u}_0,p_0)$, with $({\bf v}_1,\bar{p}_1)$ replaced by $({\bf v}_0^1,\bar{p}_0^1)$.

\subsection{Asymptotics and estimates of $Q_\beta[{\boldsymbol\varphi}]$ for the case ${\boldsymbol{\varphi}}\in{\bf\Phi}_{1}$}
Next, we are ready to study the asymptotics and estimates of $Q_\beta[{\boldsymbol\varphi}]$, for this kind of locally constant boundary data. 
\begin{prop}\label{lemaQb1}
	If  ${\boldsymbol{\varphi}}\in{\bf\Phi}_{1}$, then we have
\begin{align*}
	Q_{1}[{\boldsymbol\varphi}]
	&=\frac{\mu\,\pi}{\sqrt{\kappa_1-\kappa}}\bigg(1+\frac{(\kappa_1+\kappa)^2}{(\kappa_1-\kappa)^{2}}+\frac{\kappa(\kappa_1+\kappa)^2}{3(\kappa_1-\kappa)^{3}}
	\bigg)\frac{1}{\sqrt{\varepsilon}}+O(1),\\
	Q_{3}[{\boldsymbol\varphi}]
	&=\frac{\mu \pi (\kappa_1+\kappa)}{3(\kappa_1-\kappa)^{5/2}}\bigg(3+\frac{\kappa}{\kappa_1-\kappa}
	\bigg)\frac{1}{\sqrt{\varepsilon}}+O(1),
	\end{align*}
	and
	$$|Q_2[\boldsymbol\varphi]|\leq C|\ln \varepsilon|.$$
\end{prop}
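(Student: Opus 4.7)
The plan is to exploit the energy representation
$$Q_\beta[\boldsymbol\varphi] = -\int_\Omega \left(2\mu e(\mathbf{u}_0), e(\mathbf{u}_\beta)\right)\mathrm{d}x$$
from \eqref{defaij}, together with the key cancellation $\mathbf{v}_0^1 + \mathbf{v}_1 = (1,0)^{\mathrm T}$ on $\Omega_{2R}$ recorded in \eqref{v01v1}. The latter implies $e(\mathbf{v}_0^1) = -e(\mathbf{v}_1)$ in $\Omega_R$, which will convert $Q_\beta[\boldsymbol\varphi]$ into a quantity essentially equal to $a_{1\beta}$, whose asymptotics we already know from Lemma \ref{lema114D} and Lemma \ref{lema113D}.

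Concretely, I would first localize: on the outer region $\Omega\setminus\Omega_R$ all gradients are universally bounded by standard Stokes theory, so its contribution to $Q_\beta$ is $O(1)$. On $\Omega_R$, decompose $\mathbf{u}_0 = \mathbf{v}_0^1 + \mathbf{w}_0^1$ and $\mathbf{u}_\beta = \mathbf{v}_\beta + \mathbf{w}_\beta$. Using $e(\mathbf{v}_0^1) = -e(\mathbf{v}_1) = -e(\mathbf{u}_1)+e(\mathbf{w}_1)$, one obtains
\begin{equation*}
Q_\beta[\boldsymbol\varphi] = a_{1\beta} - \int_{\Omega_R}\!\left(2\mu e(\mathbf{w}_0^1), e(\mathbf{u}_\beta)\right)\mathrm{d}x - \int_{\Omega_R}\!\left(2\mu e(\mathbf{w}_1), e(\mathbf{u}_\beta)\right)\mathrm{d}x + O(1).
\end{equation*}
Since Proposition \ref{propu11} (applied to both $\mathbf{w}_1$ and, by the remark immediately after its statement, to $\mathbf{w}_0^1$) yields $\|\nabla\mathbf{w}_1\|_{L^\infty(\Omega_R)}, \|\nabla\mathbf{w}_0^1\|_{L^\infty(\Omega_R)} \leq C$, the two error integrals are bounded by $C\int_{\Omega_R}|\nabla\mathbf{u}_\beta|\,\mathrm{d}x$.

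For $\beta = 1, 3$, Proposition \ref{propu11} and Proposition \ref{propu13} give $|\nabla\mathbf{u}_\beta|\leq C/\delta(x_1)$, so integrating in $x_2$ first yields $\int_{|x_1|<R}\mathrm{d}x_1 = O(1)$. Therefore $Q_\beta[\boldsymbol\varphi] = a_{1\beta} + O(1)$, and the stated asymptotics for $Q_1[\boldsymbol\varphi]$ and $Q_3[\boldsymbol\varphi]$ follow directly from the expressions for $a_{11}$ and $a_{13}$ in Lemma \ref{lema114D}. For $\beta = 2$, Proposition \ref{propu12} provides only $|\nabla\mathbf{u}_2|\leq C(1/\delta(x_1) + |x_1|/\delta^2(x_1))$; integrating over the fiber in $x_2$ and then in $x_1$ gives a bound of order $|\ln\varepsilon|$ from the $|x_1|/\delta(x_1)$ term. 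Combined with the estimate $|a_{12}|\leq C|\ln\varepsilon|$ from Lemma \ref{lema113D}, this yields $|Q_2[\boldsymbol\varphi]|\leq C|\ln\varepsilon|$.

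The main technical point is precisely the $\beta=2$ case: the $|x_1|/\delta^2(x_1)$ singularity in $\nabla\mathbf{u}_2$ prevents an $O(1)$ error and forces the logarithmic bound, but this is consistent with the fact that $a_{12}$ itself is already only $O(|\ln\varepsilon|)$. Everything else is routine bookkeeping once the cancellation $e(\mathbf{v}_0^1) + e(\mathbf{v}_1) = 0$ on $\Omega_R$ is invoked; in particular, the coefficient-matching with Lemma \ref{lema114D} is automatic because we literally recover $a_{1\beta}$ to leading order.
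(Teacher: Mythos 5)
Your proposal is correct and follows essentially the same route as the paper: both hinge on the cancellation $\mathbf{v}_0^1+\mathbf{v}_1=(1,0)^{\mathrm T}$ in $\Omega_{2R}$ (hence $e(\mathbf{v}_0^1)=-e(\mathbf{v}_1)$), localization to $\Omega_R$ with $O(1)$ error, and the reduction of $Q_\beta[\boldsymbol\varphi]$ to the known asymptotics of $a_{1\beta}$. The only cosmetic difference is that the paper compares directly with $\int_{\Omega_R}(2\mu e(\mathbf{v}_1),e(\mathbf{v}_\beta))$ (which was already identified as the leading part of $a_{1\beta}$), while you compare with $a_{1\beta}$ itself and bound the remainder via $\|\nabla\mathbf{w}_1\|_{L^\infty},\|\nabla\mathbf{w}_0^1\|_{L^\infty}\leq C$ and the pointwise bounds on $|\nabla\mathbf{u}_\beta|$; your $\beta=2$ accounting (the $|x_1|/\delta^2(x_1)$ term giving $O(|\ln\varepsilon|)$) is exactly right.
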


\begin{proof}
Recalling the definition of $Q_\beta[\boldsymbol\varphi]$, by virtue of \eqref{v01v1}, one can verify that the estimates of $|Q_\beta[\boldsymbol\varphi]|$ are very similar to that of $|a_{1\beta}|$ established in Section \ref{estCalpha}. 

Indeed, 
from \eqref{defaij}, \eqref{estv112}--\eqref{estv11223}, Proposition \ref{propu11}, it follows that
\begin{align*}
Q_{1}[{\boldsymbol\varphi}]=-\int_{\Omega_R} \left(2\mu e({\bf v}_{0}^{1}), e({\bf v}_1)\right)\mathrm{d}x+O(1).
\end{align*} 
Using \eqref{v01v1}, we have 
\begin{align*}
Q_{1}[{\boldsymbol\varphi}]=\int_{\Omega_R} \left(2\mu e({\bf v}_{1}), e({\bf v}_1)\right)\mathrm{d}x+O(1),
\end{align*} 
which is the same as $a_{11}$ in \eqref{esa11}. From \eqref{estv3111}--\eqref{estv111a223},  Proposition \ref{propu13}, and \eqref{v01v1}, we have
\begin{align*}
Q_{3}[{\boldsymbol\varphi}]
&=-\int_{\Omega_R} \left(2\mu e({\bf v}_{0}^{1}), e({\bf v}_3)\right)\mathrm{d}x+O(1)=\int_{\Omega_R} \left(2\mu e({\bf v}_{1}), e({\bf v}_3)\right)\mathrm{d}x+O(1).
\end{align*}
Thus, the estimate of $Q_{3}[{\boldsymbol\varphi}]$ follows from \eqref{esta13}. 
Similarly, $|Q_2[{\boldsymbol\varphi}]|\leq C|\ln \varepsilon|$. Proposition \ref{lemaQb1} is proved.
\end{proof}

\subsection{Proof of Proposition \ref{propu18} for ${\boldsymbol{\varphi}}\in{\bf\Phi}_{1}$}

Now we solve $C^\alpha$ and give the estimates for them.

\begin{proof}[Proof of Proposition \ref{propu18}] 

Denote by $\mbox{cof}(\mathbb A)_{\alpha\beta}$  the cofactor of $a_{\alpha\beta}$. Making use of Lemma \ref{lema114D} and Lemma \ref{lema113D}, we have
\begin{align}\label{estcofA11}
\mbox{cof}(\mathbb A)_{11}=a_{22}a_{33}-a_{23}^2=\frac{(\mu \pi)^2 }{2(\kappa_1-\kappa)^{4}}\bigg(3+\frac{\kappa}{\kappa_1-\kappa}
\bigg)\frac{1}{\varepsilon^2}+O(1)\varepsilon^{-3/2},
\end{align}
\begin{align}\label{estcofA31}
\mbox{cof}(\mathbb A)_{31}=a_{12}a_{23}-a_{22}a_{13}=-\frac{(\mu \pi)^2 (\kappa_1+\kappa)}{2(\kappa_1-\kappa)^{4}}\bigg(3+\frac{\kappa}{\kappa_1-\kappa}
\bigg)\frac{1}{\varepsilon^2}+O(1)\varepsilon^{-3/2},
\end{align}
and
$$|\mbox{cof}(\mathbb A)_{21}|\leq \frac{C}{\sqrt{\varepsilon}}.$$
Then by using \eqref{ce}, Cramer's rule,  and Proposition \ref{lemaQb1},  we deduce
\begin{align}\label{estC1}
C^1=\frac{1}{\det\mathbb{A}}\Big(\mbox{cof}(\mathbb A)_{11} Q_{1}[{\boldsymbol\varphi}]-\mbox{cof}(\mathbb A)_{21} Q_{2}[{\boldsymbol\varphi}]+\mbox{cof}(\mathbb A)_{31}Q_{3}[{\boldsymbol\varphi}] \Big).
\end{align}
It follows from Proposition \ref{lemaQb1} that
\begin{align}\label{estQ1Q3}
Q_{1}[{\boldsymbol\varphi}]-(\kappa_1+\kappa)	Q_{3}[{\boldsymbol\varphi}]=\frac{\mu\,\pi}{\sqrt{\kappa_1-\kappa}}\frac{1}{\sqrt{\varepsilon}}
+O(1).
\end{align}	
Substituting \eqref{estcofA11}, \eqref{estcofA31}, \eqref{estQ1Q3}, and \eqref{DetA} into \eqref{estC1}, we obtain
\begin{align}\label{lowerC1}
C^1=1+O(1)\sqrt\varepsilon.
\end{align}

Similarly, 
$$|C^3|\leq C\sqrt{\varepsilon},\quad |C^2|\leq C\varepsilon^{3/2}.$$
This completes the proof of \eqref{lamda0}. 
\end{proof}

\subsection{Proof of Theorem \ref{mainthm0} for ${\boldsymbol{\varphi}}\in{\bf\Phi}_{1}$}\label{mainresult}
With the above estimates at hand, we are ready to complete the proof of Theorem \ref{mainthm0} for ${\boldsymbol{\varphi}}\in{\bf\Phi}_{1}$.

\begin{proof}[Proof of Theorem \ref{mainthm0}.]   
We first prove the case of ${\boldsymbol{\varphi}}\in{\bf\Phi}_{1}$. It follows  from Proposition \ref{propu12}, Proposition \ref{propu13}, and \eqref{lamda0} that
\begin{equation*}
\left|\sum_{\alpha=2}^{3}C^\alpha\nabla{\bf u}_\alpha\right|\leq C\varepsilon^{3/2}\left(\frac{1}{\delta(x_1)}+\frac{|x_1|}{\delta^2(x_1)}\right)+\frac{C\sqrt{\varepsilon}}{\delta(x_1)}\leq\frac{C}{\sqrt{\delta(x_1)}}.
\end{equation*}
From \eqref{estv1112} and \eqref{v01v1}, one notes that 
\begin{equation}\label{v11v01}
|\nabla({\bf v}_1+{\bf v}_{0}^1)|=0.
\end{equation}
Then, combining Proposition \ref{propu11}, Proposition \ref{propu13}, and \eqref{v01v1}, we obtain 
\begin{align}\label{estu0u1}
|\nabla({\bf u}_1+{\bf u}_0)|\leq |\nabla({\bf v}_1+{\bf v}_{0}^1)|+|\nabla({\bf w}_1+{\bf w}_{0}^1)|\leq C.
\end{align}
Hence, using \eqref{lowerC1} and Proposition \ref{propu11}, we deduce
\begin{align*}
|\nabla{\bf u}(x)|=\left|\sum_{\alpha=1}^{3}C^\alpha\nabla{\bf u}_\alpha+\nabla{\bf u}_0\right|
\leq \left|(C^1-1)\nabla{\bf u}_1\right|+\frac{C}{\sqrt{\delta(x_1)}}\leq\frac{C}{\sqrt{\delta(x_1)}}.
\end{align*}
For the estimate of $|p|$, we first recall that
$$p(x)=\sum_{\alpha=1}^{3}C^{\alpha}p_{\alpha}(x)+p_{0}(x).$$
Denote 
$$q:=\sum_{\alpha=1}^{3}C^{\alpha}q_{\alpha}(x)+q_{0}(x),$$
where $q_{\alpha}=p_{\alpha}-\bar p_{\alpha}$. Recalling  $\bar{p}_0^1=-\bar{p}_1$ in $\Omega_{2R}$, one can see that 
$$|q_{0}+q_1-(q_{0}+q_1)_{\Omega_R}|\leq \frac{C}{\varepsilon}.$$
Then together with  the estimate of $|p_\alpha|$ in Proposition \ref{propu11}, Proposition \ref{propu12}, and Proposition \ref{propu13}, and \eqref{lamda0}, we derive 
\begin{align*}
|p(x)-(q)_{\Omega_{R}}|&\leq\Big|C^{1}(p_{1}-(q_{1})_{\Omega_{R}})\Big|+ \sum_{\alpha=2}^{3}\Big|C^{\alpha}(p_{\alpha}-(q_{\alpha})_{\Omega_{R}})\Big|\\
&\quad+|q_{0}+q_1-(q_{0}+q_1)_{\Omega_R}|\leq \frac{C}{\varepsilon}.
\end{align*}
This finishes the proof of Theorem \ref{mainthm0} for ${\boldsymbol{\varphi}}\in{\bf\Phi}_{1}$.
\end{proof}

\subsection{The case ${\boldsymbol{\varphi}}\in{\bf\Phi}_{2}$}\label{subsec4.5}

We choose ${\bf v}_{0}^2\in C^{2}(\Omega;\mathbb R^2)$ satisfying, in $\Omega_{2R}$,
\begin{align*}
{\bf v}_{0}^2=\boldsymbol\varphi\Big(\frac{1}{2}-k(x)\Big)+\begin{pmatrix}
\frac{-6x_1}{\delta(x_{1})}\\\\
G_0^2(x)
\end{pmatrix}
\Big(k^2(x)-\frac{1}{4}\Big)\quad \mbox{in}~\Omega_{2R},
\end{align*}
where
$$G_0^2(x)=-G_2(x)$$
and $G_2(x)$ is defined in \eqref{defG2}. Thus we have 
\begin{equation*}
{\bf v}_{0}^2+{\bf v}_2=(0,1)^{\mathrm T}.
\end{equation*}
We construct $\bar{p}_0^2\in C^{1}(\Omega)$ such that in $\Omega_{2R}$,
\begin{equation}\label{barp062}
\bar{p}_0^2=\frac{3\mu}{(\kappa_1-\kappa)\delta^2(x_1)}+\mu\,\partial_{x_2}({\bf v}_0^2)^{(2)}.
\end{equation}

By a direct calculation, we derive 
\begin{align*}
|\nabla{\bf v}_0^2|\leq 
\frac{C}{\delta(x_1)}+\frac{C|x_1|}{\delta^2(x_1)}.
\end{align*}
Combining with the definition of $\bar p_0^2$ in \eqref{barp062}, we deduce, in $\Omega_{2R}$,
\begin{align*}
|{\bf f}_0^2|=\left|\mu\,\Delta{\bf v}_0^2-\nabla\bar{p}_0^2\right|\leq
\frac{C|x_1|}{\delta^2(x_1)}.
\end{align*} 
Thus, Proposition \ref{propu12} holds true for $({\bf u}_0,p_0)$ with $({\bf v}_2,\bar{p}_2)$ replaced by $({\bf v}_0^2,\bar{p}_0^2)$.

By mimicking the proof of Proposition \ref{lemaQb1}, we obtain \begin{prop}\label{lemaQb2}
If ${\boldsymbol{\varphi}}\in{\bf\Phi}_{2}$, then	we have
\begin{align*}
|Q_\beta[\boldsymbol\varphi]|\leq
C|\ln \varepsilon|,~\beta=1,3,\quad
Q_2[\boldsymbol\varphi]=\frac{3\mu\,\pi}{2}\frac{1}{(\kappa_1-\kappa)^{3/2}\varepsilon^{3/2}}+O(1)\frac{1}{\sqrt{\varepsilon}}.
\end{align*}
\end{prop}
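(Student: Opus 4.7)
The plan is to mirror the strategy used for Proposition \ref{lemaQb1}, but now exploiting the key identity $\mathbf{v}_0^2+\mathbf{v}_2=(0,1)^{\mathrm T}$ in $\Omega_{2R}$, which was built into the construction of $\mathbf{v}_0^2$. This identity implies $e(\mathbf{v}_0^2)=-e(\mathbf{v}_2)$ throughout $\Omega_{2R}$, and so effectively replaces the role played by $e(\mathbf{v}_0^1)=-e(\mathbf{v}_1)$ in the previous proposition.

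First, I would use the integration-by-parts representation $Q_\beta[\boldsymbol\varphi]=-\int_\Omega (2\mu e(\mathbf{u}_0),e(\mathbf{u}_\beta))\,\mathrm{d}x$ from \eqref{defaij} and split $\Omega=\Omega_R\cup(\Omega\setminus\Omega_R)$. On $\Omega\setminus\Omega_R$, all derivatives of $\mathbf{u}_0$ and $\mathbf{u}_\beta$ are universally bounded, contributing only $O(1)$. On $\Omega_R$, I write $\mathbf{u}_0=\mathbf{v}_0^2+\mathbf{w}_0^2$ and $\mathbf{u}_\beta=\mathbf{v}_\beta+\mathbf{w}_\beta$ and use the analogue of Proposition \ref{propu12} for the pair $(\mathbf{u}_0,p_0)$ (with the auxiliary pair $(\mathbf{v}_0^2,\bar p_0^2)$) together with Propositions \ref{propu11}--\ref{propu13} to absorb every term involving a $\mathbf{w}$-factor into the error. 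Indeed, the pointwise bounds $|\nabla\mathbf{w}_0^2|\leq C\delta(x_1)^{-1/2}$, $|\nabla\mathbf{w}_1|\leq C$, $|\nabla\mathbf{w}_2|\leq C\delta(x_1)^{-1/2}$, $|\nabla\mathbf{w}_3|\leq C$ combined with the size of $|\nabla\mathbf{v}_\alpha|$ produce integrals whose integrands decay at worst like $\delta(x_1)^{-1}$ in a neighbourhood whose length goes to $0$; these are all bounded by $C|\ln\varepsilon|$ at worst, and typically by $C$.

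This reduction leaves the main contribution
\begin{align*}
Q_\beta[\boldsymbol\varphi]=-\int_{\Omega_R}\bigl(2\mu e(\mathbf{v}_0^2),e(\mathbf{v}_\beta)\bigr)\mathrm{d}x+\text{errors}=\int_{\Omega_R}\bigl(2\mu e(\mathbf{v}_2),e(\mathbf{v}_\beta)\bigr)\mathrm{d}x+\text{errors},
\end{align*}
where the last equality uses $e(\mathbf{v}_0^2)=-e(\mathbf{v}_2)$ in $\Omega_{2R}$. Comparing with the definition of $a_{2\beta}$ in \eqref{defaij} and arguing exactly as in the computation of $a_{\alpha\beta}$ in Section \ref{estCalpha}, one concludes $Q_\beta[\boldsymbol\varphi]=a_{2\beta}+O(1)$ for $\beta=1,2,3$ (with the lower-order $|\ln\varepsilon|$ corrections absorbed where appropriate).

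Finally, I plug in the asymptotics already established: for $\beta=2$, Lemma \ref{lema114D} gives $a_{22}=\tfrac{3\mu\pi}{2(\kappa_1-\kappa)^{3/2}}\varepsilon^{-3/2}+O(\varepsilon^{-1/2})$, producing the claimed formula for $Q_2[\boldsymbol\varphi]$; for $\beta=1,3$, Lemma \ref{lema113D} yields $|a_{12}|,|a_{23}|\leq C|\ln\varepsilon|$, giving the stated bounds for $Q_1[\boldsymbol\varphi]$ and $Q_3[\boldsymbol\varphi]$. The main technical nuisance is the careful bookkeeping for the $\beta=1,3$ cases: several cross terms of the form $\int_{\Omega_R}e(\mathbf{v}_2)\cdot e(\mathbf{v}_\beta)\,\mathrm{d}x$ contain leading pieces that are odd in $x_1$ and therefore vanish, leaving only the $|\ln\varepsilon|$ remainder; verifying this parity cancellation (exactly as in the proof of Lemma \ref{lema113D}) is the delicate step, while everything else is a routine re-use of the estimates for $\mathbf{w}_\alpha$ and $\mathbf{w}_0^2$ established earlier.
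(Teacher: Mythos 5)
Your proof proposal is correct and takes essentially the same route as the paper, which explicitly reduces this case to the $\Phi_1$ argument: exploit the identity $\mathbf{v}_0^2+\mathbf{v}_2=(0,1)^{\mathrm T}$ so that $e(\mathbf{v}_0^2)=-e(\mathbf{v}_2)$ in $\Omega_{2R}$, reduce $Q_\beta[\boldsymbol\varphi]$ to the leading term $\int_{\Omega_R}(2\mu e(\mathbf{v}_2),e(\mathbf{v}_\beta))\,\mathrm{d}x$ matching $a_{2\beta}$, and invoke Lemmas \ref{lema114D} and \ref{lema113D}. One small caveat in your bookkeeping: for $\beta=2$ the cross terms involving $\mathbf{w}_0^2$ or $\mathbf{w}_2$ against $e(\mathbf{v}_2)$ are of size $O(\varepsilon^{-1/2})$ (not $O(|\ln\varepsilon|)$), since $\int_{|x_1|<R}|x_1|\,\delta(x_1)^{-3/2}\,\mathrm{d}x_1\sim\varepsilon^{-1/2}$; the intermediate claim $Q_\beta[\boldsymbol\varphi]=a_{2\beta}+O(1)$ should thus read $+O(\varepsilon^{-1/2})$ for $\beta=2$, which is harmless because the stated formula for $Q_2[\boldsymbol\varphi]$ carries an $O(\varepsilon^{-1/2})$ slack anyway.
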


The proof of Proposition \ref{propu18} and Theorem \ref{mainthm0} for the case of ${\boldsymbol{\varphi}}\in{\bf\Phi}_{2}$ is the same as that for ${\boldsymbol{\varphi}}\in{\bf\Phi}_{1}$.

\section{Locally Polynomial Boundary Data Case}\label{sec5}

In this section, we investigate the effect from the locally polynomial boundary data on the solution. We first prove the general Theorem \ref{mainthm}, then prove Theorem \ref{mainthm1} for ${\boldsymbol{\varphi}}\in{\bf\Phi}_{3}$ and Theorem \ref{mainthm2} for ${\boldsymbol{\varphi}}\in{\bf\Phi}_{4}$.

\subsection{Proof of Theorem \ref{mainthm}}

\begin{proof}[Proof of Theorem \ref{mainthm}]  
	Substituting \eqref{defma}, the estimates of $|\nabla{\bf u}_\alpha|$ in Proposition \ref{propu11}, Proposition \ref{propu12}, and Proposition \ref{propu13} into \eqref{introC}, we obtain, for $x\in\Omega_R$,
	\begin{align*}
	|\nabla{\bf u}(x)|\leq C\left(\frac{|\det\mathbb{A}_{1}|}{\det\mathbb{A}}+\frac{|\det\mathbb{A}_{3}|}{\det\mathbb{A}}\right)\frac{1}{\delta(x_1)}+\frac{C|\det\mathbb{A}_{2}|}{\det\mathbb{A}}\left(\frac{1}{\delta(x_1)} +\frac{|x_1|}{\delta^2(x_1)}\right)+|\nabla{\bf u}_0(x)|.
	\end{align*}
	From Lemma \ref{lema114D} and Lemma \ref{lema113D}, we have
	\begin{equation}\label{detAij}
	|\det\mathbb{A}_{1}|,|\det\mathbb{A}_{3}|\leq C\varepsilon^{-2}\Big(|Q_{1}[{\boldsymbol\varphi}]|+|Q_{3}[{\boldsymbol\varphi}]|\Big),\quad |\det\mathbb{A}_{2}|\leq C\varepsilon^{-1}|Q_{2}[{\boldsymbol\varphi}]|.
	\end{equation}
Combining with \eqref{DetA} leads to
	\begin{align*}
	|\nabla{\bf u}(x)|\leq \frac{C\sqrt\varepsilon}{\delta(x_1)}\Big(|Q_{1}[\boldsymbol\varphi]|+|Q_3[\boldsymbol\varphi]|+|x_1||Q_{2}[\boldsymbol\varphi]|\Big)+|\nabla{\bf u}_0(x)|.
	\end{align*}

For $|p|$,  by applying \eqref{defma}, the estimates of $|p|$ in Proposition \ref{propu11}, Proposition \ref{propu12}, and Proposition \ref{propu13}, \eqref{DetA}, and \eqref{detAij}, we derive 
	\begin{align*}
	|p(x)-(q)_{\Omega_{R}}|&\leq \sum_{\alpha=1}^{3}|C^{\alpha}(p_{\alpha}-(q_{\alpha})_{\Omega_{R}})+p_{0}-(q_{0})_{\Omega_{R}}|\\
	&\leq \frac{C}{\varepsilon^{3/2}}\left(\frac{|\det\mathbb{A}_{1}|}{\det\mathbb{A}}+\frac{|\det\mathbb{A}_{3}|}{\det\mathbb{A}}\right)+\frac{C}{\varepsilon^{2}}\frac{|\det\mathbb{A}_{2}|}{\det\mathbb{A}}+|p_{0}-(q_{0})_{\Omega_R}|\\
	&\leq \frac{C}{\varepsilon}\big(|Q_{1}[\boldsymbol\varphi]|+|Q_3[\boldsymbol\varphi]|\big)+\frac{C|Q_{2}[\boldsymbol\varphi]|}{\sqrt{\varepsilon}}+|p_{0}-(q_{0})_{\Omega_R}|.
	\end{align*}
	The proof of Theorem \ref{mainthm} is completed.
\end{proof}

\subsection{Proof of Theorem \ref{mainthm1}}\label{seccoro}

If ${\boldsymbol{\varphi}}\in{\bf\Phi}_{3}$, then we seek ${\bf v}_{0}^3\in C^{2}(\Omega;\mathbb R^2)$ satisfying, in $\Omega_{2R}$,
\begin{align*}
{\bf v}_0^3=\boldsymbol\varphi\Big(\frac{1}{2}-k(x)\Big)+\begin{pmatrix}
\frac{32(\kappa_1-\kappa)k(x)+12\kappa}{(l_1+2)}\frac{ x_1^{l_1+2}}{\delta(x_{1})}-\Big(8k(x)-3\Big)x_1^{l_1}\\\\
G_0^3(x)
\end{pmatrix}
\Big(k^2(x)-\frac{1}{4}\Big),
\end{align*}
where
\begin{align*}
G_0^3(x)&=(2k(x)-1)k(x)\delta(x_1)l_1x_1^{l_1-1}-2x_1^{l_1+1}k(x)\Big((\kappa_1+\kappa)+2(\kappa_1-\kappa)k(x)\Big)\\&\quad-\partial_{x_1}k(x)\left(\frac{32(\kappa_1-\kappa)k(x)+12\kappa}{(l_1+2)} x_1^{l_1+2}-\Big(8k(x)-3\Big)x_1^{l_1}\delta(x_1)\right).
\end{align*} 
Notice that the biggest term is $\partial_{x_2x_2}({\bf v}_0^3)^{(1)}\sim \frac{1}{\delta(x_1)}$. This implies that we can choose  $\bar{p}_0^3=0$  in $\Omega$. Now by a direct calculation, we have 
\begin{align*}
|\partial_{x_1}({\bf v}_0^3)^{(1)}|\leq\frac{|x_1|^{l_1+1}}{\delta(x_1)},\quad|\partial_{x_2}({\bf v}_0^3)^{(1)}|\leq\frac{|x_1|^{l_1}}{\delta(x_1)},
\end{align*}
\begin{align*}
|\partial_{x_2}G_0^3(x)|\leq C|x_1^{l_1-1}|,\quad |\partial_{x_1}({\bf v}_0^3)^{(2)}|\leq C|x_1^{l_1}|,
\end{align*}
\begin{align*}
\partial_{x_2}({\bf v}_0^3)^{(2)}=\partial_{x_2}G_0^3(x)\Big(k^2(x)-\frac{1}{4}\Big)+G_0^3(x)\frac{2k(x)}{\delta(x_1)}.
\end{align*}
Then we have
\begin{align*}
|\nabla{\bf v}_0^3|\leq \frac{C|x_1|^{l_1}}{\delta(x_1)}.
\end{align*}
A further calculation gives
\begin{align*}
|\partial_{x_1x_1}({\bf v}_0^3)^{(1)}|\leq \frac{C|x_1|^{l_1}}{\delta(x_1)},\quad |\partial_{x_2x_2}({\bf v}_0^3)^{(1)}|\leq \frac{C|x_1|^{l_1}}{\delta^2(x_1)},
\end{align*}
\begin{align*}
|\partial_{x_1x_1}({\bf v}_0^3)^{(2)}|\leq \frac{C|x_1|^{l_1+1}}{\delta(x_1)},\quad |\partial_{x_2x_2}({\bf v}_0^3)^{(2)}|\leq \frac{C|x_1|^{l_1-1}}{\delta(x_1)}.
\end{align*}
Since  $\bar{p}_0^3=0$, then if $l_1\geq 2$,
\begin{align*}
|{\bf f}_0^3|=\left|\mu\,\Delta{\bf v}_0^3-\nabla\bar{p}_0^3\right|\leq\frac{C}{\delta(x_1)}\quad \mbox{in}~\Omega_{2R},
\end{align*} 
and   if $l_1=1$, we have 
\begin{align*}
|{\bf f}_0^3|=\left|\mu\,\Delta{\bf v}_0^3-\nabla\bar{p}_0^3\right|\leq\frac{C|x_1|}{\delta^2(x_1)}\quad \mbox{in}~\Omega_{2R}.
\end{align*} 
The estimates of $|{\bf f}_0^3|$ are  the same as that in \eqref{estdivv11p1} and \eqref{estf13}. By using the energy iteration technique as before, we derive the following result.

\begin{prop}\label{propv03}
Let ${\bf u}_0\in{C}^{2}(\Omega;\mathbb R^2),~p_0\in{C}^{1}(\Omega)$ be the solution to \eqref{equ_v3}. Then we have, in $\Omega_{R}$,
\begin{align*}
\|\nabla({\bf u}_0-{\bf v}_0^3)\|_{L^{\infty}(\Omega_{\delta/2}(x_1))}\leq \begin{cases}
	\frac{C}{\sqrt{\delta(x_1)}},&\quad l_1=1,\\
	C,&\quad l_1\geq2,
	\end{cases}
	\end{align*}
and 
\begin{equation*}
	\|\nabla^2({\bf u}_{0}-{\bf v}_{0}^3)\|_{L^{\infty}(\Omega_{\delta/2}(x_1))}+\|\nabla q_0^3\|_{L^{\infty}(\Omega_{\delta/2}(x_1))}\leq \begin{cases}
	\frac{C}{\delta^{3/2}(x_1)},&\quad l_1=1,\\
	\frac{C}{\delta(x_1)},&\quad l_1\geq2.
	\end{cases}
\end{equation*}
	Consequently, for $x\in\Omega_{R}$,
	\begin{align*}
	|\nabla {\bf u}_0|\leq
	\begin{cases}
	\frac{C}{\sqrt{\delta(x_1)}},&\quad l_1=1,\\
	C,&\quad l_1\geq2,
	\end{cases}
	\end{align*} and
	\begin{align*}
	\|p_0-(q_0^3)_{\Omega_R}\|_{L^{\infty}(\Omega_{\delta/2}(x_{1}))}\leq\begin{cases}
	\frac{C}{\varepsilon^{3/2}},&\quad l_1=1,\\
		\frac{C}{\varepsilon},&\quad l_1\geq 2.
	\end{cases}
	\end{align*}
\end{prop}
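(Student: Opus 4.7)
The proof will follow the same three-stage scheme used for Propositions \ref{propu11}, \ref{propu12}, and \ref{propu13}: a global energy bound, then a Caccioppoli-type iteration to localize, then the interior Stokes estimates of Proposition \ref{lemWG2}. Set ${\bf w}_0 := {\bf u}_0 - {\bf v}_0^3$ and, since $\bar p_0^3\equiv 0$ in $\Omega_{2R}$, simply $q_0^3 := p_0$. Because ${\bf v}_0^3$ matches ${\boldsymbol\varphi}$ on $\partial D$, vanishes on $\partial D_1$, and is divergence-free in $\Omega_{2R}$, the pair $({\bf w}_0, q_0^3)$ solves the auxiliary problem \eqref{w1} with forcing ${\bf f}_0^3 = \mu\Delta{\bf v}_0^3$. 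All the size estimates needed have already been recorded above the statement: $|{\bf f}_0^3|\leq C|x_1|/\delta^2(x_1)$ when $l_1 = 1$ and $|{\bf f}_0^3|\leq C/\delta(x_1)$ when $l_1\geq 2$, which are precisely the bounds obeyed by ${\bf f}_2$ in \eqref{estf13} and ${\bf f}_1$ in \eqref{estdivv11p1} respectively.

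The first step is to apply Lemma \ref{lemmaenergy} to conclude $\int_\Omega|\nabla{\bf w}_0|^2\leq C$. For $l_1\geq 2$, one repeats the argument of Lemma \ref{lem3.0}, using $|\nabla{\bf v}_0^3|\leq C|x_1|^{l_1}/\delta\leq C|x_1|/\delta$ and integration by parts in $x_1$ to verify \eqref{int-fw}. For $l_1 = 1$ the forcing is more singular, so one mimics Lemma \ref{lem_energyw13}: since $k(x)$ is linear in $x_2$, the component $\Delta{\bf v}_0^3$ can be rewritten as a polynomial in $x_2$ of degree at most three with $x_1$-dependent coefficients; integrating by parts in $x_2$ against ${\bf w}_0$ (which vanishes on $\partial\Omega$) and passing derivatives to the antiderivative absorbs one factor of $\delta$, rendering the resulting integrals bounded by $(\int_\Omega|\nabla{\bf w}_0|^2)^{1/2}$ as required.

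The second step is the Caccioppoli iteration of Lemmas \ref{lem3.1} and \ref{lem3.2}. Combining \eqref{iterating1} with $\int_{\Omega_s(z_1)}|{\bf f}_0^3|^2\leq Cs/\delta^2(z_1)$ for $l_1\geq 2$, respectively $\leq Cs(s^2+|z_1|^2)/\delta^3(z_1)$ for $l_1 = 1$, and iterating $k_0\sim\delta^{-1/2}$ times yields $\int_{\Omega_\delta(z_1)}|\nabla{\bf w}_0|^2\leq C\delta^2(z_1)$ in the first case and $\leq C\delta(z_1)$ in the second. Plugging these into Proposition \ref{lemWG2}, and crucially invoking $|z_1|\leq C\sqrt{\delta(z_1)}$ from \eqref{55} to convert the $|x_1|/\delta^2$-type forcing bounds into $\delta^{-3/2}$-type bounds, delivers the stated $L^\infty$ estimates of $\nabla{\bf w}_0$, $\nabla^2{\bf w}_0$ and $\nabla q_0^3$. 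The pointwise bound on $|\nabla{\bf u}_0|$ is then the triangle inequality with the pre-computed $|\nabla{\bf v}_0^3|\leq C|x_1|^{l_1}/\delta(x_1)$, which for $l_1\geq 2$ is itself bounded by $C$.

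For the pressure, since $\bar p_0^3 = 0$ one has $p_0 = q_0^3$, so the oscillation $\|p_0-(q_0^3)_{\Omega_R}\|_{L^\infty(\Omega_{\delta/2}(x_1))}$ is controlled by integrating $\nabla q_0^3$ along a path in $\Omega_R$ from an anchor point near $(0,\varepsilon/2)$; using the gradient bound just derived together with $\int_0^R\delta^{-3/2}(t)\,dt\leq C/\varepsilon$ (for $l_1=1$) and $\int_0^R\delta^{-1}(t)\,dt\leq C/\sqrt\varepsilon$ (for $l_1\geq 2$) yields bounds well within $C/\varepsilon^{3/2}$ and $C/\varepsilon$ respectively. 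The single genuine obstacle is the global energy estimate at $l_1=1$: the raw bound $|{\bf f}_0^3|\lesssim |x_1|/\delta^2$ cannot be handled by H\"older directly, and one must exploit the precise polynomial dependence of ${\bf v}_0^3$ on $x_2$ — deliberately built into the ansatz — to integrate by parts and recover the missing power of $\delta$, exactly as was done for ${\bf f}_2$ in Lemma \ref{lem_energyw13}.
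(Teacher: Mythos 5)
Your proof is correct and follows essentially the same three-stage scheme (global energy bound via Lemma \ref{lemmaenergy}, Caccioppoli iteration, then the local Stokes estimate of Proposition \ref{lemWG2}) that the paper invokes implicitly by citing the earlier cases. One small slip: for $l_1\geq2$, since $|{\bf f}_0^3|\leq C/\delta(x_1)$ the correct local $L^2$ bound is $\int_{\Omega_s(z_1)}|{\bf f}_0^3|^2\,\mathrm{d}x\leq Cs/\delta(z_1)$ rather than $Cs/\delta^2(z_1)$; it is this (stronger) bound that, after iteration, yields the stated $\int_{\Omega_\delta(z_1)}|\nabla{\bf w}_0|^2\leq C\delta^2(z_1)$, so your conclusion is right even though the intermediate estimate as written is off by one factor of $\delta$.
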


\begin{proof}[Proof of Theorem \ref{mainthm1}]   
Substituting the estimates of $Q_\beta[{\boldsymbol{\varphi}}]$ in Proposition \ref{lemaQb}, and the estimates of $|\nabla{\bf u}_0|$ in Proposition \ref{propv03} into Theorem \ref{mainthm}, we directly derive the estimates of $|\nabla{\bf u}|$ and $|p|$. Theorem \ref{mainthm1} is proved.
\end{proof}

\subsection{Proof of Theorem \ref{mainthm2}}\label{seccoro2}

In this subsection, we consider the boundary data ${\boldsymbol{\varphi}}\in{\bf\Phi}_{4}$. For $l_2=1$, we choose ${\bf v}_0^4\in C^{2}(\Omega;\mathbb R^2)$ satisfying, in $\Omega_{2R}$,
\begin{align}\label{v02.alpha}
{\bf v}_0^4=\boldsymbol\varphi\Big(\frac{1}{2}-k(x)\Big)+\begin{pmatrix}
\frac{-4x_1^{2}}{\delta(x_{1})}+\frac{1}{\kappa_1-\kappa}\\\\
G_0^4(x)
\end{pmatrix}
\Big(k^2(x)-\frac{1}{4}\Big),
\end{align} 
where $$G_0^4(x)=2x_1k(x)-\delta(x_1)\partial_{x_1}k(x)\left(\frac{-4x_1^{2}}{\delta(x_{1})}+\frac{1}{\kappa_1-\kappa}\right).$$

For $l_2\geq 2$, we seek ${\bf v}_0^4\in C^{2}(\Omega;\mathbb R^2)$ satisfying, in $\Omega_{2R}$,
\begin{align}\label{v02alpha}
{\bf v}_0^4=\boldsymbol\varphi\Big(\frac{1}{2}-k(x)\Big)+\begin{pmatrix}
\frac{-6x_1^{l_2+1}}{(l_2+1)\delta(x_{1})}\\\\
G_0^4(x)
\end{pmatrix}
\Big(k^2(x)-\frac{1}{4}\Big),
\end{align}
where $$G_0^4(x)=2x_1^{l_2}k(x)+6\partial_{x_1}k(x)\frac{x_1^{l_2+1}}{l_2+1}.$$

In order to control the biggest term, we take
\begin{align}\label{defp02}
\bar{p}_0^4=
\begin{cases}
\frac{2\mu }{\kappa_1-\kappa}\frac{ x_1 }{\delta^2(x_1)}+\mu \partial_{x_2}({\bf v}_0^4)^{(2)},&\quad l_2=1,\\
0,&\quad l_2\geq 2.
\end{cases}
\end{align}
By a direct calculation, we derive 
\begin{align*}
|\nabla{\bf v}_0^4|\leq 
\frac{Cx_1^{l_2-1}}{\delta(x_1)}.
\end{align*}
Combining with \eqref{defp02}, we deduce, in $\Omega_{2R}$,
\begin{align*}
|{\bf f}_0^4|=\left|\mu\,\Delta{\bf v}_0^4-\nabla\bar{p}_0^4\right|\leq
\begin{cases}
\frac{C}{\delta(x_1)},&\quad l_2=1,~l_2\geq3,\\
\frac{C|x_1|}{\delta^2(x_1)},&\quad l_2=2.
\end{cases}
\end{align*} 
With these estimates at hand, replicating the  proof of Proposition \ref{propu11},    we have 
\begin{prop}\label{propv04}
	Let ${\bf u}_0\in{C}^{2}(\Omega;\mathbb R^2),~p_0\in{C}^{1}(\Omega)$ be the solution to \eqref{equ_v3}. Then 
	\begin{align*}
	\|\nabla({\bf u}_0-{\bf v}_0^4)\|_{L^{\infty}(\Omega_{\delta/2}(x_1))}\leq 
	\begin{cases}
	C,&\quad l_2=1,l_2\ge3,\\
	\frac{C}{\sqrt{\delta(x_1)}},&\quad l_2=2,
	\end{cases}
	\end{align*}
and 
\begin{equation*}
	\|\nabla^2({\bf u}_{0}-{\bf v}_{0}^4)\|_{L^{\infty}(\Omega_{\delta/2}(x_1))}+\|\nabla q_0^4\|_{L^{\infty}(\Omega_{\delta/2}(x_1))}\leq \begin{cases}
			\frac{C}{\delta(x_1)},&\quad l_2=1, l_2\ge 3,\\
		\frac{C}{\delta^{3/2}(x_1)},&\quad l_2=2.
	\end{cases}
\end{equation*}
	Consequently,
	\begin{align*}
	|\nabla {\bf u}_0|\leq \begin{cases}
	\frac{C}{\delta(x_1)},&\quad l_2=1,\\
	\frac{C}{\sqrt{\delta(x_1)}},&\quad l_2=2,\\
	C,&\quad l_2\geq 3,
	\end{cases}
	\end{align*}
	and
	\begin{align*}
	\|p_0-(q_0^4)_{\Omega}\|_{L^{\infty}(\Omega_{\delta/2}(x_{1}))}\leq
	\begin{cases}
	\frac{C}{\varepsilon^{3/2}},&\quad l_2=1,2,\\
		\frac{C}{\varepsilon},&\quad l_2\geq 3,
	\end{cases}
	\quad x\in\Omega_{R}.
	\end{align*}
\end{prop}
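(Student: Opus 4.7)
\medskip

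\noindent\textbf{Proof proposal for Proposition \ref{propv04}.} The plan is to treat $({\bf w}_{0}^{4},q_{0}^{4}):=({\bf u}_{0}-{\bf v}_{0}^{4},\,p_{0}-\bar{p}_{0}^{4})$ as the unknown pair, so that it solves the general boundary problem \eqref{w1} with driving term ${\bf f}_{0}^{4}=\mu\Delta{\bf v}_{0}^{4}-\nabla\bar{p}_{0}^{4}$. The size of ${\bf f}_{0}^{4}$ is already announced in the excerpt: $|{\bf f}_{0}^{4}|\leq C/\delta(x_{1})$ when $l_{2}=1$ or $l_{2}\geq 3$, and $|{\bf f}_{0}^{4}|\leq C|x_{1}|/\delta^{2}(x_{1})$ when $l_{2}=2$. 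I would first verify, by a direct differentiation of \eqref{v02.alpha}--\eqref{defp02}, both the incompressibility $\nabla\cdot{\bf v}_{0}^{4}=0$ in $\Omega_{2R}$ and the pointwise bound $|\nabla{\bf v}_{0}^{4}(x)|\leq C\,|x_{1}|^{l_{2}-1}/\delta(x_{1})$; these carry the entire announced ceiling for $|\nabla{\bf u}_{0}|$, so once $\nabla{\bf w}_{0}^{4}$ is controlled the upper bound on $|\nabla{\bf u}_{0}|$ follows immediately.

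Next, the proof would transplant, with only cosmetic changes, the three-step iteration scheme used for $({\bf w}_{1},q_{1})$ and $({\bf w}_{2},q_{2})$ in Section~\ref{estualpha}. Step one (global energy): I would apply Lemma \ref{lemmaenergy} to obtain $\int_{\Omega}|\nabla{\bf w}_{0}^{4}|^{2}\,dx\leq C$, the key being to check the condition \eqref{int-fw}. For $l_{2}=1$ this follows the pattern in the proof of Lemma \ref{lem_energyw3}: one rewrites $\mu\partial_{x_{2}x_{2}}({\bf v}_{0}^{4})^{(1)}-\partial_{x_{1}}\bar{p}_{0}^{4}$ as a polynomial in $x_{2}$ with coefficients depending only on $x_{1}$, pushes one $\partial_{x_{2}}$ onto ${\bf w}_{0}^{4}$, and uses $\|x_{1}/\delta(x_{1})\|_{L^{2}(\Omega_{R})}\leq C$ plus the trace bound \eqref{w1Dw1}. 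For $l_{2}\geq 2$, since $\bar{p}_{0}^{4}=0$, exactly the argument of Lemma \ref{lem_energyw13} applies verbatim (the extra power $x_{1}^{l_{2}-1}$ only improves things).

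Step two (Caccioppoli iteration): I would insert the $L^{2}$ bound on ${\bf f}_{0}^{4}$ into the Caccioppoli inequality \eqref{iterating1}. For $l_{2}=1$ and $l_{2}\geq 3$ we have $\int_{\Omega_{s}(z_{1})}|{\bf f}_{0}^{4}|^{2}\,dx\leq Cs/\delta(z_{1})$, and the iteration of Lemma \ref{lem3.1} yields $\int_{\Omega_{\delta}(z_{1})}|\nabla{\bf w}_{0}^{4}|^{2}\,dx\leq C\delta^{2}(z_{1})$. For $l_{2}=2$ we have instead $\int_{\Omega_{s}(z_{1})}|{\bf f}_{0}^{4}|^{2}\,dx\leq Cs(s^{2}+|z_{1}|^{2})/\delta^{3}(z_{1})$, which is the same scaling that appears in \eqref{L2_f13}, so Lemma \ref{lem3.2} produces $\int_{\Omega_{\delta}(z_{1})}|\nabla{\bf w}_{0}^{4}|^{2}\,dx\leq C\delta(z_{1})$. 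Step three: feed these into the two local bootstrap estimates of Proposition \ref{lemWG2}. The first estimate gives the claimed $L^{\infty}$ bound for $|\nabla{\bf w}_{0}^{4}|$ ($C$ when $l_{2}=1$ or $l_{2}\geq 3$, $C/\sqrt{\delta(x_{1})}$ when $l_{2}=2$); the second estimate, after bounding $\|\nabla{\bf f}_{0}^{4}\|_{L^{\infty}(\Omega_{\delta}(z_{1}))}$ by differentiating the explicit formulas once more, gives the stated $L^{\infty}$ bounds on $\nabla^{2}{\bf w}_{0}^{4}$ and $\nabla q_{0}^{4}$.

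Finally, combining $|\nabla{\bf v}_{0}^{4}|\leq C|x_{1}|^{l_{2}-1}/\delta(x_{1})$ with the $L^{\infty}$ bound on $\nabla{\bf w}_{0}^{4}$ proves the $|\nabla{\bf u}_{0}|$ assertions. For the pressure, the mean value theorem together with $|\nabla q_{0}^{4}|\leq C/\delta(x_{1})$ (resp.\ $C/\delta^{3/2}(x_{1})$) and $\mathrm{diam}(\Omega_{R})\leq C$ yields $|q_{0}^{4}-(q_{0}^{4})_{\Omega_{R}}|\leq C/\varepsilon$ (resp.\ $C/\varepsilon^{3/2}$), and then $|p_{0}-(q_{0}^{4})_{\Omega_{R}}|\leq|\bar{p}_{0}^{4}|+|q_{0}^{4}-(q_{0}^{4})_{\Omega_{R}}|$ gives the desired bound once one observes $|\bar{p}_{0}^{4}|\leq C|x_{1}|/\delta^{2}(x_{1})\leq C/\varepsilon^{3/2}$ when $l_{2}=1$ and $\bar{p}_{0}^{4}=0$ otherwise. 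The main obstacle, as in the treatment of ${\bf u}_{2}$, is the borderline case $l_{2}=2$: the forcing $|{\bf f}_{0}^{4}|\sim|x_{1}|/\delta^{2}(x_{1})$ and the vanishing pressure auxiliary $\bar{p}_{0}^{4}=0$ must be balanced exactly so that the Caccioppoli iteration produces only a $\delta(z_{1})$ (not $\delta^{2}(z_{1})$) local energy gain, which in turn is precisely what yields the $1/\sqrt{\delta(x_{1})}$ threshold rather than a bounded gradient.
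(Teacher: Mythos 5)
Your proposal is correct and follows the same route as the paper: the paper's own ``proof'' is a one-liner (``replicating the proof of Proposition \ref{propu11}''), and your three-step scheme---global energy via Lemma \ref{lemmaenergy}, local energy via the Caccioppoli iteration with the announced $L^{2}$ size of ${\bf f}_0^4$, then the local bootstrap of Proposition \ref{lemWG2}---is precisely what that replication amounts to, with the correct case split between $l_2=2$ (forcing of type $|x_1|/\delta^2$, energy gain $\delta(z_1)$, threshold $1/\sqrt{\delta}$) and $l_2=1,\,l_2\geq3$ (forcing of type $1/\delta$, energy gain $\delta^2(z_1)$, bounded gradient remainder). The final assembly $|\nabla{\bf u}_0|\leq|\nabla{\bf v}_0^4|+|\nabla{\bf w}_0^4|$ with $|\nabla{\bf v}_0^4|\leq C|x_1|^{l_2-1}/\delta(x_1)$ and the pressure estimate combining the bound on $\nabla q_0^4$ with $|\bar p_0^4|$ (nonzero only for $l_2=1$) also match the paper's procedure.
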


\begin{proof}[Proof of Theorem  \ref{mainthm2}.]   
Substituting the estimates of $Q_\beta[{\boldsymbol{\varphi}}]$ in Proposition \ref{lemaQb}, and the estimates of $|\nabla{\bf u}_0|$ in Proposition \ref{propv04} into Theorem \ref{mainthm}, we directly derive the estimates of $|\nabla{\bf u}|$ and $|p|$. Theorem \ref{mainthm2} is proved.
\end{proof}

\section{Lower Bounds: Proof of Theorem \ref{mainthm02} and Theorem \ref{mainthmlower}}\label{sec6}

In this Section we give the lower bounds of $\nabla{\bf u}$ at the midpoint $(0,\varepsilon/2)$ of the shortest line between $\partial{D}_{1}$ and $\partial{D}$ to show the optimality of the blow-up rates obtained above, and prove Theorem \ref{mainthm02} for boundary data ${\boldsymbol{\varphi}}\in{\bf\Phi}_{i}$, $i=1,2$,  and Theorem \ref{mainthmlower} for ${\boldsymbol{\varphi}}\in{\bf\Phi}_{i}$, $i=3,4$.

\subsection{Proof of Theorem \ref{mainthm02}}\label{subphi1}
By virtue of \eqref{defaij}, we define 
\begin{align}\label{defQ1aij}
Q_{1,\beta}[{\boldsymbol\varphi}]:=Q_\beta[{\boldsymbol{\varphi}}]-a_{\beta1}=-\int_{\Omega} \left(2\mu e({\bf u}_{0}+{\bf u}_{1}),e({\bf u}_{\beta})\right)\mathrm{d}x,\quad\beta=1,2,3.
\end{align}
Then, from Lemma \ref{lema114D} and Lemma \ref{lema113D}, and Proposition \ref{lemaQb1}, one can see that $|Q_{1,\beta}[{\boldsymbol{\varphi}}]|\leq C$, $\beta=1,2,3$. From \eqref{ce}, it follows that 
\begin{align}\label{matrixC1}
\begin{pmatrix}
~a_{11}&a_{12}&a_{13}~\\\\
~a_{21}&a_{22}&a_{23}~\\\\
~a_{31}&a_{32}&a_{33}
\end{pmatrix}\begin{pmatrix}
~C^1-1~\\\\
C^2\\\\
C^3
\end{pmatrix}=\begin{pmatrix}
~Q_{1,1}[{\boldsymbol{\varphi}}]~\\\\
Q_{1,2}[{\boldsymbol{\varphi}}]\\\\
Q_{1,3}[{\boldsymbol{\varphi}}]
\end{pmatrix}.
\end{align}
Define the limit functional
\begin{align*}
Q^*_{1,\beta}[{\boldsymbol\varphi}]=-\int_{\Omega^0} \left(2\mu e({\bf u}^*_{0}+{\bf u}^*_{1}),e({\bf u}^*_{\beta})\right)\mathrm{d}x,\quad\beta=1,2,3,
\end{align*}
where $({\bf u}_\beta^*,p_\beta^*)$ and $({\bf u}_0^*,p_0^*)$ satisfy \eqref{defu*} and \eqref{maineqn touch}, respectively. 

Similarly, using \eqref{ce} again, 
\begin{align}\label{matrixC2}
\begin{pmatrix}
~a_{11}&a_{12}&a_{13}~\\\\
~a_{21}&a_{22}&a_{23}~\\\\
~a_{31}&a_{32}&a_{33}
\end{pmatrix}\begin{pmatrix}
~C^1~\\\\
C^2-1\\\\
C^3
\end{pmatrix}=\begin{pmatrix}
~Q_{2,1}[{\boldsymbol{\varphi}}]~\\\\
Q_{2,2}[{\boldsymbol{\varphi}}]\\\\
Q_{2,3}[{\boldsymbol{\varphi}}]
\end{pmatrix},
\end{align}
where
\begin{align}\label{defQ2aij}
Q_{2,\beta}[{\boldsymbol\varphi}]:=Q_\beta[{\boldsymbol{\varphi}}]-a_{\beta2}=-\int_{\Omega} \left(2\mu e({\bf u}_{0}+{\bf u}_{2}),e({\bf u}_{\beta})\right)\mathrm{d}x,\quad\beta=1,2,3.
\end{align}
Define 
\begin{align*}
Q^*_{2,\beta}[{\boldsymbol\varphi}]=-\int_{\Omega^0} \left(2\mu e({\bf u}^*_{0}+{\bf u}^*_{2}),e({\bf u}^*_{\beta})\right)\mathrm{d}x,\quad\beta=1,2,3.
\end{align*}

We are going to prove the convergence of $Q_{1,\beta}[{\boldsymbol{\varphi}}]$ and $Q_{2,\beta}[{\boldsymbol{\varphi}}]$ as $\varepsilon\rightarrow0$.
Define
$$\mathcal{C}_{r}:=\left\{x\in\mathbb R^{2}\big| |x_1|<r,~0\leq x_{2}\leq\varepsilon+\kappa_1r^2\right\},\quad r<R.$$

\begin{lemma}\label{lem difference v11}
	Let $({\bf u}_{1},p_{1})$ and $({\bf u}_{1}^*,p_{1}^{*})$ satisfy \eqref{equ_v1} and \eqref{defu*}, respectively. Then
	\begin{align}\label{converu1}
	|({\bf u}_{1}-{\bf u}_{1}^{*})(x)|\leq C\varepsilon^{1/2},\quad  x\in V\setminus \mathcal{C}_{\varepsilon^{1/4}},
	\end{align}
	where $C$ is a {\it universal constant} and $V:=D\setminus\overline{D_{1}\cup D_{1}^0}$. 
\end{lemma}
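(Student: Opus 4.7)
The plan is to set $\boldsymbol\eta := {\bf u}_{1}-{\bf u}_{1}^{*}$ on $V$; since $V\subset \Omega\cap \Omega^{0}$, the pair $(\boldsymbol\eta, p_{1}-p_{1}^{*})$ solves the homogeneous Stokes system on $V$, and my strategy is to estimate $\boldsymbol\eta|_{\partial V}$ and then invoke the maximum modulus principle (Lemma \ref{lemmaxi}). Because $D_{1}=D_{1}^{0}+(0,\varepsilon)$, the two inclusions overlap, so $\partial V$ decomposes into a portion of $\partial D$ where both velocities vanish, an upper arc of $\partial D_{1}$ lying outside $\overline{D_{1}^{0}}$ (at positive distance from the origin), and the lower arc of $\partial D_{1}^{0}$ lying outside $\overline{D_{1}}$ (which contains the degenerate neck above $\partial D$ near the origin).

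I would first obtain the boundary estimates piece by piece. On $\partial V\cap\partial D$ we trivially have $\boldsymbol\eta=0$. On the upper arc of $\partial D_{1}$ one has ${\bf u}_{1}=\boldsymbol\psi_{1}$; since the vertical translate $x-(0,\varepsilon)$ of any such $x$ lies on $\partial D_{1}^{0}$ with ${\bf u}_{1}^{*}=\boldsymbol\psi_{1}$ there, and the segment from $x-(0,\varepsilon)$ to $x$ lies in $\Omega^{0}$ away from the origin where ${\bf u}_{1}^{*}$ has a bounded gradient by standard elliptic regularity, one gets $|\boldsymbol\eta(x)|\le C\varepsilon$. On the lower arc, at $x=(x_{1},h_{1}(x_{1}))$ we have ${\bf u}_{1}^{*}=\boldsymbol\psi_{1}$ and the point $(x_{1},\varepsilon+h_{1}(x_{1}))\in\partial D_{1}$ has ${\bf u}_{1}=\boldsymbol\psi_{1}$; integrating the pointwise gradient bound $|\nabla{\bf u}_{1}|\le C/\delta(x_{1})$ from Proposition \ref{propu11} along the vertical segment of length $\varepsilon$ yields $|\boldsymbol\eta(x)|\le C\varepsilon/\delta(x_{1})$, which is $\le C\varepsilon^{1/2}$ precisely when $|x_{1}|\ge \varepsilon^{1/4}$.

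Combining, $|\boldsymbol\eta|\le C\varepsilon^{1/2}$ on $\partial V\setminus \mathcal{C}_{\varepsilon^{1/4}}$ but only the trivial bound $|\boldsymbol\eta|\le C$ is available on the short arc $\partial V\cap \mathcal{C}_{\varepsilon^{1/4}}\subset \partial D_{1}^{0}$. A direct application of Lemma \ref{lemmaxi} on $V$ would only yield $|\boldsymbol\eta|\le C$, so I plan instead to apply the maximum modulus principle on $V':=V\setminus\overline{\mathcal{C}_{\varepsilon^{1/4}}}$; the part of $\partial V'$ inherited from $\partial V$ already carries the good bound, and it remains to estimate $\boldsymbol\eta$ on the two artificial interior cross-sections $\{x_{1}=\pm\varepsilon^{1/4}\}\cap V$, each of length $(\kappa_{1}-\kappa)\varepsilon^{1/2}$.

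This last step is the main obstacle, because on these cross-sections neither ${\bf u}_{1}$ nor ${\bf u}_{1}^{*}$ is individually small; what saves us is that both are approximately the same Couette-type profile across the neck. To make this precise I would use the decomposition ${\bf u}_{1}={\bf v}_{1}+{\bf w}_{1}$ from Section \ref{subsec2.1} (with $\|\nabla{\bf w}_{1}\|_{L^{\infty}(\Omega_{\delta/2}(x_{1}))}\le C$) together with the analogous limiting decomposition ${\bf u}_{1}^{*}={\bf v}_{1}^{*}+{\bf w}_{1}^{*}$. The explicit profiles ${\bf v}_{1}$ and ${\bf v}_{1}^{*}$ differ only through $\delta(x_{1})=\varepsilon+(\kappa_{1}-\kappa)x_{1}^{2}$ versus $(\kappa_{1}-\kappa)x_{1}^{2}$, so a direct computation from \eqref{v1alpha} gives $|{\bf v}_{1}-{\bf v}_{1}^{*}|\le C\varepsilon/\delta(x_{1})$, which is $O(\varepsilon^{1/2})$ on $|x_{1}|=\varepsilon^{1/4}$; combined with $C^{0}$-control of the two remainders along the segments (obtained by integrating their $L^{\infty}$ gradient bound from a boundary endpoint on $\partial D$ or $\partial D_{1}^{0}$ where they are already estimated), this supplies the missing bound on the artificial boundary and closes the argument via Lemma \ref{lemmaxi}. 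An alternative route, should the explicit computation prove unwieldy, would be to construct a scalar barrier built from the harmonic measure of the bad arc $\partial V\cap\mathcal{C}_{\varepsilon^{1/4}}$ in $V$ and dominate $|\boldsymbol\eta|$ by it, but the Couette-based comparison is the more direct option.
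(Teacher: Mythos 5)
Your proposal is correct and takes essentially the same route as the paper: decompose $\partial(V\setminus\mathcal{C}_{\varepsilon^{1/4}})$ into the zero piece on $\partial D$, the two crescent pieces on $\partial D_1$ and $\partial D_1^0$ controlled by vertical translation plus the gradient bounds from Proposition \ref{propu11}, the artificial cross-sections at $|x_1|=\varepsilon^{1/4}$ controlled by cancellation of the explicit Couette profiles ${\bf v}_1,{\bf v}_1^*$ together with the tame remainders ${\bf w}_1,{\bf w}_1^*$, and close with the maximum modulus principle (Lemma \ref{lemmaxi}). The only cosmetic difference is that you fix $\theta=1/4$ outright, whereas the paper introduces a free cutoff radius $\varepsilon^\theta$, computes the two competing error terms $\varepsilon^{2\theta}$ and $\varepsilon^{1-2\theta}$, and then optimizes; this is the same calculation packaged slightly differently.
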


\begin{proof}
We first introduce an auxiliary function $k^*(x)$, satisfying $k^*(x)=\frac{1}{2}$ on $\partial D_1^0\setminus\{0\}$, $k^*(x)=-\frac{1}{2}$ on $\partial D$, especially,
\begin{equation}\label{defk*}
k^*(x)=\frac{x_2-\kappa x_1^2}{(\kappa_1-\kappa)x_1^2}-\frac{1}{2}\quad\hbox{in}\ \Omega_{2R}^{0},\quad \|k^{*}(x)\|_{C^{2}(\Omega^{0}\setminus\Omega_{R}^{0})}\leq\,C,
\end{equation}
where $\Omega_r^{0}:=\left\{(x_1,x_{2})\in \mathbb{R}^{2}~\big|~\kappa x_1^2<x_{2}<\kappa_1 x_1^2,~|x_1|<r\right\}$, $r<R$. 
Notice that $({\bf u}_{1}-{\bf u}_{1}^{*},p_1-p_1^{*})$ satisfies
\begin{equation*}
\begin{cases}
\nabla\cdot\sigma[{\bf u}_{1}^*-{\bf u}_{1}^*,p_{1}-p_{1}^*]=0,&\mathrm{in}~V:=D\setminus\overline{D_{1}\cup D_{1}^0},\\
\nabla\cdot ({\bf u}_{1}-{\bf u}_{1}^*)=0,&\mathrm{in}~V,\\
{\bf u}_{1}-{\bf u}_{1}^*={\boldsymbol\psi}_{1}-{\bf u}_{1}^*,&\mathrm{on}~\partial{D}_{1}\setminus D_{1}^0,\\
{\bf u}_{1}-{\bf u}_{1}^*={\bf u}_{1}-{\boldsymbol\psi}_{1},&\mathrm{on}~\partial{D}_{1}^0\setminus(D_{1}\cup\{0\}),\\
{\bf u}_{1}-{\bf u}_{1}^*=0,&\mathrm{on}~\partial{D}.
\end{cases}
\end{equation*} 
If $x\in\partial{D}_{1}\setminus D_{1}^0\subset\Omega^0\setminus\Omega_{R}^0$, then $(x_1,x_{2}-\varepsilon)\in\Omega^0\setminus\Omega_{R}^0$. By using the mean value theorem, 
\begin{align}\label{partial D11}
|({\bf u}_{1}-{\bf u}_{1}^{*})(x_1,x_{2})|&=|(\boldsymbol\psi_{1}-{\bf u}_{1}^*)(x_1,x_{2})|=|{\bf u}_{1}^*(x_1,x_{2}-\varepsilon)-{\bf u}_{1}^*(x_1,x_{2})|\leq C\varepsilon.
\end{align}

For $x\in\partial{D}_{1}^{0}\setminus(D_{1}\cup\mathcal{C}_{\varepsilon^{\theta}})$, where $0<\theta<1$ is some constant to be determined later, then 
we have
\begin{align}\label{partial D11*}
|({\bf u}_{1}-{\bf u}_{1}^*)(x_1,x_{2})|&=|({\bf u}_{1}-\boldsymbol\psi_{1})(x_1,x_{2})|=|{\bf u}_{1}(x_1,x_{2})-{\bf u}_{1}(x_1,x_{2}+\varepsilon)|\nonumber\\
&\leq\frac{C\varepsilon}{\varepsilon+(\kappa_1-\kappa)|x_1|^{2}}\leq C\varepsilon^{1-2\theta}.
\end{align}
Define ${\bf v}_1^*$ as another auxilliary function, replacing $k(x)$ in ${\bf v}_1$ by $k^*(x)$, then, for $x\in\Omega_{R}^{0}$ with $|x_1|=\varepsilon^{\theta}$, it follows from Proposition \ref{propu11} that
\begin{align}\label{partial x2 u11}
\left|\partial_{x_{2}}({\bf u}_{1}-{\bf u}_{1}^*)(x_1,x_{2})\right|
&=\left|\partial_{x_{2}}({\bf u}_{1}-{\bf v}_{1})+\partial_{x_{2}}({\bf v}_{1}-{\bf v}_{1}^*)
+\partial_{x_{2}}({\bf v}_{1}^*-{\bf u}_{1}^*)\right|(x_1,x_{2})\nonumber\\
&\leq C\left(1+\frac{1}{\varepsilon^{4\theta-1}}\right).
\end{align}
Thus, for $x\in\Omega_R^0$ whth $|x_1|=\varepsilon^{\theta}$, by using the triangle inequality, \eqref{partial D11}, \eqref{partial x2 u11}, and recalling ${\bf u}_{1}-{\bf u}_{1}^*=0$ on $\partial D$, we have   
\begin{align}\label{es v11*2D}
\left|({\bf u}_{1}-{\bf u}_{1}^*)(x_1,x_{2})\right|&\leq\left|({\bf u}_{1}-{\bf u}_{1}^*)(x_1,x_{2})-({\bf u}_{1}-{\bf u}_{1}^*)(x',\kappa x_1^2)\right|
+C\varepsilon^{1-2\theta}\nonumber\\
&\leq\left|\partial_{x_{2}}({\bf u}_{1}-{\bf u}_{1}^*)\right|\cdot(\kappa_1x_1^2-\kappa x^2_1)\Big|_{|x'|=\varepsilon^{\theta}}\nonumber\\
&\leq C\left(1+\frac{1}{\varepsilon^{4\theta-1}}\right)\varepsilon^{2\theta}=C\left(\varepsilon^{2\theta}+\varepsilon^{1-2\theta}\right).
\end{align}	
Letting $2\theta=1-2\theta$, we get $\theta=\frac{1}{4}$. Substituting it into \eqref{partial D11*}, \eqref{partial x2 u11} and \eqref{es v11*2D}, and recalling ${\bf u}_{1}-{\bf u}_{1}^{*}=0$ on $\partial D$, we obtain
\begin{align*}
|{\bf u}_{1}-{\bf u}_{1}^{*}|\leq C\varepsilon^{1/2}\quad\mbox{on}~\partial{(V\setminus\mathcal{C}_{\varepsilon^{1/4}})}.
\end{align*}
Applying Lemma \ref{lemmaxi}, we obtain \eqref{converu1} and Lemma \ref{lem difference v11} is proved.
\end{proof}

\begin{lemma}\label{lemmatildeQ}
For $\beta=1,3$, we have 
	\begin{equation*}
	\big|Q_{1,\beta}[{\boldsymbol\varphi}]-Q^*_{1,\beta}[{\boldsymbol\varphi}]\big|\leq C\varepsilon^{1/8},\quad {\boldsymbol{\varphi}}\in{\bf\Phi}_{1},
	\end{equation*}
	and 
	\begin{equation*}
	\big|Q_{2,\beta}[{\boldsymbol\varphi}]-Q^*_{2,\beta}[{\boldsymbol\varphi}]\big|\leq C\varepsilon^{1/8},\quad {\boldsymbol{\varphi}}\in{\bf\Phi}_{2}.
	\end{equation*}
\end{lemma}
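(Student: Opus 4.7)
The plan is to bound $Q_{1,\beta}[\boldsymbol\varphi]-Q^*_{1,\beta}[\boldsymbol\varphi]$ by splitting the defining energy integrals over $\Omega$ and $\Omega^{0}$ into a contribution from the narrow cylinder $\mathcal{C}_{\varepsilon^{\theta}}$ near the contact point and one from its complement, for an exponent $\theta\in(0,1/2)$ to be optimized. The bound for $|Q_{2,\beta}-Q^*_{2,\beta}|$ when $\boldsymbol\varphi\in{\bf\Phi}_{2}$ is structurally identical once one uses the cancellation ${\bf v}_{0}^{2}+{\bf v}_{2}=(0,1)^{\mathrm{T}}$ of Subsection~\ref{subsec4.5} in place of \eqref{v01v1}, so I focus on the first bound.

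In the neck the crucial input is the cancellation \eqref{v11v01}, which together with \eqref{estu0u1} yields $|\nabla({\bf u}_{0}+{\bf u}_{1})|\leq C$ on $\Omega_{R}$; the same construction of divergence-free auxiliary functions at the limit problem gives $|\nabla({\bf u}_{0}^{*}+{\bf u}_{1}^{*})|\leq C$ on $\Omega_{R}^{0}$. Combining with Proposition~\ref{propu11} and Proposition~\ref{propu13}, which supply $|e({\bf u}_{\beta})|,|e({\bf u}_{\beta}^{*})|\leq C/\delta(x_{1})$ for $\beta=1,3$, Fubini gives
\begin{align*}
\left|\int_{\Omega\cap\mathcal{C}_{\varepsilon^{\theta}}}2\mu\,e({\bf u}_{0}+{\bf u}_{1}){:}\,e({\bf u}_{\beta})\,\mathrm{d}x\right|\leq C\int_{|x_{1}|<\varepsilon^{\theta}}\frac{\delta(x_{1})}{\delta(x_{1})}\,\mathrm{d}x_{1}\leq C\varepsilon^{\theta},
\end{align*}
and the analogous estimate for the starred integral over $\Omega^{0}\cap\mathcal{C}_{\varepsilon^{\theta}}$.

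Outside $\mathcal{C}_{\varepsilon^{\theta}}$ the two domains $\Omega$ and $\Omega^{0}$ agree up to a set of area $O(\varepsilon)$, and Lemma~\ref{lem difference v11}, together with its straightforward analogues for ${\bf u}_{\beta}-{\bf u}_{\beta}^{*}$ and ${\bf u}_{0}-{\bf u}_{0}^{*}$ (both proved by the same mean-value/maximum-principle comparison scheme), gives $L^{\infty}$ convergence at rate $\varepsilon^{1/2}$ on $V\setminus\mathcal{C}_{\varepsilon^{1/4}}$. Promoting this pointwise bound to a gradient convergence by applying Proposition~\ref{lemWG2} on balls of radius comparable to $\delta(x_{1})\geq c\varepsilon^{2\theta}$, and combining it with the pointwise bound $|\nabla{\bf u}_{\beta}|\leq C\varepsilon^{-2\theta}$ on the boundary of the symmetric difference $\Omega\triangle\Omega^{0}$ (which sits at scale $\varepsilon^{\theta}$), shows that the exterior contribution is $O(\varepsilon^{1/2-c\theta})$ for some explicit $c>0$. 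Balancing this rate against the neck bound $\varepsilon^{\theta}$ and choosing $\theta$ accordingly recovers the stated rate $\varepsilon^{1/8}$.

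The principal technical obstacle is Step~3: upgrading the pointwise $L^{\infty}$ convergence of Lemma~\ref{lem difference v11} to a $W^{1,\infty}$ convergence on the variable subdomain whose boundary tracks $\mathcal{C}_{\varepsilon^{\theta}}$ as $\varepsilon\to 0$. This requires inserting an intermediate cylinder $\mathcal{C}_{2\varepsilon^{\theta}}$, verifying that ${\bf u}_{\beta}-{\bf u}_{\beta}^{*}$ and ${\bf u}_{0}-{\bf u}_{0}^{*}$ satisfy Stokes systems of the form \eqref{w1} there with small source terms, and a careful application of Proposition~\ref{lemWG2}; it is the loss of powers in the inverse of the radius in the latter estimate, balanced against the neck width $\varepsilon^{\theta}$, that ultimately forces the final exponent $1/8$ rather than the naive $1/4$.
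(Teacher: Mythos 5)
Your decomposition into a neck cylinder $\mathcal{C}_{\varepsilon^{\theta}}$ and its complement, and your identification of the two key ingredients (the cancellation $|\nabla({\bf u}_0+{\bf u}_1)|\le C$ from \eqref{v11v01}--\eqref{estu0u1} in the neck, and the $L^{\infty}$ convergence of Lemma \ref{lem difference v11} outside) are exactly what the paper does, with the paper fixing $\theta=1/8$ upfront. The neck estimate and the reduction to the second case via ${\bf v}_0^2+{\bf v}_2=(0,1)^{\mathrm T}$ are both correct.

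There is, however, a genuine gap in your Step 3: the route you propose for upgrading the $L^{\infty}$ closeness to a gradient bound does not work as stated. Proposition \ref{lemWG2} controls $\|\nabla{\bf w}\|_{L^{\infty}}$ by $\delta^{-1}\|\nabla{\bf w}\|_{L^{2}}$ and the source, not by $\|{\bf w}\|_{L^{\infty}}$, and it is formulated for solutions of \eqref{w1} with vanishing Dirichlet data on $\partial\Omega$; the difference ${\bf u}_{1}-{\bf u}_{1}^{*}$ is far from satisfying these hypotheses, since it has nonzero (and not uniformly small) boundary values on $\partial D_{1}\setminus D_{1}^{0}$ and $\partial D_{1}^{0}\setminus D_{1}$, and its "source term'' is not one of the ${\bf f}_{\alpha}$'s already analyzed. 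The paper instead proceeds by a different mechanism: it applies the interpolation inequality $\|\nabla f\|_{L^\infty}\lesssim\|f\|_{L^\infty}^{1/2}\|\nabla^{2}f\|_{L^\infty}^{1/2}$, first on the uniformly smooth region $D\setminus(D_{1}\cup D_{1}^{0}\cup\Omega_{R})$ (where $|\nabla^{2}({\bf u}_{1}-{\bf u}_{1}^{*})|\le C$ by Schauder up to the smooth boundaries) and then, in the annuli $\Omega_{|z_1|+|z_1|^{2}}\setminus\Omega_{|z_1|}$ with $\varepsilon^{1/8}\le|z_1|\le R$, after the rescaling $x_{1}-z_{1}=|z_{1}|^{2}y_{1}$, $x_{2}=|z_{1}|^{2}y_{2}$ which turns those annuli into unit-size cubes in which the rescaled $C^{2}$ norms are bounded. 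This halves the exponent ($\varepsilon^{1/2}\to\varepsilon^{1/4}$) and, upon unscaling, yields the crucial bound $|\nabla({\bf u}_{1}-{\bf u}_{1}^{*})|\le C\varepsilon^{1/4}|x_{1}|^{-2}$ of \eqref{difference Dv1 1}. With $|\nabla{\bf u}_{\beta}^{*}|\le C|x_{1}|^{-2}$ and cross-sectional measure $\sim|x_{1}|^{2}\,\mathrm{d}x_{1}$, the exterior contribution is $\varepsilon^{1/4}\int_{\varepsilon^{\theta}}^{R}|x_{1}|^{-2}\mathrm{d}x_{1}\sim\varepsilon^{1/4-\theta}$, and balancing against the neck $\varepsilon^{\theta}$ gives $\theta=1/8$ -- not $\varepsilon^{1/2-c\theta}$ as you write, and the source of the loss is the interpolation, not an inverse-radius factor from Proposition \ref{lemWG2}. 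You should replace the appeal to Proposition \ref{lemWG2} with this rescaling-plus-interpolation argument; the additional pieces the paper collects (the $O(\varepsilon)$ symmetric-difference term $\mathrm{I}_{1}$ and the quadratic-in-difference term $\mathrm{I}_{5}$) are of lower order and do not change $\theta$.
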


\begin{proof}
	We estimate $\big|Q_{1,1}[{\boldsymbol\varphi}]-Q^*_{1,1}[{\boldsymbol\varphi}]\big|$ for example, since the other three cases are the same. Note that
	\begin{align*}
	Q_{1,1}[{\boldsymbol\varphi}]&=-\int_{\Omega} \left(2\mu e({\bf u}_{0}+{\bf u}_{1}),e({\bf u}_{1})\right)\mathrm{d}x\\
	&=-\int_{\Omega_{\varepsilon^{1/8}}} \left(2\mu e({\bf u}_{0}+{\bf u}_{1}),e({\bf u}_{1})\right)\mathrm{d}x-\int_{\Omega\setminus\Omega_{\varepsilon^{1/8}}} \left(2\mu e({\bf u}_{0}+{\bf u}_{1}),e({\bf u}_{1})\right)\mathrm{d}x,
	\end{align*}
	and 
	\begin{align*}
	Q^*_{1,1}[{\boldsymbol\varphi}]=-\int_{\Omega^0_{\varepsilon^{1/8}}} \left(2\mu e({\bf u}^*_{0}+{\bf u}^*_{1}),e({\bf u}^*_{1})\right)\mathrm{d}x-\int_{\Omega^0\setminus\Omega^0_{\varepsilon^{1/8}}} \left(2\mu e({\bf u}^*_{0}+{\bf u}^*_{1}),e({\bf u}^*_{1})\right)\mathrm{d}x.
	\end{align*}
	Using Proposition \ref{propu11} and \eqref{estu0u1}, we deduce
	\begin{align*}
	\left|-\int_{\Omega_{\varepsilon^{1/8}}} \left(2\mu e({\bf u}_{0}+{\bf u}_{1}),e({\bf u}_{1})\right)\mathrm{d}x\right|\leq C\int_{\Omega_{\varepsilon^{1/8}}}\frac{1}{\delta(x_1)}\mathrm{d}x\leq C\varepsilon^{1/8},
	\end{align*}
	and 
	\begin{align*}
	\left|-\int_{\Omega^0_{\varepsilon^{1/8}}} \left(2\mu e({\bf u}^*_{0}+{\bf u}^*_{1}),e({\bf u}^*_{1})\right)\mathrm{d}x\right|\leq C\int_{\Omega^0_{\varepsilon^{1/8}}}\frac{1}{(\kappa_1-\kappa)x_1^2}\mathrm{d}x\leq C\varepsilon^{1/8}.
	\end{align*}
	We thus obtain
	\begin{align}\label{tildeQ}
	Q_{1,1}[{\boldsymbol\varphi}]-Q^*_{1,1}[{\boldsymbol\varphi}]&=-\int_{\Omega\setminus\Omega_{\varepsilon^{1/8}}} \left(2\mu e({\bf u}_{0}+{\bf u}_{1}),e({\bf u}_{1})\right)\mathrm{d}x\nonumber\\
	&\quad+\int_{\Omega^0\setminus\Omega^0_{\varepsilon^{1/8}}} \left(2\mu e({\bf u}^*_{0}+{\bf u}^*_{1}),e({\bf u}^*_{1})\right)\mathrm{d}x+O(\varepsilon^{1/8}).
	\end{align}
	
	Next, we estimate the difference
	$$-\int_{\Omega\setminus\Omega_{\varepsilon^{1/8}}} \left(2\mu e({\bf u}_{0}+{\bf u}_{1}),e({\bf u}_{1})\right)\mathrm{d}x+\int_{\Omega^0\setminus\Omega^0_{\varepsilon^{1/8}}} \left(2\mu e({\bf u}^*_{0}+{\bf u}^*_{1}),e({\bf u}^*_{1})\right)\mathrm{d}x.$$ 
	In view of the boundedness of $|\nabla{\bf u}_0|$ and $|\nabla{\bf u}_1|$ in $D_1^0\setminus(D_1\cup\Omega_R)$ and $D_1\setminus D_1^0$,  and $|D_1^0\setminus(D_1\cup\Omega_R)|, |D_1\setminus D_1^0|\leq C\varepsilon$, we have 
	\begin{align}\label{estoutR}
	\int_{\Omega\setminus\Omega_{R}} \left(2\mu e({\bf u}_{0}+{\bf u}_{1}),e({\bf u}_{1})\right)\mathrm{d}x=\int_{D\setminus(D_1\cup D_1^0\cup\Omega_{R})} \left(2\mu e({\bf u}_{0}+{\bf u}_{1}),e({\bf u}_{1})\right)\mathrm{d}x+O(\varepsilon).
	\end{align}
	Recalling that
	$$\nabla\cdot\sigma[{\bf u}_{1}^*-{\bf u}_{1}^*,p_{1}-p_{1}^*]=0,\quad x\in D\setminus D_{1}\cup D_1^0\cup\Omega_{R},$$
	$$0\leq|{\bf u}_{1}|, |{\bf u}_{1}^{*}|\leq1,\quad x\in D\setminus D_{1}\cup D_1^0\cup\Omega_{R},$$
	and $\partial D_{1}, \partial D_1^0$, and $\partial D$ are $C^{2,\alpha}$, we have
	\begin{equation}\label{DD v1 1}
	|\nabla^{2}({\bf u}_{1}-{\bf u}_{1}^{*})|\leq|\nabla^{2}{\bf u}_{1}|+|\nabla^{2}{\bf u}_{1}^{*}|\leq C\quad\mbox{in}~D\setminus(D_{1}\cup D_1^0\cup\Omega_{R}).
	\end{equation}
	By using interpolation inequality, \eqref{DD v1 1}, and \eqref{converu1}, we obtain 
	\begin{equation}\label{D v1 *1}
	|\nabla({\bf u}_{1}-{\bf u}_{1}^{*})|\leq C\varepsilon^{1/2(1-\frac{1}{2})}=C\varepsilon^{1/4}\quad\mbox{in}~D\setminus(D_{1}\cup D_1^0\cup\Omega_{R}).
	\end{equation}
	Similar to the proof of \eqref{converu1}, we have 
	\begin{align*}
	|({\bf u}_{0}-{\bf u}_{0}^{*})(x)|\leq C\varepsilon^{1/2}\quad\mbox{in}~ V\setminus \mathcal{C}_{\varepsilon^{1/4}}.
	\end{align*}
	Then by the argument in deriving \eqref{D v1 *1}, we obtain
	\begin{equation}\label{D v0 *}
	|\nabla({\bf u}_{0}-{\bf u}_{0}^{*})|\leq C\varepsilon^{1/4}\quad\mbox{in}~D\setminus(D_{1}\cup D_1^0\cup\Omega_{R}).
	\end{equation}
	
It follows from \eqref{D v1 *1} and \eqref{D v0 *} that
	\begin{align*}
	&\int_{D\setminus(D_1\cup D_1^0\cup\Omega_{R})} \left(2\mu e({\bf u}_{0}+{\bf u}_{1}),e({\bf u}_{1})\right)\mathrm{d}x\\
	&=
	\int_{D\setminus(D_1\cup D_1^0\cup\Omega_{R})} \left(2\mu e({\bf u}^*_{0}+{\bf u}^*_{1}),e({\bf u}^*_{1})\right)\mathrm{d}x\\
	&\quad+\int_{D\setminus(D_1\cup D_1^0\cup\Omega_{R})} \left(2\mu e({\bf u}_{0}+{\bf u}_{1}-{\bf u}^*_{0}-{\bf u}^*_{1}),e({\bf u}_{1}-{\bf u}^*_{1})\right)\mathrm{d}x\\
	&\quad+\int_{D\setminus(D_1\cup D_1^0\cup\Omega_{R})} \left(2\mu e({\bf u}_{0}+{\bf u}_{1}-{\bf u}^*_{0}-{\bf u}^*_{1}),e({\bf u}^*_{1})\right)\mathrm{d}x\\
	&\quad-\int_{D\setminus(D_1\cup D_1^0\cup\Omega_{R})} \left(2\mu e({\bf u}^*_{0}+{\bf u}^*_{1}),e({\bf u}^*_{1}-{\bf u}_{1})\right)\mathrm{d}x\\
	&=\int_{\Omega^0\setminus\Omega^0_{R}} \left(2\mu e({\bf u}^*_{0}+{\bf u}^*_{1}),e({\bf u}^*_{1})\right)\mathrm{d}x+O(\varepsilon^{1/4}).
	\end{align*}
	Substituting it into \eqref{estoutR}, we obtain
	\begin{align}\label{estoutR00}
	\int_{\Omega\setminus\Omega_{R}} \left(2\mu e({\bf u}_{0}+{\bf u}_{1}),e({\bf u}_{1})\right)\mathrm{d}x=\int_{\Omega^0\setminus\Omega^0_{R}} \left(2\mu e({\bf u}^*_{0}+{\bf u}^*_{1}),e({\bf u}^*_{1})\right)\mathrm{d}x+O(\varepsilon^{1/4}).
	\end{align}
	
	Note that 
	\begin{align*}
	&\int_{\Omega_{R}\setminus\Omega_{\varepsilon^{1/8}}} \left(2\mu e({\bf u}_{0}+{\bf u}_{1}),e({\bf u}_{1})\right)\mathrm{d}x-\int_{\Omega^0_{R}\setminus\Omega^0_{\varepsilon^{1/8}}} \left(2\mu e({\bf u}^*_{0}+{\bf u}^*_{1}),e({\bf u}^*_{1})\right)\mathrm{d}x\\
	&=\int_{(\Omega_{R}\setminus\Omega_{\varepsilon^{1/8}})\setminus(\Omega_{R}^{0}\setminus\Omega_{\varepsilon^{1/8}}^{0})}\big(2\mu e({\bf u}_{0}+{\bf u}_{1}),e({\bf u}_{1})\big)\ dx\\
	&\quad+2\int_{\Omega_{R}^{0}\setminus\Omega_{\varepsilon^{1/8}}^{0}}\big(2\mu e({\bf u}_{0}+{\bf u}_{1}-{\bf u}^*_{0}-{\bf u}^*_{1}),e({\bf u}_{1}^*)\big)\ dx\nonumber\\
	&\quad-\int_{\Omega_{R}^{0}\setminus\Omega_{\varepsilon^{1/8}}^{0}}\big(2\mu e({\bf u}_{0}-{\bf u}^*_{0}),e({\bf u}_{1}^*)\big)\ dx-\int_{\Omega_{R}^{0}\setminus\Omega_{\varepsilon^{1/8}}^{0}}\big(2\mu e({\bf u}_{0}^*),e({\bf u}_{1}^*-{\bf u}_{1})\big)\ dx\nonumber\\
	&\quad+\int_{\Omega_{R}^{0}\setminus\Omega_{\varepsilon^{1/8}}^{0}}\big(2\mu e({\bf u}_{0}+{\bf u}_{1}-{\bf u}^*_{0}-{\bf u}^*_{1}),e({\bf u}_{1}-{\bf u}_{1}^{*})\big)\ dx\nonumber\\
	&=:\mbox{I}_{1}+\mbox{I}_{2}+\mbox{I}_{3}+\mbox{I}_{4}+\mbox{I}_{5}.
	\end{align*}
	For $\varepsilon^{1/8}\leq|z_{1}|\leq R$, by using the following change of variables:
	\begin{align*}
	\begin{cases}
	x_{1}-z_{1}=|z_{1}|^{2}y_{1},\\
	x_{2}=|z_{1}|^{2}y_{2},
	\end{cases}
	\end{align*}
	we rescale $\Omega_{|z_{1}|+|z_{1}|^{2}}\setminus\Omega_{|z_{1}|}$ into a nearly cube $Q_{1}$ in unit size, and $\Omega_{|z_{1}|+|z_{1}|^{2}}^{*}\setminus\Omega_{|z_{1}|}^{0}$ into $Q_{1}^{0}$.
	Let
	$${\bf U}_{1}={\bf u}_{1}(z_{1}+z_{1}^{2}y_{1},|z_{1}|^{2}y_{2}),\quad\mbox{in}~Q_{1},\quad\mbox{and}\quad {\bf U}_{1}^{*}={\bf u}_{1}^{*}(z_{1}+z_{1}^{2}y_{1},|z_{1}|^{2}y_{2}),\quad\mbox{in}~Q_{1}^{0}.$$
	Similar to \eqref{DD v1 1}, we get
	$$|\nabla^{2}{\bf U}_{1}|\leq C\quad\mbox{in}~Q_{1},\quad\mbox{and}~|\nabla^{2}{\bf U}_{1}^{*}|\leq C\quad\mbox{in}~Q_{1}^{0}.$$
	Using interpolation, we obtain
	$$|\nabla({\bf U}_{1}-{\bf U}_{1}^{*})|\leq C\varepsilon^{1/4}.$$
	Rescaling back to ${\bf u}_{1}-{\bf u}_{1}^{*}$, we have
	\begin{equation}\label{difference Dv1 1}
	|\nabla({\bf u}_{1}-{\bf u}_{1}^{*})|\leq C\varepsilon^{1/4}|x_{1}|^{-2}\quad\mbox{in}~\Omega_{R}^{0}\setminus\Omega_{\varepsilon^{1/8}}^{0}.
	\end{equation}

Similarly, we have
	\begin{equation}\label{Dv11}
	|\nabla {\bf u}_{1}|\leq C|x_{1}|^{-2}\quad\mbox{in}~\Omega_{R}\setminus\Omega_{\varepsilon^{1/8}},
	\end{equation}
	and
	\begin{equation}\label{Dv11*}
	|\nabla {\bf u}_{1}^{*}|\leq C|x_{1}|^{-2}\quad\mbox{in}~\Omega_{R}^{0}\setminus\Omega_{\varepsilon^{1/8}}^{0}.
	\end{equation}
	The estimates \eqref{difference Dv1 1}--\eqref{Dv11*} also hold for ${\bf u}_0$ and ${\bf u}_0^*$. 
	It follows from \eqref{Dv11} and $|(\Omega_{R}\setminus\Omega_{\varepsilon^{1/8}})\setminus(\Omega_{R}^{0}\setminus\Omega_{\varepsilon^{1/8}}^{0})|\leq C\varepsilon$ that
	\begin{align*}
	|\mbox{I}_{1}|\leq C\varepsilon\int_{\varepsilon^{1/8}<|x_{1}|\leq R}\frac{dx_{1}}{|x_{1}|^{2}}\leq C\varepsilon^{7/8}.
	\end{align*}
	Also, by using \eqref{difference Dv1 1} and \eqref{Dv11*}, we obtain
	\begin{align*}
	|\mbox{I}_{2}|+|\mbox{I}_{3}|+|\mbox{I}_{4}|\leq C\varepsilon^{1/4}\int_{\varepsilon^{1/8}<|x_{1}|\leq R}\frac{dx_{1}}{|x_{1}|^{2}}\leq C\varepsilon^{1/8},
	\end{align*}
	and by \eqref{difference Dv1 1}, we get
	\begin{align*}
	|\mbox{I}_{5}|\leq C\varepsilon^{1/2}\int_{\varepsilon^{1/8}<|x_{1}|\leq R}\frac{dx_{1}}{|x_{1}|^{2}}\leq C\varepsilon^{3/8}.
	\end{align*}
	We thus obtain 
	\begin{align}\label{outvare}
	\int_{\Omega_{R}\setminus\Omega_{\varepsilon^{1/8}}} \left(2\mu e({\bf u}_{0}+{\bf u}_{1}),e({\bf u}_{1})\right)\mathrm{d}x-\int_{\Omega^0_{R}\setminus\Omega^0_{\varepsilon^{1/8}}} \left(2\mu e({\bf u}^*_{0}+{\bf u}^*_{1}),e({\bf u}^*_{1})\right)\mathrm{d}x=O(\varepsilon^{1/8}).
	\end{align}
	Combining \eqref{tildeQ}, \eqref{estoutR00}, and \eqref{outvare}, we conclude
	$$\big|Q_{1,1}[{\boldsymbol\varphi}]-Q^*_{1,1}[{\boldsymbol\varphi}]\big|\leq C\varepsilon^{1/8}.$$
Lemma \ref{lemmatildeQ} is proved.
\end{proof}

Now we are in a position to prove Theorem \ref{mainthm02}.
\begin{proof}[Proof of Theorem \ref{mainthm02}.] 
	(1) For ${\boldsymbol{\varphi}}\in{\bf\Phi}_{1}$. From the definition of $k(x)$, \eqref{def_vx}, we have 
	$$k(0,\varepsilon/2)=0.$$
	Then, combining Proposition \ref{propu12}, Proposition \ref{propu13}, \eqref{estv3112}, and  \eqref{lamda0}, we have 
	\begin{equation*}
	\left|\sum_{\alpha=2}^{3}C^\alpha\partial_{x_2}{\bf u}_\alpha^{(1)}(0,\varepsilon/2)\right|\leq C.
	\end{equation*}
	Moreover, from \eqref{estv1112} and \eqref{v01v1}, one can verify that
	\begin{equation*}
	\partial_{x_2}{\bf v}_1^{(1)}(0,\varepsilon/2)=\frac{1}{\varepsilon},\quad \partial_{x_2}{\bf v}_{0}^{(1)}(0,\varepsilon/2)=-\frac{1}{\varepsilon}.
	\end{equation*}
	This together with Proposition \ref{propu11} and Proposition \ref{propu12} yields
	\begin{align*}
	|\partial_{x_2}{\bf u}_1^{(1)}(0,\varepsilon/2)+\partial_{x_2}{\bf u}_0^{(1)}(0,\varepsilon/2)|&\leq |\partial_{x_2}{\bf v}_1^{(1)}(0,\varepsilon/2)+\partial_{x_2}{\bf v}_0^{(1)}(0,\varepsilon/2)|\\
	&\quad+|\partial_{x_2}{\bf w}_1^{(1)}(0,\varepsilon/2)+\partial_{x_2}{\bf w}_0^{(1)}(0,\varepsilon/2)|\leq C.
	\end{align*}
	Thus,
	\begin{align}\label{Dulower}
	|\nabla{\bf u}(0,\varepsilon/2)|&=\left|\sum_{\alpha=1}^{3}C^\alpha\nabla{\bf u}_\alpha(0,\varepsilon/2)+\nabla{\bf u}_0(0,\varepsilon/2)\right|\nonumber\\
	&\geq \left|\sum_{\alpha=1}^{3}C^\alpha\partial_{x_2}{\bf u}_\alpha^{(1)}(0,\varepsilon/2)+\partial_{x_2}{\bf u}_0^{(1)}(0,\varepsilon/2)\right|\nonumber\\
	&\geq |C^1\partial_{x_2}{\bf u}_1^{(1)}(0,\varepsilon/2)+\partial_{x_2}{\bf u}_0^{(1)}(0,\varepsilon/2)|-C\nonumber\\
	&\geq |(C^1-1)\partial_{x_2}{\bf u}_1^{(1)}(0,\varepsilon/2)|-C\geq \frac{|C^1-1|}{\varepsilon}-C.
	\end{align}
	By using \eqref{matrixC1}, Cramer's rule, and \eqref{DetA}, we obtain
	\begin{align}\label{estC1-1}
	&C^1-1=\frac{1}{\det\mathbb{A}}\big(\mbox{cof}(\mathbb A)_{11} Q_{1,1}[{\boldsymbol\varphi}]-\mbox{cof}(\mathbb A)_{21} Q_{1,2}[{\boldsymbol\varphi}]+\mbox{cof}(\mathbb A)_{31} Q_{1,3}[{\boldsymbol\varphi}] \big)\nonumber\\
	&=\frac{(\kappa_1-\kappa)^{1/2}}{\mu \pi}\Big(Q_{1,1}[{\boldsymbol\varphi}]-(\kappa_1+\kappa) Q_{1,3}[{\boldsymbol\varphi}]\Big)\sqrt\varepsilon+O(1)\varepsilon.
	\end{align}
	If $Q_{1,1}^*[{\boldsymbol\varphi}]-(\kappa_1+\kappa) Q_{1,3}^*[{\boldsymbol\varphi}]\neq 0$, then by using Lemma \ref{lemmatildeQ}, there exists a small enough constant $\varepsilon_0>0$ such that for $0<\varepsilon<\varepsilon_0$,
	\begin{equation*}
	|Q_{1,1}[{\boldsymbol\varphi}]-(\kappa_1+\kappa) Q_{1,3}[{\boldsymbol\varphi}]|\geq \frac{1}{2}|Q_{1,1}^*[{\boldsymbol\varphi}]-(\kappa_1+\kappa) Q_{1,3}^*[{\boldsymbol\varphi}]|>0.
	\end{equation*}
	Thus, from \eqref{estC1-1}, we have 
	\begin{align*}
	|C^1-1|\geq\frac{|Q_{1,1}^*[{\boldsymbol\varphi}]-(\kappa_1+\kappa) Q_{1,3}^*[{\boldsymbol\varphi}]|}{C}\sqrt\varepsilon.
	\end{align*}
	This together with \eqref{Dulower} yields
	\begin{align*}
	|\nabla{\bf u}(0,\varepsilon/2)|\geq\frac{1}{C\sqrt\varepsilon}.
	\end{align*}
	
(2) For ${\boldsymbol{\varphi}}\in{\bf\Phi}_{2}$. From \eqref{estv1112}, \eqref{estv2112} and \eqref{estv3112}, we have
	\begin{align*}
	\partial_{x_2}{\bf v}_1^{(1)}(0,\varepsilon/2)=\frac{1}{\varepsilon}, \quad\partial_{x_2}{\bf v}_2^{(1)}(0,\varepsilon/2)=\partial_{x_2}{\bf v}_0^{(1)}(0,\varepsilon/2)=0,\quad\partial_{x_2}{\bf v}_3^{(1)}(0,\varepsilon/2)=2.
	\end{align*}
	Then, by virtue of Proposition \ref{propu11}, Proposition \ref{propu12}, Proposition \ref{propu13} and \eqref{lamda20}, we deduce
	\begin{align}\label{Dulower2}
	|\nabla{\bf u}(0,\varepsilon/2)|&=\left|\sum_{\alpha=1}^{3}C^\alpha\nabla{\bf u}_\alpha(0,\varepsilon/2)+\nabla{\bf u}_0(0,\varepsilon/2)\right|\nonumber\\
	&\geq \left|\sum_{\alpha=1}^{3}C^\alpha\partial_{x_2}{\bf u}_\alpha^{(1)}(0,\varepsilon/2)+\partial_{x_2}{\bf u}_0^{(1)}(0,\varepsilon/2)\right|\nonumber\\
	&\geq |C^1\partial_{x_2}{\bf u}_1^{(1)}(0,\varepsilon/2)+C^2\partial_{x_2}{\bf u}_2^{(1)}(0,\varepsilon/2)+\partial_{x_2}{\bf u}_0^{(1)}(0,\varepsilon/2)|-C\nonumber\\
	&\geq \frac{|C^1|}{\varepsilon}-|\nabla({\bf w}_{2}+{\bf w}_{0})|-C.
    \end{align}	
    It follows from  \eqref{equ_v1} and \eqref{equ_v3} that ${\bf u}_2+{\bf u}_0$  satisfies
	\begin{equation*}
	\begin{cases}
	\nabla\cdot\sigma[{\bf u}_2+{\bf u}_0,p_{2}+p_{0}]=0,\quad\nabla\cdot ({\bf u}_{2}+{\bf u}_{0})=0&\mathrm{in}~\Omega,\\
	{\bf u}_{2}+{\bf u}_{0}={\boldsymbol{\psi}_2}&\mathrm{on}~\partial{D}_{1},\\
	{\bf u}_{2}+{\bf u}_{0}={\boldsymbol{\varphi}}&\mathrm{on}~\partial{D}.
	\end{cases}
	\end{equation*}
	Since ${\boldsymbol{\varphi}}={\boldsymbol{\psi}_2}$ in $\Omega_{2R}$, 
	 using the method in proving Proposition \ref{propu11}, we have \begin{equation}\label{w2+w0}
	|\nabla({\bf w}_{2}+{\bf w}_{0})|\leq C.
	\end{equation}
	Moverover, by using \eqref{matrixC2}, the Cramer's rule, and \eqref{DetA}, we obtain
	\begin{align}\label{estC11}
	C^1&=\frac{1}{\det\mathbb{A}}\big(\mbox{cof}(\mathbb A)_{11} Q_{2,1}[{\boldsymbol\varphi}]-\mbox{cof}(\mathbb A)_{21} Q_{2,2}[{\boldsymbol\varphi}]+\mbox{cof}(\mathbb A)_{31} Q_{2,3}[{\boldsymbol\varphi}] \big)\nonumber\\
	&=\frac{(\kappa_1-\kappa)^{1/2}}{\mu \pi}\Big(Q_{2,1}[{\boldsymbol\varphi}]-(\kappa_1+\kappa) Q_{2,3}[{\boldsymbol\varphi}]\Big)\sqrt\varepsilon+O(1)\varepsilon.
	\end{align}
	Similarly, if $Q_{2,1}^*[{\boldsymbol\varphi}]-(\kappa_1+\kappa) Q_{2,3}^*[{\boldsymbol\varphi}]\neq 0$, by using Lemma \ref{lemmatildeQ} and \eqref{Dulower2}--\eqref{estC11}, we have 
	\begin{align*}
	|\nabla{\bf u}(0,\varepsilon/2)|\geq\frac{1}{C\sqrt\varepsilon}.
	\end{align*}
		Theorem \ref{mainthm02} is proved.		
\end{proof} 

\subsection{Proof of Theorem \ref{mainthmlower}}
	\begin{lemma}\label{zydyl}
		For $\beta=1,2,3$, we have 
		\begin{equation*}
		\big|Q_{\beta}[{\boldsymbol\varphi}]-Q^*_{\beta}[{\boldsymbol\varphi}]\big|\leq C\varepsilon^{1/8},
		\end{equation*}
		where ${\boldsymbol{\varphi}}\in{\bf\Phi}_{i}$ which is defined in  \eqref{defphi3}, $i=3,4$, $l_1\geq 2$, and $l_2\geq 3$.
	\end{lemma}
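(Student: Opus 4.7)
The plan is to adapt the strategy used in the constant boundary-data setting (Lemma \ref{lemmatildeQ}) to the present polynomial regime. Integration by parts against $\nabla\cdot\sigma[{\bf u}_0,p_0]=0$, together with $\nabla\cdot{\bf u}_\beta=0$, the rigid-motion identity $e({\bf u}_\beta)=0$ in $D_1$, and ${\bf u}_\beta=\boldsymbol\psi_\beta$ on $\partial D_1$, rewrites both functionals as volume integrals,
$$Q_\beta[\boldsymbol\varphi]=-\int_\Omega\bigl(2\mu e({\bf u}_0),e({\bf u}_\beta)\bigr)\mathrm{d}x,\qquad Q^*_\beta[\boldsymbol\varphi]=-\int_{\Omega^0}\bigl(2\mu e({\bf u}^*_0),e({\bf u}^*_\beta)\bigr)\mathrm{d}x.$$
I would then split each integration domain as $\Omega_{\varepsilon^{1/8}}\cup(\Omega\setminus\Omega_{\varepsilon^{1/8}})$ (and similarly for $\Omega^0$), and show that both the inner and the outer contributions are of size $O(\varepsilon^{1/8})$.

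For the inner neck pieces, the standing assumption $l_1\ge 2$ or $l_2\ge 3$ is crucial: Propositions \ref{propv03}--\ref{propv04} give $\|\nabla{\bf u}_0\|_{L^\infty(\Omega_R)}\le C$ (and the same for ${\bf u}^*_0$ on $\Omega^0_R$). Coupled with the pointwise bounds $|\nabla{\bf u}_\beta|\le C/\delta(x_1)$ from Propositions \ref{propu11} and \ref{propu13}, this yields $\int_{\Omega_{\varepsilon^{1/8}}}\bigl|(e({\bf u}_0),e({\bf u}_\beta))\bigr|\mathrm{d}x\le C\int_{|x_1|<\varepsilon^{1/8}}\mathrm{d}x_1\le C\varepsilon^{1/8}$ for $\beta=1,3$ (and identically for the starred integral over $\Omega^0_{\varepsilon^{1/8}}$). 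For $\beta=2$, where $|\nabla{\bf u}_2|\le C(\delta^{-1}+|x_1|\delta^{-2})$ is too singular for a crude bound, I would exploit the explicit construction of ${\bf v}_0^{3,4}$ in Sections \ref{seccoro}--\ref{seccoro2}: the polynomial vanishing of $\boldsymbol\varphi$ at the origin is embedded in these auxiliary functions, and when combined with the odd-in-$x_1$ parity of the leading singular term of $\nabla{\bf v}_2$ over the (approximately) symmetric neck, it cancels out the $|\ln\varepsilon|$ contribution down to the required $O(\varepsilon^{1/8})$.

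For the outer pieces, the strategy of Lemma \ref{lem difference v11} applies essentially verbatim. First, tracking the translated boundary values of ${\bf u}_\beta-{\bf u}_\beta^*$ and ${\bf u}_0-{\bf u}_0^*$ through the mean value theorem and invoking Lemma \ref{lemmaxi} yields $|{\bf u}_\beta-{\bf u}^*_\beta|,|{\bf u}_0-{\bf u}^*_0|\le C\varepsilon^{1/2}$ on $V\setminus\mathcal{C}_{\varepsilon^{1/4}}$. Second, Stokes interior $W^{2,\infty}$-regularity combined with $W^{1,\infty}$-interpolation promotes this to $|\nabla({\bf u}_\beta-{\bf u}^*_\beta)|\le C\varepsilon^{1/4}$ on $D\setminus(D_1\cup D_1^0\cup\Omega_R)$. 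Third, a Schauder-type rescaling $(x_1-z_1,x_2)=|z_1|^2(y_1,y_2)$ on the unit-sized cells indexed by $\varepsilon^{1/8}<|z_1|<R$ provides $|\nabla({\bf u}_\beta-{\bf u}^*_\beta)|\le C\varepsilon^{1/4}|x_1|^{-2}$ and the matching bound $|\nabla{\bf u}_\beta|,|\nabla{\bf u}^*_\beta|\le C|x_1|^{-2}$ throughout the outer annular neck. Expanding the difference of the two outer integrals bilinearly into correction terms involving $e({\bf u}_0-{\bf u}_0^*)$ or $e({\bf u}_\beta-{\bf u}_\beta^*)$ against bounded factors, plus an $O(\varepsilon)$ geometric error from the symmetric difference of $\Omega\setminus\Omega_{\varepsilon^{1/8}}$ and $\Omega^0\setminus\Omega^0_{\varepsilon^{1/8}}$, each piece is dominated by $C\varepsilon^{1/4}\int_{\varepsilon^{1/8}}^R|x_1|^{-2}\mathrm{d}x_1\le C\varepsilon^{1/8}$.

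The principal technical obstacle will be the inner-neck estimate for $\beta=2$: a crude application of $|\nabla{\bf u}_0|\le C$ against $|\nabla{\bf u}_2|\le C(\delta^{-1}+|x_1|\delta^{-2})$ only gives $O(|\ln\varepsilon|)$, and extracting the missing $\varepsilon^{1/8}/|\ln\varepsilon|$ requires simultaneously using the explicit polynomial structure of ${\bf v}_0^{3,4}$ and the parity of the singular part of $\nabla{\bf v}_2$ over the symmetric portion of the neck. This is precisely the step where the assumptions $l_1\ge 2$ and $l_2\ge 3$ are used in an essential way.
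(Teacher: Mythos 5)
The paper's proof takes a genuinely different route from yours. Rather than working with the volume integrals $Q_\beta[\boldsymbol\varphi]=-\int_\Omega(2\mu e({\bf u}_0),e({\bf u}_\beta))$, the paper first integrates by parts (using the Green/reciprocity identity for the Stokes system) to move the functional onto the \emph{outer} boundary:
\[
Q_\beta[\boldsymbol\varphi]=-\int_{\partial D}\boldsymbol\varphi\cdot\sigma\bigl[{\bf u}_\beta,\,p_\beta-(q_\beta)_{\mathcal C_{out}}\bigr]\nu,
\]
and likewise for $Q^*_\beta$. The difference then becomes a single boundary integral of $\boldsymbol\varphi\cdot\sigma[{\bf u}_\beta-{\bf u}^*_\beta,\ldots]\nu$ over $\partial D$, which the paper splits into $\partial D\setminus\mathcal{C}_{\varepsilon^{1/4-\tilde\gamma}}$ (handled by the $C\varepsilon^{\tilde\gamma}$-convergence of ${\bf u}_\beta-{\bf u}_\beta^*$ and $p_\beta-p_\beta^*$ away from the contact point, in the spirit of Lemma \ref{lem difference v11}) and $\partial D\cap\mathcal{C}_{\varepsilon^{1/4-\tilde\gamma}}$. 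The crucial advantage of this route is that the weight $\boldsymbol\varphi=(x_1^{l_1},0)^{\mathrm T}$, restricted to $\partial D$, appears explicitly as a multiplier $x_1^{l_1}$; its vanishing to order $l_1\geq 2$ at the origin is then fed directly against the local singularity of $\sigma[{\bf v}_\beta,\bar p_\beta]\nu$, yielding the $C|x_1|^{l_1-2}$ integrand and the bound $C\varepsilon^{1/4-\tilde\gamma}$. Your route splits the volume integral over $\Omega_{\varepsilon^{1/8}}$ and its complement as in Lemma \ref{lemmatildeQ}. For $\beta=1,3$ this works exactly as you describe and gives the same rate $\varepsilon^{1/8}$; it is a legitimate alternative argument, and in fact these are the only two cases used in the proof of Theorem \ref{mainthmlower}.

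However, for $\beta=2$ your inner-neck estimate has a genuine gap. The crude bound on $\int_{\Omega_{\varepsilon^{1/8}}}|e({\bf u}_0):e({\bf u}_2)|$ gives $O(|\ln\varepsilon|)$, which you acknowledge, but the proposed remedy — parity of $\nabla{\bf v}_2$ over the symmetric neck combined with the polynomial structure of ${\bf v}_0^3$ — is not sufficient. The parity argument only kills the \emph{explicit leading term} $\int e({\bf v}_0^3):e({\bf v}_2)$ (whose integrand is odd in $x_1$ when $h,h_1$ are quadratic and $l_1$ is even). It does not control the sub-leading cross term $\int e({\bf w}_0):e({\bf v}_2)$, where ${\bf w}_0={\bf u}_0-{\bf v}_0^3$ is the non-explicit correction with only $|e({\bf w}_0)|\leq C$ known from Proposition \ref{propv03}; with $|e({\bf v}_2)|\lesssim|x_1|/\delta^2$ this is again $O(|\ln\varepsilon|)$, and ${\bf w}_0$ has no a priori parity since the symmetry of $D$ and of $\boldsymbol\varphi$ is only assumed near the origin. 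A further refinement (e.g.\ integration by parts against ${\bf w}_0$ using $|{\bf w}_0|\lesssim\delta$ near the boundary, plus control of the resulting boundary terms at $|x_1|=\varepsilon^{1/8}$) would be needed and is not supplied. So as written the proposal proves the lemma for $\beta=1,3$ but leaves $\beta=2$ open. It is worth noting that the paper's own Lemma \ref{lemmatildeQ} is stated only for $\beta=1,3$ and the present lemma's $\beta=2$ case is dispatched with ``other cases are the same,'' which masks the extra $\delta^{-1}$ in the $\beta=2$ singularity — but the burden on a proof that asserts ``$\beta=1,2,3$'' is to actually close the $\beta=2$ case, and yours does not.
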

	
	\begin{proof}
We follow the idea in \cite{BJL,LX0} and prove the case of $i=3$ and $\beta=1$ for instance, since other cases are the same.	Recalling the definition of $Q_{1}[{\boldsymbol\varphi}]$ in \eqref{aijbj}, by using the integration by parts, we have 
		\begin{equation*}
		Q_{1}[{\boldsymbol\varphi}]=-\int_{\partial D} {\boldsymbol\varphi}\cdot\sigma[{\bf u}_1,p_1-(q_1)_{\mathcal C_{out}}]\nu,
		\end{equation*}
		where $\mathcal C_{out}:=V\setminus \mathcal{C}_{\varepsilon^{1/4-\tilde{\gamma}}}$, $q_1=p_1-\bar p_1$, and $\bar p_1$ is defined in \eqref{defp113D}. 
		Similarly,
		\begin{equation*}
		Q^*_{1}[{\boldsymbol\varphi}]=-\int_{\partial D} {\boldsymbol\varphi}\cdot\sigma[{\bf u}_1^*,p_1^*-(q_1^*)_{\mathcal C_{out}}]\nu,
		\end{equation*}
		where $({\bf u}_1^*,p_1^*)$ is defined in \eqref{defu*} and $q_1^{*}=p_1^{*}-\bar p_1^{*}$.
		Thus, we have
		\begin{equation*}
		Q_{1}[{\boldsymbol\varphi}]-Q^*_{1}[{\boldsymbol\varphi}]=-\int_{\partial D}{\boldsymbol\varphi}\cdot\sigma[{\bf u}_1-{\bf u}_1^{*},p_1-(q_1)_{\mathcal C_{out}}-p_1^{*}+(q_1^*)_{\mathcal C_{out}}]\nu.
		\end{equation*}
		In view of \eqref{converu1} and the  standard boundary  estimates for Stokes systems with ${\bf u}_{1}-{\bf u}_{1}^{*}=0$ on $\partial{D}$ (see, for example \cite{Kratz,Mazya}), we obtain for any $0<\tilde\gamma<\frac{1}{4}$, 
		\begin{align*}
		|\nabla({\bf u}_{1}-{\bf u}_{1}^{*})(x)|+|(p_1-(q_1)_{\mathcal C_{out}}-p_1^{*}+(q_1^*)_{\mathcal C_{out}})(x)|\leq C\varepsilon^{\tilde{\gamma}},\quad  x\in V\setminus \mathcal{C}_{\varepsilon^{1/4-\tilde{\gamma}}},
		\end{align*}
		which implies that 
		\begin{equation}\label{estBout}
		\left|\int_{\partial D\setminus\mathcal{C}_{\varepsilon^{1/4-\tilde{\gamma}}}}{\boldsymbol\varphi}\cdot\sigma[{\bf u}_1-{\bf u}_1^{*},p_1-(q_1)_{\mathcal C_{out}}-p_1^{*}+(q_1^*)_{\mathcal C_{out}}]\nu\right|\leq C\varepsilon^{\tilde{\gamma}}.
		\end{equation}
		
		Next we estimate 
		\begin{align*}
		\mathcal{B}&:=\int_{\partial D\cap\mathcal{C}_{\varepsilon^{1/4-\tilde{\gamma}}}}{\boldsymbol\varphi}\cdot\sigma[{\bf u}_1-{\bf u}_1^{*},p_1-(q_1)_{\mathcal C_{out}}-p_1^{*}+(q_1^*)_{\mathcal C_{out}}]\nu\\
		&=\int_{\partial D\cap\mathcal{C}_{\varepsilon^{1/4-\tilde{\gamma}}}}{\boldsymbol\varphi}\cdot\sigma[{\bf v}_1-{\bf v}_1^{*},\bar p_1-\bar p_1^{*}]\nu\\
		&\quad+\int_{\partial D\cap\mathcal{C}_{\varepsilon^{1/4-\tilde{\gamma}}}}{\boldsymbol\varphi}\cdot\sigma[{\bf w}_1-{\bf w}_1^{*},q_1-(q_1)_{\mathcal C_{out}}-q_1^{*}+(q_1^*)_{\mathcal C_{out}}]\nu=:\mathcal{B}_{v}+\mathcal{B}_{w},
		\end{align*}
		where ${\bf w}_1={\bf u}_1-{\bf u}_1$ and ${\bf w}_1^{*}={\bf u}_1^{*}-{\bf v}_1^{*}$. Using ${\boldsymbol\varphi}=(x_1^{l_1},0)^{\mathrm T}$ in $\Omega_{2R}$ with $l_1\geq2$, we have 
		\begin{align*}
		\mathcal{B}_{v}&=\int_{\partial D\cap\mathcal{C}_{\varepsilon^{1/4-\tilde{\gamma}}}}x_1^{l_1}\big(n_1(2\mu\partial_{x_1}({\bf v}_1-{\bf v}_1^*)^{(1)}-\bar p_1+\bar p_1^{*})\\
		&\qquad+n_2(\partial_{x_1}({\bf v}_1-{\bf v}_1^*)^{(2)}+\partial_{x_2}({\bf v}_1-{\bf v}_1^*)^{(1)})\big).
		\end{align*}
		Recall that ${\bf v}_1^*$ and $\bar p_1^*$ are defined similarly as in \eqref{v1alpha} and \eqref{defp113D} with $k^*(x)$ in place of $k(x)$, respectively, where $k^*(x)$ is defined in \eqref{defk*}. Combining \eqref{estv112}--\eqref{estv11223} and 
		$$n_1=\frac{2\kappa x_1}{\sqrt{1+4\kappa^2x_1^2}},\quad n_2=\frac{-1}{\sqrt{1+4\kappa^2x_1^2}},$$
		we have 
		\begin{align*}
		|\mathcal{B}_{v}|\leq \int_{\partial D\cap\mathcal{C}_{\varepsilon^{1/4-\tilde{\gamma}}}}\frac{C|x_1|^{l_1+2}}{|x_1|^4}\leq C\varepsilon^{1/4-\tilde{\gamma}}.
		\end{align*}
		By Proposition \ref{propu11}, we obtain
		$$|\mathcal{B}_{w}|\leq C\varepsilon^{1/4-\tilde{\gamma}}.$$
		Thus,
		\begin{equation}\label{estBw}
		|\mathcal{B}|\leq C\varepsilon^{1/4-\tilde{\gamma}}.
		\end{equation}
		Taking $\tilde{\gamma}=\frac{1}{8}$, we obtain from \eqref{estBout} and \eqref{estBw} that
		\begin{equation*}
		| Q_{1}[{\boldsymbol\varphi}]-Q^*_{1}[{\boldsymbol\varphi}]|\leq C\varepsilon^{1/8}.
		\end{equation*}
		The proof is completed.   
	\end{proof}
	
	\begin{proof}[Proof of Theorem \ref{mainthmlower}.]
(1)		If $l_1\geq 2$, then we obtain from Proposition \ref{propv03} that
		$$|\nabla{\bf u}_0|\leq C\quad\mbox{in}~\Omega.$$ 
		Proposition \ref{propu12}, together with \eqref{estv3112}, and  \eqref{lamda0} yields 
		\begin{equation*}
		\left|\sum_{\alpha=2}^{3}C^\alpha\partial_{x_2}{\bf u}_\alpha^{(1)}(0,\varepsilon/2)\right|\leq C\sqrt\varepsilon|\ln\varepsilon|,
		\end{equation*}
		where we used $k(0,\varepsilon/2)=0$. Then 
		\begin{align}\label{lowerDu}
		|\nabla{\bf u}(0,\varepsilon/2)|&=\left|\sum_{\alpha=1}^{3}C^\alpha\nabla{\bf u}_\alpha(0,\varepsilon/2)+\nabla{\bf u}_0(0,\varepsilon/2)\right|\nonumber\\
		&\geq \left|\sum_{\alpha=1}^{3}C^\alpha\partial_{x_2}{\bf u}_\alpha^{(1)}(0,\varepsilon/2)+\partial_{x_2}{\bf u}_0^{(1)}(0,\varepsilon/2)\right|\nonumber\\
		&\geq |C^1\partial_{x_2}{\bf u}_1^{(1)}(0,\varepsilon/2)|-C.
		\end{align}
		From \eqref{estv1112} and Proposition \ref{propu11}, one can verify that
		\begin{equation*}
		|\partial_{x_2}{\bf u}_1^{(1)}(0,x_2)|=|\partial_{x_2}{\bf v}_1^{(1)}(0,x_2)+\partial_{x_2}{\bf w}_1^{(1)}(0,x_2)|\geq \frac{1}{C\varepsilon}.
		\end{equation*}
		Then \eqref{lowerDu} becomes
		\begin{align}\label{Dulower1}
		|\nabla{\bf u}(0,\varepsilon
		/2)|\geq \frac{|C^1|}{C\varepsilon}-C.
		\end{align}
		
We proceed to prove the lower bound of $C^1$. From \eqref{estC1} and \eqref{DetA}, it follows that
		\begin{align*}
		C^1=\frac{(\kappa_1-\kappa)^{1/2}\sqrt{\varepsilon}}{\mu\,\pi}\left(Q_{1}[{\boldsymbol\varphi}]-(\kappa_1+\kappa)Q_{3}[{\boldsymbol\varphi}]\right)+O(1)\varepsilon^2.
		\end{align*}
		If $Q_{1}^*[{\boldsymbol\varphi}]-(\kappa_1+\kappa)Q_{3}^*[{\boldsymbol\varphi}]\neq 0$ for some $l_1\geq 2$, then by using Lemma \ref{zydyl}, there exists a small enough constant $\varepsilon_0>0$ such that for $0<\varepsilon<\varepsilon_0$,
		\begin{equation*}
		|Q_{1}[{\boldsymbol\varphi}]-(\kappa_1+\kappa)Q_{3}[{\boldsymbol\varphi}]|\geq \frac{1}{2}|Q_{1}^*[{\boldsymbol\varphi}]-(\kappa_1+\kappa)Q_{3}^*[{\boldsymbol\varphi}]|>0.
		\end{equation*}
		Thus, 
		\begin{align*}
		|C^1|
		&\geq\frac{|Q_{1}^*[{\boldsymbol\varphi}]-(\kappa_1+\kappa)Q_{3}^*[{\boldsymbol\varphi}]|}{C}\sqrt\varepsilon.
		\end{align*}
		This, in combination with  \eqref{Dulower1}, we deduce
		\begin{align*}
		|\nabla{\bf u}(0,\varepsilon
		/2)|\geq\frac{1}{C\sqrt{\varepsilon}}.
		\end{align*} 
		
	(2)  If $l_2\geq 3$, then by using Proposition \ref{propu12} and \eqref{lamda20}, the estimate \eqref{Dulower1} also holds. Similarly, if $Q_{1}^*[{\boldsymbol\varphi}]-(\kappa_1+\kappa)Q_{3}^*[{\boldsymbol\varphi}]\neq 0$ for some $l_2\geq 3$, then there exists a sufficiently small constant $\varepsilon_0>0$ such that for $0<\varepsilon<\varepsilon_0$,
	\begin{equation*}
	|Q_{1}[{\boldsymbol\varphi}]-(\kappa_1+\kappa)Q_{3}[{\boldsymbol\varphi}]|\geq \frac{1}{2}|Q_{1}^*[{\boldsymbol\varphi}]-(\kappa_1+\kappa)Q_{3}^*[{\boldsymbol\varphi}]|>0.
	\end{equation*}
	Thus, 
	\begin{align*}
	|\nabla{\bf u}(0,\varepsilon
	/2)|\geq\frac{1}{C\sqrt{\varepsilon}}.
	\end{align*} 
	Theorem \ref{mainthmlower} is proved
	\end{proof}

\noindent{\bf{\large Acknowledgements.}}
H.G. Li was partially supported by NSF of China (11971061).


\begin{thebibliography}{2}

\bibitem{ADN}
S. Agmon, A. Douglis, L. Nirenberg, Estimates near the boundary for solutions of elliptic partial differential equations satisfying general boundary conditions. II, Comm. Pure Appl. Math. 17 (1964), 35--92.

\bibitem{ABTV}
H. Ammari, E. Bonnetier, F. Triki, M. Vogelius, Elliptic estimates in composite media with smooth inclusions: an integral equation approach. Ann. Sci. \'{E}c. Norm. Sup\'{e}r. (4) 48 (2015), no. 2, 453--495.

\bibitem{AKKY} H. Ammari, H. Kang, D. Kim, S. Yu, Quantitative estimates for stress concentration of the Stokes flow between adjacent circular cylinders. (2020) arXiv:2003.06578.

\bibitem{AKL3} H. Ammari, H. Kang, H. Lee, J. Lee, M. Lim, Optimal estimates for the electrical field in two dimensions, J. Math. Pures Appl. 88 (2007) 307--324.

\bibitem{AKL} H. Ammari, H. Kang, M. Lim, Gradient estimates for solutions to the conductivity problem, Math. Ann. 332 (2005), 277--286.

\bibitem{Bab} I. Babu\u{s}ka, B. Andersson, P. Smith, K. Levin, Damage analysis of fiber composites. I. Statistical analysis on fiber scale, Comput. Methods Appl. Mech. Engrg. 172 (1999), 27--77.

\bibitem{BLY} E. Bao, Y.Y. Li, B. Yin, Gradient estimates for the perfect conductivity problem, Arch. Ration. Mech. Anal. 193 (2009), 195--226.

 \bibitem{BJL}  J.G. Bao, H.J. Ju, H.G. Li, Optimal boundary gradient estimates for Lam\'{e} system with partially infinite coefficients, Adv. Math. 314 (2017), 583--629.

\bibitem{BLL}  J.G. Bao, H.G. Li, Y.Y. Li, Gradient estimates for solutions of the Lam\'{e} system with partially infinite coefficients, Arch. Ration. Mech. Anal. 215 (2015), 307--351.

\bibitem{BLL2} J.G. Bao, H.G. Li, Y.Y. Li, Gradient estimates for solutions of the Lam\'e system with partially infinite coefficients in dimensions greater than two, Adv. Math. 305 (2017), 298--338.

\bibitem{BT} E. Bonnetier, F. Triki, On the spectrum of the Poincar\'e variational problem for two close-to-touching inclusions in 2D, Arch. Ration. Mech. Anal. 209 (2013), 541--567.

\bibitem{BV}
E. Bonnetier, M. Vogelius, An elliptic regularity result for a composite medium with ``touching" fibers of circular cross-section, SIAM J. Math. Anal. 31 (2000), no. 3, 651--677.

\bibitem{BC} B. Budiansky, G.F. Carrier, High shear stresses in stiff fiber composites, J. Appl. Mech. 51 (1984) 733--735.

\bibitem{DongLi}
H.J. Dong, H.G. Li, Optimal estimates for the conductivity problem by Green's function method, Arch. Ration. Mech. Anal. 231 (2019), no. 3, 1427--1453.

\bibitem{DongLiYang}
H.J. Dong, Y.Y. Li, Z.L. Yang, Optimal gradient estimates of solutions to the insulated conductivity problem in dimension greater than two, (2021), arXiv:2110.11313v1.

\bibitem{DongZhang}
H.J. Dong, H. Zhang, On an elliptic equation arising from composite materials, Arch. Ration. Mech. Anal. 222 (2016), no. 1, 47-89. 

\bibitem{Fortes}
A. Fortes, D.D. Joseph, T.S. Lundgren, Nonlinear mechanics of fluidization of beds of spherical particles, J. Fluid Mech. 177 (1987), 467--483.

\bibitem{Desjardins}
B. Desjardins, M. Esteban, Existence of weak solutions for the motion of rigid bodies in viscous fluid, Arch. Ration. Mech. Anal. 46 (1999), 59--71.

\bibitem{GaldiBook}
G.P. Galdi, An introduction to the mathematical theory of the Navier-Stokes equations: steady-state problems, Springer, Cham (2011).

\bibitem{Galdi98}
G.P. Galdi, Slow motion of a body in a viscous incompressible fluid with application to particle sedimentation, Recent developments in Partial Differential Equations, Quad. Mat., Aracne, Rome, 2 (1998), 1--35.


\bibitem{Galdi00}
G.P. Galdi, Slow steady fall of a rigid body in a second-order fluid, J. Non-Newtonian Fluid Mech. 93 (2000), 169--177.


\bibitem{Gunzburger}
M. Gunzburger, H.C. Lee, G.A. Seregin, Global existence of weak solutions for viscous incompressible flows around a moving rigid body in three dimensions, J. Math. Fluid Mech. 2 (3) (2000), 219--266.

\bibitem{Hofer}
R.M. H\"{o}fer, Sedimentation of inertialess particles in Stokes flows. Comm. Math. Phys. 360 (2018), no. 1, 55--101.

\bibitem{Hoffmann}
K.H. Hoffmann, V.N. Starovoitov, On a motion of a solid body in a viscous fluid. Two-dimensional
case, Adv. Math. Sci. Appl. 9 (1999), 633-648.

\bibitem{Jeffery}
G.B. Jeffery, The motion of ellipsoidal particles immersed in a viscous fluid, Proc. Roy. Soc. London Ser. A, 102 (1922), 161-179.

\bibitem{DDJoseph}
D.D. Joseph, Interrogations of direct numerical simulation of solid-liquid flow, https://dept.aem.umn.edu/people/faculty/joseph/archive/docs/Interog-1.pdf (2002).

\bibitem{KLeY} H. Kang, H. Lee, K. Yun, Optimal estimates and asymptotics for the stress concentration between closely located stiff inclusions, Math. Ann. 363 (2015), 1281--1306.

\bibitem{KLiY} H. Kang, M. Lim, K. Yun, Asymptotics and computation of the solution to the conductivity equation in the presence of adjacent inclusions with extreme conductivities, J. Math. Pures Appl. 99 (2013), 234--249.

\bibitem{KLiY2} H. Kang, M. Lim, K. Yun, Characterization of the electric field concentration between two adjacent spherical perfect conductors, SIAM J. Appl. Math. 74 (2014), 125--146.

\bibitem{KY} H. Kang, S. Yu, Quantitative characterization of stress concentration in the presence of closely spaced hard inclusions in two-dimensional linear elasticity, Arch. Ration. Mech. Anal. 232 (2019), 121--196.

\bibitem{KangYu} H. Kang, S. Yu, Singular functions and characterizations of field concentrations: a survey, Anal. Theory Appl. 37 (2021), no. 1, 102--113.

\bibitem{Kirchhoff}
G. Kirchhoff, \"Uber die Bewegung eines Rotationsk\"orpers in einer Fl\"ussigkeit, J. Reine Angew. Math. 71 (1869), 237-281.

\bibitem{Kratz} W. Kratz, On the maximum modulus theorem for Stokes function, Appl. Anal. 58 (1995), 293--302. 

\bibitem{Lady} O.A. Ladyzhenskaya,  The mathematical theory of viscous incompressible flow, Mathematics and its applications, vol 2, 2nd edn. Gordon and Breach, New York (1969).

\bibitem{Li2021} H.G. Li, Lower bounds of gradient's blow-up for the Lam\'e system with partially infinite coefficients, J. Math. Pures Appl. 149 (2021), 98--134.

\bibitem{LLBY} H.G. Li, Y.Y. Li, E.S. Bao, B. Yin, Derivative estimates of solutions of elliptic systems in narrow regions, Quart. Appl. Math.  72  (2014),  no. 3, 589--596.

\bibitem{LX0} H.G. Li, L.J. Xu,  Asymptotics of the stress concentration in high-contrast elastic composites, (2020), arXiv:2004.06310.


\bibitem{LX}  H.G. Li, L.J. Xu, Stress concentration between two adjacent rigid inclusions for the Stokes flow in 2D and 3D, (2022), arXiv: 2204.00254v1.


\bibitem{LiYang} 
Y.Y. Li, Z.L. Yang, Gradient estimates of solutions to the insulated conductivity problem in dimension greater than two, arXiv:2012.14056v1. To appear in Math. Ann.

\bibitem{LY} M. Lim, K. Yun, Blow-up of electric fields between closely spaced spherical perfect conductors, Comm. Partial Differential Equations, 34 (2009), 1287--1315.

\bibitem{Martin}
H. Mart\'in, J.S. Starovoitov, M. Tucsnak, Global weak solutions for the two-dimensional motion of several rigid bodies in an incompressible viscous fluid, Arch. Ration. Mech. Anal. 161 (2002), 113--147.


\bibitem{Mazya} V. Maz'ya, J. Rossmann,  A maximum modulus estimate for solutions of the Navier-Stokes system in domains of polyhedral type, Math. Nachr. 282 (2009), no. 3, 459--469. 


\bibitem{SanMartin}
J.A. San Mart\'in, V. Starovoitov, M. Tucsnak,  Global weak solutions for the two-dimensional motion of several rigid bodies in an incompressible viscous fluid, Arch. Ration. Mech. Anal. 161 (2002), no. 2, 113--147. 

\bibitem{Solo}
V. A. Solonnikov, General Boundary Value Problems for Douglis-Nirenberg Elliptic Systems II, Trudy Mat. Inst. Steklov. 92 (1966), 233-297; English Transl.: Proc. Steklov Inst. Math, 92, (1966), 212--272.


\bibitem{Stokes}
G. Stokes, On the effect of internalfriction offluids on the motion ofpendulums, Trans. Cambridge Philos.
Soc. 9 (1851), 8-85.

\bibitem{Weinberger}
H.F. Weinberger, On the steady fall of a body in a Navier-Stokes fluid, Proc. Symp. Pure Math. 23 (1973), 421--440.

\bibitem{Weinberger2}
H.F. Weinberger, Variational principles for a body falling in steady Stokes flow, Proc. Symp. on Continuum Mechanics and Related Problems of Analysis, Vol. 2, Mecniereba (1974), 330--339.

\bibitem{Ben}
B. Weinkove, The insulated conductivity problem, effective gradient estimates and the maximum principle, (2021), arXiv:2103.14143.  To appear in Math. Ann.

\bibitem{YuAmmari}
S. Yu, H. Ammari, Plasmonic interaction between nanosphere, SIAM Rev. 60 (2018), 356--385.

\bibitem{Yun} K. Yun, Estimates for electric fields blown up between closely adjacent conductors with arbitrary shape, SIAM J. Appl. Math. 67 (2007), 714--730.

\bibitem{Yun2} K. Yun, An optimal estimate for electric fields on the shortest line segment between two spherical insulators in three dimensions, J. Differ. Equ. 261 (2016), 148--188.
\end{thebibliography}
\end{document}